\newtheorem{thm}{Theorem}[section]
\newtheorem{lem}[thm]{Lemma}
\newtheorem{prop}[thm]{Proposition}
\newtheorem{cor}[thm]{Corollary}
\newtheorem{NN}[thm]{}
\theoremstyle{definition}\newtheorem{df}[thm]{Definition}
\theoremstyle{definition}\newtheorem{rem}[thm]{Remark}
\theoremstyle{definition}
\renewcommand{\phi}{\varphi}
\newcommand{\N}{\mathbb{N}}
\newcommand{\Z}{\mathbb{Z}}
\newcommand{\Q}{\mathbb{Q}}
\newcommand{\R}{\mathbb{R}}
\newcommand{\C}{\mathbb{C}}
\newcommand{\T}{\mathbb{T}}
\newcommand{\morp}{contractive completely positive linear map}
\newcommand{\hm}{homomorphism}
\newcommand{\dt}{\delta}
\newcommand{\ep}{\epsilon}
\newcommand{\andeqn}{\,\,\,{\rm and}\,\,\,}
\newcommand{\rforal}{\,\,\,{\rm for\,\,\,all}\,\,\,}
\newcommand{\CA}{$C^*$-algebra}
\newcommand{\SCA}{$C^*$-subalgebra}
\newcommand{\af}{{\alpha}}
\newcommand{\bt}{{\beta}}
\newcommand{\beq}{\begin{eqnarray}}
\newcommand{\eneq}{\end{eqnarray}}
\newcommand{\tforal}{\,\,\,\text{for\,\,\,all}\,\,\,}
\newcommand{\tand}{\,\,\,\text{and}\,\,\,}
\title{Asymptotically Unitary Equivalence and Classification of Simple Amenable $C^*$-algebras
}
\author{Huaxin Lin
 }
\date{}
\begin{document}

\maketitle

\begin{abstract}
Let $C$  and $A$ be two unital separable amenable simple \CA s
with tracial rank no more than one. Suppose that $C$ satisfies the
Universal Coefficient Theorem and suppose that $\phi_1, \phi_2:
C\to A$ are two unital monomorphisms. We show that there is a
continuous path of unitaries $\{u_t: t\in [0, \infty)\}$ of $A$
such that
$$
\lim_{t\to\infty}u_t^*\phi_1(c)u_t=\phi_2(c)\tforal c\in C
$$
if and only if $[\phi_1]=[\phi_2]$ in $KK(C,A),$
$\phi_1^{\ddag}=\phi_2^{\ddag},$ $(\phi_1)_T=(\phi_2)_T$ and a
rotation related map $\overline{R}_{\phi_1,\phi_2}$ associated
with $\phi_1$ and $\phi_2$ is zero.

Applying this result together with a result of W. Winter, we give
a classification theorem for a class ${\cal A}$ of unital
separable simple amenable \CA s which is strictly larger than the
class of separable \CA s whose tracial rank are zero or one.
Tensor products of two \CA s in ${\cal A}$ are again in ${\cal
A}.$ Moreover, this class is closed under inductive limits and
contains
all unital simple ASH-algebras whose state spaces of
$K_0$ is the same as the tracial state spaces as well as some unital simple ASH-algebras
whose $K_0$-group is $\Z$ and tracial state spaces are any metrizable Choquet simplex.  One consequence
of the main result is that
all unital simple AH-algebras which are ${\cal Z}$-stable are
 isomorphic to ones with no dimension growth.

\end{abstract}

\section{Introduction}

\CA s are often viewed as non-commutative topological spaces.
Indeed, by a classical theorem of Gelfand, all unital commutative
\CA s are isomorphic to $C(X),$ the algebra of all continuous
functions on some compact Hausdorff space $X.$ We are interested
in the non-commutative cases, in particular, in the extremal end
of non-commutative cases, namely, unital simple \CA s. As in the
commutative case, one studies continuous maps from one space to
another, here we are interested in the \hm s from one unital
simple \CA\, to another unital simple \CA. In this paper, we study
the problem when two unital \hm s from one unital simple \CA\, to
another are asymptotically unitarily equivalent. To be more
precise,  let $C$ and $A$ be two unital separable simple \CA s and
let $\phi_1, \phi_2: C\to A$ be two unital \hm s. We consider the
question when there exists a continuous path of unitaries $\{u(t):
t\in [0,1)\}$ such that
\beq\label{AA}
\lim_{t\to 1} u(t)^* \phi_1(c) u(t)=\phi_2(c)\tforal c\in C.
\eneq

In this paper, we assume that both $A$ and $C$ have tracial rank
no more than one (see \ref{Dtr1} below). We prove the following:
Suppose that $C$ also satisfies the
 Universal Coefficient Theorem.
 Then (\ref{AA}) holds if and only if $[\phi_1]=[\phi_2]$
in $KK(C,A),$ $\phi_1^{\ddag}=\phi^{\ddag},$
$(\phi_1)_T=(\phi_2)_T$ and a rotation related map
$\overline{R}_{\phi_1, \phi_2}=0.$ These conditions are all
$K$-theory related. Except, perhaps, the condition
$\overline{R}_{\phi_1,\phi_2}=0,$ all others are obviously
necessary. The technical detail of these conditions will be
discussed later. The result has been established when $A$ is
assumed to be a unital simple separable \CA\, with tracial rank
zero. In fact, in that case $C$ can be any unital AH-algebra. As
in the case that $A$ has tracial rank zero, one technical problem
involved in proving the above mentioned asymptotic unitary
equivalence theorem is the so-called Basic Homotopy Lemma (see
\cite{Lnasym}). A version of the Basic Homotopy Lemma for the case
that $A$ has tracial rank one was recently established in
\cite{Lnnhomp} which paves the way for this paper.

 As in the commutative case, understanding continuous maps is
 important, the answer to the above mentioned question has many applications.
 For example, a special version of this was used to study the
 embedding of crossed products into a unital simple AF-algebras (see \cite{LnZ2af}).
However, in this paper, we will only give an  application to the
classification of simple amenable \CA s.

In the study of classification of amenable \CA s, or otherwise
known as the Elliott program, two types of results are
particularly important: the so-called uniqueness theorem and
existence theorem. Briefly, the uniqueness theorem usually means a
theorem which states that, for certain \CA s $A$ and $B,$ if two
unital monomorphisms from $A$ to $B$ carry the same $K$-theory (or
rather $KK$-theory) related information, then they are
approximately unitarily equivalent. The existence theorem on the
other hand usually states that given certain $K$-theory related
maps from the relevant $K$-theory information of $A$ to those of
$B$ (for example, maps from $\kappa: K_*(A)\to K_*(B)$),  there is
indeed a map (often a unital monomorphism) from $A$ to $B$ which
induces the given map (for example, a \hm\, $\phi: A\to B$ such
that $\phi_*=\kappa$). These two types of results play the crucial
roles in all classification results in the Elliott program.

In a recent paper, W. Winter (\cite{W}) demonstrated a new
approach to the Elliott program. Let $A$ and $B$ be two unital
separable amenable \CA s which satisfy the UCT and which are
${\cal Z}$-stable. Suppose that $A\otimes M_{\mathfrak{p}}$ and
$B\otimes M_{\mathfrak{p}}$ are isomorphic for any supernatural
number $\mathfrak{p}$ and $M_{\mathfrak{p}}$ is the UHF-algebra
associated with ${\mathfrak{p}}.$ W. Winter provided a way to show
that $A$ and $B$ are actually isomorphic. He also showed how it
works for the case that both $A\otimes M_{\mathfrak{p}}$ and
$B\otimes M_{\mathfrak{p}}$ have tracial rank zero. In order to
have Winter's method to work, in general, one also needs a
uniqueness as well as an existence theorem. However, one requires
an even finer uniqueness theorem as well as a finer existence
theorem. Instead of approximate unitarily equivalent, the
uniqueness theorem now requires
 two unital monomorphisms to be asymptotically unitarily
equivalent. Thus the above mentioned theorem for asymptotic
unitary equivalence serves as the required uniqueness theorem.

The existence theorem is equally important. Let $\kappa\in
KK(C,A)$ be  such that $\kappa(K_0(C)_+\setminus\{0\})\subset
K_0(A)_+\setminus\{0\}$ with $\kappa([1_C])=[1_A].$ Let $\gamma:
T(A)\to T(C)$ be an affine continuous map. We say that $\gamma$
and $\kappa$ are compatible if
$\tau(\kappa([p]))=\gamma(\tau)([p])$ for all tracial states
$\tau\in T(A)$ and all projections $p\in M_n(C),$ $n=1,2,....$
Denote by $U(C)$ and $U(A)$ the unitary groups of $C$ and $A$ and
by $CU(C)$ and $CU(A)$ the closures of the subgroups generated by
commutators of $U(C)$ and $U(A),$ respectively. A \hm\, $\alpha:
U(C)/CU(C)\to U(A)/CU(C)$ is said to be compatible with $\kappa$
if the induced maps from $K_1(C)$ into $K_1(A)$ determined by
$\alpha$ and $\kappa|_{K_1(C)}$ are the same. One also requires
that $\af,$ $\gamma$ and $\kappa$ are compatible (see 8.1 below).
As the first part of the existence theorem, we show that given
such a triple, there is indeed a unital \hm\, $\phi: C\to A$ such
that $\phi$ induces $\kappa, $ $\alpha$ and $\gamma,$ i.e.,
$[\phi]=\kappa,$ $\phi^{\ddag}=\alpha$ and $\phi_T=\gamma.$ The
previous results of this kind can be found in L. Li's paper
(\cite{Li} with a special case in \cite{EL}) and \cite{LnKT} as
well as \cite{LnKK}. One should notice that it is relatively easy
(or at least known) to find a \hm\, $\phi$ such that
$[\phi]=\overline{\kappa}$ in $KL(C,A),$ where $\overline{\kappa}$
is the image of $\kappa$ in $KL(C,A)$ once the case that $K_*(C)$
is finitely generated is established (as in \cite{Li}). It is
quite different to match the $KK$-element when $K_*(C)$ is not
finitely generated given the fact that
$\lim_nKK(C_n,A)\not=KK(\lim_n C_n, A)$ in general.  A general
method to realize $\kappa$ can be found in \cite{LN} which can be
traced back to \cite{KK2}. It again involves the Basic Homotopy
Lemma among other things. Trace formulas and de la Harp and
Scandalis determinants play important roles to match maps $\gamma$
and $\alpha.$

 However, more are required this time for existence theorem.
We show that, for any \\$\lambda\in Hom(K_1(C),
\overline{\rho_A(K_0(A))})),$ there is a unital \hm\, $\psi:
\phi(C)\to A$ for which there is a sequence of unitaries
$\{u_n\}\subset A$ such that $\psi(a)=\lim_{n\to\infty}{\rm ad}\,
u_n(a)$ for all $a\in \phi(C)$ and the rotation related map
$\overline{R}_{\psi\circ \phi, \phi}$ is induced by $\lambda.$

Let ${\cal A}$ be the class of all unital separable simple
amenable \CA s $A$ which satisfy the UCT such that $A\otimes
M_{\mathfrak{p}}$ have tracial rank one or zero for any
supernatural number $\mathfrak{p}.$  Using the above mentioned the
strategy as well as the so-called uniqueness and existence
theorem, we show that two \CA s $A$ and $B$ in ${\cal A}$ which
are ${\cal Z}$-stable are isomorphic if and only if their Elliott
invariant are the same. This considerably enlarges the class of
unital simple \CA s that can be classified by their Elliott
invariant. Class ${\cal A}$  contains all unital simple
AH-algebras. It also contains all unital simple separable amenable
\CA s with tracial rank one or zero which satisfy the UCT and all
unital simple separable \CA s which are inductive limits of type
$I$ \CA s with the unique tracial states. In particular, it
contains the Jiang-Su algebra ${\cal Z}.$ Moreover, it contains
those unital simple ASH-algebras $A$ for which
$T(A)=S_{[1]}(K_0(A))$ (the latter is the state space of
$K_0(A)$). It is closed under inductive limits and  tensor
products. As a consequence, it shows that any two unital simple
AH-algebras are ${\cal Z}$-stably isomorphic if and only if they
have the same Elliott invariant. In a subsequent paper
(\cite{LN2}), it will show that the class contains all unital
simple AD-algebras as well as unital simple A$\T$D-algebras. Let
$(G_0, (G_0)_+, u, G_1, S, r)$ be a six-tuple, where $S$ is any
metrizable Choquet simplex, $G_1$ is any countable abelian group,
$(G_0, (G_0)_+, u)$ is any weakly unperforated Riesz group with
order unit $u$ and any pairing $r,$ it will show that there is a
unital simple separable amenable \CA\, $A\in {\cal A}$ so that its
Elliott invariant is precisely the given six-tuple.  In that
paper, we will also show that there are \CA s in ${\cal A}$ whose
$K_0$-group may not be a Riesz groups (\cite{LN2}). Furthermore,
the range of the Elliott invariants of \CA s in ${\cal A}$ will be
determined.

\vspace{0.2in}

{\bf Acknowledgments} This work is partially supported by a NSF
grant, the Shanghai Priority Academic disciplines and Chang-Jiang
Professorship from East China Normal University in the summer of
2008.

\section{Preliminaries}

\begin{NN}\label{NN1}
{\rm Let $A$ be a stably finite \CA. Denote by $T(A)$ the tracial
state space of $A$ and denote by $Aff(T(A))$ the space of all real
affine continuous functions on $T(A).$ Suppose $\tau\in T(A)$ is a
tracial state. We will also use $\tau$ for the trace $\tau\otimes
Tr$ on $A\otimes M_k=M_k(A)$ (for every integer $k\ge 1$), where
$Tr$ is the standard trace on $M_k.$

Define $\rho_A: K_0(A)\to Aff(T(A))$ to be the positive \hm\,
defined by $\rho_A([p])(\tau)=\tau(p)$ for each projection $p$ in
$M_k(A).$

}

\end{NN}

\begin{NN}\label{DDCU}
{\rm Let $A$ be a unital \CA. Denote by $U(A)$ the group of
unitaries in $A$ and denote by $U_0(A)$ the path connected
component of $U(A)$ containing the identity. Denote by $CU(A)$ the
{\it closure} of the subgroup of $U(A)$ generated by commutators.
It is a normal subgroup of $U(A).$ If $u\in U(A),$ $\overline{u}$ will be used for its image in $U(A)/CU(A).$
If $x, y\in U(A)/CU(A),$ define
$$
{\rm dist}(x, y)=\inf\{\|v^*u-1\|: {\bar u}=x\andeqn {\bar v}=y\}.
$$

Suppose that $K_1(A)=U(A)/U_0(A).$ 
We denote by $\Delta_A: U_0(A)/CU(A)\to \text{Aff}(T(A))/\overline{\rho_A(K_0(A))}$ the 
de la Harp and Skandalis 
determinant (see \cite{HS} and \cite{Th1}).  Let 
$$
d_A'({\bar f},{\bar g})=\inf\{\|f-g-h\|: h\in \overline{\rho_A(K_0(A))}\}
$$
for $f, g\in \text{Aff}(T(A))$ (${\bar f}$ and ${\bar g}$ are images of $f$ and $g$ in $\text{Aff}(T(A))/\overline{\rho_A(T(A))},$
respectively). Define 
$$
d_A(\bar{f}, \bar{g})=\begin{cases} 2 & \text{if $d_A'({\bar f},{\bar g})\ge 1/2$ }\\
                               |e^{2\pi d_A'({\bar f}, {\bar g})}-1| &\text{if  $d_A'({\bar f},{\bar g})<1/2.$}
                               \end{cases}
$$
Note that $d_A({\bar f},{\bar g})<2\pi d_A'({\bar f}, {\bar g}).$  As in 3.4 of \cite{EGL}, $\Delta_A$ gives an 
isometric isomorphism (The isomorphism follows \cite{Th1} and the isometric part follows  (a slight modification)  proof of 3.1 of \cite{NT}). 

  }
\end{NN}

\begin{NN}
 {\rm By $Aut(A)$ we mean the group of automorphisms on $A.$ Let
$u\in U(A).$ We write ${\rm ad}\, u$ the inner automorphism
defined by ${\rm ad}\, u(a)=u^*au$ for all $a\in A.$

Let $C\subset A$ be a \SCA. Denote by $\overline{\text{Inn}}(C,A)$
the set of those monomorphisms $\phi: C\to A$ such that
$\phi(c)=\lim_{n\to\infty}u_n^*cu_n$ for some sequence of
unitaries $\{u_n\}$ of $A$ and for all $c\in C.$

}
\end{NN}

\begin{NN}
{\rm Suppose that $A$ and $B$ are two unital \CA s and $\phi: A\to
B$ is a \hm. Then $\phi$ maps $CU(A)$ to $CU(B).$ Denote by
$\phi^{\ddag}: U(A)/CU(A)\to U(B)/CU(B)$ the induced \hm. Suppose
that $T(A)\not=\emptyset$ and $T(B)\not=\emptyset.$ Denote by
$\phi_T: T(B)\to T(A)$ the continuous affine map defined by
$$
\phi_T(\tau)(a)=\tau(\phi(a))\tforal a\in A\andeqn \tau\in T(B).
$$

}
\end{NN}

\begin{NN}
{\rm  Let $A$ be a \CA. Denote by $SA=C_0((0,1), A)$ the
suspension of $A.$ }
\end{NN}

\begin{NN}

{\rm A \CA\, $A$ is an AH-algebra if $A=\lim_{n\to\infty}(A_n,
\psi_n),$ where each $A_n$ has the form $P_nM_{k(n)}(C(X_n))P_n,$
where $X_n$ is a finite CW complex (not necessarily connected) and
$P_n\in M_{k(n)}(C(X_n))$ is a projection. We use $\psi_{n,
\infty}: A_n\to A$ for the induced \hm. }
\end{NN}

\begin{NN}
{\rm Denote by ${\cal N}$ the class of separable amenable \CA\,
which satisfies the Universal Coefficient Theorem. }

\end{NN}

\begin{df}\label{Dtr1}

{\rm Recall (see Theorem 6.13 of \cite{Lnplms}) that a unital
simple \CA\, $A$ is said to have tracial rank no more than one
(written $TR(A)\le 1$), if for any $\ep>0,$ any $a\in
A_+\setminus\{0\}$ and any finite subset ${\cal F}\subset A,$
there exists a projection $p\in A$ and a \SCA\, $C$ of $A$ with
$1_C=p$ and $C=\oplus_{i=1}^mC(X_i, M_{r(i)}),$ where $X_i$ is a
connected finite CW complex with dimension ${\rm dim} X\le 1,$
such that

{\rm (1)} $\|px-xp\|<\ep\tforal x\in {\cal F},$

{\rm (2)} ${\rm dist}(pxp, C)<\ep\rforal x\in {\cal F},$

{\rm (3)} $1-p$ is von-Neumann equivalent to a projection in
$\overline{aAa}.$

Denote by ${\cal I},$ the class of those \CA s\, with the form
$\oplus_{i=1}^m C([0,1], M_{r(i)})\oplus \oplus_{j=1}^{m'}
M_{R(j)}.$ It was proved (Theorem 7.1 of \cite{Lnplms}) that if
$TR(A)\le 1,$ then in the above definition, we can choose $C\in
{\cal I}.$ If furthermore, $C$ can be replaced by finite
dimensional \SCA s, then $TR(A)=0.$ If $TR(A)\le 1$ and
$TR(A)\not=0,$ we write $TR(A)=1.$ By Theorem 2.5 of
\cite{Lnctr1}, if $B$ is a unital simple AH-algebra with very slow
dimension growth, then $TR(B)\le 1.$

}

\end{df}

\begin{NN}
{\rm  Let $A$ and $B$ be two unital \CA s and let $\phi: A\to B$
be a \hm. One can extend $\phi$ to a \hm\, from $M_k(A)$ to
$M_k(B)$ by $\phi\otimes {\rm id}_{M_k}.$ In what follows we will
use $\phi$ again for this extension without further notice.}

\end{NN}

\begin{NN}

{\rm Let $A$ and $B$ be two \CA s and let $L_1, L_2: A\to B$ be a
map.  Suppose that ${\cal F}\subset A$ is a subset and  $\ep>0.$
We write
$$
L_1\approx_{\ep} L_2\,\,\,\text{on}\,\,\, {\cal F},
$$
if
$$
\|L_1(a)-L_2(a)\|<\ep\tforal a\in {\cal F}.
$$

}

\end{NN}

\begin{NN}\label{Dppu}
{\rm Let $A$ and $B$ be two unital \CA s and let $L: A\to B$ be a
unital \morp. Let ${\cal P}\subset \underline{K}(A)$ be a finite
subset. It is well known that, for some small $\dt$ and large finite
subset ${\cal G}\subset A,$ if $L$ is also $\dt$-${\cal
G}$-multiplicative, then $[L]|_{\cal P}$ is well defined. In what
follows whenever we write $[L]|_{\cal P}$ we mean $\dt$ is
sufficiently small and ${\cal G}$ is sufficiently large so that it
is well defined (see 2.3 of \cite{Lnhomp}).

If $u\in U(A),$ we will use $ \langle L\rangle (u) $ for the unitary
$L(u)|L(u)^*L(u)|^{-1}.$

For an integer $m\ge 1$ and a  finite subset ${\cal U}\subset
U(M_m(A)),$ let $F\subset U(A)$ be the subgroup generated by
${\cal U}.$ As in 6.2 of \cite{Lnctr1}, there exists a finite
subset ${\cal G}$ and a small $\dt>0$ such that a $\dt$-${\cal
G}$-multiplicative \morp\, $L$ induces a \hm\, $L^{\ddag}:
{\overline{F}}\to U(M_m(B))/CU(M_m(B)).$  Moreover, we may assume,
${\overline{\langle L\rangle (u)}}=L^{\ddag}({\bar u}).$
If there are $L_1, L_2: A\to B$ and $\ep>0$ is given. Suppose that
both $L_1$ and $L_2$ are $\dt$-${\cal G}$-multiplicative and
$L_1^{\ddag}$ and $L_2^{\ddag}$ are well defined on
${\overline{F}},$ whenever, we write
$$
{\rm dist}(L_1^{\ddag}({\bar u}), L_2^{\ddag}({\bar u}))<\ep
$$
for all $u\in {\cal U},$ we also assume that $\dt$ is sufficiently
small and ${\cal G}$ is sufficiently large so that
$$
{\rm dist}{\overline{\langle L_1\rangle(u)}}, \overline{\langle
L_2\rangle(u)})<\ep
$$
for all $u\in {\cal U}.$

}

\end{NN}

\begin{df}\label{Kund}
{\rm Let $A$ be a \CA.  Following Dadarlat and Loring (\cite{DL}),
denote
$$
\underline{K}(A)=\bigoplus_{i=0,1}
K_i(A)\bigoplus_{i=0,1}\bigoplus_{k\ge 2}K_i(A,\Z/k\Z).
$$
Let $B$ be a unital \CA. If furthermore, $A$ is assumed to be
separable and satisfy the Universal Coefficient Theorem
(\cite{RS}), by \cite{DL},
$$
Hom_{\Lambda}(\underline{K}(A), \underline{K}(B))=KL(A,B).
$$
Here $KL(A,B)=KK(A,B)/Pext(K_*(A), K_*(B)).$ (see \cite{DL} for
details).
Let $k\ge 1$ be an integer. Denote
$$
F_k\underline{K}(A)=\oplus_{i=0,1}K_i(A)\bigoplus_{n|k}K_i(A,\Z/k\Z).
$$
Suppose that $K_i(A)$ is finitely generated ($i=0,2$). It follows
from \cite{DL} that there is an integer $k\ge 1$ such that
\beq\label{dkl1}
Hom_{\Lambda}(F_k\underline{K}(A), F_k\underline{K}(B))
=Hom_{\Lambda}(\underline{K}(A), \underline{K}(B)).
\eneq
}
\end{df}

\begin{df}
{\rm Denote by $KK(A,B)^{++}$ those elements $x\in KK(A,B)$ such
that $x(K_1(A)_+\setminus \{0\})\subset K_1(B)_+\setminus\{0\}.$
Suppose that both $A$ and $B$ are unital. Denote by
$KK_e(A,B)^{++}$ the set of those elements $x$ in $KK(A,B)^{++}$
such that $x([1_A])=[1_B].$

}
\end{df}

\begin{df}\label{Dbot2}
{\rm Let
$A$ and $B$ be  two unital \CA s.  Let $h: A\to B$ be a \hm\, and
$v\in U(B)$ such that
$$
h(g)v=vh(g)\,\rforal\, g\in A.
$$
 Thus we
obtain a \hm\, ${\bar h}: A\otimes C(S^1)\to B$ by ${\bar
h}(f\otimes g)=h(f)g(v)$ for $f\in A$ and $g\in C(S^1).$ The
tensor product induces two injective \hm s:
\beq\label{dbot01}
\bt^{(0)}&:& K_0(A)\to K_1(A\otimes C(S^1))\\
 \bt^{(1)}&:&
K_1(A)\to K_0(A\otimes C(S^1)).
\eneq
The second one is the usual Bott map. Note, in this way, one
writes
$$K_i(A\otimes C(S^1))=K_i(A)\oplus \bt^{(i-1)}(K_{i-1}(A)).$$
We use $\widehat{\bt^{(i)}}: K_i(A\otimes C(S^1))\to
\bt^{(i-1)}(K_{i-1}(A))$ for the projection to
$\bt^{(i-1)}(K_{i-1}(A)).$

For each integer $k\ge 2,$ one also obtains the following
injective \hm s:
\beq\label{dbot02}
\bt^{(i)}_k: K_i(A, \Z/k\Z))\to K_{i-1}(A\otimes C(S^1), \Z/k\Z),
i=0,1.
\eneq
Thus we write
\beq\label{dbot02-1}
K_{i-1}(A\otimes C(S^1), \Z/k\Z)=K_{i-1}(A,\Z/k\Z)\oplus
\bt^{(i)}_k(K_i(A, \Z/k\Z)),\,\,i=0,1.
\eneq
Denote by $\widehat{\bt^{(i)}_k}: K_{i}(A\otimes C(S^1),
\Z/k\Z)\to \bt^{(i-1)}_k(K_{i-1}(A,\Z/k\Z))$ similarly to that of
$\widehat{\bt^{(i)}}.,$ $i=1,2.$ If $x\in \underline{K}(A),$ we
use ${\boldsymbol{\beta}}(x)$ for $\bt^{(i)}(x)$ if $x\in K_i(A)$
and for $\bt^{(i)}_k(x)$ if $x\in K_i(A, \Z/k\Z).$ Thus we have a
map ${\boldsymbol{ \bt}}: \underline{K}(A)\to
\underline{K}(A\otimes C(S^1))$ as well as
$\widehat{\boldsymbol{\bt}}: \underline{K}(A\otimes C(S^1))\to
 {\boldsymbol{ \bt}}(\underline{K}(A)).$ Therefore one may write
 $\underline{K}(A\otimes C(S^1))=\underline{K}(A)\oplus {\boldsymbol{ \bt}}( \underline{K}(A)).$

On the other hand ${\bar h}$ induces \hm s ${\bar h}_{*i,k}:
K_i(A\otimes C(S^1)), \Z/k\Z)\to K_i(B,\Z/k\Z),$ $k=0,2,...,$ and
$i=0,1.$
We use $\text{Bott}(h,v)$ for all  \hm s ${\bar h}_{*i,k}\circ
\bt^{(i)}_k.$ We write
$$
\text{Bott}(h,v)=0,
$$
if ${\bar h}_{*i,k}\circ \bt^{(i)}_k=0$ for all $k\ge 1$ and
$i=0,1.$
We will use $\text{bott}_1(h,v)$ for the \hm\, ${\bar
h}_{1,0}\circ \bt^{(1)}: K_1(A)\to K_0(B),$ and
$\text{bott}_0(h,u)$ for the \hm\, ${\bar h}_{0,0}\circ \bt^{(0)}:
K_0(A)\to K_1(B).$
Since $A$ is unital, if $\text{bott}_0(h,v)=0,$ then $[v]=0$ in
$K_1(B).$

In what follows, we will use $z$ for the standard generator of
$C(S^1)$ and we will often identify $S^1$ with the unit circle
without further explanation. With this identification $z$ is the
identity map from the circle to the circle.

}
\end{df}

\begin{NN}\label{ddbot}
{\rm Given a finite subset ${\cal P}\subset \underline{K}(A),$
there exists a finite subset ${\cal F}\subset A$ and $\dt_0>0$
such that
$$
\text{Bott}(h, v)|_{\cal P}
$$
is well defined, if
$$
\|[h(a),\, v]\|=\|h(a)v-vh(a)\|<\dt_0\tforal a\in {\cal F}
$$
(see 2.10 of \cite{Lnhomp}). There is $\dt_1>0$ (\cite{Lo}) such
that $\text{bott}_1(u,v)$ is well defined for any pair of
unitaries $u$ and $v$ such that $\|[u,\, v]\|<\dt_1.$ As in 2.2 of
\cite{ER}, if $v_1,v_2,...,v_n$ are unitaries such that
$$
\|[u, \, v_j]\|<\dt_1/n,\,\,\,j=1,2,...,n,
$$
then
$$
\text{bott}_1(u,\,v_1v_2\cdots v_n)=\sum_{j=1}^n\text{bott}_1
(u,\, v_j).
$$
By considering  unitaries $z\in  {\widetilde{A\otimes C}}$
($C=C_n$ for some commutative \CA\, with torsion $K_0$ and
$C=SC_n$), from the above, for a given unital separable amenable
\CA\, $A$ and a given finite subset ${\cal P}\subset
\underline{K}(A),$ one obtains a universal constant $\dt>0$ and a
finite subset ${\cal F}\subset A$ satisfying the following:
\beq\label{ddbot-1}
\text{Bott}(h,\, v_j)|_{\cal P}\,\,\, \text{is well
defined}\andeqn \text{Bott}(h,\, v_1v_2\cdots v_n)=\sum_{j=1}^n
\text{Bott}(h,\, v_j),
\eneq
for any unital \hm\, $h$ and unitaries $v_1, v_2,...,v_n$ for
which
\beq\label{ddbot-2}
\|[h(a),\, v_j]\|<\dt/n,\,\,\,j=1,2,...,n
\eneq
for all $a\in {\cal F}.$
If furthermore, $K_i(A)$ is finitely generated, then (\ref{dkl1})
holds. Therefore, there is a finite subset ${\cal Q}\subset
\underline{K}(A),$ such that
$$
\text{Bott}(h,v)
$$
is well defined if $\text{Bott}(h, v)|_{\cal Q}$ is well defined
(see also 2.3 of \cite{Lnhomp}).

See section 2 of \cite{Lnhomp} for the further information. }

\end{NN}

We will use  the following the theorems frequently.

\begin{thm}{\rm ( Theorem 8.4 of \cite{Lnnhomp})}\label{LNHOMP}
Let $A$ be a unital simple \CA\, in ${\cal N}$   with $TR(A)\le 1$
and let $\ep>0$ and ${\mathcal F}\subset A$ be a finite subset.
Suppose that $B$ is a unital separable simple \CA\, with $TR(B)\le
1$ and $h: A\to B$ is a unital monomorphism. Then there exists
$\dt>0,$ a finite subset ${\mathcal G}\subset A$ and a finite
subset ${\mathcal P}\subset \underline{K}(A)$ satisfying the
following: Suppose that there is a unitary $u\in B$ such that
\beq\label{NCT1C-1}
\|[h(a), u]\|<\dt\tforal f\in {\mathcal G}\andeqn
\rm{Bott}(h,u)|_{{\mathcal P}}=0.
\eneq
 Then there exists a piece-wise smooth and
continuous path of unitaries $\{u_t:t\in [0,1]\}$ such that
$$
u_0=u,\,\,\, u_1=u, \|[h(a), v_t]\|<\ep\rforal f\in {\mathcal
F}\andeqn t\in [0,1].
$$

\end{thm}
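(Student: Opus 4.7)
The plan is to deduce this Basic Homotopy Lemma for $TR(B)\le 1$ from the already established Basic Homotopy Lemma in the $TR(B)=0$ case (cf.\ \cite{Lnasym}), using the $\mathcal{I}$-algebra approximation afforded by Definition~\ref{Dtr1} to reduce the problem to an interval-algebra fiberwise construction, and using the $\mathrm{Bott}$-vanishing data to kill the $K_1$-obstructions to gluing. I would choose the parameters in reverse order. First fix $\epsilon_1\ll \epsilon$, then use the UCT and the finite-generation reduction~(\ref{dkl1}) together with the universal Bott calculus (\ref{ddbot-1}), (\ref{ddbot-2}) to pick a finite $\mathcal{P}\subset \underline{K}(A)$ and a finite $\mathcal{G}_0\supset \mathcal{F}$ large enough that controlling $\mathrm{Bott}(h,u)|_{\mathcal{P}}$ controls the full Bott map, with some $\delta_0>0$ small enough that $\mathrm{Bott}(h,u)|_{\mathcal{P}}$ is well defined under (\ref{NCT1C-1}).

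Next I would exploit $TR(B)\le 1$ to cut down to an interval algebra. For a prescribed tolerance $\eta>0$ and a finite set $\mathcal{G}_1\supset h(\mathcal{G}_0)\cup\{u\}$, the hypothesis yields a projection $p\in B$ and a C*-subalgebra $C\subset B$ with $1_C=p$, $C\in\mathcal{I}$, so that $p$ almost commutes with $\mathcal{G}_1$, each $pxp$ lies within $\eta$ of $C$ for $x\in\mathcal{G}_1$, and $1-p$ is dominated in trace by any prescribed small element. Perturbing slightly, one writes $u\approx u'\oplus u''$ with $u'\in U(C)$ and $u''\in U((1-p)B(1-p))$, and the compressed data $a\mapsto ph(a)p$ together with $u'$ inherits the almost-commutation estimate as well as (after accounting for the compression error) the vanishing Bott condition on $\mathcal{P}$.

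The hard step, and the main obstacle, is the interval-algebra piece. Inside each summand $C([0,1],M_{r(i)})$, for each fixed $t\in[0,1]$ the fiber evaluation yields a pair (compressed $h$, evaluated $u'(t)$) in a matrix algebra over $\mathbb{C}$, where the $TR=0$ Basic Homotopy Lemma gives a fiberwise homotopy to $1$. Gluing these fiber homotopies into one continuous $[0,1]$-parameter family inside $C$ carries a $K_1$-type obstruction living in $K_1(A\otimes C(S^1))\otimes K_0(C([0,1]))$ together with its $\mathbb{Z}/k$-coefficient refinements, and it is precisely the hypothesis $\mathrm{Bott}(h,u)|_{\mathcal{P}}=0$ (the full $\underline{K}$-version, not just $\mathrm{bott}_1=0$) that forces these obstructions to vanish. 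A partition argument along $[0,1]$, combined with a logarithm-and-exponentiate construction on each sub-interval, then produces a piecewise smooth continuous path in $U(C)$ from $u'$ to $1_C$ whose commutators with $ph(\mathcal{F})p$ stay bounded by $\epsilon/2$.

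Finally I would handle the small corner $(1-p)B(1-p)$. Since $1-p$ has arbitrarily small trace and since $(1-p)B(1-p)$ inherits $TR\le 1$, any unitary there can be connected to $1_{1-p}$ by a continuous unitary path, and because this path lives in a corner whose compression of $h(\mathcal{F})$ is uniformly small, the contribution to $\|[h(a),u_t]\|$ is negligible. Taking the direct sum of the interval-algebra path and the corner path and then conjugating back through the short unitary rotation that realized $u\approx u'\oplus u''$ produces the required continuous path $\{u_t\}$ from $u$ to $1$ satisfying the estimate $\|[h(a),u_t]\|<\epsilon$ uniformly. The bookkeeping of the $\epsilon$-$\delta$ constants through the compression, the interval-algebra gluing, and the Bott calculus is what makes this long; conceptually, the interval-algebra gluing step governed by the full $\underline{K}$-vanishing hypothesis is the heart of the argument.
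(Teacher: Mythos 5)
This statement is quoted in the paper as Theorem 8.4 of \cite{Lnnhomp} and is not proved there, so there is no in-paper argument to compare against; your plan has to stand on its own, and as written it has two genuine gaps. The fatal one is the treatment of the corner $(1-p)B(1-p)$. The tracial rank hypothesis gives you $\tau(1-p)$ small, not $\|(1-p)h(a)(1-p)\|$ small: for a unital monomorphism into a simple $C^*$-algebra the compression of $h(a)$ to a corner of tiny trace still has norm comparable to $\|a\|$. Consequently, if you connect $u''$ to $1-p$ by an arbitrary path in $U((1-p)B(1-p))$, the block $\|[(1-p)h(a)(1-p),u''_t]\|$ at intermediate times can be as large as $2\|a\|$, and the total commutator $\|[h(a),u'_t\oplus u''_t]\|$ is not controlled by $\ep$. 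This is exactly the central difficulty that makes the Basic Homotopy Lemma nontrivial and why it cannot be a routine corollary of tracial approximation; the actual proof in \cite{Lnnhomp} instead passes through uniqueness theorems for (approximately multiplicative) maps from $A\otimes C(\T)$ into $B$, after first correcting the spectral (measure) distribution of $u$ by absorbing a unitary with full spectrum, and builds the path from the implementing unitaries rather than from a cut-down.

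The second gap is the ``fiberwise'' step inside $C([0,1],M_{r(i)})$. The fiber data at each $t$ is an approximately multiplicative completely positive map of $A$ into a matrix algebra, not a unital monomorphism into a simple $C^*$-algebra of tracial rank zero, so the $TR=0$ Basic Homotopy Lemma does not apply as stated; and even granting fiberwise nullhomotopies, assembling them into a single path that is continuous in the interval parameter while keeping $\|[ph(a)p,\cdot]\|<\ep/2$ uniformly is itself a homotopy-lemma problem of the Bratteli--Elliott--Evans--Kishimoto type (almost commuting unitaries over $[0,1]$ with length control), which your sketch asserts rather than solves; the phrase ``a $K_1$-type obstruction living in $K_1(A\otimes C(S^1))\otimes K_0(C([0,1]))$'' does not identify a mechanism by which $\mathrm{Bott}(h,u)|_{\cal P}=0$ in $B$ descends to the needed fiberwise and gluing obstructions in the subalgebra $C$ (note also that transferring $\underline{K}$-data through the compression $x\mapsto pxp$ requires its own quantitative justification, especially for the $\Z/k\Z$-coefficient part). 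To repair the argument you would need to replace both steps by the uniqueness-theorem strategy, which is what the cited reference does.
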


We will also use the following

\begin{thm}{\rm (Corollary 11.8 of \cite{Lnn1})}\label{Tnn1}
Let $C$ be a unital simple \CA\, in ${\cal N}$ with $TR(C)\le 1$
and let $B$ be a unital separable simple \CA\, with $TR(B)\le 1.$
Suppose that $\phi_1, \phi_2: C\to B$ are two unital
monomorphisms. Then there exists a sequence of unitaries $\{u_n\}$
of $A$ such that
$$
\lim_{n\to\infty}{\rm ad}\, u_n\circ \phi_1(a)=\phi_2(a)\tforal
a\in C
$$
if and only if
$$
[\phi_1]=[\phi_2]\,\,\,\text{in}\,\,\, KL(C,B),\,\,\,
\phi_1^{\ddag}=\phi_2^{\ddag}\andeqn (\phi_1)_T=(\phi_2)_T.
$$
\end{thm}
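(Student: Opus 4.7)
The plan is to treat the two directions separately. The necessity direction is essentially formal: each inner automorphism $\mathrm{ad}\,u_n$ preserves $[\phi_1]$ in $KL(C,B)$ (inner automorphisms induce the identity on $\underline{K}$), preserves $\phi_1^{\ddag}$ because $u_n^*\phi_1(v)u_n$ and $\phi_1(v)$ differ by the commutator $[\phi_1(v)^{-1},u_n^*]\in CU(B)$, and preserves $(\phi_1)_T$ because $\tau\circ\mathrm{ad}\,u_n=\tau$ for every trace $\tau\in T(B)$. Norm convergence $\mathrm{ad}\,u_n\circ\phi_1\to\phi_2$ pointwise then forces the corresponding invariants of $\phi_2$ to agree with those of $\phi_1$: for $KL$ this uses that sufficiently close projections and unitaries yield the same class; for $\phi^{\ddag}$ it uses the continuity of $v\mapsto\bar v$ with respect to the $\mathrm{dist}$ of \ref{DDCU}; and for traces it is joint continuity of $\tau$ against norm limits.

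For sufficiency, the plan is to invoke an Elliott-style approximate intertwining: it is enough to show that for every finite $\mathcal{F}\subset C$ and every $\epsilon>0$, there exists a single unitary $u\in B$ with $\mathrm{ad}\,u\circ\phi_1\approx_\epsilon\phi_2$ on $\mathcal{F}$, with enough uniformity in the parameters to iterate and diagonalize. The main reduction is to a uniqueness statement for monomorphisms out of an algebra $C_0$ in the class $\mathcal{I}$. Using $TR(C)\le 1$, I would choose a projection $p\in C$ almost commuting with $\mathcal{F}$ together with a unital subalgebra $C_0\subset pCp$ with $1_{C_0}=p$ and $C_0\in\mathcal{I}$, such that $pcp$ lies within $\epsilon$ of $C_0$ for every $c\in\mathcal{F}$ and $1-p$ is tracially tiny. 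The restrictions $\phi_i|_{C_0}$ then inherit matching $[\cdot]$, $\ddag$ and tracial data from the global hypotheses, up to errors controlled by the cutdown.

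On the complementary corner, $\phi_1(1-p)$ and $\phi_2(1-p)$ share the same $K_0$-class (from $[\phi_1]=[\phi_2]$ in $KL$) and are therefore Murray--von Neumann equivalent in $B$, so an initial unitary conjugation identifies them; the contribution of $\mathcal{F}$ in this corner has size $O(\tau(1-p))$ and is absorbed into $\epsilon$. On the principal corner I would prove the corresponding uniqueness for unital monomorphisms $C_0\to pBp$ with $C_0\in\mathcal{I}$ and matching data. This is where Theorem \ref{LNHOMP}, the Basic Homotopy Lemma for $TR\le 1$, is the decisive input: one first realizes $\phi_2|_{C_0}$ approximately as $\mathrm{ad}\,w\circ\phi_1|_{C_0}$ for a unitary $w\in pBp$, using classical uniqueness for maps out of direct sums of $C([0,1],M_r)$ and matrix blocks together with the matching of $KL$ and $T$; the matching of $\phi^{\ddag}$ then forces $\mathrm{Bott}(\phi_1|_{C_0},w)|_{\mathcal P}=0$ on a suitable finite $\mathcal P\subset\underline{K}(C_0)$, and Theorem \ref{LNHOMP} produces a path of unitaries from $w$ to $1$ through approximately commuting unitaries, yielding the desired single $u$.

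The main obstacle I expect is coordinating all three invariants simultaneously so that the Bott obstruction really vanishes. The hypothesis $\phi_1^{\ddag}=\phi_2^{\ddag}$ on $U_0/CU$ translates, via the isometric isomorphism $\Delta_B:U_0(B)/CU(B)\to\text{Aff}(T(B))/\overline{\rho_B(K_0(B))}$, into an identity of affine functions modulo $\overline{\rho_B(K_0(B))}$, and this has to be reconciled with $(\phi_1)_T=(\phi_2)_T$ and with $[\phi_1]=[\phi_2]$ so that both $\mathrm{bott}_0$ and $\mathrm{bott}_1$ contributions, as well as the $\mathbf{Z}/k\mathbf{Z}$-coefficient pieces of $\mathrm{Bott}$, disappear on $\mathcal{P}$. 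A further subtlety is that $KL$ (not $KK$) is used, so one must work with finite truncations $F_k\underline{K}(C_0)$ in the sense of (\ref{dkl1}) and diagonalize over $k$ as $\epsilon\to 0$. Combining this with the tracial approximation on the small corner and the approximate intertwining scheme produces the required sequence $\{u_n\}$.
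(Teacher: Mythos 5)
There is a genuine gap, and it sits at the heart of your sufficiency argument. First, for context: the paper does not prove this statement at all --- it is imported verbatim as Corollary 11.8 of \cite{Lnn1}, so any proof is ``a different route.'' Your necessity direction is fine and routine. The problem is the step where you tracially decompose the \emph{source}: you choose $p\in C$ and $C_0\subset pCp$ with $C_0\in{\cal I}$ and $\tau(1-p)$ small, and then assert that ``the contribution of ${\cal F}$ in this corner has size $O(\tau(1-p))$ and is absorbed into $\epsilon$.'' That is false: $(1-p)c(1-p)$ is small in \emph{trace}, not in \emph{norm}, so $\|u^*\phi_1((1-p)c(1-p))u-\phi_2((1-p)c(1-p))\|$ is a priori of order $\|c\|$, not of order $\tau(1-p)$. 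The corner $(1-p)C(1-p)$ is a hereditary subalgebra of the simple algebra $C$, every bit as complicated as $C$ itself, and comparing $\phi_1$ and $\phi_2$ on it is the original problem over again; the induction does not close. This is exactly the difficulty that forces the actual proofs to use a stable-uniqueness/absorption mechanism (the tracially large summand serves as the full absorbing map, in the Dadarlat--Eilers/Lin style of Theorem 11.5 of \cite{Lnn1}), and it is also why this paper, whenever it needs such uniqueness statements (see the proofs of \ref{hitK1}, \ref{LTRBOT}, \ref{TM}), first invokes the classification theorem to write $C=\lim_n(C_n,\psi_n)$ with $C_n$ concrete building blocks in ${\cal C}_0$ and then works with maps out of the $C_n$, rather than using the internal tracial approximation of $C$ by ${\cal I}$-algebras.

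Two secondary problems. Theorem \ref{LNHOMP} is stated for a \emph{simple} source algebra in ${\cal N}$, so it does not apply to $h:C_0\to pBp$ with $C_0\in{\cal I}$ (which is not simple); a homotopy lemma for non-simple sources is a different statement that you would have to supply. More importantly, a homotopy lemma is the wrong tool here anyway: connecting $w$ to $1$ through almost-commuting unitaries is what one needs for \emph{asymptotic} unitary equivalence (Theorem \ref{TM}), whereas the present statement only asks for a sequence of unitaries, for which the conjugating unitary $w$ itself suffices once you have $\mathrm{ad}\,w\circ\phi_1\approx_\epsilon\phi_2$ on ${\cal F}$. To repair the proof you would need either (a) the classification-theorem reduction of $C$ to an AH inductive limit plus the uniqueness theorem for maps out of the building blocks (the route of \cite{Lnn1}), or (b) a target-side tracial decomposition together with a genuine stable uniqueness theorem that absorbs the small-trace part; neither is present in your sketch.
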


\section{Rotation maps and  Exel's trace formula}

\begin{NN}\label{Mtorus}
{\rm Let $A$ and $B$ be two unital \CA s. Suppose that $\phi,
\psi: A\to B$ are two monomorphisms. Define
\beq\label{Dmt-1}
M_{\phi, \psi}=\{x\in C([0,1], B): x(0)=\phi(a)\andeqn
x(1)=\psi(a) \,\,\,\text{for some}\,\,\, a\in A\}.
\eneq
When $A=B$ and $\phi={\rm id}_A,$ $M_{\phi, \psi}$ is the usual
mapping torus.  We may call $M_{\phi, \psi}$ the (generalized)
mapping torus of $\phi$ and $\psi.$ This notation will be used
throughout of this article.  Thus one obtains an exact sequence:
\beq\label{Dmt-2}
0\to SB \stackrel{\imath}{\to} M_{\phi, \psi}\stackrel{\pi_0}{\to}
A\to 0,
\eneq
where $\pi_0: M_{\phi, \psi}\to A$ is  identified with the
point-evaluation at the point $0.$

Suppose that $A$ is a separable amenable \CA.  From
(\ref{Dmt-2}), one obtains an element in $Ext(A,SB).$ In this case
we identify $Ext(A,SB)$ with $KK^1(A,SB)$ and $KK(A,B).$

Suppose that $[\phi]=[\psi]$ in $KL(A,B).$ The mapping torus
$M_{\phi, \psi}$ corresponds a trivial element in $KL(A, B).$ It
follows that there are two exact sequences:
\beq\label{Dmt-3}
&&0\to K_1(B)\stackrel{\imath_*}{\to} K_0(M_{\phi,
\psi})\stackrel{(\pi_0)_*}{\to} K_0(A)\to 0\andeqn\\\label{Dmt-4}
&&0\to K_0(B)\stackrel{\imath_*}{\to}K_1(M_{\phi, \psi})
\stackrel{(\pi_0)_*}{\to} K_1(A)\to 0.
\eneq
which are pure extensions of abelian groups.

 }

\end{NN}

\begin{df}\label{Dr}
{\rm Suppose that $T(B)\not=\emptyset.$  Let $u\in M_l(M_{\phi,
\psi})$ be a unitary which is a piecewise smooth function on
$[0,1].$ For each $\tau\in T(B),$ we denote by $\tau$  the trace
$\tau\otimes Tr$ on $M_l(B),$ where $Tr$ is the standard trace on
$M_l$ as in \ref{NN1}. Define \beq\label{Dr-1}
R_{\phi,\psi}(u)(\tau)={1\over{2\pi i}}\int_0^1
\tau({du(t)\over{dt}}u(t)^*)dt. \eneq It is easy to see that
$R_{\phi, \psi}(u)$ has real values.
If
\beq\label{Dr-2}
\tau(\phi(a))=\tau(\psi(a))\rforal a\in A\andeqn \tau\in T(B),
\eneq
then
 there exists a \hm\,
$$
R_{\phi, \psi}: K_1(M_{\phi, \psi})\to Aff(T(B))
$$
defined by
$$
R_{\phi, \psi}([u])(\tau)={1\over{2\pi i}}\int_0^1
\tau({du(t)\over{dt}}u(t)^*)dt.
$$
}

\end{df}

If $p$ is a projection in $M_l(B)$ for some integer $l\ge 1,$ one
has $\imath_*([p])=[u],$ where $u\in M_{\phi, \psi}$ is a unitary
defined by
$
u(t)=e^{2\pi it}p+(1-p)\,\,\,\text{for}\,\,\, t\in [0,1].
$
It follows that
$$
R_{\phi, \psi}(\imath_*([p]))(\tau)=\tau(p)\tforal \tau\in T(B).
$$
In other words,
$$
R_{\phi, \psi}(\imath_*([p]))=\rho_B([p]).
$$
Thus one has, exactly as in 2.2 of \cite{KK2},  the following:

\begin{lem}\label{DrL} When
{\rm (\ref{Dr-2})} holds, the following diagram commutes:
$$
\begin{array}{ccccc}
K_0(B) && \stackrel{\imath_*}{\longrightarrow} && K_1(M_{\phi, \psi})\\
& \rho_B\searrow && \swarrow R_{\phi,\psi} \\
& & Aff(T(B)) \\
\end{array}
$$

\end{lem}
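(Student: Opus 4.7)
The plan is to verify commutativity by a direct computation on the generators of $K_0(B)$, namely the classes of projections in $M_l(B)$. The paragraph immediately preceding the lemma already identifies $\imath_*([p])=[u]$ for the loop $u(t)=e^{2\pi it}p+(1-p)$, so the content of the lemma is really the single numerical identity $R_{\phi,\psi}([u])(\tau)=\tau(p)$. Under the hypothesis (\ref{Dr-2}), the map $R_{\phi,\psi}\colon K_1(M_{\phi,\psi})\to Aff(T(B))$ is well-defined as a homomorphism (this is built into Definition \ref{Dr}), so once the identity is checked on projection classes it propagates to $K_0(B)$ by linearity.

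For the computation I would differentiate
$$
\frac{du(t)}{dt}=2\pi i\,e^{2\pi it}p,
$$
and combine with $u(t)^*=e^{-2\pi it}p+(1-p)$. Using $p^2=p$ and $p(1-p)=0$ one gets the key simplification
$$
\frac{du(t)}{dt}\,u(t)^*=2\pi i\,p,
$$
a constant in $t$. Applying $\tau\otimes Tr$ and substituting into the defining formula for $R_{\phi,\psi}$ yields
$$
R_{\phi,\psi}([u])(\tau)=\frac{1}{2\pi i}\int_0^1 \tau(2\pi i\,p)\,dt=\tau(p)=\rho_B([p])(\tau).
$$
Thus $R_{\phi,\psi}\circ\imath_*=\rho_B$ on the class $[p]$ of every projection in every $M_l(B)$.

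Finally, since $K_0(B)$ is generated as an abelian group by such classes, and both $R_{\phi,\psi}\circ\imath_*$ and $\rho_B$ are group homomorphisms into $Aff(T(B))$, the identity extends by $\Z$-linearity to all of $K_0(B)$, proving commutativity of the diagram. There is no real obstacle here: the only non-trivial input is that hypothesis (\ref{Dr-2}) makes $R_{\phi,\psi}$ trace-value-independent of the representative unitary, and that is already established in Definition \ref{Dr}; the rest is the one-line derivative computation above, which mirrors the analogous computation in 2.2 of \cite{KK2}.
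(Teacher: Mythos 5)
Your proposal is correct and is essentially the paper's own argument: the paper proves the lemma by exactly the observation that $\imath_*([p])$ is represented by the loop $u(t)=e^{2\pi it}p+(1-p)$, whose winding integral evaluates to $\tau(p)=\rho_B([p])(\tau)$, exactly as in your derivative computation. The extension to all of $K_0(B)$ by additivity of both homomorphisms is the standard (and correct) final step, which the paper leaves implicit.
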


\vspace{0.2in}

\begin{df}\label{eta}
If furthermore, $[\phi]=[\psi]$ in $KK(A,B)$ and $A$ satisfies the
Universal Coefficient Theorem, using Dadarlat-Loring's notation,
one has the following splitting exact sequence:
\beq\label{eta-1-}
0\to \underline{K}(SB)\,{\stackrel{[\imath]}{\to}}\,
\underline{K}(M_{\phi,\psi})\,{\stackrel{[\pi_0]}{\rightleftarrows}}_{\theta}
\,\,\underline{K}(A)\to 0.
\eneq
In other words, there is  $\theta\in
{\rm Hom}_{\Lambda}(\underline{K}(A), \underline{K}(M_{\phi,\psi}))$
such that $[\pi_0]\circ \theta=[\rm id_A].$ In particular, one has
a monomorphism $\theta|_{K_1(A)}: K_1(A)\to K_1(M_{\phi, \psi})$
such that $[\pi_0]\circ \theta|_{K_1(A)}=({\rm id}_A)_{*1}.$ Thus,
one may write
\beq\label{eta-1}
K_1(M_{\phi, \psi})=K_0(B)\oplus K_1(A).
\eneq
Suppose also that $\tau\circ \phi=\tau\circ \psi$ for all $\tau\in
T(A).$  Then  one obtains the \hm\,
\beq\label{eta-2}
R_{\phi,\psi}\circ \theta|_{K_1(A)}: K_1(A)\to Aff(T(B)).
\eneq

We say a  rotation related map  vanishes,  if there exists a such
splitting map $\theta$ such that
$$R_{\phi, \psi}\circ \theta|_{K_1(A)}=0.$$

Denote by ${\cal R}_0$ the set of those \hm s $\lambda \in
{\rm Hom}(K_1(A), \text{Aff}(T(B)))$ for which there is a \hm\, $h:
K_1(A)\to K_0(B)$ such that $\lambda=\rho_A\circ h.$ It is a
subgroup of ${\rm Hom}(K_1(A), \text{Aff}(T(B))).$
Let $\theta, \theta'\in
{\rm Hom}_{\Lambda}(\underline{K}(A),\underline{K}(M_{\phi, \psi}))$
such that $[\pi_0]\circ [\theta]=[{\rm id}_C]=[\pi_0]\circ
[\theta'].$ Then $(\theta-\theta')(K_1(A)\subset K_0(B).$ In other
words,
$$
R_{\phi, \psi}\circ (\theta-\theta')|_{K_1(A)}\in {\cal R}_0.
$$
Thus, we obtain a well-defined element $\overline{R}_{\phi,
\psi}\in {\rm Hom}(K_1(A),\text{Aff}(T(B)))/{\cal R}_0$ (which does not
depend on the choices of $\theta$).

{\it In this case, if there is a \hm\, $\theta'_1: K_1(A)\to
K_1(M_{\phi, \psi})$ such that $(\pi_0)_{*1}\circ\theta'_1={\rm
id}_{K_1(A)}$ and
$$
R_{\phi, \psi}\circ \theta'_1\in {\cal R}_0,
$$
then there is $\Theta\in {\rm Hom}_{\Lambda}(\underline{K}(A),
\underline{K}(M_{\phi, \psi}))$ such that
$$
[\pi_0]\circ \Theta=[{\rm id}_A]\tand R_{\phi, \psi}\circ
\Theta=0.
$$
}

To see this, let $\theta\in {\rm Hom}_{\Lambda}(\underline{K}(A),
\underline{K}(M_{\phi, \psi}))$ such that $[\pi_0]\circ
\theta=[{\rm id}_A].$ There is a \hm\, $h: K_1(A)\to K_0(B)$ such
that
$
\rho_A\circ h=R_{\phi, \psi}\circ \theta'_1.
$
Define $\theta_1'': K_1(A)\to K_1(M_{\phi, \psi})$ by
$$
\theta_1''(x)=\theta_1'(x)-h(x)\tforal x\in K_1(A).
$$
Then $(\pi_0)_{*1}\circ \theta_1''=(\pi_0)_{*1}\circ \theta={\rm
id}_{K_1(A)}.$ Moreover,
$$
R_{\phi,\psi}\circ \theta_1''=0.
$$

To lift $\theta_1''$ to an element in $KL(C, K_1(M_{\phi,
\psi})),$ define $\theta_0''=\theta|_{K_0(A)}.$  By the UCT, there
exists $\theta''\in {KL}(C,M_{\phi, \psi})$ such that
$\Gamma(\theta'')=\theta_*'',$  where $\Gamma$ is the map from
${KL}(C, M_{\phi, \psi})$ onto $\mathrm{Hom}({K}_*(C),
{K}_*(M_{\phi, \psi})).$

Put
$$
x_0=[\pi_0]\circ
\theta''-[\rm{id}_{\underline{{K}}(C)}]\,\,\,\,{\rm in}\,\,\,
KL(C,C).
$$
Then $\Gamma(x_0)=0.$ Define $\Theta=\theta''-\theta\circ x_0\in
\mathrm{Hom}_{\Lambda}(\underline{{K}}(C),
\underline{{K}}(M_{\phi,\psi})).$ Then one computes that
\beq\label{Lr0-3}
[\pi_0]\circ \Theta &=& [\pi_0]\circ \theta''-[\pi_0]\circ \theta\circ x_0=([\rm{id}_{\underline{{K}}(C)}]+x_0)-[\rm{id}_{\underline{{K}}(C)}]\circ x_0\\&=&[\rm{id}_{\underline{{K}}(C)}]+x_0-x_0=[\rm{id}_{\underline{{K}}(C)}].
\eneq
Moreover,
$$
\Theta|_{{K}_1(C)}=\theta_1''.
$$
Therefore,
$$
R_{\phi, \psi}\circ \Theta|_{{K}_1(C)}=0.
$$

In particular, if $\overline{R}_{\phi, \psi}=0,$  there exists
$\Theta\in {\rm Hom}_{\Lambda}(\underline{K}(A),
\underline{K}(M_{\phi,\psi}))$ such that $[\pi_0]\circ
\Theta=[{\rm id}_A]$ and
$$
R_{\phi, \psi}\circ \Theta=0.
$$


When $\overline{R}_{\phi, \psi}=0,$ $\theta(K_1(A))\in {\rm
ker}R_{\phi,\psi}$ for some $\theta$ so that (\ref{eta-1-}) holds.
In this case $\theta$ also gives the following:
$$
{\rm ker}R_{\phi,\psi}={\rm ker}\rho_B\oplus K_1(A).
$$


\end{df}

\vspace{0.2in}

The following is a generalization of the Exel trace formula for
the Bott element.

\begin{thm}{\rm (see also Theorem 3.5 of \cite{Lnasym})}\label{Exel}
There is $\dt>0$ satisfying the following:
 Let $A$ be a unital
separable simple \CA\, with $TR(A)\le 1$ and let $u, v\in U(A)$ be
two unitaries such that
\beq\label{Exel-1}
\|uv-vu\|<\dt.
\eneq
Then $\text{bott}_1(u,v)$ is well defined and
\beq\label{Exel-2}
\rho_A({\rm{bott}_1}(u,v))(\tau)={1\over{2\pi
i}}(\tau(\log(vuv^*u^*)))\tforal \tau\in T(A).
\eneq

\end{thm}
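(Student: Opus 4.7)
The plan is to adapt the argument for the tracial rank zero case (Theorem 3.5 of \cite{Lnasym}) to the present setting by exploiting the internal approximation provided by Definition \ref{Dtr1}. First I would fix $\delta$: by \ref{ddbot} there is $\delta_1>0$ so that $\text{bott}_1(u,v)$ is well defined whenever $\|[u,v]\|<\delta_1$, and I would also require $\delta<1/2$ so that $\|1-vuv^*u^*\|<1$ and $\log(vuv^*u^*)$ is defined via the usual power series as a self-adjoint element of norm $O(\delta)$. The identity (\ref{Exel-2}) is known for matrix algebras $M_n$: unwinding the Loring construction of the Bott projection and using that $vuv^*u^*$ is a multiplicative commutator (so $\det(vuv^*u^*)=1$) identifies $\text{bott}_1(u,v)\in K_0(M_n)=\Z$ with $(2\pi i)^{-1}\Tr(\log(vuv^*u^*))$ via the principal branch. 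Applying this pointwise on each fibre, the formula also holds verbatim for any $C\in{\cal I}$.

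Next, I would fix $\tau\in T(A)$ and $\eta>0$ and invoke \ref{Dtr1} with finite subset $\{u,v\}$, tolerance $\eta$, and a positive element $a\in A_+\setminus\{0\}$ chosen with small dimension function, producing a projection $p\in A$ and a \SCA\ $C\in{\cal I}$ with $1_C=p$ such that $\|[p,u]\|,\|[p,v]\|<\eta$, $\di(pup,C),\di(pvp,C)<\eta$, and $\tau(1-p)<\eta$. Standard perturbation then yields unitaries $u_1,v_1\in C$ with $\|u_1-pup\|,\|v_1-pvp\|<2\eta$ and unitaries $u_0,v_0\in(1-p)A(1-p)$ with $\|u-(u_0+u_1)\|,\|v-(v_0+v_1)\|<4\eta$; moreover $\|u_1v_1-v_1u_1\|<\delta+O(\eta)$, so $\text{bott}_1(u_1,v_1)$ is also well defined.

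By stability of the Bott map under small perturbation (\ref{Dppu}) and its additivity along orthogonal direct sums,
$$
\text{bott}_1(u,v)=\text{bott}_1(u_0,v_0)+\text{bott}_1(u_1,v_1)\quad\text{in }K_0(A).
$$
Since the Loring Bott projection lives in $M_2$ over the relevant corner, $|\rho_A(\text{bott}_1(u_0,v_0))(\tau)|\le 2\tau(1-p)<2\eta$. Similarly, $\log(vuv^*u^*)$ decomposes up to an $O(\eta)$ norm error as $\log(v_0u_0v_0^*u_0^*)+\log(v_1u_1v_1^*u_1^*)$, and the $(1-p)$-summand contributes at most $\|\log(v_0u_0v_0^*u_0^*)\|\cdot\tau(1-p)=O(\eta)$. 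On the $C$-summand the matrix-algebra computation gives the exact identity, so
$$
\bigl|\rho_A(\text{bott}_1(u,v))(\tau)-(2\pi i)^{-1}\tau(\log(vuv^*u^*))\bigr|<K\eta
$$
for a universal constant $K$; letting $\eta\to 0$ gives (\ref{Exel-2}) for the given $\tau$, and since $\tau$ was arbitrary the result follows.

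The main technical obstacle is justifying the near-decomposition of both $\text{bott}_1(u,v)$ and $\log(vuv^*u^*)$ along $(u_0+u_1,v_0+v_1)$: because $u,v$ only \emph{almost} commute with $p$, one has to invoke the stability under perturbation of both the functional-calculus construction of $\log$ and the Loring-type construction of the Bott projection, and one has to ensure the Bott class is represented in $M_k$ for a universal $k$ so that the $(1-p)$-corner contribution is genuinely controlled by $\tau(1-p)$. A secondary point is the reduction of the formula on $C\in{\cal I}$ to the $M_n$ case, which however is direct since any tracial state on $C([0,1],M_n)$ is a barycenter of point-evaluation traces.
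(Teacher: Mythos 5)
Your proposal is correct and follows essentially the same route as the paper: reduce to a \SCA\ $C\in{\cal I}$ via the tracial rank one approximation with $\tau(1-p)$ small, invoke Exel's trace formula on the matrix-algebra fibres (the paper simply cites \cite{Ex} for this rather than re-deriving it from the Loring construction), control both the Bott class and the logarithm of the commutator on the $(1-p)$-corner by $\tau(1-p)$, and pass to the limit. The only cosmetic difference is that the paper runs the estimate uniformly over all $\tau\in T(A)$ at once rather than fixing one trace.
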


\begin{proof}
There is $\dt_1>0$ (one may choose $\dt_1=2$) such that, for any
pair of unitaries for which $\|uv-vu\|<\dt_1,$
$\text{bott}_1(u,v)$ is well defined (see \ref{ddbot}). There is
also $\dt_2>0$ satisfying the following: if two pair of unitaries
$u_1, v_1, u_2, v_2$ such that
\beq\label{exl-1}
\|u_1-u_2\|<\dt_2,\,\,\,\|v_1-v_2\|<\dt_2
\eneq
as well as
$$
\|[u_1,\, v_1]\|<\dt_1/2\andeqn \|[u_2,\,v_2]\|<\dt_1/2,
$$
then
\beq\label{exl-2}
\text{bott}_1(u_1,v_1)=\text{bott}_1(u_2, v_2).
\eneq
We may also assume that
$$
\|v_1u_1v_1^*u_1-1\|<1\,\,\,\text{whenever}\,\,\,
\|[u_1,\,v_1]\|<\dt_2.
$$

 Let
$$
F=\{z\in S^1: |z-1|<1+1/2\}.
$$
Let $\log: F\to (-\pi, \pi)$ be a smooth branch of logarithm.
We choose
$\dt=\min\{\dt_1/2, \dt_2/2\}.$ Now fix a pair of unitaries $u,
v\in A$ with
\beq\label{exl-3}
\|[u,\,v]\|<\dt.
\eneq
For $\ep>0,$ there is $\dt_3>0$ such that
\beq\label{exl-3+1}
|\log(t)-\log (t')|<\ep
\eneq
provided that $|t-t'|<\dt_3$ and $t,t'\in F.$

Choose $\ep_1=\min\{\ep, \dt_3/4, \dt/2\}.$
Since $TR(A)\le 1,$ there is a \SCA\, $B\in {\cal I}$ of $A,$ a
projection $p\in A$ with $1_B=p,$  unitaries $u', v'\in B,$ and
$u'', v''\in (1-p)A(1-p)$ such that \beq\label{exl-4}
\|u'+u''-u\|<\ep_1,\,\,\, \|v'+v''-v\|<\ep_1\\\label{exl-5} \andeqn
\tau(1-p)<\ep_1\tforal \tau\in T(A). \eneq In particular,
\beq\label{exl-6} \text{bott}_1(u'+u'',v'+v'')=\text{bott}_1(u,v).
\eneq Therefore \beq\label{exl-6+1}
\rho_A(\text{bott}_1(u'+u'',v'+v''))(\tau)=\rho_A(\text{bott}_1(u,v))(\tau)
\eneq for all $\tau\in T(A).$

 Note that
\beq\label{exl-7}
\|[u',\, v']\|<\dt\andeqn \|[u'',\,v'']\|<\dt.
\eneq
We also have
\beq\label{exl-7+1}
\|v'u'(v')^*(u')^*-p\|<1+1/2\andeqn
\|v''u''(v'')^*(u'')^*-(1-p)\|<1+1/2.
\eneq

Write $B=\oplus_{j=1}^r C_j,$ where $C_j=C([0,1],M_{l(j)}),$ or
$C_j=M_{l(j)}$ and
\beq\label{exl-8}
u'=\oplus_{j=1}^r u'(j)\andeqn v'=\oplus_{j=1}^rv'(j),
\eneq
where $u'(j), v'(j)\in C_j$ are unitaries. If $C_j=M_{l(j)},$ by
Exel's trace formula (\cite{Ex}), one has
\beq\label{exl-9}
Tr_j(\text{bott}_1(u'(j), v'(j)))={1\over{2\pi i}}
Tr_j(\log(v'(j)u'(j)v(j)^*u(j)^*))
\eneq
where $Tr_j$ is the standard trace on $M_{l(j)}.$
 If
$C_j=C([0,1], M_{l(j)}),$ define $\pi_s: C_j\to M_{l(j)}$ by the
point-evaluation at $s\in [0,1].$ Then we have
\beq\label{exl-9+1}
Tr_j(\text{bott}_1(\pi_s(u'(j)), \pi_s(v'(j))))={1\over{2\pi i}}
Tr_j(\log(\pi_s(v'(j)u'(j)v(j)^*u(j)^*)))
\eneq
for all $ s\in [0,1].$ It follows that, for any tracial state
$t\in T(C_j),$
\beq\label{exl-9+2}
t(\text{bott}_1(u'(j), v'(j)))={1\over{2\pi i}}
t(\log(v'(j)u'(j)v(j)^*u(j)^*)).
\eneq

Suppose that $\tau\in T(A).$ Then there are $\lambda_j\ge 0$ such
that \beq\label{exl-10} \sum_{j=1}^r {\lambda_j\over{l(j)}}=1\andeqn
\tau|_{B}=\sum_{j=1}^r{\lambda_j\over{l(j)}}t_j, \eneq where $t_j\in
T(C_j),$ $j=1,2,...,r.$
 It follows that
\beq\label{exl-11}
\tau(\text{bott}_1(u',v'))={1\over{2\pi
i}}\tau(\log(v'u'(v')^*(u')^*)).
\eneq

We also have
\beq\label{exl-12}
&&\hspace{-0.3in}\tau(\log((v'+v'')(u'+u'')(v'+v'')^*(u'+u'')^*)))\\
&&= \tau(\log(v'u'(v')^*(u')^*))+\tau(\log(v''u''(v'')^*(u'')^*)).
\eneq
Note that
\beq\label{exl-13}
&&|{1\over{2\pi
i}}\tau(\log(v''u''(v'')^*(u'')^*))|<\tau(1-p)<\ep\\
&&\hspace{-0.2in}\andeqn \tau(\text{bott}_1(u'', v''))<\ep
\eneq
for all $\tau\in T(A).$ It follows that
\beq\label{exl-14}
|\rho_A(\text{bott}(u,v))(\tau)-{1\over{2\pi
i}}\tau(\log((v'+v'')(u'+u'')(v'+v'')^*(u'+u'')^*)))| <2\ep
\eneq
for all $\tau\in T(A).$

By(\ref{exl-4}), we have
\beq\label{exl-14+1}
\|vuv^*u^*-(v'+v'')(u'+u'')(v'+v'')^*(u'+u'')^*\|<4\ep_1.
\eneq
Thus, by the choice of $\ep_1$ and  by (\ref{exl-4}),
\beq\label{exl-15}
|{1\over{2\pi
i}}[\tau(\log((v'+v'')(u'+u'')(v'+v'')^*(u'+u'')^*))-
\tau(\log(vuv^*u))]|<\ep
\eneq
for all $\tau\in T(A).$ Thus, by (\ref{exl-15}) and
(\ref{exl-14}),
\beq\label{exl-16}
|\rho_A(\text{bott}_1(u,v))(\tau)-{1\over{2\pi
i}}\tau(\log(vuv^*u))|<3\ep
\eneq
for all $\tau\in T(A)$ and for all $\ep.$ Let $\ep\to 0,$ we
obtain
\beq\label{exl-17}
\rho_A(\text{bott}_1(u,v)={1\over{2\pi i}}\tau(\log(vuv^*u))
\eneq
for all $\tau\in T(A).$

\end{proof}

\section{Asymptotic unitary equivalence}

\begin{lem}\label{NecL}
Let $A$ be a separable \CA\,  and let  $B$ be  a  unital \CA.
Suppose that $\phi_1, \phi_2: A\to B$ are two unital \hm s such that
there is a continuous path of unitaries $\{u(t): t\in
[0,\infty)\}\subset B$ such that \beq\label{NecL1}
\lim_{t\to\infty}{\rm ad}\, u(t)\circ \phi_1(a)=\phi_2(a)\rforal
a\in A. \eneq Then there is a continuous piecewisely  smooth path of
unitaries $\{w(t): t\in [0,\infty)\}\subset B$ such that
\beq\label{NecL2} \lim_{t\to\infty}{\rm ad}\, v(t)\circ
\phi_1(a)=\phi_2(a)\rforal a\in A. \eneq
\end{lem}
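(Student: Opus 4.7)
The plan is a routine approximation argument: on each compact subinterval of $[0,\infty)$, replace the continuous path $u(t)$ by a finite concatenation of real-analytic one-parameter-unitary pieces of the form $t \mapsto \exp(i t h) w$ (with $h = h^*$ fixed on each piece), glued so that the approximation agrees with $u$ at every partition point. Continuity at the gluing points is then automatic, and by keeping the approximation uniformly close to $u$ on each piece, asymptotic convergence is preserved.

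First, since $A$ is separable, fix a countable dense subset $\{a_k\}$ of the unit ball of $A$, and choose $0 = T_0 < T_1 < T_2 < \cdots$ with $T_n \to \infty$ such that $\|u(t)^*\phi_1(a_k)u(t) - \phi_2(a_k)\| < 1/n$ for all $t \ge T_n$ and all $k \le n$. On each compact interval $[T_n, T_{n+1}]$ the map $t \mapsto u(t) \in U(B)$ is uniformly continuous, so I can pick a partition $T_n = s_0 < s_1 < \cdots < s_{m_n} = T_{n+1}$ with
$$
\|u(t) - u(s_i)\| < \tfrac{1}{n+1} \quad \text{for } t\in [s_i, s_{i+1}],\ 0 \le i < m_n.
$$
In particular $\|u(s_{i+1})u(s_i)^* - 1\| < \tfrac{2}{n+1}$, so the principal branch of $\log$ is defined on the spectrum, giving a self-adjoint $h_{n,i}$ with $u(s_{i+1})u(s_i)^* = \exp(i h_{n,i})$ and $\|h_{n,i}\| = O(1/(n+1))$.

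Second, define on each subinterval $[s_i, s_{i+1}]$
$$
w(t) = \exp\!\Bigl(i\,\tfrac{t-s_i}{s_{i+1}-s_i}\,h_{n,i}\Bigr)\,u(s_i),
$$
and set $w(t) = u(t)$ on, say, $[0, T_1]$ after an initial smoothing of the same kind. Each piece is real-analytic, $w(s_i) = u(s_i)$ and $w(s_{i+1}) = u(s_{i+1})$, so the pieces assemble into a globally continuous, piecewise smooth path of unitaries on $[0, \infty)$.

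Third, from the construction $\|w(t) - u(s_i)\| \le \|h_{n,i}\|$ and $\|u(t) - u(s_i)\| < 1/(n+1)$, hence $\|w(t) - u(t)\| \to 0$ as $t \to \infty$. Therefore
$$
\|w(t)^*\phi_1(a)w(t) - u(t)^*\phi_1(a)u(t)\| \le 2\|w(t)-u(t)\|\,\|a\| \longrightarrow 0
$$
uniformly in $a$ on bounded sets, which combined with the hypothesis gives $\mathrm{ad}\,w(t)\circ\phi_1(a_k) \to \phi_2(a_k)$ for every $a_k$, and then by a standard $\varepsilon/3$ density argument for every $a \in A$.

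There is essentially no serious obstacle here; the only point requiring a little care is that the approximating pieces glue continuously, and this is forced by demanding $w(s_i) = u(s_i)$ at each partition point, which is what makes the exponential interpolation above the natural choice rather than, say, convolution-smoothing, which would destroy unitarity.
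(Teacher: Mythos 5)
Your proof is correct and follows essentially the same route as the paper: partition $[0,\infty)$ into subintervals on which $u$ varies by less than a summable (or vanishing) amount, write each increment as $\exp(ih)$ for a small self-adjoint $h$, and interpolate linearly in the exponent so the pieces agree with $u$ at the partition points. The only differences are cosmetic (left vs.\ right multiplication by the exponential factor, and estimating $\|w(t)-u(t)\|\to 0$ directly rather than comparing both to $u$ at the nearest partition point).
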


\begin{proof}
For each integer $n\ge 1,$ since $v(t)$ is continuous in $[n-1,
n],$ there are points $\{t_{i,n}: i=0,1,...,l(n)$ such that
$n-1=t_{0,n}<t_{1,n}<\cdots t_{l(n),n}=n+1$ and
$$
\|u(t_{i,n})-u(t_{i-1, n})\|<{1\over{2^{n+5}}},\,\,\,i=1,2,...,l(n).
$$
Therefore there is, for each $i,$ a selfadjoint element
$h_{i,n}\in B$ with $\|h_{i,n}\|\le {1\over{2^{n+3}}}$ such that
$$
u(t_i)=u(t_{i-1})\exp(\sqrt{-1} h_{i,n}),\,\,\, i=1,2,...,l(n).
$$
Define
$$
v(t)=u(t_{i-1})\exp(\sqrt{-1}
({t-t_{i-1,n}\over{t_{i,n}-t_{i-1,n}}})h_{i,n})\tforal t\in
[t_{i-1,n}, t_{i, n}),
$$
$i=1,2,...,l(n)$ and $n=1,2,....$ Note that, for any $c\in C,$
\beq
\hspace{-0.8in}\|v(t)^*\phi_1(c)v(t)-u(t_{i,n})^*\phi_1(c)u(t_{i,n})\|\le
\|(v(t)^*-u(t_{i,n})^*)\phi_1(c)v(t)\|+\\\|u(t_{i,n})^*\phi_1(c)(v(t)-u(t_{i,n}))\|<
({1\over{2^{n+1}}}+{1\over{2^{n+1}}})\|\phi_1(c)\|={1\over{2^n}}\|\phi_1(c)\|
\eneq
for all $t\in [t_{i-1,n}, t_{i,n}),$ $i=1,2,...,l(n)$ and
$n=1,2,....$ It follows that
$$
\lim_{t\to\infty}{\rm ad}\, v(t)\circ \phi_1(c)=\phi_2(c)\tforal
c\in C.
$$
Note that $v(t)$ is continuous and piecewisely  smooth.

\end{proof}

\begin{thm}\label{NecT}
Let $A\in {\cal N}$ be a unital \CA\, and let $B$ be a unital
separable  \CA. Suppose that $\phi_1, \phi_2: A\to B$ are unital
monomorphisms such that \beq\label{NecM1} \lim_{t\to\infty}{\rm
ad}\, u(t)\circ \phi_1(a)=\phi_2(a)\tforal a\in A \eneq for some
continuous and piecewisely smooth path of unitaries $\{u(t): t\in
[0, \infty)\}\subset B.$  Then \beq\label{NecM2}
[\phi_1]=[\phi_2],\,\,\,\phi_1^{\ddag}=\phi_2^{\ddag},\,\,\,
(\phi_1)_T=(\phi_2)_T\andeqn
\overline{R}_{\phi_1, \phi_2}=0. \eneq
\end{thm}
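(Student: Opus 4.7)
The plan is to verify each of the four conditions by extracting information from the path $\{u(t)\}$. By Lemma~\ref{NecL} we may assume $t\mapsto u(t)$ is piecewise smooth, and by prepending a short continuous path we may further arrange $u(0)=1$; this only replaces $\phi_1$ by an inner conjugate in $B$, which does not affect any of the four invariants. Two of the conditions are then immediate. For $(\phi_1)_T=(\phi_2)_T$: unitary invariance of each $\tau\in T(B)$ gives $\tau(u(t)^*\phi_1(a)u(t))=\tau(\phi_1(a))$ for every $t$, and $t\to\infty$ yields $\tau(\phi_2(a))=\tau(\phi_1(a))$. For $\phi_1^{\ddag}=\phi_2^{\ddag}$: for any unitary $v\in U(M_k(A))$ the element $u(t)^*\phi_1(v)u(t)\phi_1(v)^{-1}$ is the single commutator $[u(t)^*,\phi_1(v)]\in CU(M_k(B))$, so $\overline{u(t)^*\phi_1(v)u(t)}=\overline{\phi_1(v)}$, and the limit forces $\overline{\phi_2(v)}=\overline{\phi_1(v)}$.

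For the remaining two conditions, reparametrize $u$ as a piecewise smooth $W:[0,1)\to U(B)$ with $W(0)=1$ and $\lim_{s\to 1}W(s)^*\phi_1(a)W(s)=\phi_2(a)$ for every $a\in A$, and define $\Phi:A\to M_{\phi_1,\phi_2}$ by $\Phi(a)(s)=W(s)^*\phi_1(a)W(s)$ for $s<1$ and $\Phi(a)(1)=\phi_2(a)$. Then $\Phi$ is a unital monomorphism and $\pi_0\circ\Phi=\mathrm{id}_A$, so the extension (\ref{Dmt-2}) splits. Since its class in $\mathrm{Ext}(A,SB)\cong KK^1(A,SB)\cong KK(A,B)$ equals $[\phi_1]-[\phi_2]$, we conclude $[\phi_1]=[\phi_2]$ in $KK(A,B)$. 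Take $\theta:=[\Phi]\in\mathrm{Hom}_\Lambda(\underline{K}(A),\underline{K}(M_{\phi_1,\phi_2}))$ as the splitting in (\ref{eta-1-}).

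To show $\overline{R}_{\phi_1,\phi_2}=0$ we verify $R_{\phi_1,\phi_2}\circ\theta|_{K_1(A)}=0$. The key pointwise identity is, for any unitary $v\in U(M_k(A))$ and $V(s)=W(s)^*\phi_1(v)W(s)$,
\begin{align*}
\tau(V'(s)V(s)^*)&=\tau\bigl((W^*)'(s)W(s)\bigr)+\tau\bigl(W(s)^*\phi_1(v)W'(s)W(s)^*\phi_1(v)^*W(s)\bigr)\\
&=-\tau(W^*W')+\tau(W'W^*)=0,
\end{align*}
using $(W^*)'W=-W^*W'$, trace cyclicity, and $\phi_1(v)\phi_1(v)^*=1$. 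To convert the pointwise vanishing into a value of $R_{\phi_1,\phi_2}$, represent $\theta([v])$ by a piecewise smooth unitary $V_T$ on $[0,1]$: take $V_T(s)=u(2sT)^*\phi_1(v)u(2sT)$ on $[0,1/2]$ and a short exponential interpolation from $u(T)^*\phi_1(v)u(T)$ to $\phi_2(v)$ on $[1/2,1]$. The cyclic identity forces $\int_0^{1/2}\tau(V_T'V_T^*)\,ds=0$, while the $[1/2,1]$ piece contributes at most $C\|u(T)^*\phi_1(v)u(T)-\phi_2(v)\|\to 0$. For $T$ large the paths $V_T$ and $\Phi(v)$ are homotopic rel endpoints in $M_k(M_{\phi_1,\phi_2})$ (the difference is a small loop in $U_k(B)$ confined to a neighborhood of the identity), so $[V_T]=\theta([v])$. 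Hence $R_{\phi_1,\phi_2}(\theta([v]))$ is a fixed value dominated by arbitrarily small quantities, and is therefore $0$. This gives $\overline{R}_{\phi_1,\phi_2}=0$.

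The main obstacle is the last step: making sense of the rotation integral for the non-smooth representative $\Phi(v)$, which is handled by passing to the piecewise smooth approximant $V_T$ and using the discreteness of $K_1(M_{\phi_1,\phi_2})$ together with the homotopy identification $[V_T]=\theta([v])$ for $T$ large. Once this is arranged, the cyclic-trace cancellation is the conceptual heart of the argument: conjugation-induced paths carry no rotation, and asymptotic unitary equivalence is the statement that $\phi_1$ is conjugated to $\phi_2$ precisely along such a path.
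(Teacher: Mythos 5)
Your proof is correct, but it takes a different route from the paper only in the sense that the paper does not prove this theorem at all: after observing that $\phi_1^{\ddag}=\phi_2^{\ddag}$ is immediate, it defers everything else to Theorem 4.3 of \cite{Lnasym}. What you have written is essentially a reconstruction of that cited argument (which goes back to Kishimoto--Kumjian): the trace and $CU$ conditions follow from invariance of traces and of the closed commutator subgroup under conjugation; the $KK$ condition follows because $a\mapsto\bigl(s\mapsto W(s)^*\phi_1(a)W(s)\bigr)$ is a genuine homomorphism splitting $\pi_0:M_{\phi_1,\phi_2}\to A$, so the extension (\ref{Dmt-2}) is trivial; and the vanishing of $\overline{R}_{\phi_1,\phi_2}$ is the cyclic-trace cancellation $\tau(V'V^*)=-\tau(W^*W')+\tau(W'W^*)=0$ applied to the splitting $\theta=[\Phi]$, with the truncation $V_T$ correctly handling the fact that $\Phi(v)$ need not be piecewise smooth at $s=1$. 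Your treatment of the technical points (well-definedness of $R_{\phi_1,\phi_2}$ on $K_1$-classes, $[V_T]=\theta([v])$ for large $T$ because the discrepancy loop stays in a small neighborhood of $\phi_2(v)$) is sound. One small spot deserves more care: the normalization $u(0)=1$. Prepending a path from $1$ to $u(0)$ is only possible if $u(0)\in U_0(B)$, which is not given; the correct move is to replace $u(t)$ by $u(0)^*u(t)$ and $\phi_1$ by $\mathrm{ad}\,u(0)\circ\phi_1$, as you also indicate. That this leaves $\overline{R}_{\phi_1,\phi_2}$ unchanged is true but not entirely free: it needs $\overline{R}_{\phi_1,\,\mathrm{ad}\,u(0)\circ\phi_1}=0$ together with the additivity of \ref{Mul}, and since $u(0)$ need not lie in $U_0(B)$ one should produce the zero-rotation splitting via the standard $2\times 2$ trick, conjugating $\mathrm{diag}(\phi_1(v),1)$ along a path from $1$ to $\mathrm{diag}(u(0),u(0)^*)$ in $U_0(M_2(B))$ and invoking the same cyclic-trace identity. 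With that sentence added, the argument is complete.
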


\begin{proof}
It is clear that $\phi_1^{\dag}=\phi_2^{\dag},$ if $\phi_1$ and
$\phi_2$ are asymptotically unitarily equivalent. Thus this
theorem follows from Theorem 4.3 of \cite{Lnasym}.

\end{proof}

\begin{cor}\label{NecC}
Let $A\in {\cal N}$ and let $B$ be a unital \CA. Suppose that
$\phi_1, \phi_2: A\to B$ are two unital monomorphisms which are
asymptotically unitarily equivalent. Then
\beq\label{Necc}
[\phi_1]=[\phi_2]\,\,\,\text{in}\,\,\,KK(A,B), \phi_1^{\ddag}=\phi_2^{\ddag},\,
(\phi_1)_T=(\phi_2)_T\andeqn \overline{R}_{\phi_1, \phi_2}=\{0\}.
\eneq
\end{cor}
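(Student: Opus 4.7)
The plan is to reduce immediately to Theorem \ref{NecT} by smoothing the implementing path. By hypothesis, there exists a continuous path of unitaries $\{u(t):t\in[0,\infty)\}\subset B$ with
$$
\lim_{t\to\infty}u(t)^*\phi_1(a)u(t)=\phi_2(a)\tforal a\in A.
$$
This is exactly the hypothesis of Lemma \ref{NecL}, so we obtain a continuous, piecewisely smooth path $\{v(t):t\in[0,\infty)\}\subset B$ with the same asymptotic property, namely
$$
\lim_{t\to\infty}v(t)^*\phi_1(a)v(t)=\phi_2(a)\tforal a\in A.
$$

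Since $A\in{\cal N},$ the hypotheses of Theorem \ref{NecT} are now met with the piecewisely smooth path $\{v(t)\}.$ Applying that theorem yields directly
$$
[\phi_1]=[\phi_2]\,\,\,\text{in}\,\,\,KK(A,B),\quad \phi_1^{\ddag}=\phi_2^{\ddag},\quad (\phi_1)_T=(\phi_2)_T\tand \overline{R}_{\phi_1,\phi_2}=0,
$$
which is the statement of the corollary. (The $KK$-equality, as opposed to merely $KL$-equality, uses the UCT built into the assumption $A\in{\cal N}$ and is precisely what makes the splitting \eqref{eta-1-} available, so that $\overline{R}_{\phi_1,\phi_2}$ is well-defined and can be asserted to vanish.)

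Essentially all the work has been done: Lemma \ref{NecL} removes the only gap between the naive definition of asymptotic unitary equivalence (continuous path) and the more convenient form (piecewisely smooth path) needed to make sense of the rotation integral $R_{\phi_1,\phi_2}$ via $\tfrac{1}{2\pi i}\int_0^1\tau(u'(t)u(t)^*)\,dt.$ Thus the only conceptual step is this smoothing; the $K$-theoretic, tracial, and determinant obstructions are then supplied by Theorem \ref{NecT}, and there is no further obstacle.
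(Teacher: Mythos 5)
Your proof is correct and follows exactly the paper's own route: the paper's proof of this corollary is the one-line observation that it follows from Lemma \ref{NecL} (smoothing the path) combined with Theorem \ref{NecT}. Your elaboration of why the smoothing step is needed to make the rotation integral meaningful is accurate and just spells out what the paper leaves implicit.
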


\begin{proof}
This follows from \ref{NecT} and \ref{NecL} immediately.

\end{proof}

\section{An existence theorem}

%

\begin{lem}\label{meas}
Let $C$ be a unital separable exact simple \CA\, with real rank zero, stable rank one and weakly unperforated $K_0(C).$
 Let $A$ be a unital simple \CA\, of stable rank one.
Suppose that there is a \hm\, $\kappa: K_0(C)\to K_0(A)$ for which
$\kappa([1_C])=[1_A]$ and $\kappa(K_0(C)_+\setminus\{0\})\subset
K_0(A)_+\setminus\{0\}.$ Then $\kappa$ induces a positive linear
map $\Lambda: C_{s.a}\to \text{Aff}(T(A))$ such that
$$
\Lambda(p)(\tau)=\tau(\kappa([p]))
$$
for any projection $p\in C.$ Moreover, suppose that there is a
sequence of unital completely positive linear maps $L_n: C\to A$
such that
\beq
\lim_{n\to\infty}\|L_n(a)L_n(b)-L_n(ab)\|=0\tand 
\lim_{n\to\infty}\sup_{\tau\in T(A)}|\rho_A( [L_n](x)-\kappa(x))(\tau)|=0
\eneq
for all $a, b\in C$ and for all $x\in K_0(C).$  Then
$$
\lim_{n\to\infty}\sup\{|\tau\circ L_n(f)-\Lambda(f)(\tau)|:\tau\in
T(A)\}=0\tforal f\in C.
$$
Moreover,  if $f\in C_+\setminus \{0\},$ $\Lambda(f)(\tau)>0$ for
all $\tau\in T(A).$

\end{lem}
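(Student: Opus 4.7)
The plan is to produce $\Lambda$ from a pointwise family of tracial states on $C$, then prove the uniform convergence first on projections and extend by density using real rank zero of $C$.

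For each $\tau\in T(A)$, the composition $\tau\circ\kappa\colon K_0(C)\to\R$ is a state of the scaled ordered group $(K_0(C),K_0(C)_+,[1_C])$. Because $C$ is unital, exact, has stable rank one and weakly unperforated $K_0$, the Blackadar--Handelman state-to-trace correspondence (combined with Haagerup's identification of quasitraces with traces on exact $C^*$-algebras) produces a tracial state $t_\tau\in T(C)$ with $t_\tau(p)=\tau(\kappa([p]))$ for every projection $p\in M_\infty(C)$; $t_\tau$ is unique since $C$ has real rank zero (traces on $C$ are determined by their values on projections). Setting $\Lambda(f)(\tau):=t_\tau(f)$ for $f\in C_{s.a}$ gives a positive linear map with the required values on projections. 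Affineness of $\tau\mapsto t_\tau(f)$ and continuity in $\tau$ both hold trivially on projections, where $\tau\mapsto \tau(\kappa([p]))$ is affine and continuous, and pass to general $f$ by norm approximation together with the fact that each $t_\tau$ has norm one; uniqueness of $t_\tau$ propagates affineness from the projection level to all of $C$.

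For the uniform convergence, consider first a projection $p\in C$. Asymptotic multiplicativity gives $\|L_n(p)^2-L_n(p)\|\to 0$; since $L_n(p)$ is self-adjoint (positivity of $L_n$), for large $n$ functional calculus yields a projection $q_n\in A$ with $\|L_n(p)-q_n\|\to 0$ and $[q_n]=[L_n](p)$, so
\[
\sup_{\tau\in T(A)}|\tau(L_n(p))-\Lambda(p)(\tau)|\;\le\;\|L_n(p)-q_n\|+\sup_{\tau\in T(A)}|\rho_A([L_n](p)-\kappa([p]))(\tau)|\;\longrightarrow\;0
\]
by hypothesis. Linearity extends this to every finite real linear combination $g=\sum\lambda_ip_i$ of mutually orthogonal projections. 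Given $f\in C_{s.a}$ and $\ep>0$, real rank zero of $C$ supplies such a $g$ with $\|f-g\|<\ep$; since $L_n$ is UCP and $\Lambda$ is positive and unital, both are contractive on self-adjoint elements, so
\[
|\tau(L_n(f))-\Lambda(f)(\tau)|\;\le\;2\ep+|\tau(L_n(g))-\Lambda(g)(\tau)|
\]
uniformly in $\tau$, giving the claimed limit on $C_{s.a}$ and, after writing $f=\operatorname{Re}f+i\operatorname{Im}f$ and extending $\Lambda$ by complex linearity, on all of $C$. Finally, for $f\in C_+\setminus\{0\}$ and any $\tau\in T(A)$, $\Lambda(f)(\tau)=t_\tau(f)>0$ because $C$ is unital simple and hence every tracial state on $C$ is faithful.

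The main obstacle is the very first step: lifting the state $\tau\circ\kappa$ on $K_0(C)$ to a tracial state $t_\tau\in T(C)$. This is precisely where all three structural hypotheses on $C$ (exactness, stable rank one, weak unperforation of $K_0$) are used simultaneously; once $t_\tau$ is in hand, everything else is a standard perturbation and density argument.
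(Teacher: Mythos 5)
Your proof is correct and follows essentially the same route as the paper's: realize $\tau\circ\kappa$ as a (quasi)trace on $C$ via Blackadar--R{\o}rdam/Haagerup, verify the convergence on projections and their orthogonal linear combinations, and pass to all of $C$ by the real-rank-zero density argument. The only (harmless) deviation is the final positivity step, where you invoke faithfulness of tracial states on the simple algebra $C$ while the paper finds a projection $p\le f/\|f\|$ and uses strict positivity of $\kappa$; both are valid.
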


\begin{proof}
It follows from \cite{BR} that there is
a quasi-trace $\Lambda(\tau)$ on $C$ such that
$$
\Lambda(\tau)(p)=\kappa([p])(\tau)
$$
for each projection $p \in C$ and for each 
$\tau\in T(A).$ It follows from the  Haagerup theorem
that $\Lambda(\tau)$ is a tracial state on $C.$
  Thus $\kappa$
gives a positive linear map $\Lambda: C_{s.a}\to {\text{Aff}}(T(A))$
such that
$$
\Lambda(p)(\tau)=\tau(\kappa([p]))
$$
for all projections $p\in A.$ Moreover, for each $f\in C_{s.a},$
there is  a sequence of sets of  finitely many mutually orthogonal
projections $p_{1,n}, p_{2, n},...,p_{k(n),n}\subset  C$ and real
numbers $\lambda_{1, n}, \lambda_{2,n},...,\lambda_{k,n}$ such that
\beq\label{meas2} \lim_{n\to\infty}\|f-\sum_{i=1}^{k(n)}
\lambda_{i,n}p_{i,n}\|=0. \eneq Note that
$$
\lim_{m\to\infty}\sup\{|\tau\circ L_m(\sum_{i=1}^{k(n)}
\lambda_{i,n}p_{i,n})-\sum_{i=1}^{k(n)}\lambda_{i,n}\tau(\kappa([p_{i,n}])|:
\tau\in T(A)\}=0.
$$
 It follows from (\ref{meas2}) that
$$
\lim_{n\to\infty}\sup\{|\tau(L_n(f))-\Lambda(f)(\tau)|:\tau\in
T(A)\}=0\tforal f\in C_{s.a.}.
$$
 For any $f\in C_+$ with $\|f\|=4/3,$  since $C$ has
real rank zero, there is a projection $p\in C$ such that
$
f\ge p.
$
Thus $\Lambda(f)(\tau)\ge \tau\circ \kappa([p])>0$ for all
$\tau\in T(A).$

\end{proof}

Let $A$ and let $C$ be unital \CA s. Let $T=N\times R:
C_+\setminus \{0\}\to \N\times \R$ be a map and let ${\cal
H}\subset C_+\setminus \{0\}$ be a subset.  Recall that a map $L:
C\to A$ is said to be ${\cal H}\times T$-full, if, for any $a\in
{\cal H},$ there are $N(a)$ elements $x_1,x_2,...,x_{N(a)}$ with
$\|x_i\|\le R(a)$ such that
$$
\sum_{j=1}^{N(a)} x_i^*L(a)x_i=1_A.
$$
\vspace{0.1in}

\begin{lem}\label{2meas}
Let $C$ be a unital separable exact simple \CA\, with real rank
zero, stable rank one and weakly unperforated $K_0(C).$
 Let $A$ be a unital simple \CA\, with $TR(A)\le 1. $
Suppose that there is a \hm\, $\kappa: K_0(C)\to K_0(A)$ for which
$\kappa([1_C])=[1_A]$ and $\kappa(K_0(C)_+\setminus\{0\})\subset
K_0(A)_+\setminus\{0\}$ and suppose that there is a sequence of
unital completely positive linear maps $L_n: C\to A$ such that
\beq\label{2meas1}
\lim_{n\to\infty}\|L_n(a)L_n(b)-L_n(ab)\|=0\tforal a,\, b\in C
\tand\\\label{2meas2}
\lim_{n\to\infty}\sup\{|\tau([L_n](x)-\kappa(x))|: \tau\in
T(A)\}=0\tforal x\in K_0(C).
\eneq

Then there is $T=N\times R: C_+\setminus\{0\}\to \N\times \R$
such that, for any finite subset ${\cal H}\subset
C_+\setminus\{0\},$ there exists an integer $n_0\ge 1$ such that
$L_n$ is ${\cal H}$-$T$-full for all $n\ge n_0.$

\end{lem}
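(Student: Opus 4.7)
First, I would apply Lemma~\ref{meas} to extract the positive linear map $\Lambda:C_{\mathrm{sa}}\to\text{Aff}(T(A))$ together with the uniform convergence
\[
\sup_{\tau\in T(A)}|\tau(L_n(f))-\Lambda(f)(\tau)|\to 0\quad\text{for every }f\in C.
\]
Since $\Lambda(a)(\tau)>0$ for $a\in C_+\setminus\{0\}$ and $T(A)$ is compact, the quantity $\alpha(a):=\min_{\tau\in T(A)}\Lambda(a)(\tau)$ is strictly positive. Together with $\|a\|$, this $\alpha(a)$ will determine the desired function $T=N\times R$.

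Next, fix $a\in C_+\setminus\{0\}$ and set $b_n:=L_n(a)\ge 0$, with $\|b_n\|\le\|a\|$. By the uniform trace convergence, for all $n$ sufficiently large $\tau(b_n)\ge\alpha(a)/2$ uniformly in $\tau$. Choose a continuous function $g:[0,\|a\|]\to[0,1]$ with $g=0$ on $[0,\epsilon/2]$ and $g=1$ on $[\epsilon,\|a\|]$, where $\epsilon:=\alpha(a)/(4\|a\|)$. Because $\|b_n-b_n g(b_n)\|\le\epsilon$, a short estimate yields $\tau(g(b_n))\ge(\tau(b_n)-\epsilon)/\|a\|\ge\alpha(a)/(4\|a\|)=:\gamma(a)$ for every $\tau$. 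Since $0\le g(b_n)\le 1$, the Cuntz dimension function then satisfies $d_\tau(g(b_n))\ge\tau(g(b_n))\ge\gamma(a)$.

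Set $N(a):=\lceil 2/\gamma(a)\rceil$, so that $N(a)\,d_\tau(g(b_n))\ge 2$ uniformly in $\tau$. The key structural input is that $A$ has both strict comparison of positive elements by traces and stable rank one, each being a consequence of $TR(A)\le 1$. Strict comparison in $M_{N(a)}(A)$ gives $1_A\lesssim \mathrm{diag}(g(b_n),\ldots,g(b_n))$ with a strict dimension-function gap, and stable rank one upgrades this Cuntz subequivalence to an exact equation
\[
\sum_{i=1}^{N(a)}s_i^{*}g(b_n)s_i=1_A,
\]
with a norm bound $\|s_i\|\le C_0(\gamma(a))$ obtained from a spectral cutoff argument: extract a projection in the hereditary subalgebra generated by $\mathrm{diag}(g(b_n),\ldots,g(b_n))$ that is Murray--von Neumann equivalent to $1_A$, and then implement the equivalence by an isometry supported on a spectral level set of $g(b_n)$.

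Finally, writing $g(t)=t\,h(t)$ with $\|h\|_\infty\le 2/\epsilon$ on $[0,\|a\|]$, we have $g(b_n)=b_n^{1/2}h(b_n)b_n^{1/2}$. Setting $x_i:=h(b_n)^{1/2}s_i$ gives $\sum_{i=1}^{N(a)}x_i^{*}b_n x_i=\sum_{i=1}^{N(a)}s_i^{*}g(b_n)s_i=1_A$ and $\|x_i\|\le\sqrt{2/\epsilon}\,C_0(\gamma(a))=:R(a)$. Defining $T(a):=(N(a),R(a))$ and, given any finite $\mathcal{H}\subset C_+\setminus\{0\}$, taking $n_0$ large enough that the trace lower bound $\tau(L_n(a))\ge\alpha(a)/2$ holds simultaneously for every $a\in\mathcal{H}$, every $\tau$, and every $n\ge n_0$, we conclude that $L_n$ is $\mathcal{H}$-$T$-full for all $n\ge n_0$. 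The main obstacle is the third-paragraph upgrade from Cuntz subequivalence with a strict gap to an exact equation with norm-controlled elements, which is exactly where the combined pair ``strict comparison plus stable rank one'' (both supplied by $TR(A)\le 1$) carries the weight.
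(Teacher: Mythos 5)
Your overall strategy---push the trace lower bound from Lemma \ref{meas} through a cutoff function and then convert a Cuntz-type comparison into an exact fullness identity---is reasonable, but it rests on an input that you do not justify and that is not available off the shelf: strict comparison of \emph{positive elements} by traces in a simple unital $A$ with $TR(A)\le 1.$ Comparison of \emph{projections} by traces is a standard consequence of $TR(A)\le 1$ (part of the basic theory in \cite{Lnplms}), but the claim that $N\,d_\tau(g(b_n))\ge 2$ for all $\tau$ forces $1_A$ to be Cuntz-dominated by $N$ copies of $g(b_n)$ is a much stronger statement about the Cuntz semigroup of $A$; in the logical framework of this paper it would essentially require ${\cal Z}$-stability (which is being \emph{proved} here, not assumed) or a separate substantial argument, neither of which you supply. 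This is exactly the step you yourself flag as ``the main obstacle,'' and asserting that it is ``supplied by $TR(A)\le 1$'' does not close it. The subsequent norm-control step (extracting a projection from the hereditary subalgebra of $g(b_n)\oplus\cdots\oplus g(b_n)$ and implementing the equivalence with bounded elements) is repairable by standard spectral-gap manipulations, but it inherits the unproved comparison as its starting point.

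The paper avoids the issue entirely by exploiting the hypothesis that $C$ has real rank zero---a hypothesis your argument never uses except implicitly through Lemma \ref{meas}. Given $a\in C_+\setminus\{0\},$ one first finds a nonzero \emph{projection} $p_0$ in the hereditary subalgebra $\overline{f_0(a/\|a\|)Cf_0(a/\|a\|)}$ of $C,$ transfers $[p_0]$ to a projection $p'$ of $A$ with $[p']=\kappa([p_0])$ using stable rank one, and bounds $\tau(p')$ below by $(3/4)d(a).$ Comparison of projections for $TR(A)\le 1$ then gives $\sum_{i}v_i^*p'v_i\ge 1_A$ with at most $N(a)$ terms, which is cut down to an exact identity $\sum_i x_i^*p'x_i=1_A$ with $\|x_i\|\le 1.$ For $n$ large, $L_n(p_0)$ is close to a projection $p$ of $A$ with the same trace bound, and $p$ is dominated, via functional calculus in $L_n(a),$ by the multiple $2R(a)L_n(a);$ fullness of $L_n(a)$ follows with the stated constants. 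To salvage your version you would need to either prove strict comparison for positive elements in $TR(A)\le 1$ algebras from scratch, or---much more simply---route the argument through a projection of $C$ as the paper does.
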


\begin{proof}
Let $\Lambda$ be given by \ref{meas}. For any $a\in
D_+\setminus\{0\},$ let $a_1=a/\|a\|.$ Let $a_0=f_0(a_1)$ where
$f_0\in C([0,1]),$ $f_0(t)=0$ if $t\in [0,3/4],$ $f_0(1)=1$ and
$0\le f_0(t)\le 1.$ Define $f_1\in C_0((0,1])$ such that $0\le
f_1(t)\le 1,$ $f(t)=0$ for $t\in [0,1/2],$ $f(t)=1$ for $t\in
[3/4,1].$ Since $C$ has real rank zero, there is non-zero
projection $p_0\in \overline{a_0Ca_0}.$
Put
$$
d(a)=\inf\{\Lambda(p_0)(\tau): \tau\in T(A)\}.
$$
Then $d(a)>0.$ Since $A$ has stable rank one, there is a
projection $p'\in A$ such that $[p']=\kappa([p_0])$ (since
$\kappa([p_0])\le [1_A]$).  We have that
\beq\label{2meas-1}
\tau(p')\ge (3/4)d(a)\tforal \tau\in T(A).
\eneq
Let
\beq\label{2meas-2}
N(a)=[{1\over{(3/4)d(a)}}]+1\andeqn R(a)=\max\{1/\|a\|,1\}.
\eneq
Note that, since $TR(A)\le 1,$ for any projection $p'\in A$ with
$\tau(p')\ge (3/4)d(a)\tforal \tau\in T(A),$ there are partial
isometries $v_1,v_2,...,v_m\in M_m(A)$ with $m\le N(a)$ such that
$
\sum_{i=1}^m v_i^*p'v_i\ge 1_A.
$
Let $w\in M_m(A)$ be a partial isometry such that
$$
w^*w=1\andeqn ww^*=q\le \sum_{i=1}^mv_i^*p'v_i.
$$
It follows that
\beq
\sum_{i=1}^m w^*v_i^*p'v_iw&=& w^*qw=1_A.
\eneq
Let $x_i=w^*v_i,$ $i=1,2,...,m.$ Note that $x_i\in A$ and
$\|x_i\|\le 1.$ Define $T(a)=(N(a), 2\sqrt{2}R(a)^{1/2})$ for all
$a\in D_+\setminus\{0\}.$ Fix a finite subset ${\cal H}\subset
D_+\setminus\{0\}.$
Choose $n_0'\ge 1$ such that
$$
L_n(p_0)\ge (15/16)\Lambda(p)(\tau)\tforal \tau\in T(A)
$$
for all $a\in {\cal H}$ and for all $n\ge n_0'.$ Put
$b=L_n(f_1(a_1)).$ By (\ref{2meas1}) and Section 2.5 of
\cite{Lnbk} , there is a projection $p\in A$ such that, with $n\ge
n_0\ge n_0',$
\beq\label{2meas-3}
\|L_n(p_0)-p\|<1/32\andeqn \|bp-p\|<1/16.
\eneq
From what we have shown above, there are $z_1,z_2,...,z_{N(a)}\in
A$ with $\|z_i\|\le 1$ such that
$$
\sum_{i=1}^{N(a)}z_i^*pz_i=1_A.
$$
It follows from (\ref{2meas-3}) (and Section 2.5 of \cite{Lnbk},
for example) that there is $0\le b_1\le 2$ such that
$
b_1bb_1=p.
$
Hence
$$
\sum_{i=1}^{N(a)}z_i^*b_1bb_1z_i=1_A.
$$
Since $2R(a)a\ge f_1(a_1),$ we have
$$
\sum_{i=1}^{N(a)} z_i^*2R(a)L_n(a)z_i\ge 1_A.
$$
It follows that there are $y_1,y_2,...,y_{N(a)}$ with $\|y_i\|\le
2\sqrt{2}R(a)^{1/2}$ such that
$$
\sum_{i=1}^{N(a)}y_i^*L_n(a)y_i=1_A.
$$
It follows that $L_n$ is ${\cal H}$-$T$-full for all $n\ge n_0.$

\end{proof}


%
%
%
%

\begin{df}\label{Dc0}
{\rm  Denote by ${\cal C}_{00}$ the class of \CA s with the form
$PM_l(C(X))P,$ where $X=\T\vee \T\vee\cdots \vee Y,$ where $Y$ is
a connected finite simplicial complex with dimension no more than
three and with torsion $K_1(C(Y)),$ $P\in M_l(C(X))$ is a
projection.  In particular, $X\in {\bf X}$ as defined in 8.2 of
\cite{Lnn1}. Denote by ${\cal C}_0$ the class of \CA s in ${\cal
C}_{00}$ so that the projection $P$ has rank at least 6, if
$X\not=\T\vee \T\vee\cdots \vee \T.$ In particular,
$K_1(C)=U(C)/U_0(C)$ for $C\in{\cal  C}_0.$ Denote by ${\cal C}$ the
class of AH-algebras which have the form $C=\lim_{n\to\infty}(C_n,
\imath_n),$ where each $C_n$ is a finite direct sum of \CA s in
${\cal C}_0$ and $\imath_n$ is a unital monomorphism.
If $C\in {\cal C},$ then  one has
splitting short exact sequence
\beq\label{UCad}
U_0(C)/CU(C)\to U(C)/CU(C)\to K_1(C)\to 0
\eneq
and  $\Delta_C$ is an isometric and isomorphism (see
\ref{DDCU}).


}

\end{df}

\begin{thm}\label{hitK1}
Let $C$ be a unital separable simple amenable \CA\, with $TR(C)=0$
which satisfies the UCT and let $A$  be a unital separable simple
\CA\, with $TR(A)\le 1.$ Suppose that $\kappa\in KL(C,A)^{++}$
with $\kappa([1_C])=[1_A].$ Then there is a unital \hm\, $h: C\to
A$ such that
$$
[h]=\kappa\,\,\, {\rm in}\,\,\, KL(C,A).
$$
\end{thm}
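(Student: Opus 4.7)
The plan is to realize $\kappa$ by a genuine \hm\, via a finite-stage existence argument on building blocks with finitely generated $K$-theory, followed by an Elliott-style approximate intertwining. First, because $C$ is a unital separable simple amenable \CA\, with $TR(C)=0$ satisfying the UCT, Lin's classification provides a presentation $C=\lim_{n\to\infty}(C_n,\imath_n)$ in which each $C_n$ is a finite direct sum of building blocks of the form $PM_r(C(X))P$ with $X$ a low-dimensional finite CW complex, so $K_*(C_n)$ is finitely generated. For each $n$ set $\kappa_n=\kappa\circ [\imath_{n,\infty}]\in KL(C_n,A)^{++};$ by the finite-generation identification (\ref{dkl1}), $\kappa_n$ is determined by its restriction to $F_{k_n}\underline{K}(C_n)$ for some integer $k_n.$

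Next I would use existence results for finitely generated $K$-theory (in the spirit of \cite{LnKK} and \cite{LN}, underpinned by the Basic Homotopy Lemma \ref{LNHOMP}) to produce unital $\dt_n$-${\cal G}_n$-multiplicative \morp s $L_n: C_n\to A$ with $[L_n]|_{{\cal P}_n}=\kappa_n|_{{\cal P}_n}$ for a finite subset ${\cal P}_n\subset \underline{K}(C_n)$ large enough to determine $\kappa_n,$ together with the tracial approximation $|\tau\circ L_n(p)-\tau(\kappa_n([p]))|<\ep_n$ for projections $p$ in a designated finite subset of $C_n$ and all $\tau\in T(A).$ Lemma \ref{2meas} then supplies uniform fullness of the $L_n$, which is precisely the input needed to invoke the uniqueness theorem in what follows.

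Third, with both $L_n$ and $L_{n+1}\circ \imath_n$ realizing $\kappa_n$ up to tolerance $\ep_n$ on ${\cal P}_n,$ I would apply the approximate unitary equivalence form of Theorem \ref{Tnn1}: there exist unitaries $u_n\in A$ with
$$
{\rm ad}\,u_n\circ L_{n+1}\circ \imath_n\approx_{\ep_n} L_n\,\,\,\text{on}\,\,\,{\cal F}_n,
$$
for a prescribed finite subset ${\cal F}_n\subset C_n.$ Choosing the sequences $\ep_n,\, {\cal F}_n,\, {\cal P}_n,\, \dt_n,\, {\cal G}_n$ compatibly, a standard Elliott approximate intertwining then produces a unital \hm\, $h: C\to A$ as the pointwise limit of $({\rm ad}\,(u_1u_2\cdots u_n)\circ L_n)\circ \imath_{n,\infty}^{-1}$ on dense subsets, and one verifies $[h]=\kappa$ in $KL(C,A)$ by compatibility of $\Hom_\Lambda$ with the relevant inductive limit.

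The main obstacle is the third step: arranging that the hypothesis of the uniqueness theorem actually holds at each stage. Theorem \ref{Tnn1} requires matching of $\ddag$ and tracial data in addition to $KL$-data, while the construction so far controls only the $KL$ side together with approximate traces. The assumption $TR(C)=0$ is what rescues the argument: it forces $\Aff(T(C))=\overline{\rho_C(K_0(C))},$ so trace data on $C$ is entirely encoded by $K_0,$ and correspondingly the $\ddag$-map data on $U(C)/CU(C)$ is controlled by $K_1$ together with the density statement, so matching $\kappa$ in $KL$ essentially forces the remaining invariants to agree modulo the chosen $\ep_n.$ The obstruction $\lim KK(C_n,A)\neq KK(\lim C_n,A)$ flagged in the introduction is evaded because we work throughout in $KL,$ where $\Hom_\Lambda$ restricted to a finite subset of $\underline{K}$ behaves well under the inductive limit, reducing matching of $\kappa$ to a sequence of finite-stage problems for which the existence theory with finitely generated $K$-theory applies directly.
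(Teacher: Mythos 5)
Your overall architecture matches the paper's proof: present $C=\lim_n(C_n,\imath_n)$ with building blocks having finitely generated $K$-theory, produce almost multiplicative completely positive maps realizing $\kappa$ on larger and larger finite subsets of $\underline{K}(C)$ with tracial control (Lemmas \ref{meas} and \ref{2meas} giving the fullness hypothesis), apply an approximate uniqueness theorem to consecutive maps, and intertwine. Two points, however, need attention. First, the uniqueness theorem you invoke cannot be Theorem \ref{Tnn1}, which is stated for unital \emph{monomorphisms} between simple \CA s; since your $L_n$ are only $\dt_n$-${\cal G}_n$-multiplicative maps, the correct tool is the stable uniqueness theorem for almost multiplicative full maps (Theorem 11.5 of \cite{Lnn1}), whose hypotheses are exactly what \ref{2meas} is feeding. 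This is largely a matter of citation, since you clearly have the fullness hypothesis in mind.

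The genuine gap is your claim that matching $\kappa$ in $KL$ together with approximate traces ``essentially forces the remaining invariants to agree.'' It does not. The uniqueness theorem also requires $\mathrm{dist}(L_{n}^{\ddag}({\bar u}),(L_{n+1}\circ\imath_n)^{\ddag}({\bar u}))$ to be small on a prescribed finite set of unitaries. While $TR(C)=0$ does make $\rho_C(K_0(C))$ dense in $\mathrm{Aff}(T(C))$ (so that the trace data and the $U_0(C)/CU(C)$ part are controlled by $K_0$ and the tracial estimates), for a unitary $u$ with $[u]\neq 0$ in $K_1(C)$ the classes $\overline{\langle L_n\rangle(u)}$ and $\overline{\langle L_{n+1}\circ\imath_n\rangle(u)}$ are two lifts of $\kappa([u])$ to $U(A)/CU(A)$ and may differ by an arbitrary element of $U_0(A)/CU(A)\cong \mathrm{Aff}(T(A))/\overline{\rho_A(K_0(A))}$, which is typically large when $TR(A)=1$ and is not controlled by $KL$-data or by traces of elements of $C$. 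The paper closes this gap by a stage-by-stage correction: using $TR(C)=0$ to split off a small corner $p_k$ on which $\phi_{k,\infty}$ is almost finite dimensional, and then applying Lemma 7.4 of \cite{Lnctr1} to insert a homotopically trivial \hm\, $\Psi_{k+1}$ on the corresponding corner of $A$ whose sole purpose is to adjust the $\ddag$-value on the chosen generators of $K_1(C_k)$ to within $\dt_k$ of a fixed target $\af_k$ (see (\ref{nhit4}) and (\ref{nhit5-})). Without this correction the hypotheses of the uniqueness theorem fail and the intertwining cannot start.
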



\begin{proof}
It follows from \cite{Lnduke} that $C$ is a unital simple
AH-algebra. Moreover, it can be written as $C=\lim_{n\to\infty}
(C_n, \phi_{n, n+1}),$ where each $C_n$ is a finite direct sum of
\SCA s in ${\cal C}_0$ (this follows from a theorem of Villadsen
and Theorem 10.9 and Theorem 10.10 of \cite{Lnctr1}, see also
\cite{EGL}). Furthermore, by \cite{EGLb}, we may assume that
$\phi_{n, n+1}$ are injective.
Let $\{{\cal F}_n\}$ be an increasing sequence of finite subsets
of $C$ whose union is dense in $C.$ We may assume that there is a
finite subset ${\cal F}_n'\subset C_n$ such that $\phi_{n,
\infty}({\cal F}_n')={\cal F}_n,$ $ n=1,2,....$ Let $\{{\cal
P}_n\}$ be an increasing sequence of finite subsets of
$\underline{K}(C).$ By 9.10 of \cite{Lnctr1}, there exists a
sequence of unital completely positive linear maps $L_n: C\to A$
such that
\beq\label{hitK-1}
\lim_{n\to\infty} \|L_n(a)L_n(b)-L_n(ab)\|=0 \tforal a,\, b\in
C\andeqn [L_n]|_{{\cal P}_n}=\kappa|_{{\cal P}_n},
\eneq
$n=1,2,....$ Let $\Lambda$ be in \ref{meas} associated with
$\kappa$ and $T=N\times R: C_+\setminus\{0\}\to \N\times \R$ be
given by $\kappa$ as in \ref{2meas}.

Let $\dt_n$ (in place of $\dt$), ${\cal G}_n\subset C$ (in place
of ${\cal G}$), ${\cal P}_n'\subset \underline{K}(C)$ (in place of
${\cal P}$) and ${\cal U}_n\subset U(M_{\infty}(C))$ be as
required by Theorem 11.5 of \cite{Lnn1} for $\ep_n=1/2^{n+1},$
${\cal F}_n$ and $T.$  Since $C$ is of stable rank one, we only
need to consider the case that  ${\cal U}_n\subset U(C).$ To
simplify the notation, we may assume that ${\cal U}_n\subset
U(C).$ By passing  to a subsequence of $\{{\cal P}_n\},$ we may
assume, without loss of generality, that ${\cal P}_n={\cal P}_n',$
$n=1,2,....$ Note as in (\ref{UCad}) we may write
$$
U(C)/CU(C)=U_0(C)/CU(C)\oplus K_1(C).
$$
Let $\pi_1: U(C)/CU(C)\to U_0(C)/CU(C)$ and $\pi_2: U(C)/CU(C)\to
K_1(C)$ be the projection. Put $\overline{\cal
U}_n^{(1)}=\pi_1(\overline{\cal U}_n)$ and $\overline{\cal
U}_n^{(2)}=\pi_2(\overline{\cal U}_n),$ $n=1,2,....$ To simplify
notation, without loss of generality, we may assume that
$\overline{\cal U}_n=\overline{\cal U}_n^{(1)}\cup\overline{\cal
U}_n^{(2)},$ $n=1,2,....$ Let ${\cal S}_n\subset C_{s.a.}$ be a
finite subset such that $\Delta_C(\overline{\cal
U}_n^{(1)})\subset \overline{\widehat{{\cal J}_n}},$ where $\widehat{{\cal
J}_n}$ is the image of ${\cal J}_n$ in $\text{Aff}(T(C))$ and $\overline{\widehat{{\cal J}_n}}$ is the image 
in $\text{Aff}(C)/\overline{\rho_C(C)},$
$n=1,2,....$ Put ${\cal G}_n'={\cal G}_n\cup {\cal J}_n,$ $n=1,2,....$

Without loss of generality, we may also assume that there is a
finite subset ${\cal Q}_n\in \underline{K}(C_n)$ such that
$[\phi_{n, \infty}]({\cal Q}_n)={\cal P}_n,$ a finite subset
${\cal G}_n^{(0)}\subset C_n$ such that $\phi_{n, \infty}({\cal
G}_n^{(0)})={\cal G}_n'$ and ${\cal U}_n\subset \phi_{n,
\infty}(U(C_n)),$ $n=1,2,....$ Let ${\cal V}_n\subset U(C_n)$ be a
finite subset such that $\overline{\phi_n({\cal
V}_n)}=\overline{{\cal U}_n}^{(2)}$ and $\kappa_1^{(n)}:
G(\overline{{\cal V}_n})\to K_1(C_n)$ is injective, where $G({\cal
V}_n)$ is the subgroup of $U(C_n)/CU(C_n)$ generated
${\overline{{\cal V}_n}}$ and where $\kappa_1^{(n)}:
U(C_n)/CU(C_n)\to K_1(C_n)$ is the quotient map, $n=1,2,....$

We may assume, by passing to a subsequence, that
\beq\label{NHIT+}
\sup\{|\tau(L_n(a))-\Lambda(a))(\tau)|: \tau\in T(A)\}<\dt_n/32\pi
\eneq
for $a=(1/2)(c^*+c)$ and $a=(1/2i)(c-c^*)$ for all $c\in {\cal
G}_k'.$

We claim, that,  there is a subsequence $\{n(k)\}$
such that
\beq\label{nhik1}
L_{n(k+1)}'&\approx_{1/2^{n+1}}& L_{n(k)}'\,\,\,{\rm on}\,\,\,
{\cal F}_k,\\\label{nhit2}
 [L_{n(k)}']|_{{\cal
P}_k}&=&[L_{n(k)}]|_{{\cal P}_k}\andeqn\\\label{nhit2+}
\sup\{|\tau(L_{n(k)}'(a))-\tau(L_{n(k)}(a))|:\tau\in
T(A)\}&<&\dt_k/8\pi
\eneq
for all $a\in {\cal G}_k',$ $k=1,2,...,$ where  
$L_{n(k)}': C\to A$ is a $\dt_{k}$-${\cal
G}_{k}'$-multiplicative \morp. We also assume that  $[L_{n(k)}']|_{{\cal
P}_k}$ is well defined and $L_{n(k)}'$ is ${\cal H}_k$-$T$-full,
where ${\cal H}_k={\cal G}_k'\cap C_+\setminus\{0\},$ 
$L_{n(k)}'\circ \phi_{k, \infty}$ induces  $\af_k':
K_1(C_k)\to U(A)/CU(A),$ where we write $U(C_k)/CU(C_k)=U_0(C_k)/CU(C_k)\oplus K_1(C_k)$ and define 
$\af_k: U(C_k)/CU(C_k)\to U(A)/CU(A)$ so that $\af_k|_{K_1(C_k)}=\af_k'$ and $\af_k|_{U_0(C_k)/CU(C_k)}=0.$
Let
${\tilde \dt}_k$ (in place of $\dt$), ${\tilde G}_k$ (in place of
${\cal G}$) and $\eta_k$ (in place of $\eta$) be required by Lemma
7.4 of \cite{Lnctr1} for $C_k$ ( in place of $C$), $\dt_k/2\pi$
(in place of $\ep$) and for $\phi_{k-1, k}({\cal V}_{k-1})$ (in
place of ${\cal U}$). We may assume that ${\tilde \dt}_k<\dt_k/2,$
$k=1,2,....$

We use induction. Suppose that we have constructed
$n(1)<n(2)<...,n(k)$
so that (\ref{nhik1}), (\ref{nhit2}) and (\ref{nhit2+}) hold.

  Since
 $TR(C)=0,$
  we may assume that
 there is a projection $p_k\in C$  and a unital \hm\, $\psi_k: C_k\to B$ for some finite
  dimensional \SCA\, $B$ of $C$ with
 $1_C=p_k$ such that
 \beq\label{nhitK-12}
\|\phi_{k,\infty}(f)-\psi_k\oplus (1-p_k)\phi_{k,
\infty}(f)(1-p_k)\|&<&\min\{\dt_{k+1}/8\pi, \eta_{k+1}/4, 1/2^{n+2}\}\\
\andeqn
\|p_k\phi_{k,\infty}(f)-\phi_{k,\infty}(f)p_k\|&<&\min\{\dt_{k+1}/8\pi,
\eta_{k+1}/4, 1/2^{n+2}\} \eneq for all $f\in {\cal G}_k'\cup
\tilde{ G}_k$ and \beq\label{nhit3} \tau(p_k)<\min\{{\tilde
\dt}_{k+1}/4, \dt_{k+1}/4\}\tforal \tau\in T(C). \eneq

To simplify notation, we may assume that $L_n(p_k)$ is a
projection for all $n\ge n(k)+1,$ without loss of generality.

By \ref{2meas}, we may assume, without loss of generality, that $L_n$ are ${\cal
H}_{k+1}$-$T$-full for all $n\ge N(k)$ for some $N(k)\ge 1,$ where
${\cal H}_{k+1}={\cal G}_{k+1}'\cap C_+\setminus\{0\}.$  We may
also assume that $L_n$ is $\min\{\dt_{k+1}, \eta_k\}$-${\cal
S}_{k+1}$-multiplicative, where ${\cal S}_{k+1}\supset {\cal
G}_{k+1}'\cup (\psi_k+(1-p_k)\phi_{k, \infty})({\cal
G}_k^{(0)}\cup {\tilde G}_k).$ We may also assume that
$N(k)>n(k).$  Without loss of generality, we may further assume
that $L_{N(k)}(p_k)$ is a projection.

It follows from Lemma 7.4 of \cite{Lnctr1} that there is a unital
\hm\, $\Psi_{k+1}: C_{k+1}\to L_{N(k)}(p_k)AL_{N(k)}(p_k)$ such
that $\Psi_{k+1}$ is homotopically trivial,
$(\Psi_{k+1}){*0}=[L_{N(k)}\circ \psi_{k}]_{*0}$ and
\beq\label{nhit4}
\alpha_k({\bar w})^{-1}(\Psi_{k+1}\oplus
(1-q_k)L_{N(k)}(1-q_k))^{\ddag}({\bar w})=\overline{g_w}
\eneq
for all $w\in \phi_{k, k+1}({\cal V}_k),$ where $g_w\in U_0(A),$
${\rm cel}(g_w)<\dt_k/2\pi$ and where $q_k=L_{N(k)}(p_k).$ Let
$n(k+1)=N(k).$ Define $L_{n(k+1)}''=\Psi_k\oplus
(1-q_k)L_{N(k)}(1-q_k).$ We have
\beq\label{nhit6}
[L_{n(k+1)}'']|_{{\cal P}_k}=[L_{n(k+1)}]|_{{\cal P}_k}.
\eneq
 Note, by \ref{2meas},  we may assume that $L''_{n(k+1)}$ is also
 ${\cal H}_{k+1}$-$T$-full.
 By, (\ref{nhit3}),
 $$
 |\tau\circ L_{n(k+1)}''(a)-\tau\circ
 L_{n(k+1)}(a)|<\dt_{k+1}/4\pi \tforal \tau\in T(A)
 $$
 and $a\in {\cal G}_{k+1}'.$  
 In particular,
by (\ref{NHIT+}),
$$
|\tau\circ L_{n(k+1)}''(a)-\tau\circ L_{n(k)}'(a)|<\dt_k/2\pi \tforal
\tau\in T(A)
$$
and $a\in {\cal G}_k'.$ Since ${\cal J}_k\subset {\cal G}_k'$ (see also \ref{DDCU}),
one also has 
\beq\label{nhit5-}
{\rm dist}((L_{n(k+1)}'')^{\ddag}({\bar u}), (L_{n(k)}')^{\ddag}({\bar u}))<\dt_k
\eneq
for all ${\bar u}\in \overline{{\cal U}_k^{(1)}}.$ By combining with (\ref{nhit4}), one sees that 
(\ref{nhit5-}) holds for all $u\in {\cal U}_k.$

By applying Theorem 11.5 of \cite{Lnn1}, we obtain a unitary
$u_k\in A$ such that
\beq\label{nhi5}
{\rm ad}\, u_k\circ L_{n(k+1)}''\approx_{1/2^{k+1}}
L_{n(k)}'\,\,\, {\rm on}\,\,\, {\cal F}_k.
\eneq

Moreover, it is clear that, by choosing larger $N(k),$ if
necessarily,  we assume that
 $L_{n(k+1)}''\circ \phi_{k+1, \infty}$ induces
a \hm\,  $\af_{k+1}':K_1(C_{k+1})\to U(A)/CU(A).$ We then define $\af_{k+1}$ accordingly. 
 Define
$L_{n(k+1)}'={\rm ad}\, u_k\circ L_{n(k+1)}''.$  This proves the
claim.

It follows that $\{L'_{n(k)}\}$ is a Cauchy sequence. One obtains
a unital linear map $h: C\to A$ such that
$$
\lim_{k\to\infty}L'_{n(k)}(c)=h(c)\tforal c\in C.
$$
Therefore $h$ must be a unital \hm. It follows from (\ref{nhit6})
that
$$
[h]=\kappa.
$$

\end{proof}

\section{Almost commuting with a unitary}

\begin{lem}\label{Ql}
Let $C$ be a unital AH-algebra and let $A$ be a unital separable
simple \CA\, with $TR(A)=0.$ Suppose that there is a unital
monomorphism $h: C\to A.$ Then, for any $\ep>0,$ any finite subset
${\cal F}$ and any finite subset ${\cal P}\subset
\underline{K}(C),$ there exists  a \SCA\, $C_0\subset C$ with
${\cal P}\subset [\imath](\underline{K}(C_0)),$ where $\imath:
C_0\to C$ is the embedding, and a finite subset ${\cal Q}\subset
K_1(C_0)$ and $\dt>0$ satisfying the following: Suppose that
$\kappa\in {\rm Hom}_{\Lambda}(\underline{K}(C_0\otimes C(\T)),
\underline{K}(A))$ with
\beq\label{Ql1}
|\rho_A\circ \kappa(\boldsymbol{\bt}(x))(\tau)|<\dt\tforal x\in
{\cal Q}\andeqn\tforal \tau\in T(A).
\eneq
Then there exists a unitary $u\in U(A)$ such that
$$
\|[h(c), \,u]\|<\ep\tforal c\in {\cal F}\andeqn {\rm Bott}(h\circ
\imath, u)=\kappa\circ [\imath]\circ \boldsymbol{\bt}.
$$

Moreover, there is a sequence of \CA s $C_n$ with the form
$C_n=P_nM_{r(n)}(C(X_n))P_n,$ where $X_n$ is a finite CW complex
and $P_n\in M_{r(n)}(C(X_n))$ is a projection, such that
$C=\lim_{n\to\infty}(C_n, \phi_n)$ for a sequence of unital
monomorphisms $\phi_n: C_n\to C_{n+1}$ and one may choose
$C_0=\phi_{n,\infty}(C_n)$ for some integer $n\ge 1.$

\end{lem}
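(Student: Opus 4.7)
Since $C$ is AH, by \cite{EGLb} we may write $C=\lim_{n\to\infty}(C_n,\phi_n)$ with each $C_n=P_nM_{r(n)}(C(X_n))P_n$ ($X_n$ a finite CW complex) and each $\phi_n$ a unital monomorphism. Fix $n$ large enough that the finite set $\mathcal{F}$ is approximated within $\epsilon/32$ by elements of $\phi_{n,\infty}(C_n)$ and $\mathcal{P}\subset[\phi_{n,\infty}](\underline{K}(C_n))$; put $C_0:=\phi_{n,\infty}(C_n)$. This establishes the ``moreover'' clause. Because $K_*(C_0)$ is finitely generated, by (\ref{dkl1}) there is an integer $k\ge 1$ with
\[
\mathrm{Hom}_\Lambda(\underline{K}(C_0\otimes C(\mathbb{T})),\underline{K}(A))=\mathrm{Hom}_\Lambda(F_k\underline{K}(C_0\otimes C(\mathbb{T})), F_k\underline{K}(A)),
\]
so $\kappa$ is determined by its restriction to a finite portion of $F_k\underline{K}(C_0\otimes C(\mathbb{T}))$. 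Choose $\mathcal{Q}\subset K_1(C_0)$ to be a finite generating set of the free part of $K_1(C_0)$ together with representatives of the torsion classes not already controlled at level $F_k$.

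The construction of $u$ proceeds by an existence argument applied to $C_0\otimes C(\mathbb{T})$, itself a nice AH-algebra, using the splitting
\[
\underline{K}(C_0\otimes C(\mathbb{T}))=\underline{K}(C_0)\oplus {\boldsymbol{\bt}}(\underline{K}(C_0)).
\]
On the first summand, $\kappa$ must agree with $[h\circ\imath]$ (this is forced because $\kappa\circ[\imath]$ restricts to the given map on $\underline{K}(C_0)$); the genuinely new data is $\kappa\circ{\boldsymbol{\bt}}$ on the second summand, and this is the Bott map to be realized. The aim is to produce a $\delta'$-${\cal G}$-multiplicative unital \morp\ $\Phi\colon C_0\otimes C(\mathbb{T})\to A$ with $\Phi|_{C_0\otimes 1}\approx_{\epsilon/4} h|_{C_0}$ on a finite set $\mathcal{F}'\subset C_0$ approximating $\mathcal{F}$, and with $[\Phi]=\kappa$ on $F_k\underline{K}$. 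This will be obtained by combining Theorem~\ref{hitK1} (supplying the $KL$-matching on $\underline{K}(C_0)$, which amounts to recovering $h|_{C_0}$ up to asymptotic unitary equivalence) with the Bott-refinement techniques of \cite{Lnhomp} and \cite{LnKT} (supplying the second summand). The unitary $u:=\Phi(1\otimes z)$, where $z$ is the canonical generator of $C(\mathbb{T})$, then automatically satisfies $\|[h(c),u]\|<\epsilon$ on $\mathcal{F}$ and $\mathrm{Bott}(h\circ\imath,u)=\kappa\circ[\imath]\circ{\boldsymbol{\bt}}$.

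The main technical obstacle is to verify that the tracial condition $|\rho_A\circ\kappa({\boldsymbol{\bt}}(x))(\tau)|<\delta$ on $\mathcal{Q}$ is exactly the input needed to make the existence theorem produce $\Phi$ with the required multiplicativity and $\underline{K}$-data. Here $TR(A)=0$ is crucial: the Exel-type trace formula of Theorem~\ref{Exel} identifies the tracial image of $\mathrm{bott}_1$-classes with integrals of logarithms of commutators, so smallness of traces on $\mathcal{Q}$ controls the affine-function-valued piece of the map $\kappa\circ{\boldsymbol{\bt}}\colon K_1(C_0)\to K_0(A)$ closely enough that, via real-rank-zero approximation in $A$, it can be absorbed by a small perturbation when lifting the abstract $KL$-matching to full $\underline{K}$-matching at the \morp\ level. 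The constant $\delta$ must therefore be chosen small enough (in terms of $\epsilon$, $\mathcal{F}$, $\mathcal{P}$, and the ranks and dimensions occurring in $C_0$) to absorb all of these quantitative errors while still producing an honest unitary after the final perturbation.
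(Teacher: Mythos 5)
Your write-up is a strategy outline rather than a proof, and the one step you defer is the entire content of the lemma. You correctly set up the inductive limit, the choice of $C_0$ with finitely generated $K$-theory, and the splitting $\underline{K}(C_0\otimes C(\T))=\underline{K}(C_0)\oplus \boldsymbol{\bt}(\underline{K}(C_0))$, but then you announce that "the main technical obstacle is to verify that the tracial condition \ldots is exactly the input needed" and resolve it only by saying $\dt$ "must be chosen small enough \ldots to absorb all of these quantitative errors." That is precisely the point where a proof is required: one must show that smallness of $\rho_A\circ\kappa(\boldsymbol{\bt}(x))$ on a finite generating set $\cal Q$ lets one realize the Bott data $\kappa\circ\boldsymbol{\bt}$ by a unitary almost commuting with $h$, and this uses in an essential way that $TR(A)=0$ (so that $K_0(A)$-classes of small trace are carried by small projections in whose corners the Bott element can be implemented, the complement being handled by a finite-dimensional approximation of $h$). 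The paper does not redo this work; it observes that the statement follows immediately from Lemma 7.5 of \cite{Lnasym}, after replacing $\kappa$ by the element $\kappa'$ with $\kappa'|_{\underline{K}(C)}=[h]$ and $\kappa'|_{\boldsymbol{\bt}(\underline{K}(C))}=\kappa|_{\boldsymbol{\bt}(\underline{K}(C))}$. If you are not permitted to quote that lemma, you must actually carry out the corner construction, which your proposal does not do.

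A secondary error: you assert that on the summand $\underline{K}(C_0)$ the map $\kappa$ "must agree with $[h\circ\imath]$" and that this "is forced." Nothing in the hypotheses constrains $\kappa$ on that summand; the hypothesis (\ref{Ql1}) and the conclusion ${\rm Bott}(h\circ\imath,u)=\kappa\circ[\imath]\circ\boldsymbol{\bt}$ only involve $\kappa$ on the image of $\boldsymbol{\bt}$. The correct move is the one the paper makes: discard $\kappa$'s values on $\underline{K}(C_0)$ and substitute $[h]$ there, so that the resulting element is realizable by a map restricting to (something close to) $h$ on $C_0\otimes 1$. As written, your reduction starts from a false premise about $\kappa$, even though the repaired statement is what you in fact need.
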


\begin{proof}
Suppose that $\kappa\in {\rm Hom}_{\Lambda}(\underline{K}(C\otimes
C(\T)), \underline{K}(A)).$ Note (see 2.10 of \cite{Lnhomp}) that
$$
\underline{K}(C\otimes C(\T))=\underline{K}(C)\oplus
{\boldsymbol{\bt}}( \underline{K}(C)).
$$
Define $\kappa'\in {\rm Hom}_{\Lambda}(\underline{K}(C\otimes C(\T)),
\underline{K}(A))$ as follows.
\beq
\kappa'|_{\underline{K}(C)}=[h]\andeqn
\kappa'|_{{\boldsymbol{\bt}}(
\underline{K}(C))}=\kappa|_{{\boldsymbol{\bt}}(
\underline{K}(C))}.
\eneq
We see then the lemma follows immediately from Lemma 7.5 of
\cite{Lnasym} by considering $\kappa'$ instead of $\kappa.$

\end{proof}

\begin{lem}\label{small}
Let $X$ be a finite CW complex and let  $C=PM_r(C(X))P,$ where
$P\in M_r(C(X))$ is a projection and $r\ge 1.$ Let $A$ be a unital
separable simple \CA\, with $TR(A)=0$ and let $\kappa\in KK_e(C,
A)^{++}.$ Suppose that $p\in A$ is a non-zero projection. Then
there is a non-zero projection $p_0\le p,$ a unital monomorphism
$h_0: C\to p_0Ap_0$ and a unital \hm\, $h_1: C\to F$ such that
$$
[h_0+h_1]=\kappa,
$$
where  $F$ is a finite dimensional \SCA\, of $A$ with $1_F=1-p_0.$

\end{lem}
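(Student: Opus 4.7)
The strategy is to split $\kappa$ as $[h_1]+[h_0]$, where $h_1$ is a finite-dimensional representation assembled from point evaluations on $X$ (with appropriate multiplicities) into a finite-dimensional $F\subseteq A$, and $h_0$ is a unital monomorphism into the complementary corner $p_0Ap_0$. Because $X$ is a finite CW complex, $K_0(C)=K^0(X)$ is finitely generated, so it suffices to realize the positivity condition $\kappa-[h_1]\in KK_e(C,p_0Ap_0)^{++}$ by checking finitely many tracial inequalities $\tau(\kappa([y_i]))>\tau(h_1(y_i))$ on a finite set $[y_1],\dots,[y_L]$ of generators of $K_0(C)_+$, uniformly in $\tau\in T(A)$.

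For the construction, I would pick representative points $x_1,\dots,x_L\in X$ and form $h_1=\bigoplus_{j=1}^L m_j\operatorname{ev}_{x_j}:C\to F:=\bigoplus_j M_{m_j d(x_j)}$, where $d(x_j)=\operatorname{rank}P(x_j)$. Since $A$ is unital simple with $TR(A)=0$, it has stable rank one, strict comparison of projections (via weakly unperforated $K_0$), and arbitrarily small projections; these facts let one embed each summand $M_{m_j d(x_j)}$ into $A$ via matrix units of any prescribed sufficiently small trace. One then chooses the multiplicities $m_j$ and the minimal-projection traces so that $\kappa([y_i])-[h_1(y_i)]$ is a positive class in $K_0(A)$ with total tracial size smaller than $\inf_{\tau\in T(A)}\tau(p)$; this keeps $\kappa-[h_1]$ positive on each generator while simultaneously forcing $[1_A]-[1_F]\le[p]$ in $K_0(A)$. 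Conjugating $F$ by a unitary in $A$ (using stable rank one) then yields $1_F=1-p_0$ for a nonzero projection $p_0\le p$.

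Setting $\kappa_0:=\kappa-[\iota_F\circ h_1]\in KK(C,A)$, one has $\kappa_0([1_C])=[p_0]$, and the tracial inequalities above together with strict comparison promote $\kappa_0$ to an element of $KK_e(C,p_0Ap_0)^{++}$ (using that $p_0$ is full in the simple $A$, so that $K_*(p_0Ap_0)\cong K_*(A)$). The existence theorem for unital monomorphisms from the AH building block $C=PM_r(C(X))P$ into a unital simple $C^*$-algebra with $TR=0$ (a classical result, in the spirit of Theorem~\ref{hitK1} but for the non-simple $C$ arising from a finite CW complex) then produces a unital monomorphism $h_0:C\to p_0Ap_0$ with $[h_0]=\kappa_0$. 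The direct sum $h_0\oplus h_1:C\to p_0Ap_0\oplus F\subset A$ is unital and satisfies $[h_0\oplus h_1]=[h_0]+[h_1]=\kappa$, as required.

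The main obstacle is the delicate tracial bookkeeping in the second step: one must simultaneously make $[h_1]$ trace-dominated by $\kappa$ on every positive generator (so that $\kappa-[h_1]$ stays positive) and make $[1_A]-[1_F]$ small enough to fit under $[p]$. This balance is made possible by the divisibility properties of $K_0(A)$ coming from $TR(A)=0$, combined with the faithfulness of $\kappa$ on $K_0(C)_+\setminus\{0\}$, which supplies the positive uniform lower bound $\inf_{\tau\in T(A)}\tau(\kappa([y_i]))>0$ on each generator and allows the approximation of $\kappa([y_i])$ by a multiple of a projection of arbitrarily small residual trace.
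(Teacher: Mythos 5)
Your overall strategy (finite-dimensional part from point evaluations, plus a small-corner monomorphism carrying the residual $KK$-class) is different from the paper's, and it has two genuine problems. First, the step where you ``invoke the existence theorem for unital monomorphisms from $C=PM_r(C(X))P$ into a unital simple $C^*$-algebra with $TR=0$'' realizing an arbitrary element of $KK_e(C,p_0Ap_0)^{++}$ is essentially assuming the content of the lemma. The way such existence theorems are actually established for building blocks $C(X)$ is precisely by producing a homomorphism of the decomposed form $h_0\oplus h_1$ with $h_0$ landing in an arbitrarily small corner and $h_1$ finite dimensional; indeed the paper's own proof reduces to the case $C=C(X)$ and cites Lemma 4.2 of \cite{Lnhomp}, which is that statement, and then passes to general $C=PM_r(C(X))P$ by a Morita-type corner argument (cutting $M_k(C)$ down to a full matrix algebra over $C(X)$, applying the special case, tensoring with $M_{R'}$ and compressing back). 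So your argument does not so much prove the lemma as derive one form of the existence theorem from another, and the form you take as given is not the one that is independently available.

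Second, the tracial bookkeeping you describe is flawed as stated: you propose to verify positivity of $\kappa-[h_1]$ on ``a finite set of generators of $K_0(C)_+$,'' but while $K_0(C)$ is a finitely generated group, the positive cone $K_0(C)_+$ is in general \emph{not} finitely generated as a semigroup for a finite CW complex $X$ (already for $X=S^2$, where $K_0(C(S^2))_+\setminus\{0\}=\{(n,k):n>0\}$). Checking $\tau(\kappa(x))>\tau([h_1](x))$ on finitely many classes therefore does not yield positivity of $\kappa-[h_1]$ on all of $K_0(C)_+\setminus\{0\}$. The standard repair is to exploit that $[h_1]$ factors through the rank map and to control $\tau\circ\kappa$ uniformly in terms of rank (using that classes of rank bounded below by $\dim X$ dominate trivial projections), which is exactly the kind of argument carried out in the reference the paper cites; your sketch skips it. The remaining ingredients you use (strict comparison and divisibility from $TR(A)=0$, stable rank one to conjugate $F$ under $p$, $KK=KL$ since $K_*(C)$ is finitely generated) are fine.
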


\begin{proof}
First we assume that $C=C(X).$ It is then clear that we may assume
that $X$ is a connected. In this case  the lemma follows from
Lemma 4.2 of \cite{Lnhomp}. For general case, there is $k\ge 1$
such that there is a projection $Q\in M_k(C)$ such that
$QM_k(C)Q\cong M_R(C(X))$ for some integer $R\ge 1.$  Moreover,
there is an integer $R'\ge 1$ and a projection $Q_1\in
M_{R'}(QM_k(C)Q)$ such that $Q_1M_{R'}(QM_k(C)Q)Q_1=M_{R_1}(C)$ for
some integer $R_1\ge 1.$ Then there is a projection $q\in M_k(A)$
such that there are mutually orthogonal and mutually equivalent
projections $q_1,q_2,..,q_R\in M_k(A)$ such that
$\kappa([Q])=[q_1].$ Let $C_0=e_1 M_R(C(X))e_1\cong C(X),$ where
$e_1$ is a rank one projection. Let $e_0\in M_k(A)$ such that
$\kappa([e_1])=[e_0].$ Let $q_0\in e_0Ae_0$ be a non-zero
projection such that
$$
k[q_0]\le [p].
$$

It follows from the special case that there is a unital
monomorphism $h_0: C_0\to h_0(e_1)M_k(A)h_0(e_1)$ such that
$$
[h_0']=\kappa
$$
and $h_0'=h_{0,0}+h_{0,1},$ where $h_{0,0}$ is a unital
monomorphism from $C_0$ to $q_0M_k(A)q_0$ and $h_{0,1}$ maps $C_0$
into a finite dimensional \SCA\, in $(e_0-q_0)M_k(A)(e_0-q_0).$
Let $h_1'=h_0'\otimes {\rm id}_{M_{R'} }$ and
$h_2=(h_1')|_{Q_1M_{R'}(QM_k(C)Q)Q_1}.$ Let
$h'=h_2|_{e_1'M_{R_1}(C)e_1'},$ where $e_1'\in M_{R_1}(C)$ is a
rank one projection. Then we obtain a unital monomorphism $h: C\to
A$ by choosing $h={\rm ad}\, u\circ h'$ for some unitary $u\in
h_2(M_{R_1}(C)).$  By choosing a right unitary,  we may write
$h=h_0+h_1,$ where $h_0$ maps $C$ into $p_0Ap_0$ with $p_0\le p$
and $h_1: C\to (1-p_0)A(1-p_0)$ with finite dimensional range.

\end{proof}

\begin{thm}\label{HITK}
Let $C$ be a unital \CA\, with the form $C=\lim_{n\to\infty}(C_n,
\imath_n),$ where $C_n$ is a finite direct sum of \SCA s in ${\cal
C}_{00}$ and $\imath_n: C_n\to C_{n+1}$ is unital and injective,
and let $A$ be a unital separable simple \CA\, $TR(A)\le 1.$
Suppose that $h: C\to A$ is a monomorphism. Then, for any $\ep>0,$
any finite subset ${\cal F}$ and any finite subset ${\cal
P}\subset \underline{K}(C),$ there exists  $n\ge 1$ for which
${\cal P}\subset [\imath_n](\underline{K}(C_n)),$  a finite subset
${\cal Q}\subset K_1(C_n)$ and $\dt>0$ satisfying the following:
Suppose that $\kappa\in {\rm Hom}_{\Lambda}(\underline{K}(C_n\otimes
C(\T)), \underline{K}(A))$ with
\beq\label{Ql1+}
|\rho_A\circ \kappa(\boldsymbol{\bt}(x)(\tau))|<\dt\tforal x\in
{\cal Q}\tand \tforal \tau\in T(A).
\eneq
Then there exists a unitary $u\in U(A)$ such that
$$
\|[h(c), \,u]\|<\ep\tforal c\in {\cal F}\andeqn {\rm Bott}(h\circ
\imath_n, u)=\kappa\circ \boldsymbol{\bt}.
$$

\end{thm}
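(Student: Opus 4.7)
My plan mirrors that of Lemma~\ref{Ql}, reducing to a $TR(A)\le 1$ analogue of Lemma~7.5 of \cite{Lnasym}. First, since $C=\lim_{n\to\infty}(C_n,\imath_n)$ with each $\imath_n$ injective, I would choose $n$ large enough that ${\cal P}\subset [\imath_{n,\infty}](\underline{K}(C_n))$ and each $f\in{\cal F}$ lies within $\ep/8$ of $\imath_{n,\infty}(C_n)$. This reduces the problem to constructing $u\in A$ that almost commutes (to within $\ep/2$) with $h\circ\imath_{n,\infty}$ on a finite subset of $C_n$ and satisfies ${\rm Bott}(h\circ\imath_{n,\infty},u)|_{{\cal P}}=\kappa\circ{\boldsymbol{\bt}}|_{{\cal P}}$.

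Second, following the trick in Lemma~\ref{Ql}, I would use the splitting $\underline{K}(C_n\otimes C(\T))=\underline{K}(C_n)\oplus {\boldsymbol{\bt}}(\underline{K}(C_n))$ to replace $\kappa$ by $\kappa'\in {\rm Hom}_\Lambda(\underline{K}(C_n\otimes C(\T)),\underline{K}(A))$ with $\kappa'|_{\underline{K}(C_n)}=[h\circ\imath_{n,\infty}]$ and $\kappa'|_{{\boldsymbol{\bt}}(\underline{K}(C_n))}=\kappa|_{{\boldsymbol{\bt}}(\underline{K}(C_n))}$. Then $\kappa'\circ{\boldsymbol{\bt}}=\kappa\circ{\boldsymbol{\bt}}$, so it suffices to realize $\kappa'$ by a Bott element against $h\circ\imath_{n,\infty}$.

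The heart of the argument is the $TR(A)\le 1$ analogue of Lemma~7.5 of \cite{Lnasym}. Invoking Definition~\ref{Dtr1}, I would decompose $h\circ\imath_{n,\infty}$ approximately as $\Psi\oplus h_1$, where $\Psi$ maps $C_n$ into a subalgebra $B\in{\cal I}$ of $pAp$ with $1_B=p$, and $h_1$ maps $C_n$ into $(1-p)A(1-p)$, with $\tau(1-p)$ controlled by the $\delta$ appearing in~(\ref{Ql1+}). Since $K_1(B)=0$ for $B\in{\cal I}$, a unitary in $B$ close to $p$ contributes trivially to the Bott class, so the prescribed element $\kappa'\circ{\boldsymbol{\bt}}$ must be realized inside $(1-p)A(1-p)$. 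One constructs the required unitary $u_1\in(1-p)A(1-p)$ by first promoting $\kappa'$ to a $*$-homomorphism $\phi:C_n\otimes C(\T)\to(1-p)A(1-p)$ via Lemma~\ref{small} and a suitable existence result (in the spirit of Theorem~\ref{hitK1}), then applying Theorem~\ref{Tnn1} to find a unitary $w$ with $w^*\phi(\cdot\otimes 1)w$ approximately equal to $h_1$, and setting $u_1=w^*\phi(1\otimes z)w$; the Exel trace formula (Theorem~\ref{Exel}) converts the bound~(\ref{Ql1+}) into both the approximate commutation and the Bott-class identity.

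The main obstacle is extracting the Bott data on all of ${\cal P}\subset\underline{K}(C)$ from only the tracial smallness on the finite subset ${\cal Q}\subset K_1(C_n)$. This is precisely why $C_n$ is restricted to ${\cal C}_{00}$: the wedge structure $X=\T\vee\cdots\vee Y$ with torsion $K_1(C(Y))$ makes $\underline{K}(C_n)$ rigid enough that, via the UCT and the finite-generation identity~(\ref{dkl1}), the Bott map on ${\cal P}$ is determined by its values on ${\cal Q}\subset K_1(C_n)$ up to an error controlled by $\delta$. With this bookkeeping in place, combining $u_1$ with a unitary in $B$ near $p$ and conjugating to align the decomposition yields the desired $u\in A$.
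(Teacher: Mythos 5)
Your overall shape is right (realize the Bott data exactly via a homomorphism out of $C_n\otimes C(\T)$ landing in a small corner, then conjugate the resulting model map onto $h$), but the heart of your argument rests on an existence result you do not actually have. You propose to ``promote $\kappa'$ to a $*$-homomorphism $\phi:C_n\otimes C(\T)\to(1-p)A(1-p)$'' where $(1-p)A(1-p)$ is a corner of the tracial rank one algebra $A$. No result in the paper provides this: Lemma \ref{small} and Lemma 7.5 of \cite{Lnasym} produce such homomorphisms only into algebras of tracial rank \emph{zero}, and the existence theorems for $TR\le 1$ targets (\ref{lex}, \ref{TEX}) apply only to domains in ${\cal C}_0$/${\cal C}$, which $C_n\otimes C(\T)$ is not (its spectrum $X\times\T$ is not of the form $\T\vee\cdots\vee Y$). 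Producing such a $\phi$ into a $TR\le 1$ corner is essentially as hard as the theorem you are proving. The paper's key move, which your sketch omits entirely, is to first use Theorem \ref{hitK1} to embed a unital simple $TR=0$ algebra $B$ with the same ordered $K$-theory as $A$ into $A$, and then realize the Bott element inside a small corner $p_0Bp_0$ of that $TR=0$ subalgebra via Lemma \ref{Ql} (whose proof reduces to the already-established $TR=0$ case, Lemma 7.5 of \cite{Lnasym}); the final unitary is $U^*(u_0+(1-p_0))U$ where $U$ comes from a uniqueness theorem conjugating the model map onto $h$.

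Two further points. First, your claim that ``since $K_1(B)=0$ for $B\in{\cal I}$, a unitary in $B$ close to $p$ contributes trivially to the Bott class'' is false as stated: ${\rm bott}_1$ is not determined by $K_1$-classes (Voiculescu's almost commuting unitaries in $M_n$ have nontrivial Bott element even though $K_1(M_n)=0$); what you actually want is to take the unitary equal to $p$ on that summand. Second, the conjugation step cannot be done with Theorem \ref{Tnn1}, which requires a \emph{simple} domain; $C_n$ is not simple, and the correct tool is Corollary 11.6 of \cite{Lnn1}. Moreover, to apply any such uniqueness theorem you must first arrange that your model map agrees with $h$ not only in $KK$ but also on traces and on $U(C)/CU(C)$; this is the content of the paper's use of Lemma 9.5 and Lemma 7.4 of \cite{Lnctr1} (the summands $h_1$ and $\Phi$), and your sketch does not address it. Finally, the reduction of the general case to rank $P\ge 6$ by tensoring with $M_6$ is also needed but is routine.
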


\begin{proof}
It is clear that it suffices to prove the case that $C$ is a finite direct sum of \CA s in ${\cal C}_{00}.$ 
Therefore, to simplify notation, without loss of generality, we may assume
that $C\in {\cal C}_{00}$ and write $C=PC_r(C(X))P.$ Let $B$ be a
unital separable simple \CA\, with $TR(B)=0$ which satisfies the
UCT and
$$
(K_0(B), K_0(B)_+, [1_B], K_1(B))=(K_0(A), K_0(A)_+, [1_A],
K_1(A)).
$$
It follows from \ref{hitK1} that we may assume that there is an
embedding $\imath: B\to A$ such that $[\imath]$ (in $KL(B,A)$)
induces an identification of the above. To simplify notation, we
may further assume that $B$ is a unital \SCA\, of $A.$

Let $\ep_1>0$ with $\ep_1<\ep$ and let ${\cal F}_1\supset {\cal
F}$ be a finite subset such that, for any unital \hm\, $H: C\to A$
and unitary $u'\in A$ for which
$$
\|[H(c),\, u']\|<\ep_1\tforal c\in {\cal F}_1,
$$
 ${\rm Bott}(H,u')$ is well-defined. Moreover, if $H': C\to A$
is another unital monomorphism such that
$$
\|H(c)-H'(c)\|<\ep_1\tforal c\in {\cal F}_1,
$$
then
$$
{\rm Bott}(H, u')={\rm Bott}(H', u').
$$


Let $\dt_1>0$ (in place of $\dt$), ${\cal G}_1\subset C$ ( in
place of ${\cal H}$) be a finite subset, ${\cal P}'\subset
\underline{K}(C)$ (in place of ${\cal P}$) be a finite subset and
let ${\cal U}\subset U(M_{\infty}(C))$ be a finite subset required
by Cor. 11.6 of \cite{Lnn1} for $\ep_1/2$ (in place of $\ep$) and
${\cal F}_1$ and $h.$  Without loss of generality, we may assume that
${\cal G}_1$ is in the unit ball of $C.$

For a moment, we assume that rank of $P$ at each point of $X$ has
rank at least 6. In this case $K_1(C)=U(C)/U_0(C).$ Therefore, we
may assume that ${\cal U}\subset U(C).$
Write $U(C)/CU(C)=U_0(C)/CU(C)\oplus K_1(C)$ and $\pi_1: U(C)/CU(C)\to U_0(C)/CU(C)$ and 
$\pi_2: U(C)/CU(C)\to K_1(C)$ are projection maps, respectively. 
Let $\overline{\cal U}_1=\pi_1(\overline{\cal U})$ and $\overline{\cal U}_2=\pi_2(\overline{\cal U}).$
To simplify the notation, without loss of generality, we may assume that
$\overline{\cal U}=\overline{\cal U}_1\cup \overline{\cal U}_2.$ Let ${\cal H}_0\subset C_{s.a.}$ be a finite subset such that
$\overline{\widehat{{\cal H}_0}}\supset \Delta_C(\overline{{\cal U}_1}).$ Let ${\cal G}_2={\cal G}_1\cup {\cal H}_0.$

Let $\dt_2>0$ ( in place of $\dt$)  be as required by Lemma 7.4 of
\cite{Lnctr1} for $\dt_1/2$ (in place of $\ep$), $\overline{\cal U}_2$  and
$\alpha=h^{\ddag}.$

It follows from \ref{small} that there is a unital monomorphism
$h_0: C\to p_0Bp_0$ with $\tau(p_0)<\min\{\dt_1/8, \dt_2/4\}$ for
all $\tau\in T(B)$ and a unital \hm\, $h_1': C\to F,$ where $F$ is
a finite dimensional \SCA\, of $B$ with $1_F=(1-p_0)$ such that
\beq\label{HITK2}
[h_0+h_1']=[h]\,\,\,{\rm in}\,\,\, KK(C, A).
\eneq

Note that we have assumed that $C\in {\cal C}_0.$  Let $1>\dt_3>0$
(in place of $\dt$) and ${\cal Q}\subset K_1(C)$ be required by
Lemma \ref{Ql} for ${\cal F},$  ${\cal P}$ and $p_0Bp_0$ ( in
place of $A$).

Let $\dt=\min\{\dt_3\cdot(\dt_1/16\pi), \dt_3\cdot \dt_2/4 \}.$ Now
suppose that
$$
|\rho_A\circ \kappa\circ\boldsymbol{\bt}(x)(\tau)|<\dt\tforal x\in
{\cal Q}\andeqn \tforal \tau\in T(A).
$$
By \ref{Ql}, there is a unitary $u_0\in p_0Bp_0$ such that such
that
\beq\label{HITK3}
\|[h_0(c), \, u_0]\|<\ep_1/2\tforal c\in {\cal F}\andeqn {\rm
Bott}(h_0, u_0)= \kappa\circ \boldsymbol{\bt}.
\eneq
Put $u=u_0+(1-p_0).$ It is easy to see that there is a non-zero
projection $q_0\in (1-p_0)A(1-p_0)$ such that
\beq\label{HITK4}
q_0f=fq_0\tforal f\in F\andeqn \tau(q_0)<\dt\tforal\tau\in T(A).
\eneq
Define $\phi_0(c)=q_0h_1'(c)$ and $\phi_0'(c)=(1-p_0-q_0)h_1'(c)$
for all $c\in C.$

By Lemma 9.5 of \cite{Lnctr1}, there is \SCA\, $B_0\in
(1-p_0-q_0)A(1-p_0-q_0)$ with $B_0\in {\cal I}$ and a unital
\hm\,$h_1: C\to B_0$ such that
\beq\label{HITK5-}
(h_1)_{*0}&=&(\phi_0')_{*0}\andeqn\\
\|\tau\circ h_1(f)-(\tau(1-p_0-q_0))\tau\circ
h(f)\|&<&\dt_1/8\pi\tforal f\in {\cal G}_2
\eneq
and for all $\tau\in T(A).$

Define $\phi_1=h_0+h_1.$ By applying Lemma 7.4 of \cite{Lnctr1},
we obtain a unital \hm\, $\Phi: C\to q_0Aq_0$ with
$\Phi_{*0}=(\phi_0)_{*0},$ $\Phi$ is homotopically trivial, and,
for all $w\in {\cal U},$
\beq\label{HITK5}
h^{\ddag}({\bar w})^{-1}(\Phi\oplus \phi_1)^{\ddag}({\bar
w})=\overline{g_w},
\eneq
where $g_w\in U_0(A)$ and ${\rm cel}(g_w)<\dt_1/2.$ Define
$h_2=\Phi\oplus \phi_1.$ Then
\beq\label{HITK5+1}
 [h_2]=[h]\,\,\,{\rm in}\,\,\,KK(C, A).
 \eneq
Moreover, we compute that
\beq\label{HITK5+2}
|\tau\circ h(f)-\tau(h_2(f))|&<& \dt_1/4 +|\tau\circ h(f)-\tau(h_1(f))|\\
&\le &\dt_1/2+\dt_1/4 +|\tau(1-p_0-q_0)\tau(h(f))-\tau(h_1(f))|\\
&<& \dt_1/2\pi\,\,\,\,\,\tforal f\in {\cal G}_2
\eneq
and for all $\tau\in T(A).$ Since $\overline{\widehat{{\cal H}_0}}\supset \Delta_C(\overline{{\cal U}_1}),$ by combining the above inequality with (\ref{HITK5}), 
\beq\label{HITK6-1}
{\rm dist}(h_2^{\ddag}({\bar w}), h^{\ddag}({\bar
w}))&<&\dt_1\tforal w\in {\cal U}.
\eneq
By applying Corollary 11.6 of \cite{Lnn1} that  there is a unitary
$U\in A$ such that
\beq\label{HITK6}
{\rm ad}\, U\circ h_2\approx_{\ep_1/2} h\,\,\, {\rm on}\,\,\,{\cal
F}.
\eneq
Choose $u=U^*(u_0+(1-p_0))U.$ Then
\beq\label{HITK7}
\|[h(c),\, u]\|<\ep_1\tforal c\in {\cal F}.
\eneq
Moreover, by the choice of $\ep_1,$
\beq
{\rm Bott}(h, u)&=&{\rm Bott}(h_2,u_0+(1-p_0))\\
&=&{\rm Bott}(h_0, u_0)=\kappa\circ {\boldsymbol{\bt}}.
\eneq

Now consider the general case. We consider $h\otimes {\rm
id}_{M_6}: M_6(C)\to M_6(A).$ Suppose that $1/2>\ep>0.$ For
$\ep_2={\ep_1\over{4\cdot 36}}$ and ${\cal F}_2=\{(a_{i,j})\in
M_6(A): a_{i,j}\in {\cal F}, {\rm or}\,\,\,\,a_{i,j}=0, 1_A\},$
applying what we have proved to $h\otimes{\rm id}_{M_6},$ we
obtain a unitary $W\in U(M_6(A))$ such that
\beq\label{HITK8}
\|[h\otimes{\rm id}_{M_6}(c),\, W]\|<\ep_2\tforal c\in {\cal F}_2\andeqn\\
 {\rm Bott}(h\otimes {\rm id}_{M_6}, W)=\kappa\circ {\boldsymbol{\bt}}.
 \eneq
It is easy to compute that there is a unitary $u\in U(A)$ such
that
\beq\label{HIKT9}
\|U-W\|<\ep_1,
\eneq
where $U={\rm diag}(\overbrace{u,u,...,u}^6).$ It follows that
\beq
\|[h(c),\, u]\|<\ep\tforal c\in {\cal F}\\
\andeqn {\rm Bott}(h,u)={\rm Bott}(h\otimes{\rm id}_{M_6},
U)=\kappa\circ {\boldsymbol{\bt}}.
\eneq

\end{proof}

\begin{rem}\label{CHITK}
{\rm Let $A$ be a unital simple separable \CA\, with $TR(A)\le 1$
and let $C$ be a unital AH-algebra as in Theorem \ref{HITK}. Note
that, given \hm s $b_i: (\imath_n)_{*i}(K_i(C_n))\to K_i(A)$
($i=0,1$), there is an element $\kappa\in
{\rm Hom}_{\Lambda}(\underline{K}(C_n\otimes C(\T)), \underline{K}(A))$
such that $\kappa|_{K_i(C_n)}=b_i\circ (\imath_n)_{*i},$ $i=0,1.$
Thus Theorem \ref{HITK} says that $A$ has property (B1) and (B2)
associated with $C$ as defined in 3.6 of \cite{LN}.

}
\end{rem}

\section{Asymptotic unitary equivalence in simple \CA s of tracial
rank one}

\begin{lem}\label{LTRBOT}
Let $C$ be a unital AH-algebra in ${\cal C}$
 and let $A$ be a unital separable  simple \CA\,
with $TR(A)\le 1.$ Suppose that $\phi_1, \phi_2: C\to A$ are two
unital monomorphisms. Suppose that
\beq\label{ltrbot-1}
[\phi_1]&=&[\phi_2]\,\,\,\text{in}\,\,\,KL(C,A),\\
\phi_1^{\ddag}=\phi_2^{\ddag},\,\,\,\,\,\,
(\phi_1)_T&=&(\phi_2)_T\tand\\\label{ltrbot-1+}
R_{\phi_1,\phi_2}(K_1(M_{\phi_1,\phi_2}))&=&\rho_A(K_0(A)).
\eneq
Then, for any increasing sequence of finite subsets $\{{\cal
F}_n\}$ of $C$ whose union  is dense in $C,$ any increasing
sequence of finite subsets $\{{\cal P}_n\}$ of $K_1(C)$ with
$\cup_{n=1}^{\infty}{\cal P}_n= K_1(C)$ and any decreasing
sequence of positive number $\{\dt_n\}$ with
$\sum_{n=1}^{\infty}\dt_n<\infty,$ there exists a sequence of
unitaries $\{u_n\}$ in $U(A)$ such that
\beq\label{ltrbot-2}
{\rm ad}\, u_n\circ \phi_1\approx_{\dt_n}
\phi_2\,\,\,\text{on}\,\,\, {\cal F}_n
\eneq
and
\beq\label{ltrbot-3}
\rho_A({\rm bott}_1(\phi_2, u_n^*u_{n+1})(x))=0\tforal x\in {\cal
P}_n
\eneq
for all sufficiently large $n.$
\end{lem}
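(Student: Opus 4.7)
The plan is to construct $\{u_n\}$ in two passes. First apply Theorem \ref{Tnn1} to obtain a base sequence $\{w_n\}\subset U(A)$ with $\mathrm{ad}\,w_n\circ\phi_1\to\phi_2$; then use the hypothesis $R_{\phi_1,\phi_2}(K_1(M_{\phi_1,\phi_2}))=\rho_A(K_0(A))$ together with Exel's formula (Theorem \ref{Exel}) and the existence theorem \ref{HITK} to correct each $w_{n+1}$ by a small unitary $v_n$ which almost commutes with $\phi_2$, chosen to kill the tracial part of $\mathrm{bott}_1(\phi_2, u_n^*u_{n+1})$ on ${\cal P}_n$. The hypotheses $[\phi_1]=[\phi_2]$ in $KL(C,A)$, $\phi_1^{\ddag}=\phi_2^{\ddag}$ and $(\phi_1)_T=(\phi_2)_T$ are exactly the input Theorem \ref{Tnn1} requires. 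Writing $C=\lim_n(C_n,\imath_n)$ with each $C_n$ a finite direct sum of building blocks in ${\cal C}_0$, I pass to a subsequence so that ${\cal F}_n$ and generators of ${\cal P}_n$ come from $\imath_{m(n),\infty}(C_{m(n)})$, and I arrange $\mathrm{ad}\,w_n\circ\phi_1\approx_{\eta_n}\phi_2$ on a larger finite subset ${\cal G}_n\supset {\cal F}_n$, with $\eta_n$ small enough that $\mathrm{bott}_1(\phi_j,w_n^*w_{n+1})|_{{\cal P}_n}$ is well-defined for $j=1,2$ and the two maps agree, and small enough to absorb all later perturbations by the $v_k$.

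The heart of the argument is to show that the homomorphism $\lambda_n:=\rho_A\circ\mathrm{bott}_1(\phi_2,w_n^*w_{n+1})|_{{\cal P}_n}$ takes its values in $\rho_A(K_0(A))$. For each generator $z$ of ${\cal P}_n$, smoothing $w_n^*w_{n+1}$ into a piecewise smooth path of unitaries and pasting with the endpoint values produces a unitary $U_{n,z}$ in a matrix algebra over $M_{\phi_1,\phi_2}$, yielding a class in $K_1(M_{\phi_1,\phi_2})$. Exel's formula (Theorem \ref{Exel}) applied to the almost-commuting pair $(\phi_2(z),w_n^*w_{n+1})$ identifies $\rho_A(\mathrm{bott}_1(\phi_2,w_n^*w_{n+1}))(z)(\tau)$ with the rotation integral $R_{\phi_1,\phi_2}([U_{n,z}])(\tau)$, up to a term in $\imath_*(K_0(A))$ coming from the endpoint matching. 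The hypothesis then forces $\lambda_n$ to factor through $\rho_A(K_0(A))$, so there is a homomorphism $\mu_n\colon\langle{\cal P}_n\rangle\to K_0(A)$ with $\rho_A\circ\mu_n=\lambda_n$ on ${\cal P}_n$. Lifting $-\mu_n$ via the UCT to $\kappa_n\in\mathrm{Hom}_{\Lambda}(\underline{K}(C_{m(n)}\otimes C(\T)),\underline{K}(A))$ (with the side estimate $|\rho_A\circ\kappa_n\circ\boldsymbol{\beta}(x)(\tau)|$ arbitrarily small, as required by Theorem \ref{HITK}), and applying Theorem \ref{HITK} to $\phi_2$ and $\kappa_n$, I obtain a unitary $v_n\in U(A)$ that almost commutes with $\phi_2$ on ${\cal G}_n$ and satisfies $\mathrm{bott}_1(\phi_2,v_n)|_{{\cal P}_n}=-\mu_n|_{{\cal P}_n}$.

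To assemble, set $u_1=w_1$ and $u_{n+1}=w_{n+1}V_n$ with $V_n=v_1v_2\cdots v_n$. Provided the tolerances on each $v_k$ shrink fast enough (which Theorem \ref{HITK} permits, by shrinking the $\eta_n$ at the outset), the cumulative factor $V_{n-1}$ almost commutes with $\phi_2$ on ${\cal F}_n$, so $\mathrm{ad}\,u_n\circ\phi_1\approx_{\delta_n}\phi_2$ on ${\cal F}_n$. Writing $u_n^*u_{n+1}=V_{n-1}^*(w_n^*w_{n+1})V_{n-1}v_n$ and using the additivity of $\mathrm{bott}_1$ on products of approximately commuting unitaries (\ref{ddbot}) together with its invariance under conjugation by approximate commutants, one obtains on ${\cal P}_n$
\[
\mathrm{bott}_1(\phi_2,u_n^*u_{n+1})=\mathrm{bott}_1(\phi_2,w_n^*w_{n+1})+\mathrm{bott}_1(\phi_2,v_n),
\]
which composes with $\rho_A$ to give $\lambda_n-\rho_A\circ\mu_n=0$, as required. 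The principal obstacle is the middle paragraph: executing the clutching that associates to the almost-commuting pair $(\phi_2,w_n^*w_{n+1})$ a genuine class in $K_1(M_{\phi_1,\phi_2})$, and matching Exel's formula to the integral defining $R_{\phi_1,\phi_2}$, all while keeping the quantitative control needed so that each approximate identity used upgrades to an exact one on ${\cal P}_n$.
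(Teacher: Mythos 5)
There is a genuine gap, and it sits exactly at what you call the heart of the argument. The map ${\rm bott}_1(\phi_2,w_n^*w_{n+1})$ takes values in $K_0(A)$ by definition, so the assertion that $\lambda_n=\rho_A\circ{\rm bott}_1(\phi_2,w_n^*w_{n+1})$ ``takes its values in $\rho_A(K_0(A))$'' is vacuous (take $\mu_n={\rm bott}_1(\phi_2,w_n^*w_{n+1})$ itself), and consequently the hypothesis (\ref{ltrbot-1+}) is never genuinely used in your argument. The quantity that actually has to lie in $\rho_A(K_0(A))$, and for which (\ref{ltrbot-1+}) is indispensable, is the \emph{absolute} defect of a single $w_n$, namely $\tau\mapsto \frac{1}{2\pi i}\tau\bigl(\log(\phi_2(w_j)\,{\rm ad}\,w_n(\phi_1(w_j^*)))\bigr)$, which a priori is only an element of $\text{Aff}(T(A))$; via the splitting of (\ref{eta-1-}) induced by $w_n$ it equals $R_{\phi_1,\phi_2}$ evaluated on a lift of $[w_j]$, and (\ref{ltrbot-1+}) is what permits writing it as $-\rho_A({\rm bott}_1(\phi_2,U_n))$ for a correcting unitary $U_n$ produced by Theorem \ref{HITK}. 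That no scheme correcting only the relative defects ${\rm bott}_1(\phi_2,w_n^*w_{n+1})$ can succeed is forced by the conclusion itself: $\rho_A({\rm bott}_1(\phi_2,u_n^*u_{n+1}))(z_j)(\tau)$ is the difference of consecutive absolute defects (the computation (\ref{LTR-17})--(\ref{LTR-20})), so the conclusion makes these defects eventually constant in $n$ while they also tend to $0$ in norm, hence they must vanish for large $n$; moving them from their initial nonzero value to $0$ requires at some stage a unitary whose ${\rm bott}_1$ against $\phi_2$ has prescribed trace equal to that value, which is possible only when that value lies in $\rho_A(K_0(A))$.

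The assembly step is also broken. With $u_{n+1}=w_{n+1}V_n$ and $V_n=v_1v_2\cdots v_n$, the factor $v_1$ is fixed once chosen: its commutators with $\phi_2(a)$ are controlled only for $a$ in the finite set used at stage $1$ and only up to a fixed tolerance $\ep_1>0$, whereas (\ref{ltrbot-2}) at stage $n$ needs $\|[\phi_2(a),V_{n-1}]\|<\dt_n$ for all $a\in {\cal F}_n$ with $\dt_n\to 0$. No forward choice of tolerances can achieve this, and the same defect undermines the additivity and conjugation-invariance of ${\rm bott}_1$ that you invoke for $u_n^*u_{n+1}=V_{n-1}^*(w_n^*w_{n+1})V_{n-1}v_n$. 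The paper's proof avoids accumulation entirely: each $w_n$ is corrected separately, $u_n=w_nU_n$, with $U_n$ chosen via Theorem \ref{HITK} (using (\ref{ltrbot-1+}) and the Exel formula) so that the absolute defect of $u_n$ is exactly zero; the relative quantities $\rho_A({\rm bott}_1(\phi_2,u_n^*u_{n+1}))$ then telescope to zero with no further correction and no products of corrections ever appear.
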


\begin{proof}
It follows  from Corollary 11.7 of \cite{Lnn1}  that there exists a
sequence of unitaries $\{u_n\}\subset A$ such that
\beq\label{LTR1}
\lim_{n\to\infty}{\rm ad}\, v_n\circ \phi_1(a)=\phi_2(a)\tforal
a\in C.
\eneq

Without loss of generality, we may assume that ${\cal
P}_n=\{z_1,z_2,...,z_n\}$ and we may also assume that ${\cal F}_n$
are in the unit ball of $C$ and $\cup_{n=1}^{\infty}{\cal F}_n$ is
dense in the unit ball of $C$ and we write that
$C=\lim_{n\to\infty}(C_n, \psi_n),$ where
$C_n=P_nM_{R(n)}(C(X_n))P_n$ and $C_n$ is a finite direct sum of
\CA s in ${\cal C}_0,$ and where each $\psi_n$ is a unital
monomorphism. We may assume that ${\cal F}_n\subset \phi_{n,
\infty}(C_n)$ and ${\cal P}_n\subset (\phi_{n,
\infty})_{*1}(K_1(C_n)),$ $n=1,2,....$

Let $1/2>\ep_n'>0$ so that ${\text{Bott}}(h', u')|_{{\cal P}_n} $
is well defined for any unital monomorphism $h': C\to A$ and any
unitary $u'\in A$ for which
\beq\label{LTR2}
\|[h'(a), u']\|<\ep_n'\tforal a\in {\cal G}_{n}
\eneq
for some finite subset ${\cal G}_n\subset \phi_{n, \infty}(C_n)$
which contains ${\cal F}_n.$ Moreover, we may assume that\\
${\text{Bott}}(h'\circ \psi_{n,\infty},u')$ is well defined
whenever (\ref{LTR2}) holds.

Put $\ep_n''=\min\{\ep_n'/2, {1/2^{n+1}}, \dt_n/2\}.$

Let $\dt'_n>0$ ( in place of $\dt$), $k(n)\ge 1$ (in place of $n$)
 and ${\cal Q}_n\subset K_1(C_n)$ (in place
of ${\cal Q}$) be as required by \ref{HITK} for
 $\ep_n''/2$ (in place of
$\ep$), ${\cal F}_n$ (in place of ${\cal F}$) and ${\cal
P}_n=\{z_1,z_2,..., z_n\}$ (in place of ${\cal P}$).

Note here we assume that $\{z_1,z_2,...,z_n\}\subset
(\psi_{k(n),\infty})_{*1}(K_1(C_{k(n)})).$ So there are $y_1,
y_2,...,y_n\in K_1(C_{k(n)})$ such that
$(\psi_{k(n),\infty})_{*1}(y_i)=z_i,$ $i=1,2,...,n.$

Suppose that $\{y_1,y_2,...,y_{m(n)}\}$ ($m(n)\ge n$) forms a set of
generators of $K_1(C_{k(n)}).$  We may assume, without loss of
generality, that ${\cal Q}_n=\{y_1, y_2,...,y_{m(n)}\}.$  Put
$\ep_n=\min\{\ep_n''/2, \dt_n'/2\}.$

We may assume that
\beq\label{LTR4}
{\rm ad}\, v_n\circ \phi_1\approx_{\ep_n}
\phi_2\,\,\,\text{on}\,\,\, {\cal G}_{k(n)}
\eneq
Moreover we may assume that
\beq\label{LTR5}
\|[\phi_1(a),\, v_nv_{n+1}^*]\|<\ep_n\tforal a\in {\cal G}_{k(n)}.
\eneq
Thus $\text{bott}_1(\phi_1\circ \psi_{k(n),\infty}, v_nv_{n+1}^*)$
is well defined.






Let $w_1,w_2,...,w_n,...,w_{m(n)}\in M_l(C_{k(n)})$ for some $l\ge
1$ be unitaries such that $[w_i]=(\psi_{k(n),\infty})_{*1}(y_i),$
$i=1,2,...,m(n).$
 Put
$$
{\bar v}_n={\rm diag}(\overbrace{v_n,v_n,\dots, v_n}^{l}).
$$
We will continue to use $\phi_i$ for $\phi_i\otimes{\rm
id}_{M_l},$ $i=1,2.$

By choosing even larger ${\cal G}_{k(n)},$ we may also assume that
\beq\label{LTR6}
\|\phi_2(w_j){\rm ad}\, {\bar
v}_n(\phi_1((w_j)^*))-1\|<(1/4)\sin(2\pi \ep_n),\,\,\,n=1,2,....
\eneq
Put
$$
h_{j,n}=\log({1\over{2\pi i}}\phi_2(w_j){\rm ad}\, {\bar
v}_n(\phi_1((w_j)^*))),\,\,\,j=1,2,...,m(n), n=1,2,....
$$
Then for any $\tau\in T(A),$
\beq\label{LTR7}
\tau(h_{j,n})<\ep_n<\dt_n',\,\,\,j=1,2,...,m(n).
\eneq
By the assumption that
$R_{\phi_1,\phi_2}(M_{\phi_1,\phi_2})=\rho_A(K_0(A))$ and by
Lemma \ref{Exel}, we conclude that
\beq\label{LTR8}
\widehat{h_{j,n}}(\tau)=\tau(h_{j,n})\in \rho_A(K_0(A)).
\eneq

By applying 6.1, 6.2 and 6.3  of \cite{Lnemb2}, and  choosing
larger ${\cal G}_{k(n)},$
we obtain a \hm\, $\af_n': K_1(C_n)\to \rho_A(K_0(A))$ such that
\beq\label{LTR-9}
\af_n'(y_j)(\tau)=\widehat{h_{j,n}}(\tau)=\tau(h_{j,n}),
j=1,2,...,m(n).
\eneq

Since $\af_n'(K_1(C_n))$ is free, it follows from (\ref{LTR8})
that there is a \hm\, $\af_n^{(1)}: K_1(C_n)\to K_0(A)$ such that
\beq\label{LTR-10}
\rho_A\circ \af_n^{(1)}(y_j)(\tau)=\tau(h_{j,n}),\,\,\,
j=1,2,...,m(n).
\eneq
Define $\af_n^{(0)}: K_0(C_n)\to K_1(A)$ by $\af_n^{(0)}=0.$ By
the UCT, there is $\kappa_n\in {\rm Hom}_{\Lambda}(\underline{K}(C_n),
\underline{K}(A))$ such that $\kappa_n|_{K_1(C_n)}=\af_n^{(1)}.$

It follows from \ref{HITK} that there exists a unitary $U_n\in
U_0(A)$ such that
\beq\label{LTR-11}
&&\|[\phi_2(a),\, U_n]\|<\ep_n''/2\rforal a\in {\cal
F}_n\\\label{LTR-11+} &&\hspace{-0.3in}\andeqn\,\,
\rho_A(\text{bott}_1 (\phi_2, U_n))(z_j)=-\rho_A\circ \af_n(z_j),
\eneq
$j=1,2,...,n.$

 By the Exel trace formula \ref{Exel} and (\ref{LTR-9}), we have
\beq\label{LTR-12}
\tau(h_{j,n})&=&-\rho_A(\text{bott}_1(\phi_2,
U_n))(z_j)(\tau)\\\label{LTR-12+} &=&-\tau({1\over{2\pi i}}\log(
{\bar U_n}\phi_2(w_j){\bar U_n}^* \phi_2(w_j^*)))
\eneq
for all $\tau\in T(A),$ $j=1,2,...,m,$ where
\beq\label{LTR-13}
{\bar U}_n={\rm diag}(\overbrace{U_n,U_n\cdots, U_n}^l)
\eneq
 Define $u_n=v_nU_n,$ $n=1,2,....$
Put
$$
{\bar u_n}={\rm diag}(\overbrace{u_n,u_n,\dots, u_n}^l).
$$

By 6.1 of \cite{Lnemb2}), and by (\ref{LTR-11+}) and
(\ref{LTR-12+}), we compute that
\beq\label{LTR-14}
&&\hspace{-0.3in}\tau(\log({1\over{2\pi i}}(\phi_2(w_j){\rm ad}\,
{\bar
u_n}(\phi_1(w_j^*)))))\\
&=&\tau(\log({1\over{2\pi i}}({\bar U_n}\phi_2(w_j^*){\bar
U_n}^*{\bar v}_n^*
(\phi_1(w_j)){\bar v_n})))\\
&=&\tau(\log({1\over{2\pi i}}({\bar U_n}\phi_2(w_j){\bar
U_n}^*\phi_2(w_j^*)\phi_2(w_j){\bar v}_n^*
(\phi_1(w_j^*)){\bar v_n})))\\
&=&\tau(\log({1\over{2\pi i}}({\bar U_n}\phi_2(w_j){\bar
U_n}^*\phi_2(w_j^*))))\\
&&\hspace{0.6in}+\tau({1\over{2\pi i}}\log(\phi_2(w_j){\bar v}_n^*
(\phi_1(w_j^*)){\bar v_n})))\\\label{LTR-15}
&=&\rho_A(\text{bott}_1(\phi_2, U_n))(z_j)(\tau)+\tau(h_{j,n})=0
\eneq
for all $\tau\in T(A).$


Let
\beq\label{LTR-16}
b_{j,n}&=&\log({1\over{2\pi i}}{\bar u_n}\phi_2(w_j){\bar
u_n}^*\phi_1(w_j^*))\andeqn\\
 b_{j,n}'&=&\log({1\over{2\pi
i}}\phi_2(w_j){\bar u_n}^*{\bar u_{n+1}}\phi_2(w_j^*){\bar
u_{n+1}}^*{\bar u_n})),
\eneq
$j=1,2,...,n$ and $n=1,2,....$
 We have, by (\ref{LTR-15}),
\beq\label{LTR-17}
\tau(b_{j,n})&=&\tau(\log({1\over{2\pi i}}{\bar
u_n}\phi_2(w_j){\bar
u_n}^*\phi_1(w_j^*)))\\
&=& \tau(\log{1\over{2\pi i}}{\bar u_n}^*{\bar
u_n}\phi_2(w_j){\bar u_n}^*\phi_1(w_j^*){\bar u_n})\\
&=&\tau(\log{1\over{2\pi i}}\phi_2(w_j){\bar u_n}^*\phi_1(w_j^*)
{\bar u_n})=0
\eneq
for all $\tau\in T(A),$ $j=1,2,...,n$ and $n=1,2,....$  Note also
$\tau(b_{j,n+1})=0$ for all $\tau\in T(A)$ and for $j=1,2,...,n.$
Note that
\beq\label{LTR-18}
{\bar u_n}e^{2\pi i b_{j,n}'}{\bar u_n}^*=e^{2\pi i
b_{j,n}}e^{-2\pi i b_{j,n+1}}.
\eneq
Thus, by 6.1 of \cite{Lnemb2} and by (\ref{LTR-18}), we compute
that
\beq\label{LTR-19}
\tau(b_{j,n}')=\tau(b_{j,n})-\tau(b_{j,n+1})=0\rforal \tau\in
T(A).
\eneq
It follows the Exel trace formula (\ref{Exel}) and (\ref{LTR-19})
that
\beq\label{LTR-20}
&&\hspace{-0.3in}\rho_A(\text{bott}_1(\phi_2,u_n^*u_{n+1}))(z_j^*)(\tau)\\
&=&\tau(\log({\bar u_n}^*{\bar
u_{n+1}}\phi_2(w_j^*){\bar u_{n+1}}^*{\bar u_n}\phi_2(w_j))\\
&=&\tau(\log(\phi_2(w_j){\bar u_n}^*{\bar u_{n+1}}\phi_2(w_j^*){\bar
u_{n+1}}^*{\bar u_n}))=0
\eneq
for all $\tau\in T(A).$ It follows that (\ref{ltrbot-3}) holds. By
(\ref{LTR4}) and (\ref{LTR-11}), one concludes that (\ref{ltrbot-2})
also holds.

\end{proof}

\begin{thm}\label{TM}
Let $C$  be a unital  simple \CA\, in ${\cal N}$ with $TR(C)\le 1$
and let $A$ be a unital separable simple \CA\, with $TR(A)\le 1.$
Suppose that $\phi_1, \phi_2: C\to A$ are two unital
monomorphisms. Then there exists a continuous path of unitaries
$\{u(t):t\in [0,\infty)\}\subset A$ such that
\beq\label{TM-2}
\lim_{t\to\infty}{\rm ad}\, U(t)\circ \phi_1(c)=\phi_2(c)\tforal
c\in C
\eneq
if and only if
\beq\label{TM-1}
[\phi_1]&=&[\phi_2]\,\,\,\text{in}\,\,\,KK(C,A),\\
\phi_1^{\ddag}=\phi_2^{\ddag},\,\,\,\,\,\, (\phi_1)_T&=&(\phi_2)_T\tand\\
\overline{R}_{\phi_1,\phi_2}&=&\{0\}.
\eneq
\end{thm}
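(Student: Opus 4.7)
The necessity is Corollary~\ref{NecC}, combined with Lemma~\ref{NecL} to pass from a continuous path to the piecewise-smooth sequence used in the rotation calculation. For sufficiency, I first reduce to the case that $C$ is a unital AH-algebra in the class $\mathcal{C}$ of Definition~\ref{Dc0}, presented as $C=\lim_{n\to\infty}(C_n,\psi_n)$ with $C_n$ a finite direct sum of building blocks in $\mathcal{C}_0$ and $\psi_n$ injective. This reduction uses Lin's classification of unital separable simple $C^*$-algebras in $\mathcal{N}$ with $TR\le 1$ as AH-algebras of this form, as invoked in the proof of Theorem~\ref{hitK1}.

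The conditions $[\phi_1]=[\phi_2]$ in $KK(C,A)$ and Lemma~\ref{DrL} provide a splitting $K_1(M_{\phi_1,\phi_2})=K_0(A)\oplus K_1(C)$, and the hypothesis $\overline{R}_{\phi_1,\phi_2}=0$ means the splitting $\theta$ can be chosen so that $R_{\phi_1,\phi_2}\circ\theta|_{K_1(C)}=0$. Hence $R_{\phi_1,\phi_2}(K_1(M_{\phi_1,\phi_2}))=\rho_A(K_0(A))$, which is precisely the main hypothesis of Lemma~\ref{LTRBOT}. Choose increasing exhausting finite subsets $\{\mathcal{F}_n\}\subset C$ and $\{\mathcal{P}_n\}\subset K_1(C)$, and summable $\{\delta_n\}$ dominated by the constants demanded at each level by Theorem~\ref{LNHOMP} applied with tolerance $\ep_n=2^{-n}$, homomorphism $\phi_2$ and finite subset $\mathcal{F}_n$. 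Lemma~\ref{LTRBOT} then produces unitaries $u_n\in A$ with $\operatorname{ad} u_n\circ\phi_1\approx_{\delta_n}\phi_2$ on $\mathcal{F}_n$ and $\rho_A(\operatorname{bott}_1(\phi_2,u_n^*u_{n+1}))|_{\mathcal{P}_n}=0$. Set $z_n=u_n^*u_{n+1}$. Theorem~\ref{LNHOMP} will then furnish, for each $n$, a continuous piecewise-smooth path $w_n:[0,1]\to U(A)$ from $z_n$ to $1$ with $\|[\phi_2(a),w_n(t)]\|<\ep_n$ on $\mathcal{F}_n$, provided the required Bott-vanishing holds. Concatenating via $u(t)=u_n w_n(t-n)$ for $t\in[n,n+1]$ gives a continuous path, and the combination of $\operatorname{ad} u_n\circ\phi_1\approx_{\delta_n}\phi_2$ on $\mathcal{F}_n$ with $\|[\phi_2(a),w_n(t)]\|<\ep_n$ forces $\operatorname{ad} u(t)\circ\phi_1(c)\to\phi_2(c)$ for every $c\in C$.

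The principal obstacle is upgrading the trace-vanishing condition supplied by Lemma~\ref{LTRBOT} to the full Bott-vanishing $\operatorname{Bott}(\phi_2,z_n)|_{\mathcal{Q}_n}=0$ on a finite subset $\mathcal{Q}_n\subset\underline{K}(C)$ demanded by Theorem~\ref{LNHOMP}. Inspecting Lemma~\ref{LTRBOT}, the correcting unitaries $U_n$ used in its construction come from Theorem~\ref{HITK} applied to a $\kappa_n\in\operatorname{Hom}_\Lambda(\underline{K}(C_n),\underline{K}(A))$ supported only on $K_1(C_n)\to K_0(A)$, so the $K_0$ and torsion components of $\operatorname{Bott}(\phi_2,z_n)$ vanish automatically on generators of $[\psi_n](\underline{K}(C_n))$, while the residual $K_1\to K_0$ component lies in $\ker\rho_A\subset K_0(A)$ by the trace condition. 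This residual is killed by inserting an additional unitary $W_n$ from Theorem~\ref{HITK}, with Bott data prescribed to cancel it (feasible since any homomorphism $K_1(C_n)\to\ker\rho_A$ is realized as a Bott map under our assumptions on $A$), and replacing $u_{n+1}$ by $u_{n+1}W_n^*$ at each stage of the induction. The combinatorial bookkeeping that interlocks the finite subsets $\mathcal{F}_n,\mathcal{P}_n,\mathcal{Q}_n$ and the tolerances $\delta_n,\ep_n$ with the constants demanded by Theorems~\ref{LNHOMP} and~\ref{HITK} (each set chosen only after the next round of constants has been fixed) is where the bulk of the care is required.
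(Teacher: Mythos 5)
Your skeleton matches the paper's: necessity via Corollary \ref{NecC}, and for sufficiency the reduction to an inductive limit of blocks in ${\cal C}_0$, Lemma \ref{LTRBOT} to produce the $u_n$, the Basic Homotopy Lemma \ref{LNHOMP} to connect $u_n^*u_{n+1}$ to $1$, and concatenation. But the step you yourself flag as the principal obstacle contains a genuine gap. You assert that ``the $K_0$ and torsion components of $\operatorname{Bott}(\phi_2,z_n)$ vanish automatically'' because the correcting unitaries $U_n$ inside Lemma \ref{LTRBOT} have Bott data supported on $K_1\to K_0$. That is false: $u_n^*u_{n+1}=U_n^*v_n^*v_{n+1}U_{n+1}$, and the $v_n$ come from approximate unitary equivalence (Corollary 11.7 of \cite{Lnn1}), so $\operatorname{Bott}(\phi_2,v_n^*v_{n+1})$ carries completely uncontrolled $\operatorname{bott}_0$ and $\Z/k\Z$-coefficient components; Lemma \ref{LTRBOT} controls only $\rho_A\circ\operatorname{bott}_1$. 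Consequently the obstruction to applying Theorem \ref{LNHOMP} is a full element of $\operatorname{Hom}_\Lambda(\underline{K}(C_n),\underline{K}(SA))$, not a homomorphism $K_1(C_n)\to\ker\rho_A$, and your single corrector $W_n$ does not touch the other components.

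The second, related, omission is consistency across stages. Killing $\operatorname{Bott}(\phi_2,u_n^*u_{n+1})$ for every $n$ simultaneously requires that the stage-$n$ obstruction be exhibited as a coboundary: the paper defines $\gamma_n$ from $(\phi_1,v_n)$ into $\underline{K}(M_{\phi_1,\phi_2})$, compares it with the \emph{global} splitting $\theta$ chosen (using $\overline{R}_{\phi_1,\phi_2}=0$) so that $R_{\phi_1,\phi_2}\circ\theta|_{K_1(C)}=0$, sets $\kappa_n=\zeta_n-\theta_n$, and proves $\kappa_n-\kappa_{n+1}\circ[\psi_{n+2}]=\Gamma(\operatorname{Bott}(\phi_1,v_nv_{n+1}^*))|_{H_n}$; the telescoping cancellation (\ref{NNFM7})--(\ref{NNFM8}) is what makes the corrections $w_n'$ coherent. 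Moreover, to realize $-\kappa_n$ by a unitary via Theorem \ref{HITK} one must verify $|\rho_A(\tilde\kappa_n(z_j))(\tau)|<\dt_{n+1}$, and this is where the rotation hypothesis is used a second time, through the almost-multiplicative lifts $L_n$ into the mapping torus and the paths $V_n(t)$ of 10.3--10.6 of \cite{Lnasym} (the computation (\ref{NVuV4})). Your proposal uses $\overline{R}_{\phi_1,\phi_2}=0$ only through Lemma \ref{LTRBOT}, which is not enough: without the global splitting $\theta$ and the mapping-torus analysis there is no reason the full $\underline{K}$-obstructions are simultaneously correctable, nor that the correctors are within reach of Theorem \ref{HITK}.
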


\begin{proof}
The ``only if " part follows from  \ref{NecC}. We need to show
that ``if" part of the theorem.

 Let $C=\lim_{n\to\infty}(C_n, \psi_n),$ where $C_n$ is a finite direct sum of \CA s in ${\cal C}_0$
 and $\psi_n: C_n\to C_{n+1}$ is a unital monomorphism.
Let $\{{\cal F}_n\}$ be an increasing sequence of finite subsets
of $C$ such that $\cup_{n=1}^{\infty}{\cal F}_n$ is dense in $C.$

Put
$$
M_{\phi_1, \phi_2}=\{f\in C([0,1],A): f(0)=\phi_1(a)\andeqn
f(1)=\phi_2(a)\,\,\,{\rm for\,\,\,some}\,\,\,a \in C\}.
$$

Since $C$ satisfies the Universal Coefficient Theorem, the
assumption that   $[\phi_1]=[\phi_2]$ in $KK(C,A)$ implies the
following
 exact sequence splits:
\beq\label{nFM}
\begin{array}{ccccccc}
0 \to  & \underline{K}(SA)  &\to &
\underline{K}(M_{\phi_1, \phi_2})
&{\stackrel{\pi_0}{\rightleftarrows}}_{\theta}& \underline{K}(C)
&\to 0
\end{array}
\eneq
For some $\theta\in {\rm Hom}_{\Lambda}(\underline{K}(C),
\underline{K}(A)).$

Furthermore, since $\tau\circ \phi_1=\tau\circ \phi_2$ for all
$\tau\in T(A)$ and $\overline{R}_{\phi_1,\phi_2}=\{0\},$ we may
also assume that
\beq\label{FM-1-1}
{\rm R}_{\phi_1,\phi_2}(\theta(x))=0\rforal x\in K_1(C).
\eneq

By \cite{DL}, one has
\beq\label{FM-1}
\lim_{n\to\infty}(\underline{K}(C_n), [\psi_n])=\underline{K}(C).
\eneq

Since each $K_i(C_n)$ is finitely generated, there is an integer
$K(n)\ge 1$ such that
\beq\label{NNFM-00}
{\rm Hom}_{\Lambda}(F_{K(n)}\underline{K}(C_n),
F_{K(n)}\underline{K}(A))={\rm Hom}_{\Lambda}(\underline{K}(C_n),
\underline{K}(A)).
\eneq

Let $\dt_n'>0$ (in place of $\dt$), ${\cal G}_n'\subset C$  (in
place of ${\cal G}$) and ${\cal P}_n'\subset \underline{K}(C)$ (in
place of ${\cal P}$) be finite subsets corresponding to
$1/2^{n+2}$ and ${\cal F}_n$ required by Theorem 7.4 of
\cite{Lnnhomp}. Without loss of generality, we may assume that
${\cal G}_n'\subset \psi_{n, \infty}({\cal G}_{n})$ and ${\cal
P}_n'= [\psi_{n, \infty}]({\cal P}_n)$ for some finite subset
${\cal G}_n$ of $C_n$ and for some finite subset ${\cal
P}_n\subset \underline{K}(C_{n}).$  We may assume that ${\cal
P}_n$ contains a set of generators of
$F_{K(n)}\underline{K}(C_{n}),$ $\dt_n'<1/2^{n+3}$ and ${\cal
F}_n\subset {\cal G}_n'.$ We also assume that $\text{Bott}(h',
u')|_{{\cal P}_n}$ is well defined whenever $\|h'(a),\,
u']\|<\dt_n'$ for all $a\in {\cal G}_n'$ and for any unital \hm\,
$h'$ and unitary $u'.$ Note that $\text{Bott}(h',u')|_{{\cal
P}_n}$ defines $\text{Bott}(h'|_{C_n},u').$

We further assume that
\beq\label{NNF0}
\text{Bott}(h,u)|_{{\cal P}_n}=\text{Bott}(h',u)|_{{\cal P}_n}
\eneq
provided that $h\approx_{\dt_n'}h'$ on ${\cal G}_n'.$


We may also assume that $\dt_n'$ is smaller than $\dt/3$ for that
$\dt$ in \ref{ddbot} for $C_n$ and ${\cal P}_n.$

Let $k(n)\ge n$ (in place of $n$) and $\eta_n>0$ (in place of
$\dt$) be required by \ref{HITK} for $\psi_{n,\infty}({\cal G}_n)$
(in place of ${\cal F}$), ${\cal P}_n$ (in place of ${\cal P}$)
and $\dt_n'/4$ (in place of $\ep$). For $C_n,$ since $K_i(C_n)$
($i=0,1$) is finitely generated, by choosing larger $k(n),$ we may
assume that $(\phi_{k(n),\infty})_{*i}$ is injective on $(\phi_{n,
k(n)})_{*i}(K_i(C_n)),$ $i=0,1.$
Since $K_i(C_n)$ is finitely generated, by (\ref{NNFM-00}), we may
further assume that $[\phi_{k(n), \infty}]$ is injective on
$[\phi_{n, k(n)}](\underline{K}(C_n)),$ $n=1,2,....$

By passing to a subsequence, to simplify notation, we may assume
that $k(n)=n+1.$

Let $\dt_n=\min\{\eta_n, \dt_n/2'\}.$

By \ref{LTRBOT},  there are unitaries $v_n\in U(A)$ such that
\beq\label{NFM2}
&&{\rm ad}\, v_n\circ \phi_1\approx_{\dt_{n+1}/4}
\phi_2\,\,\,\text{on}\,\,\,\psi_{n+1,\infty}({\cal
G}_{n+1}),\\\label{NFM3} &&\rho_A(\text{bott}_1(\phi_2,\,
v_n^*v_{n+1}))(x)=0
\rforal x\in \psi_{n+1,\infty}(K_1(C_{n+1}))\andeqn\\
&&\|[\phi_2(a),\,v_n^*v_{n+1}]\|<\dt_{n+1}/2\rforal a\in
\psi_{n+1,\infty}({\cal G}_{n+1})
\eneq
(Note that $K_1(C_{n+1})$ is finitely generated).

Note that, by (\ref{NNF0}), we may also assume that
\beq\label{NFM3+}
\text{Bott}(\phi_1, v_{n+1}v_n^*)|_{{\cal
P}_n}=\text{Bott}(v_n^*\phi_1v_n,v_n^*v_{n+1})|_{{\cal
P}_n}=\text{Bott}(\phi_2, v_n^*v_{n+1})|_{{\cal P}_n}.
\eneq
In particular,
\beq\label{NFM3+1}
\text{bott}_1(v_n^*\phi_1v_n,v_n^*v_{n+1})(x)=\text{bott}_1(\phi_2,
v_n^*v_{n+1})(x)
\eneq
for all $x\in \psi_{n+1,\infty}(K_1(C_{n+1})).$

 By applying 10.4 and 10.5 of \cite{Lnasym}, without loss of generality, we may assume that
$\phi_1$ and $v_n$ define $\gamma_n\in
{\rm Hom}_{\Lambda}(\underline{K}(C_{n+1}), \underline{K}(M_{\phi_1,
\phi_2}))$ and $[\pi_0]\circ \gamma_n=[{\rm id}_{C_{n+1}}].$
Furthermore, by 10.4  and 10.5 of \cite{Lnasym}, without loss of
generality, we may assume that
\beq\label{0TM-10-1}
\tau(\log(\phi_2\circ\psi_{n+1,\infty}(z_j)^*{\widetilde{v_n}}^*\phi_1\circ
\psi_{n+1,
\infty}(z_j){\widetilde{v_n}}))<\dt_{n+1},\,\,\,j=1,2,...,r(n),
\eneq
where $\{z_1,z_2,...,z_{r(n)}\}\subset U(M_k(C_{n+1}))$ which
forms a set of generators of $K_1(C_{n+1})$ and where
$\widetilde{v_n}={\rm diag}(\overbrace{v_n,v_n,...,v_n}^k).$

Let $H_n=[\phi_{n+1}](\underline{K}(C_{n+1})).$ Since
$\cup_{n=1}[\phi_{n+1,
\infty}](\underline{K}(C_n))=\underline{K}(C)$ and $[\pi_0]\circ
\gamma_n=[{\rm id}_{C_{n+1}}],$ we conclude that
\beq\label{0TM-10-}
\underline{K}(M_{\phi_1,\phi_2})=\underline{K}(SA)+\cup_{n=1}^{\infty}\gamma_n(H_n).
\eneq
Thus, by passing to a subsequence, we may further assume that
\beq\label{0TM-10--}
\gamma_{n+1}(H_n)\subset
\underline{K}(SA)+\gamma_{n+2}(H_{n+1}),\,\,\,n=1,2,....
\eneq

By identifying $H_n$ with $\gamma_{n+1}(H_n),$ we may write $j_n:
\underline{K}(SA)\oplus H_n\to \underline{K}(SA)\oplus H_{n+1}.$
By (\ref{0TM-10-}), the inductive limit is
$\underline{K}(M_{\phi_1,\phi_2}).$

From the definition of $\gamma_n,$ we note that,
$\gamma_{n}-\gamma_{n+1}\circ [\psi_{n+1}]$ maps
$\underline{K}(C_{n+1})$ into $\underline{K}(SA).$

By 10.6 of \cite{Lnasym},
$
\Gamma(\text{Bott}(\phi_1,
v_{n}v_{n+1}^*))|_{H_{n}}=(\gamma_{n+1}-\gamma_{n+2}\circ
[\psi_{n+2}])|_{H_n}
$
gives a \hm\, $\xi_n: H_n\to \underline{K}(SA).$
Put $\zeta_n=\gamma_{n+1}|_{H_n}.$ Then
\beq\label{0TM-19}
j_n(x,y)=(x+\xi_n(y),[\psi_{n+2}](y))\rforal (x, y)\in
\underline{K}(SA)\oplus H_n.
\eneq

Thus, we obtain the following  diagram:
\beq\label{LDG1}
\begin{array}{ccccccc}
0 \to  & \underline{K}(SA)  &\to & \underline{K}(SA)\oplus H_n
&\to
& H_n &\to 0\\\nonumber
 &\| & &\hspace{0.4in}\| \hspace{0.15in}\swarrow_{\xi_n} \hspace{0.05in}\downarrow_{[\psi_{n+2}]} &&
 \hspace{0.2in}\downarrow_{[\psi_{n+2}]} &\\\label{LDG2}
 0 \to  & \underline{K}(SA)  &\to & \underline{K}(SA)\oplus H_{n+1} &\to & H_{n+1} &\to 0\\\nonumber
 &\| & &\hspace{0.4in}\| \hspace{0.1in}\swarrow_{\xi_{n+1}}\downarrow_{[\psi_{n+3}]} &&
\hspace{0.2in} \downarrow_{[\psi_{n+3}]} &\\
 0 \to  & \underline{K}(SA)  &\to & \underline{K}(SA)\oplus H_{n+2} &\to & H_{n+2} &\to 0\\
 \end{array}
\eneq
By the assumption that $\overline{R}_{\phi_1,\phi_2}=0,$ $\theta$
also gives the following
$$
{\rm ker}R_{\phi_1, \phi_2}={\rm ker}\rho_A\oplus K_1(C).
$$

Define $\theta_n=\theta\circ [\psi_{n+2, \infty}]$ and
$\kappa_n=\zeta_n-\theta_n.$ Note that
\beq\label{NFM11-1}
\theta_n=\theta_{n+1}\circ [\psi_{n+2}].
\eneq
We also have that
\beq\label{NFM-11-2}
\zeta_n-\zeta_{n+1}\circ [\psi_{n+2}]=\xi_n.
\eneq

 Since $[\pi_0]\circ (\zeta_n-\theta_n)=0,$ $\kappa_n$ maps $H_n$
into $\underline{K}(SA).$ It follows that
\beq\nonumber
\kappa_n-\kappa_{n+1}\circ [\psi_{n+2}] &=&
\zeta_n-\theta_n-\zeta_{n+1}\circ [ \psi_{n+2}]+\theta_{n+1}\circ
[\psi_{n+2}]\\\label{NFM11}
&=&\zeta_n-\zeta_{n+1}\circ[\psi_{n+2}]=\xi_n
\eneq

It follows from 10.3 of \cite{Lnasym}  that  there are integers
$N_1\ge 1,$  a ${\dt_{n+1}\over{4}}$-$\psi_{n+1}({\cal
G}_{n+1})$-multiplicative \morp\, $L_n: \psi_{n,
\infty}(C_{n+1})\to M_{1+N_1}(M_{\phi_1,\phi_2}),$ a unital \hm\,
$h_0: \psi_{n+1, \infty}(C_{n+1})\to M_{N_1}(\C),$
and a continuous path of unitaries $\{V_n(t): t\in [0,3/4]\}$ of
$M_{1+N_1}(A)$ such that $[L_n]|_{{\cal P}_{n+1}'}$ is well
defined, $V_n(0)=1_{M_{1+N_1}(A)},$
\beq\label{NVuV1}
[L_n\circ \psi]|_{{\cal P}_n}=(\theta\circ
[\psi_{n+1,\infty}]+[h_0\circ \psi_{n+1,\infty}])|_{{\cal P}_n},
\eneq
\beq\label{NVuV2}
\hspace{-0.4in}\pi_t\circ L_n\circ
\psi_{n+1,\infty}\approx_{\dt_{n+1}/4} {\rm ad}\, V_n(t)\circ
((\phi_1\circ \psi_{n+1,\infty})\oplus (h_0\circ
\psi_{n+1,\infty}))
\eneq
on $\psi_{n+1,\infty}({\cal G}_{n+1})$ for all $t\in (0,3/4],$
\beq\label{NVuV3}
\hspace{-0.4in}\pi_t\circ L_n\circ
\psi_{n+1,\infty}\approx_{\dt_{n+1}/4} {\rm ad}\, V_n(3/4)\circ
((\phi_1\circ \psi_{n+1,\infty})\oplus (h_0\circ
\psi_{n+1,\infty}))
\eneq
on $\psi_{n+1,\infty}({\cal G}_{n+1})$ for all $t\in (3/4,1),$ and
\beq\label{NVuV33}
\pi_1\circ L_n\circ
\psi_{n+1,\infty}\approx_{\dt_{n+1}/4}\phi_2\circ
\psi_{n+1,\infty}\oplus h_0\circ \psi_{n+1,\infty}
\eneq
on $\psi_{n+1,\infty}({\cal G}_{n+1}),$ where $\pi_t:
M_{\phi_1,\phi_2}\to A$ is the point-evaluation at $t\in (0,1).$

Note that $R_{\phi_1,\phi_2}(\theta(z))=0$ for all $x\in
\phi_{n+1,\infty}(K_1(C_{n+1})).$  As computed in 10.4 of
\cite{Lnasym},
\beq\label{NVuV4}
\tau(\log((\phi_2(z)\oplus h_0(z)^*V_n(3/4)^*(\phi_1 (z)\oplus
h_0(z))V_n(3/4)))=0
\eneq
for $z=\psi_{n+1, \infty}(y),$ where $y$ is in a set of generators
of $K_1(C_{n+1})$ and for all $\tau\in T(A).$

Define $W_n'={\rm daig}(v_n,1)\in M_{1+N_1}(A).$ Then
$\text{Bott}((\phi_1\oplus h_0)\circ \psi_{n+1, \infty},\,
W_n'(V_n(3/4)^*)$ defines a \hm\, ${\tilde \kappa}_n\in
{\rm Hom}_{\Lambda}(\underline{K}(C_{n+1}),\underline{K}(SA)).$ By
(\ref{0TM-10-1})
\beq\label{NNFM1-}
\tau(\log((\phi_2\oplus h_0)\circ \psi_{n+1,\infty}(z_j)^*{\tilde
V}_n^*(\phi_1\oplus h_0)\circ \psi_{n+1,\infty}(z_j){\tilde
V}_n))<\dt_{n+1},
\eneq
$j=1,2,...,r(n),$ where ${\tilde V}_n={\rm diag}(V_n,1).$ Then, by
(\ref{NVuV4}), using exactly the same argument from
(\ref{LTR-17})--(\ref{LTR-19}), we compute that
\beq\label{NNFM1}
\rho_A({\tilde \kappa}_n(z_j))(\tau) <\dt_{n+1},\,\,\,j=1,2,....
\eneq
It follows from \ref{HITK} that there is a unitary $w_n'\in U(A)$
such that
\beq\label{NNFM2}
\|[\phi_1(a), w_n']\|<\dt_{n+1}'/4\rforal a\in \psi_{n,\infty}({\cal G}_n)\andeqn\\
\text{Bott}(\phi_1\circ \psi_{n,\infty},\, w_n')=-{\tilde
\kappa}_n\circ[\psi_n].
\eneq
By (\ref{NNF0}),
\beq\label{NNFM3}
\text{Bott}(\phi_2\circ \psi_{n,\infty},\,v_n^*w_n'v_n)|_{{\cal
P}_n}=-{\tilde \kappa}_n\circ [\psi_n]|_{{\cal P}_n}.
\eneq
Put $w_n=v_n^*w_n'v_n.$

It follows from 10.6 of \cite{Lnasym} that
\beq\label{NNFM4}
\Gamma(\text{Bott}(\phi_1\circ \psi_{n, \infty}, w_n'))=-\kappa_n
\andeqn \Gamma(\text{Bott}(\phi_1\circ \psi_{n, \infty},
w_{n+1}'))=-\kappa_{n+1}.
\eneq
We also have
\beq\label{NNFM5}
\Gamma(\text{Bott}(\phi_1\circ \psi_{n,\infty},
v_nv_{n+1}^*))|_{H_n}=\zeta_n-\zeta_{n+1}\circ [\psi_{n+2}]=\xi_n.
\eneq
But, by (\ref{NFM11}),
\beq\label{NNFM6}
(-\kappa_n +\xi_n+\kappa_{n+1}\circ [\psi_{n+1}])=0.
\eneq
It follows from (\ref{NNFM4}), (\ref{NNFM5}), (\ref{NNFM6}) and
10. 6 of \cite{Lnasym}that
\beq\label{NNFM7}
\hspace{-0.2in} -\text{Bott}(\phi_1\circ \psi_{n, \infty},\,w_n')
+\text{Bott}(\phi_1\circ \psi_{n, \infty}, \,v_nv_{n+1}^*)
+\text{Bott}(\phi_1\circ\psi_{n,\infty}, w_{n+1}') =0.
\eneq
Define $u_n=v_nw_n^*,$ $n=1,2,....$ Then, by (\ref{NFM2}) and
(\ref{NNFM2}),
\beq\label{NFM16}
{\rm ad}\, u_n\circ \phi_1\approx_{\dt_n'/2} \phi_2\rforal a\in
\psi_{n, \infty}({\cal G}_n).
\eneq

From (\ref{NFM3+}), (\ref{NNF0}) and (\ref{NNFM7}), we compute
that
\beq\label{NNFM8}
&&\hspace{-0.6in}\text{Bott}(\phi_2\circ \psi_{n, \infty},u_n^*u_{n+1})\\
&=& \text{Bott}(\phi_2\circ \psi_{n, \infty}, w_nv_n^*v_{n+1}w_{n+1}^*)\\
&=& \text{Bott}(\phi_2\circ \psi_{n, \infty}, w_n)+\text{Bott}(\phi_2\circ \psi_{n, \infty}, v_n^*v_{n+1})\\
&&\hspace{1.8in}+\text{Bott}(\phi_2\circ \psi_{n, \infty}, w_{n+1}^*)\\
&=&\text{Bott}(\phi_1\circ \psi_{n, \infty},w_n')+\text{Bott}(\phi_1\circ \psi_{n, \infty}, v_{n+1}v_n^*)\\
&&\hspace{1.8in}+\text{Bott}(\phi_1\circ \psi_{n, \infty}, (w_{n+1}')^*)\\
&=&-[-\text{Bott}(\phi_1\circ \psi_{n, \infty}, w_n')+\text{Bott}(\phi_1\circ \psi_{n, \infty}, v_nv_{n+1}^*)\\
&&\hspace{1.8in}+\text{Bott}(\phi_1\circ \psi_{n, \infty}, w_{n+1})]\\
&=&0
\eneq

Therefore, by \ref{LNHOMP}, there exists a piece-wise smooth and
continuous path of unitaries $\{z_n(t): t\in [0,1]\}$ of $A$ such
that
\beq\label{FM-16}
&&z_n(0)=1,\,\,\, z_n(1)=u_n^*u_{n+1}\andeqn\\\label{FM-16+}
&&\|[\phi_2(a),\, z_n(t)]\|<1/2^{n+2}\rforal a\in {\cal F}_n\andeqn
t\in [0,1].
\eneq
Define
$$
u(t+n-1)=u_nz_{n+1}(t)\,\,\,t\in (0,1].
$$
Note that $u(n)=u_{n+1}$ for all integer $n$ and $\{u(t):t\in [0,
\infty)\}$ is a continuous path of unitaries in $A.$ One estimates
that, by (\ref{NFM16}) and (\ref{FM-16+}),
\beq\label{FM-17}
{\rm ad}\, u(t+n-1)\circ \phi_1 &\approx_{\dt_n'}& {\rm
ad}\,z_{n+1}(t)\circ \phi_2
\\
 &\approx_{1/2^{n+2}}& \phi_2
\,\,\,\,\,\,\,\,\,\,\,\,\,\,\,\,\,\,\hspace{0.5in}\text{on}\,\,\,\,{\cal
F}_n
\eneq
 for all $t\in (0,1).$
It then follows that
\beq\label{FM-18}
\lim_{t\to\infty}u^*(t)\phi_1(a) u(t)=\phi_2(a)\rforal a\in C.
\eneq

\end{proof}

\section{The range of  asymptotic unitary equivalence
classes}

\begin{df}
{\rm Let $C$ be a unital \CA\, for which $T(C)\not=\emptyset.$
Denote by  $T_{\mathfrak{f}}(C)$ the set of faithful tracial
states.

Let $A$ be another unital \CA\, with $T(A)\not=\emptyset.$  Let
$\kappa \in KK_e(A,B)^{++},$ $\gamma: T(A)\to
T_{\mathfrak{f}}(C).$ We say that $\gamma$ is compatible with
$\kappa$ if for any projection $p\in M_k(C)$ and $\tau\in T(A),$
$\gamma(\tau)(p)=\tau(\kappa([p])).$ Let $\gamma^*:
\text{Aff}(T(C))\to \text{Aff}(T(A))$ be the continuous affine map
induced by $\gamma,$ i.e.,
$$
\gamma^*(f)(\tau)=f(\gamma(\tau))\tforal f\in \text{Aff}(C(C))
$$
and for all $\tau\in T(A).$ If $\gamma$ is compatible with
$\kappa,$ denote by $\overline{\gamma^*}:
\text{Aff}(T(C))/\overline{\rho_C(K_0(C))}\to
\text{Aff}(T(A))/\overline{\rho_A(K_0(A))}$ the map induced by
$\gamma^*.$ Let $\af: U(C)/CU(C)\to U(A)/CU(A)$ be a \hm. We say
$\af$ is compatible with $\kappa,$ if $\af(U_0(C)/CU(C)\subset
U_0(A)/CU(A)$ and  $q_1\circ \af({\bar
u})=\kappa([u])$  for any unitary $u\in U(A),$ where $q_1: U(A)/CU(A)\to K_1(A)$ is the
quotient map.

Now uppose that $K_1(C)=U(C)/U_0(A)$
and  suppose that $A$ is a unital simple \CA\, with $TR(A)\le 1.$ We say $\af,$
$\gamma$ and $\kappa$ are compatible if $\af$ and $\gamma$ are
compatible with $\kappa$ and
$$
\Delta_A\circ\af\circ \Delta_C^{-1}=\overline{\gamma^*}.
$$

Note that if $u=\prod_{k=1}^m \exp(i h_k),$ where $h_k\in
C_{s.a.},$ then $u\in CU(C)$ if and only if $\sum_{k=1}^m
\tau(h_j)=0$ for all $\tau\in T(C),$ $j=1,2,...,m.$
 }

\end{df}

\begin{NN}\label{Delt}
{\rm Let $X$ be a compact metric space and let $C=PM_k(C(X))P,$
where $P\in M_k(C(X))$ is a projection, and let $A$ be a unital
simple \CA\, with stable rank one. Let $\gamma: T(A)\to
T_{\mathfrak{f}}(C))$ be a continuous affine map. For any $\tau\in
T(A),$ and open subset $O\subset X,$ define
$\mu_{\gamma(\tau)}(O)=\sup\{\gamma(\tau)(f): 0<f\le 1\,\,\,
\text{supp}(f)\subset O\}.$ Since $\gamma(T(A))$ is compact, we
conclude that
$$
\inf_{\tau\in T(A)}\mu_{\gamma(\tau)}(O)>0
$$
for any non-empty open set $O\subset X.$

Fix $a\in (0,1).$ There are finitely many points
$x_1,x_2,...,x_m\in X$ such that $\cup_{i=1}^mB(x_i, a/2)\supset
X.$ Let $O_a$ be an open ball with radius $a.$ Then $O_a\supset
B(x_i, a/2)$ for some $i.$ Define
$$
\Delta_1(a)=\min_{1\le i\le m} \{\inf_{\tau\in
T(A)}\mu_{\gamma(\tau)}(B(x_i, a/2))\}.
$$

}

\end{NN}

\begin{lem}\label{lex}
Let $C\in {\cal C}_0$ and let $A$ be a unital separable simple
\CA\, with $TR(A)\le 1.$
 Suppose that $\kappa\in KK_e(C,A)^{++},$
$\gamma: T(A)\to T_{\mathfrak{f}}(C)$ is a continuous affine map
and $\af: U(C)/CU(C)\to U(A)/CU(C)$ is a \hm\, for which $\gamma,$
$\af$ and $\kappa$ are compatible.

Let $\ep_1>0,$ $\ep_2>0,$ $\eta>0,$ ${\cal H}\subset C_{s.a}$ be a
finite subset and let ${\cal U}\subset U(M_{\infty}(C))$ be a
finite subset. Then there exists a unital monomorphism $h: C\to A$
such that
\beq
[h]&=&\kappa\,\,\,\text{in}\,\,\, KK(C,A),\\
\sup_{\tau\in T(A)}|\tau\circ h(a)-\gamma(\tau)(a)|&<&\ep_1\tforal a\in {\cal H},\,\,\,\text{and},\\
{\rm dist}(h^{\ddag}(\bar{z}),\af(\bar{z}))&<&\ep_2\tforal z\in
{\cal U},
\eneq

Moreover, we may also require that
$$
\mu_{\tau\circ h}(O_a)\ge 3\Delta_1(a)/4\tforal a\ge \eta
$$
for all $\tau\in T(A),$ where $O_a$ is an open ball of $X$ with
radius $a$ and $\Delta_1: (0,1)\to (0,1)$ is given in \ref{Delt}
and where the measure $\mu_{\tau\circ h}$ is defined as in 10.9 of
{\rm \cite{Lnn1}.}

\end{lem}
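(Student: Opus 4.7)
The plan is to build $h$ in two stages: first produce a unital monomorphism $h_0 : C \to A$ which realises $\kappa$ and approximates $\gamma$ on traces (together with the required measure lower bound), and then modify $h_0$ by a small inner-type correction so that $h^{\ddag}$ matches $\alpha$ to within $\epsilon_2$ on $\mathcal{U}$. The feasibility of the second step rests on the isometric isomorphism $\Delta_A$ recalled in \ref{DDCU}, which identifies $U_0(A)/CU(A)$ with $\text{Aff}(T(A))/\overline{\rho_A(K_0(A))}$; the compatibility hypothesis $\Delta_A \circ \alpha \circ \Delta_C^{-1} = \overline{\gamma^*}$ ensures these two stages are consistent.

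For the first stage, choose a finite partition of unity on $X$ subordinate to open balls of radius less than $\eta/2$, pick a point $x_i$ in each ball, and, using $\mu_{\gamma(\tau)}$ to assign approximate multiplicities, assemble a finite-dimensional representation $\pi : C \to F$ whose tracial behaviour matches $\gamma$ up to $\epsilon_1/2$ on $\mathcal{H}$ (the availability of projections with prescribed trace values follows from $TR(A)\le 1$ and stable rank one). To make the result a monomorphism realising the full class $\kappa$, first absorb into $A$ a unital copy of a simple $TR = 0$ algebra $B$ with the same Elliott invariant as $A$ via \ref{hitK1}, and then apply \ref{small} inside $B$ to produce a unital monomorphism $h_{0,0}: C \to p_0Bp_0$ (with $\tau(p_0)$ small) together with a finite dimensional $h_{0,1}: C \to F \subset (1-p_0)A(1-p_0)$ with $[h_{0,0}+h_{0,1}] = \kappa$. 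Set $h_0 = h_{0,0} + h_{0,1}$ with the finite-dimensional summand taken to be $\pi$ rearranged so that the traces of $h_0$ approximate $\gamma$. Because the point-evaluations in $\pi$ are dense on scale $\eta/2$ with masses bounded below by $\Delta_1(a)$, the measure condition $\mu_{\tau \circ h_0}(O_a) \ge 3\Delta_1(a)/4$ holds automatically for $a\ge \eta$.

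For the second stage, consider $\beta = \alpha \cdot (h_0^{\ddag})^{-1}$. By compatibility of $\alpha$ with $\kappa$ and the fact that $[h_0]=\kappa$, the map $\beta$ vanishes after projection onto $K_1(A)$, so it factors through $U_0(A)/CU(A)$. Moreover, since $h_0$ already realises $\gamma$ up to small error on traces, the compatibility identity $\Delta_A \circ \alpha \circ \Delta_C^{-1}=\overline{\gamma^*}$ forces the image of $\beta$ on $U_0(C)/CU(C)$ to lie inside a small ball of $\text{Aff}(T(A))/\overline{\rho_A(K_0(A))}$. Using the isometric isomorphism $\Delta_A^{-1}$, translate $\beta|_{U_0(C)/CU(C)}$ back to a perturbation implementable by conjugating $h_0$ by an exponential unitary $\exp(ih)$ with $h$ self-adjoint of small norm; by standard decomposition of $U(C)/CU(C)$ into its $U_0(C)/CU(C)$ and $K_1(C)$ summands (available since $K_1(C) = U(C)/U_0(C)$ for $C \in \mathcal{C}_0$), the correction can be carried out finitely on the generators in $\mathcal{U}$.

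The principal obstacle is the second stage: the correction of $h_0^{\ddag}$ to within $\epsilon_2$ of $\alpha$ must be implemented without disturbing the already-established value $[h] = \kappa$ and the tracial approximation to $\gamma$ beyond the allotted $\epsilon_1$. The key technical lever is the isometric half of the statement about $\Delta_A$, which translates a distance bound in $U_0(A)/CU(A)$ directly to a norm bound in $\text{Aff}(T(A))/\overline{\rho_A(K_0(A))}$, enabling a quantitatively controlled selection of the correcting exponential unitary. The lower-bound measure condition, finally, is preserved under this small inner perturbation since the perturbation by $\exp(ih)$ with small $\|h\|$ changes the induced measures by an amount much smaller than $\Delta_1(a)/4$ uniformly in $\tau$.
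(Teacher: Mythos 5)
Your proposal has two genuine gaps, one in each stage. In the first stage you propose to realize the tracial data $\gamma$ by a finite--dimensional representation $\pi\colon C\to F$, asserting that ``the availability of projections with prescribed trace values follows from $TR(A)\le 1$ and stable rank one.'' This is false: for a unital simple \CA\ with $TR(A)\le 1$ the image $\rho_A(K_0(A))$ need \emph{not} be dense in $\text{Aff}(T(A))$ (think of a simple unital AI--algebra with $K_0=\Z$ and unique trace, which has only trivial projections). In such an algebra a unital finite--dimensional representation of $C=PM_r(C(X))P$ amounts to a single point--evaluation and cannot approximate a faithful trace $\gamma(\tau)$, nor satisfy the measure lower bound $\mu_{\tau\circ h}(O_a)\ge 3\Delta_1(a)/4$. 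The paper avoids this by not using a finite--dimensional target at all for the tracial matching: after producing $h_0+h_1$ from \ref{small} with $\tau(p_0)$ small, it applies Lemma 9.5 of \cite{Lnctr1} to replace the large finite--dimensional piece by a unital \hm\ $h_2\colon C\to B_0$ into a \SCA\ $B_0\in{\cal I}$ (a direct sum of interval algebras) of $A$, with $(h_2)_{*0}=(h_{1,1})_{*0}$ and $|\tau\circ h_2(f)-\tau(1-p_0-e_0)\gamma(\tau)(f)|<\ep_1/4$; the continuous spectrum of interval algebras is what makes arbitrary tracial data reachable when projections are scarce.

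The second stage is also unworkable as stated: you propose to correct $h_0^{\ddag}$ toward $\af$ by \emph{conjugating} $h_0$ by a unitary $\exp(ih)$. But inner perturbations act trivially on $U(A)/CU(A)$: if $h'={\rm ad}\,u\circ h_0$, then $h'(v)h_0(v)^*=u^*h_0(v)uh_0(v)^*$ is a multiplicative commutator, so $(h')^{\ddag}=h_0^{\ddag}$ and no choice of $u$ changes anything. The correction must instead be built into the direct--sum decomposition before the homomorphism is assembled: the paper reserves a small corner $e_0Ae_0$ and uses Lemma 7.4 of \cite{Lnctr1} to produce a homotopically trivial $\Phi\colon C\to e_0Ae_0$ with $\af({\bar w})^{-1}(\Phi\oplus\phi_1)^{\ddag}({\bar w})=\overline{g_w}$ and ${\rm cel}(g_w)<\ep_2/2$ for the $K_1$--part of ${\cal U}$; the $U_0(C)/CU(C)$--part of the $\ddag$ condition then comes for free from the tracial approximation via the isometric determinant $\Delta_A$ (this is the one piece of your outline that matches the paper's argument). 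Finally, the measure condition is obtained in the paper simply by running the construction with smaller $\ep_1$ and a larger set ${\cal H}$, not by a perturbation estimate.
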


\begin{proof}
The most part of the proof of this lemma is contained in that of
\ref{hitK1}. For the convenience of the reader, we present the
proof below.

As in the proof of \ref{HITK}, there is an unital embedding
$\imath: B\to A,$ where $B$ is a unital separable amenable simple
\CA\, with $TR(B)=0,$ such that $[\imath]\in KK_e(B,A)^{++}$ is an
invertible element. Therefore there is $\kappa_0: KK_e(C, B)^{++}$
such that
$$
\kappa=[\imath]\times \kappa_0\,\,\, {\rm in}\,\,\, KK(C, A).
$$

We write  $C=PM_r(C(X))P,$ where $X$ is a connected finite CW
complex as described in \ref{Dc0}, $r\ge 6$ and $P\in M_r(C(X))$
is projection.

Let $\ep_1>0,$ $ \ep_2>0,$ $\kappa,$ ${\cal H}$  and ${\cal U}$ be
given. We may assume that $\ep_1<\ep_2/2\pi.$ Denote by
${\overline{\cal U}}$ the image of ${\cal U}$ in $U(C)/CU(C).$
Write $U(C)/CU(C)=U_0(C)/CU(C)\oplus K_1(C).$ Let $\pi_1:
U(C)/CU(C)\to U_0(C)/CU(C)$ and $\pi_2: U(C)/CU(C)\to K_1(C)$ be
two fixed projection maps. Let ${\cal H}_0\subset C_{s.a}$ be a
finite subset so that the image of $\overline{\widehat{{\cal H}_0}}$ in
$\text{Aff}(T(C))/\overline{\rho_C(K_0(C))}$ containing
$\Delta_C(\pi_1(\overline{\cal U})).$ Let ${\cal H}_1={\cal H}\cup
{\cal H}_0.$  To simplify notation, without loss of generality, we
may also assume that $\overline{\cal U}=\pi_1(\overline{\cal
U})\cup\pi_2(\overline{\cal U}).$
Let $\dt_1>0$ (in place of $\dt$) be required by Lemma 7.4 of
\cite{Lnctr1} for $\ep_2/2$ (in place of $\ep$), ${\cal U}$ and
$\af.$

By \ref{small}, there is a projection $p_0\in A,$ a finite
dimensional \SCA\, $F\subset A$ with $1_F=1-p_0$ and unital
monomorphism  $h_0: C\to p_0Ap_0$ and a unital \hm\, $h_1: C\to F$
such that
\beq\label{lex-1}
[h_0+h_1]=\kappa\andeqn \tau(p_0)<\min\{\ep_1/4, \dt_1/4\}
\eneq
for all $\tau\in T(A).$ It is easy to find a projection $e_0\in
(1-p_0)A(1-p_0)$ such that $e_0$ commutes with every element in
$F$ and
\beq\label{lex-2}
\tau(e_0)<\min\{\ep_1/4, \dt_1/4\}\tforal \tau\in T(A).
\eneq
Let $h_{1,0}=e_0h_1$ and $h_{1,1}=(1-p_0-e_0)h_1.$

It follows from Lemma 9.5 of \cite{Lnctr1} that there is a \SCA\,
$B_0\subset (1-p_0-e_0)A(1-p_0-e_0)$ for which $B_0\in {\cal I}$
and there exists a unital \hm\, $h_2: C\to B_0$ such that
\beq\label{lex-3}
(h_2)_{*0}=(h_{1,1})_{*0}\andeqn |\tau\circ
h_2(f)-\tau(1-p_0-e_0)\gamma(\tau)(f)|<\ep_1/4
\eneq
for all $f\in {\cal H}_1$ and for all $\tau\in T(A).$

Now define $\phi_0=h_{1,0}$ and $\phi_1=h_0\oplus h_2.$ Note that
$\phi_0$ is homotopically trivial. By applying Lemma 7.4 of
\cite{Lnctr1}, we obtain a unital \hm\, $\Phi: C\to e_0Ae_0$ such
that
\beq\label{lex-4}
\Phi_{*0}=\phi_{*0},\,\,\,\af({\bar w})^{-1}(\Phi\oplus
\phi_1)^{\ddag}({\bar w})={\overline{g_w}},
\eneq
where $g_w\in U_0(A)$ and ${\rm cel}(g_w)<\ep_2/2$ for all
$\bar{w}\in \pi_2(\overline{{\cal U}})$ and $\Phi$ is
homotopically trivial. Now define $h=\Phi+\phi_1.$ Then clearly
\beq\label{lex-5}
[h]=[h_0+h_1]=\kappa\,\,\,{\rm in}\,\,\, KK(C,A).
\eneq
We also have
\beq\label{lex-6}
 |\tau\circ h(f)-\gamma(\tau)(f)|&<&|\tau\circ
 h(f)-\tau(1-p_0-e_0)\gamma(\tau)(f)|+\ep_1/2\\
 &<& \ep_1/4+\ep_1/2<3\ep_1/4\,\,\,\,\,\,\tforal \tau\in T(A)
 \eneq
 and for all $f\in {\cal H}_1.$ In particular, for all $x\in
 \Delta_C^{-1}(\overline{{\cal H}_0}),$
\beq\label{lex-6+1}
{\rm dist}(h^{\ddag}(x),\af(x))<\ep_2
\eneq
(see \ref{DDCU}).
 Combining this with (\ref{lex-4}), we have, for all $w\in {\cal U},$
\beq\label{lex-6+}
{\rm dist}(h^{\ddag}({\bar w}) , \af({\bar w})) <\ep_2.
\eneq

We note that the last part of the lemma follows by choosing
smaller $\ep_1$ and larger ${\cal H}.$

\end{proof}

\begin{lem}\label{LEx1}
Let $C\in {\cal C}_{0}$ and let $A$ be a unital separable simple
\CA\, with $TR(A)\le 1.$ Suppose that $\kappa\in KK_e(C,A)^{++},$
$\gamma: T(A)\to T_{\mathfrak{f}}(C)$ is a continuous affine map
and $\af: U(C)/CU(C)\to U(A)/CU(C)$ is a \hm\, for which $\gamma,$
$\af$ and $\kappa$ are compatible. Then there exists a unital
monomorphism $h: C\to A$ such that
$$
[h]=\kappa\,\,\,{\rm in}\,\,\, KK(C,A), \tau\circ
h(f)=f(\gamma(\tau))\tforal f\in C_{s,a}\tand\\
h^{\ddag}=\af.
$$

\end{lem}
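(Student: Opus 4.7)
The plan is to obtain $h$ as the pointwise limit of a Cauchy sequence of unital monomorphisms $g_n={\rm ad}\,v_n\circ h_n$, where each $h_n$ is furnished by Lemma~\ref{lex} with progressively tighter tolerances and the conjugating unitaries $v_n$ are built inductively by intertwining via Corollary~11.6 of \cite{Lnn1} (used here exactly as in the proof of Theorem~\ref{HITK}).

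First I would fix an increasing sequence $\{{\cal F}_n\}$ of finite subsets of $C$ with dense union.  For each $n$ I let $\dt^{(n)}>0$, together with finite subsets ${\cal P}^{(n)}\subset\underline{K}(C)$, ${\cal V}^{(n)}\subset U(M_\infty(C))$ and ${\cal S}^{(n)}\subset C_{s.a.}$, be the data that Corollary~11.6 of \cite{Lnn1} requires to produce approximate unitary equivalence on ${\cal F}_n$ within $1/2^{n+1}$.  Then I choose increasing finite subsets ${\cal H}_n\supset{\cal S}^{(n)}$ whose union is dense in $C_{s.a.}$, and ${\cal U}_n\supset{\cal V}^{(n)}$ whose images generate a dense subgroup of $U(C)/CU(C)$ (possible by separability and the splitting (\ref{UCad})), together with $\ep_n,\dt_n>0$ satisfying $\ep_n+\ep_{n+1}<\dt^{(n)}$ and $\dt_n+\dt_{n+1}<\dt^{(n)}$.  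Lemma~\ref{lex}, applied with these data, furnishes a unital monomorphism $h_n:C\to A$ with $[h_n]=\kappa$ in $KK(C,A)$, $\sup_{\tau\in T(A)}|\tau(h_n(a))-\gamma(\tau)(a)|<\ep_n$ for $a\in{\cal H}_n$, and ${\rm dist}(h_n^{\ddag}({\bar z}),\af({\bar z}))<\dt_n$ for $z\in{\cal U}_n$.

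Next I would intertwine.  Set $v_1=1$ and $g_1=h_1$; assuming $v_n,g_n$ constructed, I apply Corollary~11.6 of \cite{Lnn1} to $g_n$ and $h_{n+1}$: since inner conjugation preserves $[\cdot]$, $\ddag$ and traces, $[g_n]=[h_{n+1}]=\kappa$, the $\ddag$-maps agree to within $\dt_n+\dt_{n+1}<\dt^{(n)}$ on ${\cal V}^{(n)}$, and the induced traces agree to within $\ep_n+\ep_{n+1}<\dt^{(n)}$ on ${\cal S}^{(n)}$.  The corollary supplies $w_n\in U(A)$ with $\|{\rm ad}\,w_n\circ h_{n+1}(a)-g_n(a)\|<1/2^{n+1}$ for $a\in{\cal F}_n$.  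Putting $v_{n+1}=v_nw_n$ and $g_{n+1}={\rm ad}\,v_{n+1}\circ h_{n+1}$, the sequence $\{g_n\}$ is Cauchy on $\cup_n{\cal F}_n$, and since that union is dense, $h(a)=\lim_n g_n(a)$ extends to a unital $\ast$-homomorphism $h:C\to A$.

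Finally I would verify the three required properties.  For any finite ${\cal P}\subset\underline{K}(C)$, choosing $n$ large enough that $g_n$ approximates $h$ on a finite subset determining $[\cdot]|_{{\cal P}}$ gives $[h]|_{{\cal P}}=[g_n]|_{{\cal P}}=\kappa|_{{\cal P}}$, so $[h]=\kappa$ in $KK(C,A)$.  For $a\in{\cal H}_k$ and $\tau\in T(A)$, passing to the limit in $|\tau(g_n(a))-\gamma(\tau)(a)|<\ep_n\to 0$ yields $\tau(h(a))=\gamma(\tau)(a)$, which extends by density and continuity to every $a\in C_{s.a.}$.  Likewise $h^{\ddag}({\bar z})=\af({\bar z})$ for $z\in\cup_n{\cal U}_n$ and hence on the dense subgroup they generate; since both $h^{\ddag}$ (by contractivity of $h$) and $\af$ (by the compatibility conditions, using that $\Delta_C$ and $\Delta_A$ are isometric isomorphisms on the $U_0/CU$ parts while $K_1(C)$ is discrete in the splitting (\ref{UCad})) are continuous in the metric of~\ref{DDCU}, $h^{\ddag}=\af$.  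Injectivity is automatic: if $x\in C$ is nonzero then $\gamma(\tau)(x^*x)>0$ for every $\tau$ by faithfulness, so $\tau(h(x^*x))=\gamma(\tau)(x^*x)>0$ and $h(x^*x)\ne 0$ in the simple \CA\ $A$.  The main technical obstacle is the back-and-forth bookkeeping of the tolerances $\ep_n,\dt_n$ and finite subsets ${\cal H}_n,{\cal U}_n$ so that the approximate matching provided by Lemma~\ref{lex} is tight enough for Corollary~11.6 of \cite{Lnn1} to apply at each inductive step.
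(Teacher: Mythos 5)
Your overall strategy coincides with the paper's: produce a sequence $h_n$ from Lemma \ref{lex} with shrinking tolerances on increasing finite sets, intertwine by unitaries using an approximate uniqueness theorem from \cite{Lnn1}, and take the limit. The verification of $[h]=\kappa$, $\tau\circ h=\gamma(\tau)$, $h^{\ddag}=\af$ and injectivity at the end is fine. The gap is in the intertwining step, and it is a real one.

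You fix the data $\dt^{(n)}$, ${\cal P}^{(n)}$, ${\cal V}^{(n)}$, ${\cal S}^{(n)}$ ``required by Corollary 11.6 of \cite{Lnn1}'' \emph{before} constructing any of the $h_n$, and then arrange ${\cal H}_n\supset{\cal S}^{(n)}$, ${\cal U}_n\supset{\cal V}^{(n)}$. But Corollary 11.6, as it is used in the proof of Theorem \ref{HITK} in this paper, produces its tolerances \emph{for a given monomorphism} $h$ (``required by Cor.\ 11.6 of \cite{Lnn1} for $\ep_1/2$ \dots and $h$''): since $C\in{\cal C}_0$ is not simple, any uniqueness theorem for maps out of $C$ must control how ``spread out'' the maps are (their fullness, equivalently the measures $\mu_{\tau\circ h}$), and that control enters through the reference monomorphism. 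In your induction the reference map at stage $n$ is $g_n$, which is only available after $h_n$ has been built from ${\cal H}_n\supset{\cal S}^{(n)}$ — so choosing ${\cal S}^{(n)}$ first is circular; and if instead you choose the data after $g_n$ is built, there is no reason $\tau\circ g_n$ is close to $\gamma(\tau)$ on the new set ${\cal S}^{(n)}$. The paper breaks this circle by using the ``Moreover'' clause of Lemma \ref{lex}, i.e.\ the uniform lower bound $\mu_{\tau\circ h_n}(O_a)\ge\Delta(a)$ for a single function $\Delta$ determined in advance by $\gamma$ (see \ref{Delt}), and then invoking Theorem 10.10 of \cite{Lnn1}, whose hypotheses are phrased in terms of exactly this uniform measure bound and which extracts the subsequence and intertwining unitaries for the whole sequence at once. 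Your proposal never uses the measure clause of Lemma \ref{lex}, which is precisely the ingredient needed to make the tolerances independent of the (yet to be constructed) maps $h_n$.
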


\begin{proof}
Let $\Delta=\Delta_1/2$ be as in \ref{Delt} associated with
$\gamma.$

Let $\{\ep_n\}$ be a sequence of decreasing  positive numbers with
$\lim_{n\to\infty}\ep_n=0,$  $\{{\cal H}_n\}$ be an increasing
sequence of finite subsets of $C_{s.a}$ such that the union is dense
in $A_{s.a.}$ and $\{{\cal U}_n\}$ be an increasing sequence of
finite subsets of $U(C)$ such that the union is dense in $U(C).$ Let
$\{\eta_n\}$ be another sequence of decreasing positive numbers such
that $\lim_{n\to\infty}\eta_n=0.$

It follows from \ref{lex} that there exists a sequence of unital
monomorphisms $h_n: C\to A$ such that
\beq\label{LEx1-1}
[h_n]&=&\kappa\,\,\,{\rm in}\,\,\, KK(C,A),\\
|\tau\circ h_n(f)-\gamma(\tau)(f)|&<&\ep_n
\eneq
for all $\tau\in T(A)$ and for all $f\in {\cal H}_n,$ and
\beq\label{LEx1-2-}
\mu_{\tau\circ h_n}(O_a)\ge \Delta(a)\tforal \tau\in T(A)
\eneq
and for any open ball with radius $a\ge \eta_n.$ Moreover,
\beq\label{LEx1-2}
 {\rm
dist}(h_n^{\ddag}({\bar w}), \af({\bar w}))&<&\ep_n
\eneq
for all $w\in {\cal U}_n.$

Let $\{{\cal F}_n\}$ be an increasing sequence of finite subsets
of $C$ such that the union is dense in $C.$  It follows from 10.10
of \cite{Lnn1} that there exists a subsequence $\{n(k)\}$ and a
sequence of unitaries $\{u_n\}\subset A$ such that
\beq\label{LEx1-3}
{\rm ad}\, u_k\circ h_{n(k+1)}\approx_{1/2^k} {\rm ad}\,
u_{k-1}\circ h_{n(k)}\,\,\,{\rm on}\,\,\, {\cal F}_k,
\eneq
$n=1,2,....$
 It follows that $\{{\rm ad}\, u_k\circ h_{n(k+1)}\}$ is a Cauchy
 sequence. Therefore it converges. Let $h$ be the limit.
It is then easy to check that $h$ meets all requirements.

\end{proof}

\begin{lem}\label{LEx2}
Let $C\in {\cal C}$ and let $A$ be a unital separable simple \CA\,
with $TR(A)\le 1.$ Suppose that $\kappa\in KL_e(C,A)^{++},$
$\gamma: T(A)\to T(C)$ is a continuous affine map and $\af:
U(C)/CU(C)\to U(A)/CU(A)$ is a \hm\, for which $\gamma,$ $\af$ and
$\kappa$ are compatible. Then there exists a unital \hm\, $h: C\to
A$ such that
$$
[h]=\kappa\,\,\,{\rm in}\,\,\, KL(C,A),
h_T=\gamma(\tau)\tand\\
h^{\ddag}=\af.
$$
\end{lem}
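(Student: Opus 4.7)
The plan is to construct $h:C\to A$ by a one-sided Elliott intertwining of stagewise realisations produced by Lemma~\ref{LEx1}. Write $C=\lim_{n\to\infty}(C_n,\imath_n)$ with each $C_n$ a finite direct sum of algebras from ${\cal C}_0$ and each $\imath_n:C_n\to C_{n+1}$ a unital monomorphism, and let $\imath_{n,\infty}:C_n\to C$ denote the canonical embedding. Define the restricted data
$$
\kappa_n=\kappa\times[\imath_{n,\infty}]\in KL_e(C_n,A)^{++},\quad \gamma_n=(\imath_{n,\infty})_T\circ\gamma,\quad \af_n=\af\circ\imath_{n,\infty}^{\ddag}.
$$
Because $K_*(C_n)$ is finitely generated one has $KL(C_n,A)=KK(C_n,A)$, so $(\kappa_n,\gamma_n,\af_n)$ is a compatible triple in the sense of Lemma~\ref{LEx1}. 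Applying \ref{LEx1} to each ${\cal C}_0$-summand and assembling produces a unital monomorphism $h_n:C_n\to A$ with $[h_n]=\kappa_n$ in $KK(C_n,A)$, $(h_n)_T=\gamma_n$, and $h_n^{\ddag}=\af_n$.

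Next, $h_{n+1}\circ\imath_n$ and $h_n$ carry identical $KL$-, $T$- and $\ddag$-data on $C_n$, since this is simply the stability of $(\kappa_n,\gamma_n,\af_n)$ under $[\imath_n]$. Because $C_n\in{\cal N}$ and $TR(C_n)\le 1$, Theorem~\ref{Tnn1} yields a sequence of unitaries in $A$ approximately intertwining $h_{n+1}\circ\imath_n$ and $h_n$. Fix an increasing sequence of finite subsets $\{{\cal F}_n\}\subset C$ with dense union and ${\cal F}_n\subset\imath_{n,\infty}(C_n)$, lift to ${\cal F}_n'\subset C_n$, and choose $v_n\in U(A)$ so that
$$
\|{\rm ad}\,v_n\circ h_{n+1}\circ\imath_n(a)-h_n(a)\|<1/2^{n+2}\tforal a\in{\cal F}_n'.
$$
Setting $w_n=v_1v_2\cdots v_{n-1}$ (with $w_1=1$) and $\Phi_n={\rm ad}\,w_n\circ h_n$, the sequence $\{\Phi_n\circ\imath_{k,n}\}_{n\ge k}$ becomes Cauchy on ${\cal F}_k'$ for every $k$; a standard one-sided Elliott argument then yields a unital \hm\ $h:C\to A$ such that, for each fixed $n$, $h\circ\imath_{n,\infty}$ is the pointwise limit of $\Phi_m\circ\imath_{n,m}$ as $m\to\infty$.

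It remains to check that $h$ realises $(\kappa,\gamma,\af)$. By construction $h\circ\imath_{n,\infty}$ is approximately unitarily equivalent to $h_n$, hence $[h\circ\imath_{n,\infty}]=\kappa_n$, $(h\circ\imath_{n,\infty})_T=\gamma_n$ and $(h\circ\imath_{n,\infty})^{\ddag}=\af_n$ for every $n$. The trace identity $h_T=\gamma$ then follows by density of $\bigcup_n\imath_{n,\infty}(C_n)$ in $C$ and joint continuity; the $KL$-identity $[h]=\kappa$ follows from the Dadarlat--Loring identification $KL(C,A)={\rm Hom}_{\Lambda}(\underline K(C),\underline K(A))$ combined with $\underline K(C)=\lim_n\underline K(C_n)$. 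This is precisely why the statement is formulated in $KL$ rather than $KK$, in accordance with the introduction's remark that $\lim_n KK(C_n,A)\ne KK(C,A)$ in general.

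The main obstacle I anticipate is the identity $h^{\ddag}=\af$ in $U(C)/CU(C)$, since unitaries of $C$ are only approximated---not represented---by unitaries from the $\imath_{n,\infty}(C_n)$. The way to handle this is via the splitting $U(C_n)/CU(C_n)=U_0(C_n)/CU(C_n)\oplus K_1(C_n)$ from \ref{Dc0}. The $K_1$-component passes to the limit by continuity of $K_1$ together with finite generation at each stage, so the $\ddag$-identity holds on that part. On the $U_0$-component one invokes the isometric isomorphism $\Delta_{C_n}$ of \ref{DDCU}: the compatibility condition $\Delta_A\circ\af\circ\Delta_C^{-1}=\overline{\gamma^*}$ combined with the already-verified identity $h_T=\gamma$ forces $h^{\ddag}$ to coincide with $\af$ on $U_0(C)/CU(C)$ as well. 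Together these give $h^{\ddag}=\af$ on all of $U(C)/CU(C)$.
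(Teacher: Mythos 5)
Your overall strategy is the same as the paper's: restrict $(\kappa,\gamma,\af)$ to each $C_n$, realize the restricted triple by a monomorphism $h_n:C_n\to A$ via Lemma \ref{LEx1}, and then run a one-sided approximate intertwining to pass to the limit. The construction of $h$ and the verification of $[h]=\kappa$, $h_T=\gamma$ and $h^{\ddag}=\af$ (via density of $\bigcup_n\imath_{n,\infty}(U(C_n))$, continuity, and the determinant $\Delta$) are all in order.

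There is, however, a genuine gap at the intertwining step. You invoke Theorem \ref{Tnn1} for the pair $h_{n+1}\circ\imath_n,\ h_n:C_n\to A$, asserting that ``$C_n\in{\cal N}$ and $TR(C_n)\le 1$.'' But $C_n$ is a finite direct sum of algebras $PM_l(C(X))P$ with $X$ a wedge of circles and a finite complex; it is not simple, and tracial rank in the sense of \ref{Dtr1} is only defined for unital \emph{simple} \CA s, so the hypotheses of Theorem \ref{Tnn1} simply fail. This is not a cosmetic issue: for a non-simple (homogeneous) domain, agreement of $KL$-, trace- and $\ddag$-data is in general \emph{not} sufficient for approximate unitary equivalence into a tracial-rank-one target; one also needs a uniform fullness/eigenvalue-distribution condition on the two maps. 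That is exactly why Lemma \ref{lex} carries the extra conclusion $\mu_{\tau\circ h}(O_a)\ge 3\Delta_1(a)/4$, which your argument never uses. The paper instead applies the uniqueness theorem for maps from such building blocks (Corollary 11.7 of \cite{Lnn1}), whose hypotheses are met precisely because the stagewise maps produced by \ref{lex}/\ref{LEx1} come with this measure-distribution control. To repair your proof, replace the appeal to Theorem \ref{Tnn1} by the appropriate uniqueness theorem for non-simple AH building blocks and verify its fullness hypothesis using the last clause of Lemma \ref{lex}.
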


\begin{proof}
We may write that $C=\lim_{n\to\infty}(C_n, \phi_{n,n+1}),$ where
$C_n$ is a finite direct sum of \CA s in ${\cal C}_0$ and each
$\phi_n$ is unital and injective.

Let $\kappa_n=\kappa\circ [\phi_{n, \infty}],$ $\af_n=\af\circ
\phi_n^{\ddag}$ and $\gamma_n=(\phi_{n,\infty})_T\circ \gamma,$
where $[\phi_{n,\infty}]\in KK(C_n,C),$ $\phi_{n, n+1}^{\ddag}:
U(C_n)/CU(C_n)\to U(A)/CU(A)$ and $(\phi_{n,\infty})_T$ are
induced by $\phi_{n, \infty}.$

It follows from \ref{LEx1} that there are unital monomorphisms
$\psi_n: C_n\to A$ such that
\beq\label{LEx2-1}
&&[\psi_n]=\kappa_n\,\,\,{\rm in}\,\,\,KK(C_n,A),\,\,\,
\psi_n^{\ddag}=\af_n\andeqn\\
&& (\psi_n)_T=\gamma_n.
\eneq
In particular,
\beq\label{LEx2-1-}
[\psi_{n+1}\circ
\phi_{n,n+1}]=\kappa_n,\,\,\,\psi_{n+1}^{\ddag}\circ
\phi_{n,n+1}^{\ddag}=\psi_n^{\ddag}\andeqn (\psi_{n+1}\circ
\phi_{n,n+1})_T=(\psi_n)_T.
\eneq

Let $\{{\cal F}_n\}$ be an increasing sequence of finite subsets
of $C$ whose union is dense in $C.$ Without loss of generality, we
may assume that there is a finite subset ${\cal F}_n'\subset C_n$
such that $\phi_{n, \infty}({\cal F}_n')={\cal F}_n,$ $n=1,2,....$
Note that $\phi_{n, n+1}({\cal F}_n')\subset {\cal F}_{n+1}',$
$n=1,2,....$

It follows from Corollary 11.7 of \cite{Lnn1} that there is a
subsequence $\{n(k)\},$ a sequence of unitaries $\{u_k\}\subset A$
such that
\beq\label{LEx2-2}
{\rm ad}\, u_k\circ \psi_{n(k+1)}\circ \phi_{n(k),
n(k+1)}\approx_{1/2^k} {\rm ad}\, u_{n-1}\circ
\psi_{n(k)}\,\,\,{\rm on}\,\,\,{\cal F}_{n(k)}',
\eneq
$k=1,2,....$

Thus one obtains a unital \hm\, $h: C\to A$ so that
\beq\label{LEx2-3}
h(f)=\lim_{k\to\infty} {\rm ad}\, u_k\circ \psi_{n(k),
n(k+1)}\circ \phi_{m, n(k)}(f)
\eneq
for all $f\in C_m,$ $m=1,2,....$ It follows that
\beq\label{LEx2-4}
[h]=\kappa\,\,\, h^{\ddag}=\af\andeqn\\
\tau\circ h(f)=f(\gamma(\tau))
\eneq
for all $\tau\in T(A)$ and all $f\in C_{s.a.}.$

\end{proof}

\begin{thm}\label{TEX}
Let $C\in {\cal C}$ and let $A$ be a unital separable simple \CA\,
with $TR(A)\le 1.$ Suppose that $\kappa\in KK_e(C,A)^{++},$
$\gamma: T(A)\to T(C)$ is a continuous affine map and $\af:
U(C)/CU(C)\to U(A)/CU(A)$ is a \hm\, for which $\gamma,$ $\af$ and
$\kappa$ are compatible. Then there exists a unital \hm\, $h: C\to
A$ such that
$$
[h]=\kappa\,\,\,{\rm in}\,\,\, KK(C,A), \tau\circ
h(f)=f(\gamma(\tau))\tforal f\in C_{s,a}\tand\\
h^{\ddag}=\af.
$$
\end{thm}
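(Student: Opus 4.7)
The plan is to derive Theorem \ref{TEX} from Lemma \ref{LEx2} by refining the $KL$-level equality produced there to the required $KK$-level equality, exploiting the inductive limit structure of $C$ together with the asymptotic unitary equivalence theorem \ref{TM}.

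Write $C=\lim_{n\to\infty}(C_n,\phi_{n,n+1})$ with each $C_n$ a finite direct sum of building blocks from ${\cal C}_0$ and each $\phi_{n,n+1}$ unital and injective (Definition \ref{Dc0}). Each $K_*(C_n)$ is finitely generated, so $Pext(K_*(C_n),K_*(A))=0$ and consequently $KK(C_n,A)=KL(C_n,A)$. Applying Lemma \ref{LEx1} to each direct summand, for every $n$ I obtain a unital monomorphism $\psi_n:C_n\to A$ with
\begin{equation*}
[\psi_n]=\kappa\circ[\phi_{n,\infty}] \text{ in } KK(C_n,A),\quad \psi_n^{\ddag}=\alpha\circ \phi_{n,\infty}^{\ddag},\quad (\psi_n)_T=(\phi_{n,\infty})_T\circ \gamma.
\end{equation*}

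The two maps $\psi_{n+1}\circ \phi_{n,n+1}$ and $\psi_n$ from $C_n$ to $A$ then agree on $KK$-classes, on $\ddag$-maps, and on traces. By Theorem \ref{TM}, they are asymptotically unitarily equivalent provided the rotation related map $\overline{R}_{\psi_{n+1}\circ \phi_{n,n+1},\psi_n}$ vanishes. The central step is to arrange this vanishing inductively: given $\psi_n$, replace $\psi_{n+1}$ by a map $\psi_{n+1}'$ that is asymptotically unitarily equivalent to $\psi_{n+1}$ (so still satisfies the compatibility conditions on $C_{n+1}$) but for which $\overline{R}_{\psi_{n+1}'\circ \phi_{n,n+1},\psi_n}=0$. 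The existence of such a $\psi_{n+1}'$ is an existence-for-rotation-maps statement: any prescribed element of $\mathrm{Hom}(K_1(C_n),\mathrm{Aff}(T(A)))/{\cal R}_0$ can be realized as $\overline{R}_{\psi\circ h,h}$ for some asymptotically inner perturbation $\psi$ of the ambient homomorphism. At the level of finitely generated $K_1(C_n)$ this is built from Theorem \ref{HITK} (producing near-commuting unitaries with prescribed Bott element) together with the Exel trace formula (Theorem \ref{Exel}), in exactly the pattern already used in Lemma \ref{LTRBOT} and in the proof of Theorem \ref{TM}.

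Having secured asymptotic unitary equivalence of $\psi_{n+1}\circ \phi_{n,n+1}$ with $\psi_n$ at each stage, a standard Elliott intertwining — identical in form to the one in Lemma \ref{LEx2}, but now leveraging asymptotic rather than merely approximate unitary equivalence — produces unitaries $u_k\in A$ and a unital homomorphism $h:C\to A$ as the limit of $\mathrm{ad}\, u_k\circ \psi_{n(k)}$ along a suitable subsequence. Since asymptotic unitary equivalence preserves $KK$-classes (Corollary \ref{NecC}), we obtain $[h]\circ[\phi_{n,\infty}]=\kappa\circ[\phi_{n,\infty}]$ in $KK(C_n,A)$ for every $n$, and hence $[h]=\kappa$ in $KK(C,A)$; the $\ddag$- and tracial compatibilities pass to the limit by continuity. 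The main obstacle is the rotation-correction step above: producing, for each $n$, an asymptotically inner modification of $\psi_{n+1}$ that realizes a prescribed class in $\mathrm{Hom}(K_1(C_n),\mathrm{Aff}(T(A)))/{\cal R}_0$ as its rotation map while leaving the $KK$, $\ddag$ and tracial data intact. Once this perturbation mechanism is in hand, the rest of the argument is a routine packaging of the $KL$-level construction of Lemma \ref{LEx2} with Theorem \ref{TM}.
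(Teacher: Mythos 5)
Your argument breaks down at the very last step, where you pass from $[h]\circ[\phi_{n,\infty}]=\kappa\circ[\phi_{n,\infty}]$ in $KK(C_n,A)$ for all $n$ to $[h]=\kappa$ in $KK(C,A)$. That implication is false in general: since each $K_*(C_n)$ is finitely generated, $KK(C_n,A)=KL(C_n,A)$, so agreement on all the $C_n$ only pins down the class of $h$ in $KL(C,A)=\mathrm{Hom}_{\Lambda}(\underline{K}(C),\underline{K}(A))$, i.e.\ it gives $[h]=\overline{\kappa}$ where $\overline{\kappa}$ is the image of $\kappa$ in $KL(C,A)$. The discrepancy $\kappa-[h]$ can be an arbitrary element of $Pext(K_*(C),K_{*+1}(A))$ (the kernel of $KK(C,A)\to KL(C,A)$, equivalently the $\varprojlim^1$ term in the Milnor sequence for $KK(\lim_nC_n,A)$), and upgrading the intertwining from approximate to asymptotic unitary equivalence does nothing to remove it, because the restriction maps to the $C_n$ simply do not see this part of $KK(C,A)$. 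This is exactly the difficulty the introduction flags: $\lim_nKK(C_n,A)\not=KK(\lim_nC_n,A)$ in general, so matching a $KK$-element is genuinely harder than matching its image in $KL$.

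The paper's proof handles this differently: it takes the monomorphism $\phi$ with $[\phi]=\overline{\kappa}$ in $KL(C,A)$, $\phi^{\ddag}=\af$ and $\phi_T=\gamma$ already supplied by Lemma \ref{LEx2}, notes that $\kappa-[\phi]\in Pext(K_*(C),K_{*+1}(A))$ by the UCT, and then invokes Theorem 3.15 of \cite{LN} --- using that $A$ is tracially approximately divisible and has properties (B1) and (B2) associated with $C$ (Remark \ref{CHITK}) --- to produce $\psi_0\in\overline{\rm{Inn}}(\phi(C),A)$ with $[\psi_0\circ\phi]-[\phi]=\kappa-[\phi]$ in $KK(C,A)$. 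Since $\psi_0$ is a pointwise limit of inner automorphisms, $h=\psi_0\circ\phi$ keeps $h^{\ddag}=\af$ and $h_T=\gamma$ while acquiring $[h]=\kappa$ in $KK(C,A)$. This $Pext$-correction mechanism is the idea missing from your proposal; the rotation-correction step you describe (and leave unproved) addresses a different issue and is not needed for this existence theorem, where approximate unitary equivalence already suffices for the intertwining.
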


\begin{proof}
Denote by $\overline{\kappa}$ the image of $\kappa$ in $KL(C,A).$
It follows from \ref{LEx2} that there is unital monomorphism
$\phi: C\to A$ such that
\beq\label{TEX-0}
[\phi]=\overline{\kappa},\,\,\phi^{\ddag}=\af\andeqn\\
\tau\circ \phi(c)=\gamma(\tau)(c)\tforal \tau\in T(A)
\eneq
and for all $c\in C_{s.a.}.$ Regarding $\phi(C)$ as a unital
\SCA\, of $A.$ It follows from Theorem 5.4 of \cite{Lnctr1} that
$A$ is tracially approximately divisible. Note, by the UCT, that
$\kappa-[\phi]\in Pext(K_*(C), K_{*+1}(A)).$ It follow from
\ref{CHITK} that $A$ has property (B1) and property (B2)
associated with $C.$  By Theorem 3.15 of \cite{LN}, there is a
unital monomorphism $\psi_0$ and a sequence of unitaries
$\{u_n\}\subset A$ such that
\beq\label{TEX-1}
[\psi_0\circ \phi]-[\phi]=\kappa-[\phi]\,\,\,{\rm in}\,\,\,
KK(C,A)\andeqn\\\label{TEX-2}
 \lim_{n\to\infty}{\rm ad}\, u_n\circ
\psi(c)=\psi_0\circ \phi(c)\tforal c\in C.
\eneq
Define $h=\psi_0\circ \phi.$ Then $[h]=\kappa$ in $KK(C,A).$ By
(\ref{TEX-2}, we still have that
$$
h^{\ddag}=\psi^{\ddag}\andeqn \tau\circ h(c)=\gamma(\tau)(c)
$$
for all $c\in C_{s.a.}$ and for all $\tau\in T(A).$

\end{proof}

\section{Rotation maps}

The following follows from a result of K. Thomsen.
\begin{lem}\label{baraff}
Let $A$ be a unital separable simple \CA\, with $TR(A)\le 1.$
Suppose that $u\in CU(A).$ Then, for any piece-wise smooth
continuous path $\{u(t):t\in [0,1]\}$ with $u(0)=u$ and
$u(1)=1_A,$
$$
R_A(\{u(t)\})\in \overline{\rho_A(K_0(A))}\,\,\,{\rm in}\,\,\,
\text{Aff}(T(A)).
$$
\end{lem}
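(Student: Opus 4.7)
The plan is to deduce this directly from the well-definedness of the de la Harpe--Skandalis determinant $\Delta_A$ recalled in \ref{DDCU}. Since $\{u(t)\}$ is a continuous path connecting $u$ to $1_A$, we have $u\in U_0(A)$, so it makes sense to ask about $\bar{u}\in U_0(A)/CU(A)$. By hypothesis $u\in CU(A)$, hence $\bar{u}=\bar{1}$ in $U_0(A)/CU(A)$, so $\Delta_A(\bar u)=0$ in $\text{Aff}(T(A))/\overline{\rho_A(K_0(A))}$.

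The next step is to identify $\Delta_A(\bar u)$ with the class of $R_A(\{u(t)\})$. By Thomsen's theorem (Theorem 3.1 of \cite{Th1}), for any $v\in U_0(A)$ and any piecewise smooth path $\{v(s):s\in[0,1]\}$ with $v(0)=1_A$ and $v(1)=v$, the quantity
$$
\tau\;\longmapsto\;\frac{1}{2\pi i}\int_0^1 \tau\bigl(v'(s)v(s)^*\bigr)\,ds
$$
defines an element of $\text{Aff}(T(A))$ whose class in $\text{Aff}(T(A))/\overline{\rho_A(K_0(A))}$ is independent of the path and vanishes on $CU(A)$; this is exactly the definition of $\Delta_A(\bar v)$. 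Applying this with $v(s):=u(1-s)$, which is a piecewise smooth path from $1_A$ to $u$, we obtain
$$
\overline{\tfrac{1}{2\pi i}\textstyle\int_0^1 \tau(v'(s)v(s)^*)\,ds}=\Delta_A(\bar u)=0\quad\text{in }\text{Aff}(T(A))/\overline{\rho_A(K_0(A))}.
$$

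Finally, a direct change-of-variables computation gives $v'(s)v(s)^*=-u'(1-s)u(1-s)^*$, hence the above integral equals $-R_A(\{u(t)\})$. Since $\overline{\rho_A(K_0(A))}$ is a subgroup of $\text{Aff}(T(A))$, we conclude $R_A(\{u(t)\})\in \overline{\rho_A(K_0(A))}$, as desired.

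The only substantive content is the well-definedness of $\Delta_A$ as a map on $U_0(A)/CU(A)$, i.e.\ both its path-independence modulo $\rho_A(K_0(A))$ and its vanishing on commutators (the latter being automatic since $\Delta_A$ takes values in an abelian group and $CU(A)$ is the closure of the commutator subgroup, combined with continuity under the metric $d_A$ from \ref{DDCU}). That is Thomsen's theorem, which is the only fact one needs to invoke; there is no genuine obstacle beyond correctly bookkeeping the orientation of the path.
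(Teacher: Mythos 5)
Your proof is correct and follows essentially the same route as the paper: both reduce the statement to Thomsen's theorem on the de la Harpe--Skandalis determinant (Lemma 3.1/Theorem 3.2 of \cite{Th1}), i.e.\ to the well-definedness of $\Delta_A$ on $U_0(A)/CU(A)$ recorded in \ref{DDCU}. The only point the paper makes explicit that you leave implicit is the stabilization step (Corollary 3.5 of \cite{Lnnhomp}, giving $U(A)/CU(A)\cong U(M_n(A))/CU(M_n(A))$), which is needed because Thomsen's results are stated for the stabilized unitary group.
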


\begin{proof}
It follows from Corollary 3.5 of \cite{Lnnhomp} that the map $j: u\mapsto {\rm diag}(u,1,1,...,1)$
from $U(A)$ into $U(M_n(A))$ induces an isomorphism from $U(A)/CU(A)$ to $U(M_n(A))/CU(M_n(A)).$
Then the lemma follows from Lemma 3.1 of \cite{Th1} (see also Theorem 3.2 of \cite{Th1}).

\end{proof}

\begin{lem}\label{BARAFF}
Let $A$ be a unital simple \CA\, with $TR(A)\le 1.$ Let $C$ be a
unital separable \CA\, with $K_1(C)=U(C)/U_0(C).$   Suppose that
$\phi, \psi: C\to A$ are two unital monomorphisms such that
\beq
[\phi]=[\psi]\,\,\,{\rm in}\,\,\, KK(C,A),\\\label{BARAFF1}
\phi_T=\psi_T\tand \phi^{\ddag}=\psi^{\ddag}.
\eneq
Then
$$
R_{\phi, \psi}\in \text{Hom}(K_1(C), \overline{\rho_A(K_0(A))}).
$$

\end{lem}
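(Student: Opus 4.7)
The plan is to exploit the equality $\phi^{\ddag}=\psi^{\ddag}$ to write down, for each unitary $u\in U(C),$ an explicit lift to $M_{\phi,\psi}$ whose rotation integral lies in $\overline{\rho_A(K_0(A))}$ by appeal to Lemma \ref{baraff}. The assumption $K_1(C)=U(C)/U_0(C)$ means every class in $K_1(C)$ is represented by a unitary in $C$ itself, so no passage to matrix algebras is needed.

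Fix $u\in U(C).$ Since $\phi^{\ddag}({\bar u})=\psi^{\ddag}({\bar u}),$ the unitary $w:=\phi(u)\psi(u)^*$ lies in $CU(A)\subset U_0(A).$ Choose a piecewise smooth path $\{w(t):t\in[0,1]\}$ in $U_0(A)$ with $w(0)=w$ and $w(1)=1_A,$ and set $v_u(t):=w(t)\psi(u).$ Then $v_u$ is a unitary in $M_{\phi,\psi}$ with $v_u(0)=\phi(u),$ $v_u(1)=\psi(u),$ and $(\pi_0)_{*1}([v_u])=[u].$ A direct computation gives $v_u'(t)v_u(t)^*=w'(t)w(t)^*,$ whence
\[
R_{\phi,\psi}([v_u])(\tau)=\frac{1}{2\pi i}\int_0^1 \tau\bigl(w'(t)w(t)^*\bigr)\,dt.
\]
Because $w\in CU(A),$ Lemma \ref{baraff} guarantees that this real-valued functional on $T(A)$ lies in $\overline{\rho_A(K_0(A))}$; reality follows from $\phi_T=\psi_T.$

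To descend the construction to $K_1(C),$ I would use the pure exact sequence (\ref{Dmt-4}): any two lifts of $[u]\in K_1(C)$ to $K_1(M_{\phi,\psi})$ differ by an element of $\imath_*(K_0(A)),$ and by Lemma \ref{DrL} the change in $R_{\phi,\psi}$ belongs to $\rho_A(K_0(A))\subset\overline{\rho_A(K_0(A))}.$ Hence the assignment $[u]\mapsto R_{\phi,\psi}([v_u])$ induces a well-defined map $K_1(C)\to\text{Aff}(T(A))/\overline{\rho_A(K_0(A))},$ which by the previous paragraph is identically zero. Equivalently, fixing any lift $\theta\colon K_1(C)\to K_1(M_{\phi,\psi})$ of $(\pi_0)_{*1}$ (defined pointwise via the choices of $v_u$), the composition $R_{\phi,\psi}\circ\theta$ takes values in $\overline{\rho_A(K_0(A))},$ which is the asserted homomorphism. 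Additivity follows from the tracial identity for pointwise products of unitary paths, which shows that $R_{\phi,\psi}$ is additive on the subgroup of $K_1(M_{\phi,\psi})$ generated by the $[v_u].$

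The only subtlety — and the main place care is required — is the consistent interpretation of the statement $R_{\phi,\psi}\in \Hom(K_1(C),\overline{\rho_A(K_0(A))})$ in the presence of the ambiguity of lifts: once one either commits to a specific choice of $v_u$ for each $u$ or passes to the quotient $\text{Aff}(T(A))/\overline{\rho_A(K_0(A))},$ Lemma \ref{baraff} combined with Lemma \ref{DrL} does essentially all the work, and the role of $[\phi]=[\psi]$ in $KK(C,A)$ is merely to make the mapping torus $M_{\phi,\psi}$ $K$-theoretically split so the exact sequence (\ref{Dmt-4}) is available.
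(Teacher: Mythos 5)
Your proposal is correct and follows essentially the same route as the paper: both arguments use $\phi^{\ddag}=\psi^{\ddag}$ to place $\phi(u)\psi(u)^*$ in $CU(A)$, translate a path in $M_{\phi,\psi}$ from $\phi(u)$ to $\psi(u)$ by $\psi(u)$ so that it becomes a path from an element of $CU(A)$ to $1_A$ with the same rotation integral, and then invoke Lemma \ref{baraff}. Your extra remarks on the ambiguity of lifts via (\ref{Dmt-4}) and Lemma \ref{DrL} only make explicit what the paper leaves implicit.
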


\begin{proof}
Let $z\in K_1(C)$ be represented by a unitary $u.$ Then by
(\ref{BARAFF1}),
$$
\phi(u)\psi(u)^*\in CU(A).
$$
Suppose that $\{u(t): t\in [0,1]\}$ is a piece-wise smooth
continuous path in $U(A)$ such that $u(0)=\phi(u)$ and
$u(1)=\psi(u).$ Put $w(t)=\psi(u)^*u(t).$ Then
$w(0)=\psi(u)^*\phi(u)\in CU(A)$ and $w(1)=1_A.$ Thus
\beq
R_{\phi, \psi}(z)(\tau)&=&{1\over{2\pi}}\int_0^1
\tau({du(t)\over{dt}}u^*(t))dt\\
&=&{1\over{2\pi}}\int_0^1
\tau(\psi(u)^*{du(t)\over{dt}}u^*(t)\psi(u))dt\\
&=& {1\over{2\pi}}\int_0^1
\tau({dw(t)\over{dt}}w(t))dt
\eneq
for all $\tau\in T(A).$ By \ref{baraff},
$$
R_{\phi,\psi}(z)\in \overline{\rho_A(K_0(A))}.
$$
It follows that $R_{\phi,\psi}\in \text{Hom}(K_1(C),
\overline{\rho_A(K_0(A))}).$

\end{proof}

\begin{df}
{\rm Let $A$ be unital \CA, let $C\subset A$ be a unital \SCA.
Denote by ${\overline{\text{Inn}}}(C,A)$ the set of those
monomorphisms $\psi$ for which there exists a sequence of
unitaries $\{u_n\}\in A$ such that
$$
\psi(c)=\lim_{n\to\infty}u_n^*cu_n \tforal c\in C.
$$}

\end{df}

\begin{thm}{\rm (Theorem 4.2 of \cite{LN})}\label{NL}
Let $A$ be a unital \CA, let $C$ be a unital separable simple
\SCA\, of $A$ and denote by ${\imath}$ the embedding. Suppose that
$A$ has a positive element $b\in A$ with $sp(b)=[0,1],$ that $A$
has property (B1) and  property (B2) associated with $C$ (see 4.4 of \cite{LN}). For
any $\lambda\in {\rm Hom}(K_0(C), \overline{\rho_A(K_0(A))}),$ there
exists $\phi\in \overline{\rm{Inn}}(C,A)$ such that there are \hm
s $\theta_i: K_i(C)\to K_i(M_{\imath, \phi})$ with
$(\pi_0)_{*i}\circ \theta_i={\rm id}_{K_i(C)},$ $i=0,1,$ and the
rotation map $R_{\imath, \phi}: K_1(C)\to \text{Aff}(T(A))$ is
given by
\beq\label{NL-1}
R_{\imath,
\phi}(x)=\rho_A(c-\theta_1((\pi_0)_{*1}(x)))+\lambda\circ
(\pi_0)_{*1}(x)))\tforal x\in K_1(M_{\imath, \af}).
\eneq
In other words,
$$
[\phi]=[\imath]\,\,\,{\rm in}\,\,\, KK(C,A),
$$
and the rotation map $R_{\imath, \phi}: K_1(M_{\imath, \phi})\to
\text{Aff}(T(A))$ is given by
$$
R_{\imath, \phi}(a,b)=\rho_A(a)+\lambda(b)
$$
for some identification of $K_1(M_{\imath, \phi})$ with
$K_0(A)\oplus K_1(C).$

\end{thm}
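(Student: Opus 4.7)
\medskip

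\noindent\textbf{Proof plan.}
The plan is to build $\phi$ as a point\mbox{-}norm limit $\phi(c)=\lim_{n\to\infty}u_n^{\,*}\imath(c)u_n$, with the sequence $\{u_n\}\subset U(A)$ constructed by induction so that, at each stage, the ``increment'' $u_n^{\,*}u_{n+1}$ almost commutes with $\imath(\mathcal F_n)$ and carries a prescribed Bott contribution. The cumulative Bott contributions, converted to traces via Exel's formula (Theorem \ref{Exel}), will realize the given $\lambda$ on $K_1(C)$. Throughout, the splitting of the mapping torus short exact sequence (\ref{eta-1-}) supplies the identification $K_1(M_{\imath,\phi})\cong K_0(A)\oplus K_1(C)$ once we know $[\phi]=[\imath]$ in $KK(C,A)$, which is automatic since $\phi\in\overline{\mathrm{Inn}}(C,A)$.

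First I would fix increasing finite subsets $\mathcal F_n\subset C$ with dense union, finite subsets $\mathcal P_n\subset\underline K(C)$ exhausting $\underline K(C)$, and tolerances $\delta_n\searrow 0$ compatible with the Basic Homotopy Lemma (Theorem \ref{LNHOMP}) and with the constant $\delta$ of \ref{Exel}. Given $\lambda\in\mathrm{Hom}(K_1(C),\overline{\rho_A(K_0(A))})$, I would choose at stage $n$ a homomorphism $\kappa_n\in\mathrm{Hom}_{\Lambda}(\underline K(C),\underline K(SA))$ whose $K_1(C)\to K_0(A)$ part realizes $\lambda$ on $\mathcal P_n$ modulo $\delta_n$ (this uses the hypothesis $\mathrm{sp}(b)=[0,1]$, so that arbitrary values in the closure of $\rho_A(K_0(A))$ are attainable as $\rho_A$ applied to suitable $K_0$\mbox{-}classes obtained from continuous functional calculus on $b$). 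Property (B1) then yields a unitary $v_n\in A$ with $\|[\imath(c),v_n]\|<\delta_n$ on $\mathcal F_n$ and $\mathrm{Bott}(\imath,v_n)|_{\mathcal P_n}=\kappa_n|_{\mathcal P_n}$.

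Next I would splice: set $u_1=v_1$ and $u_{n+1}=u_n w_n$, where $w_n$ is obtained from property (B2) / Theorem \ref{HITK} as a unitary almost commuting with $\imath(\mathcal F_n)$ whose Bott element, composed with the already chosen $\kappa_1,\dots,\kappa_n$, produces the $(n{+}1)$st increment of $\lambda$ on $\mathcal P_n$. The Basic Homotopy Lemma \ref{LNHOMP} guarantees that each $w_n$ can be chosen in the connected component of $1$ by a short path almost commuting with $\imath(\mathcal F_n)$; summing the path lengths forces $\{u_n^{\,*}\imath(c)u_n\}$ to be Cauchy on $\mathcal F_n$ and hence on all of $C$, giving $\phi\in\overline{\mathrm{Inn}}(C,A)$.

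Finally, to compute $R_{\imath,\phi}$: concatenating the path segments used in the splicing yields, for each generator $[u]\in K_1(C)$, a piecewise smooth path in $M_{\imath,\phi}$ whose rotation integral, by Exel's formula \ref{Exel} applied to each increment, equals
\[
R_{\imath,\phi}([u])(\tau)=\sum_{n}\rho_A(\mathrm{bott}_1(\imath,w_n)([u]))(\tau),
\]
which by construction converges to $\lambda([u])(\tau)$. Combined with Lemma \ref{DrL}, which pins down $R_{\imath,\phi}$ on the $K_0(A)$\mbox{-}summand as $\rho_A$, this gives the formula $R_{\imath,\phi}(a,b)=\rho_A(a)+\lambda(b)$ under the splitting $K_1(M_{\imath,\phi})=K_0(A)\oplus K_1(C)$; Lemma \ref{BARAFF} confirms that the image indeed lies in $\overline{\rho_A(K_0(A))}$. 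The splitting maps $\theta_i$ arise from the UCT applied to the split sequence (\ref{eta-1-}) once we have fixed $\theta_1|_{K_1(C)}$ so that $R_{\imath,\phi}\circ\theta_1=\lambda$, which is possible because the difference between any two lifts lies in $\mathcal R_0$, and the freedom to shift by an element of $\mathrm{Hom}(K_1(C),K_0(A))$ (using the $\rho_A$ part) absorbs the ambiguity exactly as in Definition \ref{eta}.

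The main obstacle is the tension between (i) working in the closure $\overline{\rho_A(K_0(A))}$ rather than $\rho_A(K_0(A))$ itself and (ii) obtaining an \emph{honest} homomorphism $\lambda$ in the limit, not merely an approximate one. This is precisely where the positive element $b$ with full spectrum $[0,1]$ is indispensable: it lets us replace each target value $\lambda([u])\in\overline{\rho_A(K_0(A))}$ by a genuine $\rho_A$\mbox{-}image up to $\delta_n$, so that the telescoping of Bott increments — rather than drifting — converges and is controlled by the summable tolerances $\sum\delta_n<\infty$. Absent this spectral hypothesis, the limit rotation map could only be guaranteed to exist as an asymptotic object, not as the prescribed $\lambda$.
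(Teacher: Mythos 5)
You should first be aware that the paper does not actually prove Theorem \ref{NL}: its ``proof'' is a citation to Theorem 4.2 of \cite{LN}, together with the single remark that the density of $\rho_A(K_0(A))$ in $\text{Aff}(T(A))$ assumed there is not really needed -- only $\lambda(K_1(C))\subset\overline{\rho_A(K_0(A))}$ is used. Your sketch is therefore an attempt to reprove the cited result, and the strategy you choose (build $\phi=\lim_n\mathrm{ad}\,u_n\circ\imath$ with increments $u_n^*u_{n+1}$ carrying prescribed Bott elements, connect the increments to $1$ by short almost-commuting paths via the Basic Homotopy Lemma, and compute the rotation of the concatenated path by Exel's formula as a telescoping sum converging to $\lambda$) is indeed the Kishimoto--Kumjian strategy that underlies \cite{LN}. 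In that sense the approach is the right one.

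There are, however, concrete gaps. First, your explanation of the hypothesis $\mathrm{sp}(b)=[0,1]$ is wrong: continuous functional calculus on $b$ produces positive elements, not $K_0$-classes, and no $K_0$-class has $\rho_A$-image in $\overline{\rho_A(K_0(A))}\setminus\rho_A(K_0(A))$; the closure is reached only through the limit of the telescoping partial sums, and the element $b$ plays a different (structural) role in \cite{LN}. Second, ``realizes $\lambda$ on $\mathcal P_n$ modulo $\delta_n$'' is not enough to get the \emph{exact} equality $R_{\imath,\phi}\circ\theta_1=\lambda$ asserted in the theorem: you must choose genuine homomorphisms $h_n$ on the finitely generated groups $K_1(C_n)$ of an inductive-limit decomposition (handling torsion, which $\lambda$ kills since $\text{Aff}(T(A))$ is torsion free), with $\rho_A\circ h_n\to\lambda$, and take the Bott element of $w_n$ to be the \emph{difference} $h_{n+1}-h_n$ so that the rotation integral telescopes exactly; independent $\delta_n$-approximations at each stage need not sum to $\lambda$. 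Third, your parenthetical that $[\phi]=[\imath]$ in $KK(C,A)$ ``is automatic since $\phi\in\overline{\mathrm{Inn}}(C,A)$'' is false: approximate innerness only gives equality in $KL(C,A)$, and the $\mathrm{Pext}$ obstruction must be killed by arranging compatibility of $[\mathrm{ad}\,u_n\circ\imath]$ across the exhausting subsets of $\underline{K}(C)$; without this you get neither the stated $KK$-equality nor the splittings $\theta_i$ of \ref{eta}, on which your final bookkeeping of the ambiguity modulo $\mathcal R_0$ relies.
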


\begin{proof}
This is proved in Theorem 4.2 of \cite{LN}. In the assumption of
Theorem 4.2 of \cite{LN}, it is assumed that $\rho_A(K_0(A))$ is
dense in $\text{Aff}(T(A)).$ However, in fact, it is
$\lambda(K_1(C))\subset \overline{\rho_A(K_0(A))}$ that is used.

\end{proof}

\begin{df}\label{KKM} Let $A$ be a
unital C*-algebra, and let $C$ be a unital separable \CA. Denote
by $\mathrm{Mon}_{asu}^e(C,A)$ the set of asymptotically unitary
equivalence classes of unital monomorphisms from $C$ into $A.$
Denote by $\small{{\boldsymbol{K}}}$ the map from
$\textrm{Mon}_{asu}^e(C, A)$ into ${KK}_e(C,A)^{++}$ defined by
$$
\phi\mapsto [\phi]\tforal \phi\in \mathrm{Mon}_{asu}^e(C,A).
$$
Let $\kappa\in {KK}_e(C,A)^{++}.$ Denote by $\langle \kappa
\rangle$ the classes of $\phi\in \mathrm{Mon}_{asu}^e(C,A)$ such
that $\small{{\boldsymbol{ K}}}(\phi)=\kappa.$

Denote by $KKUT_e(A,B)^{++}$ the set of triples $(\kappa,
\af,\gamma)$ for which $\kappa\in KK_e(A,B)^{++},$ $\af:
U(A)/CU(A)\to U(B)/CU(B)$ is a \hm\, and $\gamma: T(B)\to T(A)$ is an
affine continuous map and $\af,$  $\gamma$ and $\kappa$ are
compatible. Denote by $\boldsymbol{\mathfrak{K}}$ the map from
$\textrm{Mon}_{asu}^e(C, A)$ into ${KKUT}(C,A)^{++}$ defined by
$$
\phi\mapsto ([\phi],\phi^{\ddag}, \phi_T)\tforal \phi\in
\mathrm{Mon}_{asu}^e(C,A).
$$
Denote by $\langle \kappa, \af,\gamma \rangle $ the subset of
$\phi\in \mathrm{Mon}_{asu}^e(C,A)$ such that
$\boldsymbol{\mathfrak{K}}(\phi)=(\kappa,\,\af,\,\gamma).$

\end{df}

\begin{thm}\label{Mul}
Let $C$ and $A$ be two  unital separable simple amenable \CA s
with $TR(A)\le 1.$ Suppose that $\phi_1, \phi_2, \phi_3: A\to B$
are three unital monomorphisms for which
\beq\label{Mul-1}
&&[\phi_1]=[\phi_2]=[\phi_3]\,\,\,{\rm in}\,\,\, KK(A,B)\\
&&(\phi_1)_T=(\phi_2)_T=(\phi_3)_T
\eneq
Then
\beq\label{Mul-2}
\overline{R}_{\phi_1, \phi_2}+\overline{R}_{\phi_2,
\phi_3}=\overline{R}_{\phi_1,\phi_3}.
\eneq
\end{thm}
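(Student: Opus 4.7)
The plan is to realize each $\overline{R}_{\phi_i,\phi_j}$ by explicit piecewise-smooth unitary paths in the corresponding mapping torus, and then exploit additivity of the integral in Definition \ref{Dr} under concatenation of paths.

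First I would fix $x\in K_1(C)$ and let $v\in M_k(C)$ be a unitary representative. Because $[\phi_1]=[\phi_2]=[\phi_3]$ in $KK(C,A)$, the unitaries $\phi_i(v)$ all represent the same class in $K_1(A)$, so after possibly enlarging $k$ they lie in a common path component of $U(M_k(A))$. Hence I can choose piecewise-smooth unitary paths $w_{12}(x),w_{23}(x):[0,1]\to U(M_k(A))$ with $w_{12}(x)(0)=\phi_1(v)$, $w_{12}(x)(1)=\phi_2(v)=w_{23}(x)(0)$, and $w_{23}(x)(1)=\phi_3(v)$; continuous paths exist by the preceding $K_1$-remark, and the smoothing trick used in Lemma \ref{NecL} produces piecewise-smooth representatives. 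Each $w_{ij}(x)$ is then a unitary in $M_k(M_{\phi_i,\phi_j})$ whose $K_1$-class $\theta_{ij}(x)$ lifts $x$ under $(\pi_0)_{*1}$. I then form the concatenation $w_{13}(x)(t):=w_{12}(x)(2t)$ for $t\in[0,1/2]$ and $w_{13}(x)(t):=w_{23}(x)(2t-1)$ for $t\in[1/2,1]$; this is a piecewise-smooth path from $\phi_1(v)$ to $\phi_3(v)$, so it defines a unitary in $M_k(M_{\phi_1,\phi_3})$ whose $K_1$-class $\theta_{13}(x):=[w_{13}(x)]$ also lifts $x$.

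Splitting the domain of integration in the formula of Definition \ref{Dr} at $t=1/2$ and applying the change of variables $s=2t$ (respectively $s=2t-1$) on each half yields
\[
R_{\phi_1,\phi_3}(\theta_{13}(x))(\tau)=R_{\phi_1,\phi_2}(\theta_{12}(x))(\tau)+R_{\phi_2,\phi_3}(\theta_{23}(x))(\tau)
\]
for every $\tau\in T(A)$. To promote this pointwise identity to an equality of homomorphisms $K_1(C)\to\mathrm{Aff}(T(A))$, I choose the $w_{ij}(x)$ compatibly in $x$: since $[\phi_i]=[\phi_j]$ in $KK$, the extensions $0\to K_0(A)\to K_1(M_{\phi_i,\phi_j})\to K_1(C)\to 0$ are trivial in $KL$ and hence split as abelian groups, so $\theta_{12}$ and $\theta_{23}$ can be arranged as genuine homomorphism splittings by realizing each generator of $K_1(C)$ by a chosen path and extending multiplicatively. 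Because pointwise multiplication of unitary paths commutes with concatenation (a direct check: $(u_1\cdot v_1)\star(u_2\cdot v_2)=(u_1\star u_2)\cdot(v_1\star v_2)$), the splitting $\theta_{13}$ obtained by concatenation is itself a homomorphism, and the displayed identity becomes an equality of homomorphisms.

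Finally, by Definition \ref{eta} each $\overline{R}_{\phi_i,\phi_j}$ is represented in the quotient $\mathrm{Hom}(K_1(C),\mathrm{Aff}(T(A)))/\mathcal{R}_0$ by $R_{\phi_i,\phi_j}\circ\theta_{ij}$ (any two such representatives differing by an element of $\mathcal{R}_0$ by Lemma \ref{DrL}). Reducing the displayed identity modulo $\mathcal{R}_0$ yields $\overline{R}_{\phi_1,\phi_2}+\overline{R}_{\phi_2,\phi_3}=\overline{R}_{\phi_1,\phi_3}$, as required. The hard part of the plan is the simultaneous compatibility of the three homomorphism splittings with concatenation; this is exactly what the commutativity of pointwise product and concatenation delivers, so once the individual splittings are chosen functorially on generators of $K_1(C)$, the concatenated splitting for $M_{\phi_1,\phi_3}$ comes for free.
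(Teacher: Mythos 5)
Your argument is essentially the paper's own proof: both realize the splittings $\theta_{12},\theta_{23}$ by explicit piecewise-smooth paths joining $\phi_1(v)$ to $\phi_2(v)$ to $\phi_3(v)$, define $\theta_{13}$ by concatenation, split the integral defining $R$ at $t=1/2$, and reduce modulo ${\cal R}_0$. The only cosmetic difference is that the paper first realizes $\theta_{12}(x)$ by a path with endpoints $\phi_1(u'),\phi_2(u')$ for some other representative $u'$ of $x$ and then uses $(\phi_1)_T=(\phi_2)_T$ to show the path can be corrected to have endpoints $\phi_1(u),\phi_2(u)$ without changing the rotation number, whereas you fix the representative $v$ at the outset; this is the same adjustment in different order, and the remaining point (that the concatenated section is a genuine homomorphism splitting) is treated at the same level of detail in both arguments.
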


\begin{proof}
Let $\theta_1: K_1(C)\to K_1(M_{\phi_1, \phi_2})$ such that
$(\pi_0)_{*1}\circ \theta_1={\rm id}_{K_1(C)}$ and let $\theta_2:
K_1(C)\to K_1(M_{\phi_2, \phi_3})$ such that $(\pi_0)_{*1}\circ
\theta_2={\rm id}_{K_1(C)}.$ Define $\theta_3: K_1(C)\to K_1(M_{\phi_1,
 \af\circ \phi_2})$ as follows:

 Let $k>0$ be an integer and $u\in M_k(C)$ be a unitary.

 We may assume that there is a unitary $w(t)\in M_k(M_{\phi_1, \phi_2})$ such that
 \beq\label{path1}
 w(0)=\phi_1(u'),w(1)=\phi_2(u'),[u']=[u]\,\,\,{\rm in}\,\,\, K_1(A)\\
 \andeqn
 \theta_1([u])=[w(t)]\,\,\,{\rm in}\,\,\,K_1(M_{\phi_1,\phi_2})
 \eneq
 for some unitary $u'\in M_k(C).$
To simplify notation, without loss of generality, we may assume
that there are $h_1,h_2,...,h_n\in M_k(C)_{s.a.}$ such that
$$
u^*u'=\prod_{j=1}^n\exp(ih_j).
$$
Define $z(t)=u\prod_{j=1}^n\exp(h_jt)$ ($t\in [0,1]$). Consider
$\{\phi_1(z(t)): t\in [0,1]\}.$ Then
$$
\phi_1(z(0))=\phi_1(u)\andeqn \phi_1(z(1))=\phi_1(u').
$$
Moreover, (by Lemma 3.1 of \cite{Lnappn}, for example),
$$
\int_0^1\tau({d\phi_1(z(t))\over{dt}}\phi_1(z(t))^*)dt=\sum_{j=1}^n\tau(\phi_1(h_j))
$$
for all $\tau\in T(A).$ Consider $Z(t)=\phi_2(z(1-t)).$ Then
$$
Z(0)=\phi_2(u')\andeqn Z(1)=\phi_2(u).
$$
$$
\int_0^1\tau({d\phi_2(z(1-t))\over{dt}}\phi_2(z(1-t))^*)dt=-\sum_{j=1}^n\tau(\phi_2(h_j))
$$
for all $\tau\in T(A).$ Note that
$$
\tau(\phi_2(h_j))=\tau(\phi_1(h_j))\tforal \tau\in
T(B),\,\,\,j=1,2,...,n.
$$
It follows that
$$
\int_0^1\tau({d\phi_1(z(t))\over{dt}}\phi_1(u(t))^*)dt
+\int_0^1\tau({d\phi_2(z(1-t))\over{dt}}\phi_2(z(1-t))^*)dt=0
$$
for all $\tau\in T(A).$

Therefore,
 without loss of generality, we may assume that $u=u'$ in
(\ref{path1}). We may also assume that both paths are piecewise
smooth.

We may also assume that there is a unitary $s(t)\in
  M_k(M_{\phi_2, \phi_3})$ such that
  \beq
 s(0)=\phi_2(u),s(1)=\phi_3(u)\,\,\,{\rm in}\,\,\, K_1(A)\\
 \andeqn
 \theta_2([u])=[s(t)]\,\,\,{\rm in}\,\,\,K_1(M_{\phi_2,\phi_3}).
 \eneq

 Define
 $\theta_3([u])=[v],$ where
 \beq\label{path}
v(t)=\begin{cases} w(2t)\,\,\,\text{if}\,\,\, t\in
[0,1/2)\\
s(2(t-1/2))\,\,\,\text{if}\,\,\, t\in [1/2,1],
\end{cases}
\eneq
Thus $\theta_3$ gives a \hm\, from $K_1(A)$ to $K_1(M_{
\phi_1,\phi_3})$ such that $(\pi_0)_{*1}\circ \theta_3={\rm
id}_{K_1(A)}.$ Then
\beq
R_{\phi_1,\phi_3}(\theta_3([u]))(\tau)&=& {1\over{2\pi i}}\int_0^1
\tau({dv(t)\over{dt}}v(t)^*)dt\\
&=&{1\over{2\pi i}}\int_0^{1/2} \tau({dw(2t)\over{dt}}w(2t)^*)dt+\\
&&{1\over{2\pi i}}\int_{1/2}^{1}
\tau({ds(2(t-1/2))\over{dt}}s(2(t-1/2))^*)dt\\
&=& R_{\phi_1, \phi_2}\circ \theta_0([u])(\tau)+R_{
\phi_2,\phi_3}\circ \theta_1([u])(\tau)\\
\eneq
for all $\tau\in T(B).$ Thus (\ref{Mul-2}) holds.

\end{proof}

\begin{lem}\label{Group1}
Let $A$ and let $B$ be two unital separable simple amenable \CA s
with $TR(A)\le 1.$ Suppose that $\phi_1, \phi_2: A\to B$ are  two
unital monomorphisms such that
$$
[\phi_1]=[\phi_2]\,\,\,{\rm in}\,\,\, KK(A,B)\andeqn
(\phi_1)_T=(\phi_2)_T.
$$
Suppose that $(\phi_2)_T: T(B)\to T(A)$ is an affine
homeomorphism. Suppose also that $\af\in Aut(B)$ with
$$[\af]=[{\rm id}_B]\,\,\,{\rm in}\,\,\, KK(B,B)\andeqn \af_T={\rm id}_T.
$$
Then
\beq\label{Group1-0}
\overline{R}_{\phi_1,\af\circ \phi_2} =\overline{R}_{{\rm
id}_B,\af}\circ (\phi_2)_{*1}+\overline{R}_{\phi_1,\phi_2}
\eneq
in ${\rm Hom}(K_1(A), \text{Aff}(T(B)))/{\cal R}_0.$

\end{lem}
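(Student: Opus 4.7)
The plan is to combine Theorem \ref{Mul} (additivity of rotation maps) with a naturality identity for the rotation map under the canonical inclusion $\iota\colon M_{\phi_2, \af\circ \phi_2}\hookrightarrow M_{\id_B, \af}$ induced by $\phi_2$.

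First I would verify that $\phi_3 := \af\circ \phi_2$ satisfies the compatibility hypotheses of Theorem \ref{Mul} alongside $\phi_1$ and $\phi_2$: since $[\af]=[\id_B]$ in $KK(B,B)$, the Kasparov product gives $[\phi_3]=[\af]\times [\phi_2]=[\phi_2]=[\phi_1]$ in $KK(A,B)$, and $(\phi_3)_T = (\phi_2)_T\circ \af_T=(\phi_2)_T=(\phi_1)_T$. Theorem \ref{Mul} then yields
\begin{equation*}
\overline{R}_{\phi_1,\phi_2}+\overline{R}_{\phi_2,\af\circ \phi_2}=\overline{R}_{\phi_1,\af\circ \phi_2}
\end{equation*}
in $\Hom(K_1(A),\text{Aff}(T(B)))/{\cal R}_0$. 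Hence the lemma reduces to the naturality identity
\begin{equation*}
\overline{R}_{\phi_2,\af\circ \phi_2}=\overline{R}_{\id_B,\af}\circ (\phi_2)_{*1}.
\end{equation*}

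For the naturality identity, observe that any $x\in C([0,1],B)$ with $x(0)=\phi_2(a)$ and $x(1)=\af(\phi_2(a))$ automatically satisfies $x(0)=b$ and $x(1)=\af(b)$ for $b=\phi_2(a)\in B$, so there is a canonical inclusion $\iota\colon M_{\phi_2,\af\circ \phi_2}\hookrightarrow M_{\id_B,\af}$. The evaluation-at-$0$ maps for the two mapping tori fit into a commutative square with $\iota_*$ on top and $(\phi_2)_{*1}$ on the bottom. Moreover, the rotation integrals in both mapping tori are computed by the same path-integral formula against the same tracial state on $B$, so we have the equality $R_{\id_B,\af}\circ \iota_*=R_{\phi_2,\af\circ \phi_2}$ of maps $K_1(M_{\phi_2,\af\circ \phi_2})\to \text{Aff}(T(B))$.

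Finally, fix $\Hom_{\Lambda}$-splittings $\theta$ and $\tilde\theta$ for $[\pi_0]$ on $M_{\phi_2,\af\circ \phi_2}$ and $M_{\id_B,\af}$, respectively. Both $\iota_*\circ \theta|_{K_1(A)}$ and $\tilde\theta|_{K_1(B)}\circ (\phi_2)_{*1}$ are group homomorphisms $K_1(A)\to K_1(M_{\id_B,\af})$ that lift $(\phi_2)_{*1}\colon K_1(A)\to K_1(B)$ along evaluation at $0$, so by the exact sequence (\ref{Dmt-4}) their difference factors as $\imath_*\circ h$ for some $h\in \Hom(K_1(A),K_0(B))$. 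Applying $R_{\id_B,\af}$ and using Lemma \ref{DrL} (which gives $R_{\id_B,\af}\circ \imath_*=\rho_B$), we obtain
\begin{equation*}
R_{\phi_2,\af\circ \phi_2}\circ \theta|_{K_1(A)}-R_{\id_B,\af}\circ \tilde\theta|_{K_1(B)}\circ (\phi_2)_{*1}=\rho_B\circ h\in {\cal R}_0.
\end{equation*}
Substituting this naturality identity into the one from Theorem \ref{Mul} gives the stated formula. The only mildly technical point is confirming that the two rotation maps genuinely intertwine under $\iota_*$, which is immediate from the common path-integral definition of $R$ against traces on $B$, together with the observation that $\iota$ is, at the level of underlying paths in $B$, simply the identity.
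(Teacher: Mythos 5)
Your proposal is correct and follows essentially the same route as the paper: the paper's proof consists precisely of applying Theorem \ref{Mul} to the triple $\phi_1,\phi_2,\af\circ\phi_2$ and then invoking the identity $\overline{R}_{\phi_2,\af\circ\phi_2}=\overline{R}_{{\rm id}_B,\af}\circ(\phi_2)_{*1}$, which it asserts without further comment. Your verification of that naturality identity, via the canonical inclusion $M_{\phi_2,\af\circ\phi_2}\hookrightarrow M_{{\rm id}_B,\af}$, the comparison of the two splittings, and Lemma \ref{DrL}, correctly supplies the step the paper leaves implicit.
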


\begin{proof}
By \ref{Mul}, we compute that
\beq
\overline{R}_{\phi_1,\af\circ \phi_2} &=& \overline{R}_{\phi_1,
\phi_2}+\overline{R}_{\phi_2,\af\circ \phi_2}\\
&=&\overline{R}_{\phi_1,\phi_2}+\overline{R}_{{\rm id}_B,\af}\circ
(\phi_2)_{*1}.
\eneq

\end{proof}

\begin{thm}\label{MT2}
Let $C\in {\cal N}$ be a unital   simple \CA\, with
$TR(C)\le 1$ and let $A$ be a unital separable simple \CA\, with
$\mathrm{TR}(A)\le 1.$ Then the map $\boldsymbol{\mathfrak{K}}:
\mathrm{Mon}_{asu}^e(C,A)\to {KKUT}(C,A)^{++}$ is surjective.
Moreover, for each $(\kappa, \af, \gamma)\in {KKUT}(C,A)^{++}$,
there exists a bijection
$$
\eta: \langle \kappa,\af,\gamma \rangle \to \mathrm{Hom}({K}_1(C),
{\overline{\rho_A(K_0(A))}})/{\cal R}_0.
$$
\end{thm}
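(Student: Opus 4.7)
The plan splits into three parts: surjectivity of $\boldsymbol{\mathfrak{K}}$, construction and injectivity of $\eta$, and surjectivity of $\eta$.

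For surjectivity of $\boldsymbol{\mathfrak{K}}$, given any compatible triple $(\kappa,\af,\gamma)\in KKUT(C,A)^{++}$, I first reduce to the case $C\in{\cal C}$. Since $C$ is unital simple amenable with $TR(C)\le 1$ and satisfies the UCT, Lin's classification identifies $C$ with a unital simple AH-algebra that is an inductive limit of algebras in the class ${\cal C}$ of \ref{Dc0}. Theorem \ref{TEX} then furnishes a unital monomorphism $\phi_0\colon C\to A$ with $\boldsymbol{\mathfrak{K}}(\phi_0)=(\kappa,\af,\gamma)$, so the fibre $\langle\kappa,\af,\gamma\rangle$ is nonempty.

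For the construction and injectivity of $\eta$, fix a base point $\phi_0\in\langle\kappa,\af,\gamma\rangle$ and set $\eta([\psi])=\overline{R}_{\phi_0,\psi}$. Any $\psi\in\langle\kappa,\af,\gamma\rangle$ satisfies the hypotheses of Lemma \ref{BARAFF}, so $R_{\phi_0,\psi}$ takes values in $\overline{\rho_A(K_0(A))}$, and the discussion in \ref{eta} shows that the class $\overline{R}_{\phi_0,\psi}\in\mathrm{Hom}(K_1(C),\overline{\rho_A(K_0(A))})/{\cal R}_0$ is well-defined. Invariance under asymptotic unitary equivalence follows from Corollary \ref{NecC} combined with the additivity of Theorem \ref{Mul}: if $\psi'$ is asymptotically unitarily equivalent to $\psi$ then $\overline{R}_{\psi,\psi'}=0$, hence $\overline{R}_{\phi_0,\psi'}=\overline{R}_{\phi_0,\psi}+\overline{R}_{\psi,\psi'}=\overline{R}_{\phi_0,\psi}$. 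Injectivity of $\eta$ is the content of the main uniqueness Theorem \ref{TM}: if $\eta([\psi_1])=\eta([\psi_2])$, then by \ref{Mul} one has $\overline{R}_{\psi_1,\psi_2}=0$, and since $\psi_1,\psi_2$ lie in the same fibre the other three invariants also agree, so \ref{TM} yields asymptotic unitary equivalence.

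For surjectivity of $\eta$, given $\lambda\in\mathrm{Hom}(K_1(C),\overline{\rho_A(K_0(A))})$, the isomorphism $(\phi_0)_{*1}\colon K_1(C)\to K_1(\phi_0(C))$ lets me transport $\lambda$ to a homomorphism on $K_1(\phi_0(C))$. I then apply Theorem \ref{NL} to the embedding $\imath\colon\phi_0(C)\hookrightarrow A$; its hypotheses are met because $A$ is unital simple with $TR(A)\le 1$ (hence contains a positive element with spectrum $[0,1]$) and has properties (B1), (B2) associated with $\phi_0(C)$ by Remark \ref{CHITK}. This produces $\beta\in\overline{\mathrm{Inn}}(\phi_0(C),A)$ whose rotation map $R_{\imath,\beta}$ realizes $\lambda$ under a suitable splitting of the sequence in \ref{eta}. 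Setting $\psi=\beta\circ\phi_0$, the approximate inner-ness of $\beta$ inside $A$ gives $[\psi]=[\phi_0]=\kappa$, $\psi^\ddag=\phi_0^\ddag=\af$ and $\psi_T=(\phi_0)_T=\gamma$, so $\psi\in\langle\kappa,\af,\gamma\rangle$; naturality of the mapping-torus construction under $\phi_0$ identifies $M_{\phi_0,\psi}$ with $M_{\imath,\beta}$ compatibly with the rotation maps, yielding $\overline{R}_{\phi_0,\psi}=\lambda$ in the target quotient. The main obstacle I anticipate is bookkeeping in this last step: one must verify that the indeterminacy in choosing the splitting $\theta$ of (\ref{eta-1-}) for $M_{\imath,\beta}$ is exactly absorbed by the quotient by ${\cal R}_0$, and that the transport of rotation maps under $(\phi_0)_{*1}$ respects this quotient.
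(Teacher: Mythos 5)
Your proposal is correct and follows essentially the same route as the paper: Theorem \ref{TEX} (after identifying $C$ with an algebra in ${\cal C}$) gives surjectivity of $\boldsymbol{\mathfrak{K}}$; Lemma \ref{BARAFF} together with \ref{NecC} and the additivity in \ref{Mul} makes $\eta$ well defined; Theorem \ref{NL} applied to the embedding $\phi_0(C)\hookrightarrow A$, with properties (B1) and (B2) supplied by \ref{CHITK}, gives surjectivity of $\eta$; and \ref{Mul} plus the uniqueness Theorem \ref{TM} gives injectivity. The "obstacle" you flag at the end is already resolved by the discussion in \ref{eta}, where the ambiguity in the splitting $\theta$ is shown to lie exactly in ${\cal R}_0$.
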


\begin{proof}
It follows from \ref{TEX} that $\boldsymbol{\mathfrak{K}}$ is
surjective.

Fix a triple $(\kappa,\af, \gamma)\in {KKT}(C,A)^{++}$ and choose
a unital monomorphism $\phi: C\to A$ such that $[\phi]=\kappa$,
$\phi^{\ddag}=\af$ and  $\phi_\mathrm{T}=\gamma.$
If $\phi_1: C\to A$ is another unital monomorphism such that
$\boldsymbol{\mathfrak{K}}(\phi_1)=\boldsymbol{\mathfrak{K}}(\phi).$
Then by \ref{BARAFF},
$$
\overline{R}_{\phi, \phi_1}\in \mathrm{Hom}({K}_1(C),
{\overline{\rho_A(K_0(A))}})/{\cal R}_0.
$$
Let $\lambda\in \mathrm{Hom}({K}_1(C), \overline{\rho_A(K_0(A))})$
be a \hm. It follows from \ref{CHITK} that $A$ has property (B1)
and property (B2) associated with $C.$   By applying \ref{NL}, we
obtain a unital monomorphism $\psi\in
{\overline{{\rm{Inn}}}}(\phi(C), A)$ with $[\psi\circ
\phi]=[\phi]$ in ${KK}(C,A)$ such that there exists a homomorphism
$\theta: {K}_1(C)\to K_1(M_{\phi, \psi\circ \phi})$ with
$(\pi_0)_{*1}\circ \theta={\rm{id}}_{{K}_1(C)}$ for which
$R_{\phi, \psi\circ \phi}\circ \theta=\lambda.$ Let
$\beta=\psi\circ \phi.$ Then $R_{\phi, \beta}\circ
\theta=\lambda.$ Note also since $\psi\in
{\overline{\rm{Inn}}}(\phi(C), A),$ $\beta^{\ddag}=\phi^{\ddag}$
and $\beta_\mathrm{T}=\phi_\mathrm{T}.$ In particular,
$\boldsymbol{\mathfrak{K}}(\beta)={\boldsymbol{\mathfrak{K}}}(\phi).$

Thus we obtain a well-defined  and surjective map $$\eta: \langle
[\phi], \phi^{\ddag}, \phi_T\rangle \to \mathrm{Hom}({K}_1(A),
\overline{\rho_A(K_0(A))})/{\cal R}_0.$$

 To see it is one to one, let
$\phi_1, \phi_2: C\to A$ be  two unital monomorphisms in $ \langle
[\phi], \phi^{\ddag},\phi_T\rangle$  such that
$$
\overline{R}_{\phi, \phi_1}=\overline{R}_{\phi, \phi_2}.
$$
Then, by \ref{Mul},
\beq
\overline{R}_{\phi_1,\phi_2}&=&\overline{R}_{\phi_1,\phi}+\overline{R}_{\phi,
\phi_2}\\
&=&-\overline{R}_{\phi, \phi_1}+\overline{R}_{\phi, \phi_2}=0.
\eneq
It follows from \ref{TM} that $\phi_1$ and $\phi_2$ are
asymptotically unitarily equivalent.

\end{proof}

\begin{df}
{\rm  Denote by $KKUT_e^{-1}(A,A)^{++}$ the subgroup of those
elements $\langle \kappa, \af, \gamma\rangle \in KKUT_e(A,A)^{++}$
for which $\kappa|_{K_i(A)}$ is an isomorphism $(i=0,1$), $\af$ is an
isomorphism and $\gamma$ is a affine homeomorphism.  Denote by
$\eta_{{\rm id}_A}=\eta|_{\langle [{\rm id}_A], {\rm
id}_A^{\ddag}, ({\rm id}_A)_T\rangle}.$

Denote by $\langle {\rm id}_A\rangle $ the class of those
automorphisms $\psi$ which are asymptotically unitarily equivalent
to ${\rm id}_A.$ Note that, if $\psi\in \langle {\rm id}_A\rangle
,$ then $\psi$ is {\it asymptotically inner}, i.e., there exists a
continuous path of unitaries $\{u(t): t\in [0,\infty)\}\subset A$
such that
$$
\psi(a)=\lim_{t\to\infty}u(t)^*au(t)\tforal a\in A.
$$

}
\end{df}

\begin{cor}\label{Inn}
Let $A\in {\cal N}$ be a unital simple  \CA\, with
$TR(A)\le 1.$ Then one
has the following short exact sequence:
\beq\label{Inn1}
0 \to {\rm Hom}(K_1(A), \overline{\rho_A(K_0(A))})/{\cal
R}_0\stackrel{\eta_{{\rm id}_A}^{-1}}{\to}{\rm Aut}(A)/\langle
{\rm id}_A\rangle \stackrel{{\boldsymbol{\mathfrak{K}}}}{\to}
KKUT_e^{-1}(A,A)^{++}\to 0.
\eneq

In particular, if $\phi, \psi\in Aut(A)$ such that
$$
{\boldsymbol{\mathfrak{K}}}(\phi)={\boldsymbol{\mathfrak{K}}}(\psi)={\boldsymbol{\mathfrak{K}}}({\rm
id}_A),
$$
Then
$$
\eta_{{\rm id}_A}(\phi\circ \psi)=\eta_{{\rm
id}_A}(\phi)+\eta_{{\rm id}_A}(\psi).
$$

\end{cor}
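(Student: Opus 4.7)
The plan is to combine the bijection $\eta$ of Theorem \ref{MT2} with a two-sided approximate intertwining of Elliott type, and then apply Lemma \ref{Group1} to obtain the additive structure on the kernel. The argument naturally splits into three steps: (i) surjectivity of $\boldsymbol{\mathfrak{K}}$ on ${\rm Aut}(A)/\langle {\rm id}_A\rangle$, (ii) identification of the kernel of $\boldsymbol{\mathfrak{K}}$ with ${\rm Hom}(K_1(A),\overline{\rho_A(K_0(A))})/{\cal R}_0$ via $\eta_{{\rm id}_A}^{-1}$, and (iii) additivity of $\eta_{{\rm id}_A}$ on the kernel.

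For step (i), given $(\kappa,\alpha,\gamma)\in KKUT_e^{-1}(A,A)^{++}$, I would apply Theorem \ref{TEX} (equivalently the surjectivity in Theorem \ref{MT2}) to produce unital monomorphisms $\phi_1:A\to A$ realizing $(\kappa,\alpha,\gamma)$ and $\psi_1:A\to A$ realizing $(\kappa^{-1},\alpha^{-1},\gamma^{-1})$. Both $\psi_1\circ\phi_1$ and $\phi_1\circ\psi_1$ then carry $\boldsymbol{\mathfrak{K}}({\rm id}_A)$, so Theorem \ref{Tnn1} yields approximate unitary equivalence with ${\rm id}_A$; a standard two-sided approximate intertwining (relying on $TR(A)\le 1$) upgrades $\phi_1$ to an automorphism $\phi\in{\rm Aut}(A)$ with $\boldsymbol{\mathfrak{K}}(\phi)=(\kappa,\alpha,\gamma)$. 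For step (ii), any $\phi\in{\rm Aut}(A)$ with $\boldsymbol{\mathfrak{K}}(\phi)=\boldsymbol{\mathfrak{K}}({\rm id}_A)$ lies in $\langle[{\rm id}_A],{\rm id}_A^{\ddag},({\rm id}_A)_T\rangle$, and Theorem \ref{MT2} attaches to it the invariant $\eta_{{\rm id}_A}(\phi)\in{\rm Hom}(K_1(A),\overline{\rho_A(K_0(A))})/{\cal R}_0$, which determines $\phi$ up to asymptotic unitary equivalence; conversely, given $\lambda$, Theorem \ref{NL} with base map ${\rm id}_A$ produces $\psi\in\overline{{\rm Inn}}(A,A)$ with $\eta_{{\rm id}_A}(\psi)=\lambda$, and the same intertwining replaces it by an automorphism. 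This establishes exactness at the middle term and injectivity of $\eta_{{\rm id}_A}^{-1}$.

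For the additivity asserted in the ``in particular'' clause, I would apply Lemma \ref{Group1} with $B=A$, $\phi_1={\rm id}_A$, $\phi_2=\psi$, and $\alpha=\phi$. Since $\psi$ satisfies $[\psi]=[{\rm id}_A]$ in $KK(A,A)$, the induced map $\psi_{*1}$ on $K_1(A)$ is the identity, and the formula in \ref{Group1} collapses to $\overline{R}_{{\rm id}_A,\phi\circ\psi}=\overline{R}_{{\rm id}_A,\phi}+\overline{R}_{{\rm id}_A,\psi}$, which is exactly $\eta_{{\rm id}_A}(\phi\circ\psi)=\eta_{{\rm id}_A}(\phi)+\eta_{{\rm id}_A}(\psi)$. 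The main technical obstacle lies in the intertwining step common to (i) and (ii): one must pass from the approximate unitary equivalence supplied by Theorem \ref{Tnn1} to a genuine automorphism representative in each asymptotic unitary equivalence class, and this is where the hypothesis $TR(A)\le 1$ becomes essential for invoking Elliott's standard approximate intertwining machinery.
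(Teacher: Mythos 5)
Your three-step plan coincides with the paper's own proof: surjectivity of $\boldsymbol{\mathfrak{K}}$ by applying Theorem \ref{TEX} to the triple $(\kappa,\af,\gamma)$ and to its inverse and then running an Elliott-type approximate intertwining (the approximate unitary equivalence of the two compositions with ${\rm id}_A$ coming from Theorem \ref{Tnn1}); identification of the kernel with ${\rm Hom}(K_1(A),\overline{\rho_A(K_0(A))})/{\cal R}_0$ through the bijection $\eta_{{\rm id}_A}$ of Theorem \ref{MT2} and the realization of an arbitrary $\lambda$ by Theorem \ref{NL}; and additivity from Lemma \ref{Group1} with $\phi_1={\rm id}_A$, $\phi_2=\psi$, $\af=\phi$, using $\psi_{*1}={\rm id}_{K_1(A)}$.

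The one place where you are too quick is the assertion that the intertwining ``upgrades $\phi_1$ to an automorphism $\phi$ with $\boldsymbol{\mathfrak{K}}(\phi)=(\kappa,\alpha,\gamma)$.'' The automorphism produced by the intertwining is a pointwise limit of the maps ${\rm ad}\,u_{2n+1}\circ \phi_1$, and such a limit only guarantees $[\phi]=[\phi_1]$ in $KL(A,A)$ (together with $\phi^{\ddag}=\phi_1^{\ddag}$ and $\phi_T=(\phi_1)_T$). Since $\boldsymbol{\mathfrak{K}}$ records the full $KK$-class and the map $KK(A,A)\to KL(A,A)$ has kernel $Pext(K_*(A),K_*(A))$, one must still correct $\phi$ by composing with a suitable element of $\overline{{\rm Inn}}(A,A)$ — exactly as in the proof of Theorem \ref{TEX}, via Theorem 3.15 of \cite{LN} — to arrange $[\phi]=\kappa$ in $KK(A,A)$ rather than merely in $KL(A,A)$. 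With that correction inserted, your argument is the paper's.
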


\begin{proof}
It follows from \ref{TEX} that,  for any $\langle \kappa, \af,
\gamma\rangle,$ there is a unital monomorphism $h: A\to A$ such
that ${\boldsymbol{\mathfrak{K}}}(h)=\langle \kappa, \af,
\gamma\rangle.$ The fact that $\kappa\in KK_e^{-1}(A,A)^{++}$
implies that there is $\kappa_1\in KK_e^{-1}(A,A)^{++}$ such that
$$
\kappa\times \kappa_1=\kappa_1\times \kappa=[{\rm id}_A].
$$
By \ref{TEX}, choose $h_1: A\to A$ such that
$$
{\boldsymbol{\mathfrak{K}}}(h)=\langle \kappa_1, \af^{-1},
\gamma^{-1}\rangle.
$$

It follows from Corollary 11.7 of \cite{Lnn1} that $h\circ h_1$
and $h\circ h_1$ are approximately unitarily equivalent. Applying
a standard approximate intertwining argument of G. A. Elliott, one
obtains two isomorphisms $\phi$ and $\phi^{-1}$ such that there is
a sequence of unitaries $\{u_n\}$ in $A$ such that
$$
\phi(a)=\lim_{n\to\infty}{\rm ad}\, u_{2n+1}\circ h(a)\andeqn
\phi^{-1}(a)=\lim_{n\to\infty}{\rm ad}\, u_{2n}\circ h_1(a)
$$
for all $a\in A.$ Thus $[\phi]=[h]$ in $KL(A,A)$ and
$\phi^{\ddag}=h^{\ddag}$ and $\phi_T=h_T.$ Then, as in the proof
of \ref{TEX}, there is $\psi_0\in {\overline{Inn}}(A,A)$ such that
$[\psi_0\circ \phi]=[{\rm id}_A]$ in $KK(A,A)$ as well as
$(\psi_0\circ \phi)^{\ddag}=h^{\ddag}$ and $(\psi_0\circ
\phi)_T=h_T.$  So we have $h\in Aut(A,A)$ such that
${\boldsymbol{\mathfrak{K}}}(h)=\langle \kappa, \af,
\gamma\rangle.$

Now let $\lambda\in {\rm Hom}(K_1(C), \overline{\text{Aff}(T(A))})/{\cal
R}_0.$  The proof \ref{MT2} says that there is $\psi_{00} \in
\overline{\text{Inn}}(A,A)$ (in place of $\psi$) such that
${\boldsymbol{\mathfrak{K}}}(\psi_{00}\circ {\rm
id}_A)={\boldsymbol{\mathfrak{K}}}({\rm id}_A)$ and
$$
\overline{R}_{{\rm id}_A, \psi_{00}}=\lambda.
$$
Note that $\psi_{00}$ is again an automorphism.

The last part of the lemma then follows from \ref{Group1}.

\end{proof}

\section{Strong asymptotic unitary equivalence}

Let $\phi_1, \phi_2: C\to A$ be two unital monomorphisms which are
asymptotically unitarily equivalent. A natural question is: Can
one find a continuous path of unitaries $\{u_t: t\in [0,
\infty)\}$ of $A$ such that
$$
u_0=1_A\andeqn \lim_{t\to\infty}{\rm ad}\, u_t\circ
\phi_1(a)=\phi_2(a)\tforal a\in A?
$$
It is known (see section 10 of \cite{Lnasym} ) that, in general,
unfortunately, the answer is negative.

\begin{df}{\rm (Definition 12.1 of \cite{Lnasym})}\,
{\rm Let $A$ and $B$ be two unital \CA s.  Suppose that $\phi_1,
\phi_2: A\to B$ are two unital \hm s. We say that $\phi_1, \phi_2$
are strongly asymptotically unitarily equivalent if there exists a
continuous path of unitaries $\{u_t: t\in [0, \infty)\}$ of $B$
such that
$$
u_0=1_B\andeqn \lim_{t\to\infty}{\rm ad}\, u_t\circ
\phi_1(a)=\phi_2(a)\tforal a.
$$
}

\end{df}

\begin{df}{\rm (Definition 10.2 of \cite{Lnasym} and see also \cite{Lnequ})}\label{Dsu}
Let $A$ be a unital \CA\, and $B$ be another \CA. Recall
(\cite{Lnequ}) that
$$
H_1(K_0(A), K_1(B))=\{x\in K_1(B): \phi([1_A])=x,\,\phi\in
{\rm Hom}(K_0(A), K_1(B))\}.
$$
\end{df}

Let us quote  the following:

\begin{prop}{\rm (Proposition 12.3 of \cite{Lnasym})}\label{Sup}
Let $A$  be a unital separable \CA\, and let $B$ be a unital \CA.
Suppose that $\phi: A\to B$ is a unital \hm\, and $u\in U(B)$ is a
unitary. Suppose that there is a continuous path of unitaries
$\{u(t): t\in [0,\infty)\}\subset B$ such that
\beq\label{sup-1}
u(0)=1_B\andeqn \lim_{t\to\infty}{\rm ad}\, u(t)\circ \phi(a)={\rm
ad}\, u\circ \phi(a)
\eneq
for all $a\in A.$  Then
$$
[u]\in H_1(K_0(A), K_1(B)).
$$

\end{prop}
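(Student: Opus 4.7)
The plan is to extract, from the given asymptotic intertwiner, a homomorphism $\psi \colon K_0(A)\to K_1(B)$ whose value at $[1_A]$ is $\pm[u]$, by means of the Bott construction applied along the tail of the path.

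First I would set $v(t) = u(t)u^*$. The hypothesis $\lim_{t\to\infty}u(t)^*\phi(a)u(t) = u^*\phi(a)u$ rearranges as $\lim_{t\to\infty}\|[v(t),\phi(a)]\| = 0$ for every $a\in A$. Hence for any finite ${\cal F}\subset A$ and any $\dt>0$ there is a $T$ with $\|[\phi(a),v(t)]\|<\dt$ for all $a\in {\cal F}$, $t\ge T$. For any finite subset ${\cal P}\subset K_0(A)$, choose ${\cal F}$ and $\dt$ (using \ref{ddbot}) so that $\text{bott}_0(\phi,w)|_{\cal P}$ is well defined whenever $w$ is a unitary $\dt$-commuting with $\phi({\cal F})$. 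For $t_1,t_2\ge T$ the path $\{v(t):t\in[t_1,t_2]\}$ consists entirely of such unitaries, so homotopy invariance of the Bott map (equivalently, the telescoping identity recorded in \ref{ddbot}) gives $\text{bott}_0(\phi,v(t_1))|_{\cal P}=\text{bott}_0(\phi,v(t_2))|_{\cal P}$. Thus the Bott class stabilizes for $t$ sufficiently large.

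Next, exhausting $K_0(A)$ by finite subsets ${\cal P}_1\subset{\cal P}_2\subset\cdots$ and using the stabilization on each ${\cal P}_n,$ I would assemble a single homomorphism $\psi \colon K_0(A)\to K_1(B)$, with $\psi(x) = \text{bott}_0(\phi,v(t))(x)$ for all $t$ large enough depending on $x$. The values are consistent because $\text{bott}_0(\phi,v(t))$ is a homomorphism for each eligible $t$ and agrees with $\psi$ on the relevant finite subsets. Finally, taking ${\cal F}=\{1_A\}$ and $x=[1_A]$, the defining formula of $\text{bott}_0$ gives $\psi([1_A]) = [\phi(1_A)v(t) + (1-\phi(1_A))] = [v(t)]$ in $K_1(B)$. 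Since $u(0)=1_B$, the whole path $\{u(t)\}$ lies in $U_0(B)$, so $[u(t)]=0$ in $K_1(B)$ and $[v(t)] = [u(t)]-[u] = -[u]$. Therefore $[u] = (-\psi)([1_A])$, and since $-\psi\in\text{Hom}(K_0(A),K_1(B))$ we conclude $[u]\in H_1(K_0(A),K_1(B))$.

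I do not expect any serious obstacle: the only points requiring care are (i) checking that the domain of validity $T$ can be chosen uniformly on each finite ${\cal P}_n$ in order to pass to a well-defined homomorphism on all of $K_0(A)$, and (ii) ensuring that the stabilized Bott values on different ${\cal P}_n$ are compatible, which follows because each $\text{bott}_0(\phi,v(t))$ is itself a group homomorphism on the ambient $K_0(A).$ The computation $\psi([1_A])=[v(t)]$ is an immediate consequence of the construction of the Bott element $\beta^{(0)}([1_A]) = [1\otimes z]\in K_1(A\otimes C(S^1))$ and the definition of $\bar h$.
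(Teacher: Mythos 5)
Your proof is correct and is essentially the standard argument (the one used for Proposition 12.3 of \cite{Lnasym}): pass to $v(t)=u(t)u^*$, which asymptotically commutes with $\phi(A)$, use homotopy invariance of the Bott construction along the tail of the path to stabilize $\text{bott}_0(\phi,v(t))$ on finite subsets, assemble a homomorphism $\psi\in\text{Hom}(K_0(A),K_1(B))$, and evaluate at $[1_A]$ to get $[v(t)]=-[u]$ since $u(0)=1_B$ forces $[u(t)]=0$. The two points you flag (uniformity of $T$ on each finite subset and compatibility of the stabilized values) are handled exactly as you indicate, so there is no gap.
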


\begin{lem}\label{SL1}
Let $C=\lim_{n\to\infty}(C_n, \psi_n),$ where $C_n$ is a finite
direct sum of \CA s in ${\cal C}_{00}$ and $\psi_n$ is unital and
injective, and let $A$ be a unital separable simple \CA\, with
$TR(A)\le 1.$ Suppose that $\phi_1, \phi_2: C\to A$ are two
monomorphisms such that there is an increasing sequence of finite
subsets ${\cal F}_n\subset C$ whose union is dense in $C,$ an
increasing sequence of finite subsets $\{{\cal P}_n\}$ of
$K_1(C)$ whose union if $K_1(C),$ a sequence of positive numbers $\{\dt_n\}$ with
$\sum_{n=1}^{\infty}\dt_n<1$ and a sequence of unitaries
$\{u_n\}\subset A,$ such that
\beq\label{SL1-1}
{\rm ad}\, u_n\circ \phi_1\approx_{\dt_n} \phi_2\,\,\,{\rm
on}\,\,\,{\cal F}_n \andeqn\\
\rho_A({\rm bott}_1(\phi_2, u_n^*u_{n+1})(x)=0\tforal x\in {\cal
P}_n.
\eneq
Then, we may further require that $u_n\in U_0(A),$
 if $H_1(K_0(C), K_1(A))=K_1(A).$

\end{lem}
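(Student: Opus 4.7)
\emph{Proof plan.} The strategy is to modify each $u_n$ to $u_n'=u_nw_n,$ where $w_n\in A$ is a unitary that approximately commutes with $\phi_2$ on ${\cal F}_n,$ so that $[u_n']=0$ in $K_1(A)$ while the two conditions in (\ref{SL1-1}) are preserved. The hypothesis $H_1(K_0(C),K_1(A))=K_1(A)$ is used precisely to realise the class $-[u_n]\in K_1(A)$ as ${\rm bott}_0(\phi_2,w_n)([1_C]);$ since $\phi_2$ is unital, this value is $[w_n],$ so such a correction is possible.

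To construct $w_n,$ first pass to a subsequence so that each ${\cal P}_n$ lifts to a finite subset ${\cal Q}_n\subset K_1(C_{m(n)})$ for some rapidly increasing sequence $m(n).$ Using $H_1(K_0(C),K_1(A))=K_1(A),$ choose $\mu_n\in {\rm Hom}(K_0(C),K_1(A))$ with $\mu_n([1_C])=-[u_n]$ and define $\kappa_n\in {\rm Hom}_{\Lambda}(\underline{K}(C_{m(n)}\otimes C(\T)),\underline{K}(A))$ whose $\boldsymbol{\bt}$-component is $\mu_n\circ (\psi_{m(n),\infty})_{*0}$ on the $\bt^{(0)}$-summand and zero on the $\bt^{(1)}$- and coefficient-summands. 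Because $\kappa_n\circ \boldsymbol{\bt}$ vanishes on $K_1(C_{m(n)}),$ the smallness hypothesis of Theorem \ref{HITK} is automatic, and that theorem supplies a unitary $w_n\in A$ with $\|[\phi_2(a),w_n]\|<\ep_n$ for $a\in {\cal F}_n$ and ${\rm Bott}(\phi_2\circ \psi_{m(n),\infty},w_n)=\kappa_n\circ \boldsymbol{\bt}.$ In particular,
\beq\nonumber
[w_n]={\rm bott}_0(\phi_2\circ \psi_{m(n),\infty},w_n)([1_{C_{m(n)}}])=\mu_n([1_C])=-[u_n]\,\,\,\text{in}\,\,\,K_1(A),
\eneq
and ${\rm bott}_1(\phi_2\circ \psi_{m(n),\infty},w_n)=0$ on ${\cal Q}_n.$

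With $u_n'=u_nw_n,$ the class $[u_n']=0;$ since $TR(A)\le 1$ forces $A$ to have stable rank one (so $K_1(A)=U(A)/U_0(A)$), we conclude $u_n'\in U_0(A).$ Conjugating ${\rm ad}\, u_n\circ \phi_1\approx_{\dt_n}\phi_2$ by $w_n$ and using that $w_n$ almost commutes with $\phi_2$ on ${\cal F}_n$ gives ${\rm ad}\, u_n'\circ \phi_1\approx_{\dt_n'}\phi_2$ on ${\cal F}_n$ with $\sum_{n=1}^{\infty}\dt_n'<\infty$ (by choosing $\ep_n$ small enough). Writing $(u_n')^*u_{n+1}'=w_n^*(u_n^*u_{n+1})w_{n+1}$ and invoking the additivity relation (\ref{ddbot-1}) for approximately commuting factors yields
\beq\nonumber
{\rm bott}_1(\phi_2,(u_n')^*u_{n+1}')=-{\rm bott}_1(\phi_2,w_n)+{\rm bott}_1(\phi_2,u_n^*u_{n+1})+{\rm bott}_1(\phi_2,w_{n+1}).
\eneq
The first and third terms vanish on ${\cal P}_n$ (after identification with their lifts in $K_1(C_{m(n)})$ and $K_1(C_{m(n+1)})$) by the construction of $\kappa_n$ and $\kappa_{n+1},$ while $\rho_A$ of the middle term vanishes on ${\cal P}_n$ by the original hypothesis; hence $\rho_A({\rm bott}_1(\phi_2,(u_n')^*u_{n+1}'))(x)=0$ for every $x\in {\cal P}_n.$

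The main difficulty is the simultaneous choice of $m(n),$ $\ep_n,$ and the finite subsets of $C$ on which the various Bott maps must be well defined, so that the additivity relation (\ref{ddbot-1}) is valid for the three-factor product; this is handled by choosing $m(n)$ large enough to accommodate ${\cal Q}_n$ and $\ep_n$ smaller than the universal tolerances from \ref{Dppu} and \ref{ddbot} divided by the number of factors, before invoking Theorem \ref{HITK}.
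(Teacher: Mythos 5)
Your proposal is correct and follows essentially the same route as the paper: realize $-[u_n]$ via a homomorphism $K_0(C)\to K_1(A)$ using the hypothesis $H_1(K_0(C),K_1(A))=K_1(A)$, extend it (with vanishing $K_1(C)\to K_0(A)$ component) to Bott data, and invoke Theorem \ref{HITK} to produce a corrector $w_n$ almost commuting with $\phi_2$ on ${\cal F}_n$ with ${\rm Bott}(\phi_2,w_n)$ equal to that data, so that $v_n=u_nw_n$ lies in $U_0(A)$ while both conditions in (\ref{SL1-1}) persist. The only cosmetic difference is that you spell out the passage to a finite stage $C_{m(n)}$ and the three-term additivity of ${\rm bott}_1$, which the paper leaves implicit.
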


\begin{proof}
Let $x_n=[u_n]$ in $K_1(A).$ Then, since $K_1(A)=H_1(K_0(C),
K_1(A)),$ there is a \hm\, $\kappa_{n,0}: K_0(C)\to K_1(A)$ such
that $\kappa_{n,0}([1_C])=-x_n.$
By the Universal Coefficient Theorem, there
is $\kappa_n\in KK(C,A)$ such that
\beq\label{sl1-1}
(\kappa_n)|_{K_0(C)}=\kappa_{n,0}\andeqn (\kappa_n)|_{K_1(C)}=0.
\eneq

There is, for each $n,$ a positive number $\eta_n<\dt_n,$  such
that
\beq\label{sl1-2}
{\rm ad}\, u_n\circ \phi_1\approx_{\eta_n}
\phi_2\,\,\,\text{on}\,\,\, {\cal F}_n.
\eneq
It follows from \ref{HITK} that, there is a unitary $w_n\in U(A)$
such that
\beq\label{sl1-3}
\|[\phi_2(a), w_n]\|&<&(\dt_n-\eta_n)/2\rforal a\in {\cal
F}_n\andeqn\\
\text{Bott}(\phi_2, w_n)&=&\kappa_n.
\eneq
Put $v_n=u_nw_n,$ $n=1,2,....$ Then, we have
$$
{\rm ad}\,v_n\circ
\phi_1\approx_{\dt_n}\phi_2\,\,\,\text{on}\,\,\,{\cal F}_n\andeqn
$$
$$
\rho_A(\text{bott}_1(\phi_2, v_n^*v_{n+1}))=0\andeqn
[v_n]=[u_n]-x_n=0.
$$

\end{proof}

\begin{thm}\label{ST1}
Let $C$ be as in \ref{SL1} and let $A$ be a unital separable
simple \CA\, with $TR(A)\le 1.$  Suppose that
$H_1(K_0(C),K_1(A))=K_1(A)$ and suppose that $\phi_1, \phi_2: C\to
A$ are two unital monomorphisms which are asymptotically unitarily
equivalent. Then there exists a continuous path of unitaries
$\{u(t): t\in [0, \infty)\}\subset A$ such that
$$
u(0)=1\andeqn \lim_{t\to\infty}{\rm ad}u(t)\circ
\phi_1(a)=\phi_2(a)\rforal a\in C.
$$
\end{thm}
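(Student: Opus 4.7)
The plan is to run the construction in the proof of Theorem \ref{TM}, but with the additional feature that each approximating unitary $u_n$ can be chosen to lie in $U_0(A)$, and then to prepend a continuous path from $1_A$ to the first unitary $u_1$ using connectedness of $U_0(A)$. Since $\phi_1$ and $\phi_2$ are already asymptotically unitarily equivalent, Corollary \ref{NecC} gives
$[\phi_1]=[\phi_2]$ in $KK(C,A)$, $\phi_1^{\ddag}=\phi_2^{\ddag}$, $(\phi_1)_T=(\phi_2)_T$ and $\overline{R}_{\phi_1,\phi_2}=0$, so the hypotheses of Theorem \ref{TM} are met and its proof can be invoked.

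Concretely, fix an increasing sequence of finite subsets $\{\mathcal F_n\}\subset C$ with dense union, an increasing sequence $\{\mathcal P_n\}\subset K_1(C)$ exhausting $K_1(C)$, and a summable sequence $\{\delta_n\}$ of tolerances. The TM-construction yields a sequence of unitaries $\{u_n\}\subset U(A)$ with
$\mathrm{ad}\,u_n\circ\phi_1\approx_{\delta_n}\phi_2$ on $\mathcal F_n$ and $\mathrm{Bott}(\phi_2\circ\psi_{n,\infty},u_n^*u_{n+1})=0$ on the relevant finite subsets (see (NNFM8)). Using the hypothesis $K_1(A)=H_1(K_0(C),K_1(A))$, the argument of Lemma \ref{SL1} allows me to replace each $u_n$ by $v_n=u_nw_n$, where $w_n$ is produced by Theorem \ref{HITK} to approximately commute with $\phi_1$ and to carry Bott element $\kappa_n$ satisfying $\kappa_n|_{K_0(C)}([1_C])=-[u_n]$ and $\kappa_n|_{K_1(C)}=0$. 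This forces $[v_n]=0$, so $v_n\in U_0(A)$, while the $K_1(C)\to K_0(A)$ component of $\mathrm{Bott}(\phi_2,v_n^*v_{n+1})$ is unaffected, so the full Bott-vanishing condition needed to apply Theorem \ref{LNHOMP} is preserved. Hence the Basic Homotopy Lemma furnishes piecewise-smooth paths $\{z_n(t):t\in[0,1]\}\subset U(A)$ with $z_n(0)=1_A$, $z_n(1)=v_n^*v_{n+1}$ and $\|[\phi_2(a),z_n(t)]\|<1/2^{n+2}$ for $a\in\mathcal F_n$.

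Since $v_1\in U_0(A)$, choose any continuous path $\{w(t):t\in[0,1]\}\subset U(A)$ with $w(0)=1_A$ and $w(1)=v_1$. Define
\[
u(t)=\begin{cases} w(t) & t\in[0,1],\\ v_n\,z_n(t-n) & t\in[n,n+1],\ n\ge 1.\end{cases}
\]
Matching at each integer endpoint gives continuity ($u(n)=v_n$ from both sides), and $u(0)=1_A$. For $a\in\mathcal F_n$ and $t\in[n,n+1]$, one estimates
$\|u(t)^*\phi_1(a)u(t)-\phi_2(a)\|\le \|\mathrm{ad}\,v_n\circ\phi_1(a)-\phi_2(a)\|+\|[\phi_2(a),z_n(t-n)]\|<\delta_n+1/2^{n+2}$, exactly as in the closing estimate of Theorem \ref{TM}, so the limit condition $\lim_{t\to\infty}\mathrm{ad}\,u(t)\circ\phi_1(a)=\phi_2(a)$ holds for all $a\in C$.

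The main obstacle is technical rather than conceptual: I must verify that the modification $u_n\mapsto v_n$ preserves not only the trace-formula condition $\rho_A(\mathrm{bott}_1(\phi_2,v_n^*v_{n+1}))=0$ (which is clear from $\kappa_n|_{K_1(C)}=0$), but also the full Bott-element vanishing needed by \ref{LNHOMP}. This requires interleaving the choice of $w_n$ with the choice of the auxiliary unitaries $w_n'$ in the TM-construction, so that the telescoping identity $-\mathrm{Bott}(\phi_1,w_n')+\mathrm{Bott}(\phi_1,v_nv_{n+1}^*)+\mathrm{Bott}(\phi_1,w_{n+1}')=0$ continues to hold when $u_n$'s are replaced by $v_n$'s. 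Because the correcting element $\kappa_n$ enters only on $K_0(C)$ and contributes a $K_1(A)$-valued correction that cancels between successive indices (telescoping), this verification is routine but must be carried out explicitly.
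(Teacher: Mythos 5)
Your proposal is correct and follows essentially the same route as the paper: reduce to the construction in the proof of Theorem \ref{TM}, use the hypothesis $H_1(K_0(C),K_1(A))=K_1(A)$ together with the argument of Lemma \ref{SL1} (via Theorem \ref{HITK}) to push the approximating unitaries into $U_0(A)$ while keeping the Bott-vanishing needed for Theorem \ref{LNHOMP}, and then prepend a continuous path from $1_A$. The only (harmless) difference is that the paper inserts the $U_0$-correction at the level of the unitaries $v_n$ coming from Lemma \ref{LTRBOT} inside the proof of \ref{TM} and then observes that all subsequent correction unitaries automatically lie in $U_0(A)$, whereas you correct the final output $u_n$ afterwards and verify the telescoping cancellation directly.
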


\begin{proof}
By \ref{NecT}, we must have
$$
[\phi_1]=[\phi_2]\,\,\,\text{in}\,\,\, KK(C,A),\,\,\,
\overline{R}_{\phi_1,\phi_2}=0\andeqn
$$
$$
(\phi_1)_T=(\phi_2)_T.
$$

By \ref{SL1}, in the proof of \ref{TM}, we may assume that $v_n\in
U_0(A),$ $n=1,2,....$ It follows that $\xi_n([1_C])=0,$
$n=1,2,....$ Therefore $\kappa_n([1_C])=0.$ This implies that
$\gamma_n\circ \boldsymbol{\bt}([1_C])=0.$ Hence $w_n\in U_0(A).$
It follows that $u_n\in U_0(A).$ Therefore there is a continuous
path of unitary $\{U(t): t\in [-1,0]\}$ in $A$ such that
$$
U(-1)=1_A\andeqn U(0)=u(0).
$$
The theorem then follows.

\end{proof}

\begin{cor}\label{Scc2}
Let $C$ be as in \ref{SL1} and let $A$ be a unital separable
simple \CA\, with $TR(A)\le 1.$ Let $\phi: C\to B$ be a unital
monomorphism and let $u\in U(A).$ Then there exists a continuous
path of unitaries $\{U(t): t\in [0,\infty)\}\subset A$ such that
$$
U(0)=1_B\andeqn \lim_{t\to\infty}{\rm ad}\, U(t)\circ \phi(a)={\rm
ad}\, u\circ \phi(a)\rforal a\in C
$$
if and only if $[u]\in H_1(K_0(C), K_1(A)).$
\end{cor}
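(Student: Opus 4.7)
The ``only if'' direction will follow immediately from Proposition~\ref{Sup} applied to $\phi$ and the given path.

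For the ``if'' direction I will set $\phi_1=\phi$ and $\phi_2={\rm ad}\,u\circ\phi$, and aim to show they are strongly asymptotically unitarily equivalent. First I will check that the hypotheses of Theorem~\ref{TM} hold: the two maps are unitarily equivalent via the constant unitary $u$, so $[\phi_1]=[\phi_2]$ in $KK(C,A)$ and $(\phi_1)_T=(\phi_2)_T$; for any $w\in U(M_k(C))$ the element $u^*\phi(w)u\phi(w)^{-1}$ is a commutator in $U(M_k(A))$ and hence lies in $CU(M_k(A))$, yielding $\phi_1^{\ddag}=\phi_2^{\ddag}$; and $\overline{R}_{\phi_1,\phi_2}=0$ by Theorem~\ref{NecT}, since $\phi_1$ and $\phi_2$ are visibly asymptotically unitarily equivalent through the constant path at $u$. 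Hence Theorem~\ref{TM} applies, and one can moreover run its proof to produce the relevant approximating sequences.

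The heart of the argument will be to rerun the proof of Theorem~\ref{ST1}, observing that its global hypothesis $H_1(K_0(C),K_1(A))=K_1(A)$ is used only to arrange that the classes $[u_n]$ of the approximating unitaries lie in $H_1(K_0(C),K_1(A))$. Here I will make the trivial choice $u_n\equiv u$, so that ${\rm ad}\,u_n\circ\phi_1=\phi_2$ exactly and $u_n^*u_{n+1}=1$, making the bott-compatibility condition of Lemma~\ref{SL1} automatic. The assumption $[u]\in H_1(K_0(C),K_1(A))$ then furnishes a homomorphism $\kappa_0:K_0(C)\to K_1(A)$ with $\kappa_0([1_C])=-[u]$; by the UCT this lifts to $\kappa\in KK(C,A)$ with $\kappa|_{K_0(C)}=\kappa_0$ and $\kappa|_{K_1(C)}=0$, and Theorem~\ref{HITK} produces unitaries $w_n\in U(A)$ almost commuting with $\phi_2$ on the relevant finite subsets with ${\rm Bott}(\phi_2,w_n)=\kappa$. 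In particular $[w_n]=-[u]$, so $v_n:=u w_n\in U_0(A)$ while ${\rm ad}\,v_n\circ\phi_1$ still approximates $\phi_2$ on the finite subsets.

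With the approximating sequence $\{v_n\}\subset U_0(A)$ in hand, the path construction from the proof of Theorem~\ref{TM} assembles a continuous family $\{u(t):t\in[0,\infty)\}$ of unitaries in $A$ with ${\rm ad}\,u(t)\circ\phi_1\to\phi_2$. I will then prepend a continuous path from $1_A$ to $v_1$ inside $U_0(A)$, defined by a smooth exponential decomposition of $v_1$, and reparametrize to produce the desired path $\{U(t):t\in[0,\infty)\}$ with $U(0)=1_A$ and ${\rm ad}\,U(t)\circ\phi(a)\to{\rm ad}\,u\circ\phi(a)$ for all $a\in C$. The main obstacle, which will dictate the care required in rerunning the proof of Theorem~\ref{ST1}, is to verify that Lemma~\ref{SL1}'s modification goes through under the weaker hypothesis $[u]\in H_1(K_0(C),K_1(A))$ rather than $H_1(K_0(C),K_1(A))=K_1(A)$; inspecting that proof, only the particular class $[u_n]=[u]$ needs to be in the image of some ${\rm Hom}(K_0(C),K_1(A))$-morphism sending $[1_C]$ to $-[u]$, which is precisely the meaning of $[u]\in H_1(K_0(C),K_1(A))$.
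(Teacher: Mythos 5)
Your proposal is correct and follows essentially the same route as the paper, whose own proof simply reads ``This follows from \ref{Sup} and the proof of \ref{ST1}'': the ``only if'' direction is \ref{Sup}, and the ``if'' direction reruns the argument of \ref{ST1} (via \ref{SL1}, \ref{HITK} and \ref{TM}) after noting that the blanket hypothesis $H_1(K_0(C),K_1(A))=K_1(A)$ is only used there to place the classes of the approximating unitaries in $H_1(K_0(C),K_1(A))$, which here is guaranteed directly by the assumption $[u]\in H_1(K_0(C),K_1(A))$. Your choice $u_n\equiv u$, the correction $v_n=uw_n\in U_0(A)$ with ${\rm Bott}(\phi_2,w_n)=\kappa$, $\kappa_0([1_C])=-[u]$, and the prepended path from $1_A$ to $v_1$ in $U_0(A)$ are exactly the intended mechanism.
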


\begin{proof}
This follows from \ref{Sup} and the proof of \ref{ST1}.
\end{proof}

\begin{cor}\label{SCZ}
Let $C$ be as in \ref{SL1} with $K_0(C)=\Z\cdot [1_C]\oplus G$ and
let $B$ be a unital separable simple \CA\, with $TR(B)\le 1.$
Suppose that $\phi_1, \phi_2: C\to B$ are two unital monomorphisms
such that
\beq\label{scz1}
[\phi_1]&=&[\phi_2]\,\,\,\text{in}\,\,\,KK(C,B),\,\,\,\overline{
R}_{\phi_1,\phi_2}=0\andeqn\\
(\phi_1)_T&=&(\phi_2)_T\rforal \tau\in T(B).
\eneq
Then there exists a continuous path of unitaries $\{u_t:t\in [0,
\infty)\}$ of $B$ such that
$$
u_0=1_B\andeqn \lim_{t\to\infty}{\rm ad}\, u_t\circ
\phi_1(a)=\phi_2(a)\rforal a\in A.
$$

\end{cor}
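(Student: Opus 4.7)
The plan is to deduce this directly from Theorem~\ref{ST1}. That theorem needs two inputs: asymptotic unitary equivalence of $\phi_1$ and $\phi_2$, and $H_1(K_0(C), K_1(B)) = K_1(B)$. The new algebraic hypothesis $K_0(C) = \Z\cdot[1_C]\oplus G$ is present precisely to provide the second input for free, so the entire content of the corollary is to verify that hypothesis and then quote \ref{ST1}.

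First I would check that $H_1(K_0(C), K_1(B)) = K_1(B)$. For any $x \in K_1(B)$, define $\varphi_x\colon K_0(C) \to K_1(B)$ by $\varphi_x([1_C]) = x$ on the $\Z\cdot[1_C]$ summand and $\varphi_x|_G = 0$; the direct sum decomposition makes this a well defined homomorphism. Since $\varphi_x([1_C]) = x$, Definition~\ref{Dsu} yields $x \in H_1(K_0(C), K_1(B))$, so $K_1(B) \subseteq H_1(K_0(C), K_1(B))$, and the reverse containment is automatic.

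Next I would verify that $\phi_1$ and $\phi_2$ are asymptotically unitarily equivalent in the sense of Theorem~\ref{TM}. The hypotheses already supply $[\phi_1] = [\phi_2]$ in $KK(C,B)$, $(\phi_1)_T = (\phi_2)_T$, and $\overline{R}_{\phi_1, \phi_2} = 0$; the only item of~\ref{TM} not listed explicitly is $\phi_1^{\ddag} = \phi_2^{\ddag}$. This is forced by the others: the $KK$-equality gives $(\phi_1)_{*1} = (\phi_2)_{*1}$, the trace equality together with the de la Harpe--Skandalis determinant (see~\ref{DDCU}) gives agreement on $U_0(C)/CU(C)$, and the splitting $U(C)/CU(C) = U_0(C)/CU(C) \oplus K_1(C)$ available for $C$ of the inductive-limit form described in~\ref{Dc0} and~\ref{SL1} assembles these two pieces into $\phi_1^{\ddag} = \phi_2^{\ddag}$. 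Thus Theorem~\ref{TM} applies and produces a continuous path of unitaries $\{v(t): t \in [0, \infty)\} \subset B$ with $\lim_{t \to \infty} \mathrm{ad}\, v(t) \circ \phi_1(a) = \phi_2(a)$ for all $a \in C$.

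Finally, Theorem~\ref{ST1} applied to the asymptotically equivalent pair $(\phi_1, \phi_2)$, with the $H_1$-hypothesis verified above, delivers a continuous path $\{u_t: t \in [0, \infty)\} \subset B$ with $u_0 = 1_B$ and $\lim_{t \to \infty} \mathrm{ad}\, u_t \circ \phi_1(a) = \phi_2(a)$ for all $a \in C$, which is the desired conclusion. No substantive obstacle is expected; the deep work is already encoded in~\ref{TM} and~\ref{ST1}, and the role of the splitting $K_0(C) = \Z\cdot[1_C] \oplus G$ is purely to match the $H_1$-hypothesis of~\ref{ST1}.
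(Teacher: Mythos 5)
Your overall route is the paper's route: the entire content of the paper's own proof is exactly your first step (given $x\in K_1(B)$, send $[1_C]\mapsto x$ and $G\mapsto 0$ to get $H_1(K_0(C),K_1(B))=K_1(B)$) followed by an appeal to \ref{ST1}. Where you go beyond the paper is in spelling out the intermediate step that \ref{ST1} needs asymptotic unitary equivalence as input, and that is where there is a gap.

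The problem is your claim that $\phi_1^{\ddag}=\phi_2^{\ddag}$ is ``assembled'' from $(\phi_1)_{*1}=(\phi_2)_{*1}$ and agreement on $U_0(C)/CU(C)$ via the splitting $U(C)/CU(C)=U_0(C)/CU(C)\oplus K_1(C).$ That inference is false in general. With respect to a splitting on the target side, $\phi_i^{\ddag}$ restricted to the $K_1(C)$-summand is a homomorphism into $U(B)/CU(B)$ whose composition with the quotient map $U(B)/CU(B)\to K_1(B)$ is $(\phi_i)_{*1}$; equality of these compositions leaves the components into $U_0(B)/CU(B)$ completely unconstrained, so the difference $\phi_1^{\ddag}-\phi_2^{\ddag}$ on that summand is an arbitrary-looking homomorphism $K_1(C)\to U_0(B)/CU(B).$ This discrepancy is precisely what the rotation map measures: for a unitary $u$ with $[u]=z\in K_1(C)$ and a piecewise smooth path $w$ from $\phi_1(u)$ to $\phi_2(u)$ representing $\theta(z),$ one has $\Delta_B(\phi_1(u)\phi_2(u)^*)=\overline{R_{\phi_1,\phi_2}(\theta(z))}$ in $\text{Aff}(T(B))/\overline{\rho_B(K_0(B))}.$ So to kill the discrepancy you must invoke the hypothesis $\overline{R}_{\phi_1,\phi_2}=0$ (which forces $R_{\phi_1,\phi_2}(\theta(z))\in\rho_B(K_0(B))$) together with the injectivity of $\Delta_B$ on $U_0(B)/CU(B)$ for $TR(B)\le 1$ (see \ref{DDCU}); your argument never uses $\overline{R}_{\phi_1,\phi_2}=0$ at this step, and without it the conclusion $\phi_1^{\ddag}=\phi_2^{\ddag}$ simply does not follow. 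Once that is repaired the rest stands (modulo the fact that \ref{TM} is stated for simple $C,$ so strictly one reruns its proof for $C$ of the inductive-limit form of \ref{SL1}, which is what \ref{ST1} does anyway).
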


\begin{proof}
 Let $x\in K_1(B).$ Then one defines $\gamma: K_0(C)\to K_1(B)$
by $\gamma([1_C])=x$ and $\gamma|_G=0.$ This implies that
$$
H_1(K_0(C), K_1(B))=K_1(B).
$$
Thus the corollary follows from \ref{ST1}.
\end{proof}

\section{Classification of separable simple amenable \CA s}

\begin{NN}
{\rm Let ${\mathfrak{p}}$ be a supernatural number. Denote by
$M_{\mathfrak{p}}$ the UHF-algebra associated with ${\mathfrak{p}}$
(see \cite{Dix}). Denote by $Q$ the UHF-algebra with $K_0(Q)=\Q$ and
$[1_Q]=1.$

}
\end{NN}

\begin{lem}\label{ucu}
Let $A$ be a unital separable simple \CA\, with $TR(A)\le 1$ and
let $\mathfrak{p}$ be a supernatural number of infinite type. Then
the \hm\, $\imath: a\mapsto a\otimes 1$ induces an isomorphism
from $U_0(A)/CU(A)$ to $U_0(A\otimes M_{\mathfrak{p}})/CU(A\otimes
M_{\mathfrak{p}}).$
\end{lem}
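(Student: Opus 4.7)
The approach is to reduce the claim to the de la Harpe--Skandalis determinant picture of \ref{DDCU}. Since $\mathfrak{p}$ is of infinite type and $TR(A)\le 1$, the algebra $A\otimes M_{\mathfrak{p}}$ also has tracial rank at most one (by standard tensor-product stability with UHF of infinite type), so Thomsen's theorem supplies isometric isomorphisms $\Delta_A: U_0(A)/CU(A)\to \text{Aff}(T(A))/\overline{\rho_A(K_0(A))}$ and $\Delta_{A\otimes M_{\mathfrak{p}}}: U_0(A\otimes M_{\mathfrak{p}})/CU(A\otimes M_{\mathfrak{p}})\to \text{Aff}(T(A\otimes M_{\mathfrak{p}}))/\overline{\rho_{A\otimes M_{\mathfrak{p}}}(K_0(A\otimes M_{\mathfrak{p}}))}$. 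The plan is to show that under these identifications, the map induced by $\imath$ is the quotient map between two copies of the same group.

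First I would use the fact that $M_{\mathfrak{p}}$ has a unique tracial state $\tau_{\mathfrak{p}}$, so the assignment $\tau\mapsto \tau\otimes\tau_{\mathfrak{p}}$ is an affine homeomorphism $T(A)\to T(A\otimes M_{\mathfrak{p}})$ and hence identifies $\text{Aff}(T(A))$ with $\text{Aff}(T(A\otimes M_{\mathfrak{p}}))$. A short computation with piecewise-smooth paths shows $\Delta_{A\otimes M_{\mathfrak{p}}}(u\otimes 1)=\Delta_A(u)$ under this identification, because for any path $\{u(t)\}$ from $1_A$ to $u$ one has $(\tau\otimes\tau_{\mathfrak{p}})((u'(t)\otimes 1)(u(t)^*\otimes 1))=\tau(u'(t)u(t)^*)$. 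Therefore $\imath^{\ddag}$ corresponds to the canonical quotient map between $\text{Aff}(T(A))/\overline{\rho_A(K_0(A))}$ and $\text{Aff}(T(A))/\overline{\rho_{A\otimes M_{\mathfrak{p}}}(K_0(A\otimes M_{\mathfrak{p}}))}$, and the question reduces to the equality of closed subgroups $\overline{\rho_A(K_0(A))}=\overline{\rho_{A\otimes M_{\mathfrak{p}}}(K_0(A\otimes M_{\mathfrak{p}}))}$ inside $\text{Aff}(T(A))$.

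The inclusion $\subset$ is automatic by functoriality of $\rho$ applied to $\imath_*$. For $\supset$, the K\"unneth formula (applicable because $K_*(M_{\mathfrak{p}})$ is torsion-free) writes any $K_0$-class of a projection in $A\otimes M_{\mathfrak{p}}$ as a finite sum $\sum_i[p_i]\otimes[q_i]$ with $p_i$ a projection in a matrix algebra over $A$ and $q_i$ a projection in $M_{\mathfrak{p}}$, and $\rho_{A\otimes M_{\mathfrak{p}}}([p_i\otimes q_i])(\tau\otimes\tau_{\mathfrak{p}})=\tau_{\mathfrak{p}}(q_i)\,\rho_A([p_i])(\tau)$. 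Since $\mathfrak{p}$ is of infinite type, $\tau_{\mathfrak{p}}(q_i)$ is a rational of the form $k_i/n_i$ with $n_i\mid \mathfrak{p}^{\infty}$, so the required inclusion amounts to the statement that $\tfrac{1}{n}\rho_A([p])$ belongs to $\overline{\rho_A(K_0(A))}$ for every $n\mid\mathfrak{p}^{\infty}$ and every projection $p$ in a matrix algebra over $A$.

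The main obstacle is this approximate divisibility statement. My plan is to invoke the tracial approximate divisibility of $A$ that follows from $TR(A)\le 1$ and simplicity (Theorem 5.4 of \cite{Lnctr1}, already cited in the excerpt): for any $\varepsilon>0$, $n\in\N$, and finite subset $\mathcal{F}\subset A$, there is a projection $e\in A$ approximately commuting with $\mathcal{F}$ and a unital copy of $M_n$ in $eAe$ with $\tau(1-e)<\varepsilon$ uniformly in $\tau\in T(A)$. Cutting a given $p$ by such an $e$ and exploiting the internal $M_n$-structure produces a projection $p'$ in a matrix algebra over $A$ with $n\rho_A([p'])$ uniformly within $\varepsilon$ of $\rho_A([p])$, which places $\tfrac{1}{n}\rho_A([p])$ in $\overline{\rho_A(K_0(A))}$. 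Once this is in hand, the two closures coincide, the quotient map in question becomes the identity, and $\imath^{\ddag}$ is an isomorphism.
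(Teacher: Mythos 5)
Your argument is correct, but it follows a genuinely different route from the paper's. The paper works directly at the level of unitary groups: it writes $A\otimes M_{\mathfrak{p}}=\lim_n(M_{m(n)}(A),\imath_n)$ with $\imath_n$ a diagonal embedding of multiplicity $k(n)$, observes that $\imath_n^{\ddag}(\bar u)=k(n)\,j_n(\bar u)$ where $j_n(\bar u)=\overline{\mathrm{diag}(u,1,\dots,1)}$ induces an isomorphism on $U/CU$ (Theorem 6.7 of \cite{Lnctr1} and Corollary 3.5 of \cite{Lnnhomp}), and then deduces injectivity from torsion-freeness of $U_0(M_{m(n)}(A))/CU(M_{m(n)}(A))$ (Theorem 6.11 of \cite{Lnctr1}) and surjectivity from its divisibility (Lemma 6.6 of \cite{Lnctr1}), finally passing to the inductive limit. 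You instead transport everything through the determinant into $\mathrm{Aff}(T(A))/\overline{\rho_A(K_0(A))}$ and reduce the lemma to the equality of two closed subgroups of $\mathrm{Aff}(T(A))$; since $K_0(A\otimes M_{\mathfrak{p}})=\varinjlim(K_0(A),\times k(n))$ (continuity of $K_0$ suffices here, no K\"unneth needed), this amounts to $\tfrac1n\rho_A(K_0(A))\subset\overline{\rho_A(K_0(A))}$ for $n$ dividing $\mathfrak{p}$. That divisibility statement is, under the determinant identification, exactly the torsion-freeness of $U_0(A)/CU(A)$ that the paper quotes as a black box, so in effect you are re-proving one of its inputs; your plan to do so via tracial approximate divisibility works, with the caveat that Theorem 5.4 of \cite{Lnctr1} produces a finite-dimensional subalgebra $\oplus_i M_{r_i}$ with all $r_i\ge N$ rather than a single unital copy of $M_n$, so one should take rank-$\lfloor r_i/n\rfloor$ subprojections in each block and absorb an error of order $n/N$. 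The trade-off: the paper's proof is shorter given the quoted structure results on $U/CU$ and never needs $TR(A\otimes M_{\mathfrak{p}})\le 1$ or the determinant picture; yours makes the mechanism transparent (the closed subgroup $\overline{\rho_A(K_0(A))}$ is already closed under division by integers, so tensoring with $\Q_{\mathfrak{p}}$ changes nothing after closure), at the cost of needing the determinant isomorphism for $A\otimes M_{\mathfrak{p}}$ as well and of redoing the divisibility input by hand.
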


\begin{proof}
There are  sequences of positive integers $ \{m(n)\}$ and
$\{k(n)\}$ such that $A\otimes
M_{\mathfrak{p}}=\lim_{n\to\infty}(A\otimes M_{m(n)}, \imath_n),$
where
$$
\imath_n: M_{m(n)}(A)\to M_{m(n+1)}(A)
$$
is defined by $\imath(a)={\rm diag}(\overbrace{a,a,...,a}^{k(n)})$
for all $a\in M_{m(n)}(A),$ $n=1,2,....$ Note, by Theorem 5.8 of
\cite{Lnplms}, $TR(M_{m(n)}(A))\le 1.$ Let $j_n:
U(M_{m(n)}(A))/CU(M_{m(n)}(A)))\to
U(M_{m(n+1)}(A))/CU(M_{m(n+1)}(A))$ be defined by
$$
j_n({\bar u})=\overline{{\rm
diag}(u,\underbrace{1,1,...,1}_{k(n)-1})}\tforal u\in
U((M_{m(n)}(A)).
$$

It follows from Theorem 6.7 of \cite{Lnctr1} and Corollary 3.5 of \cite{Lnnhomp} that $j_n$ is
an isomorphism.
 By Lemma 6.6 of \cite{Lnctr1}, $U_0(M_{m(n)}(A))/CU(M_{m(n)}(A))$
is divisible. For each $n$ and $i,$ there is a unitary $U_i\in
M_{m(n+1)}(A)$ such that
$$
U_i^*E_{1,1}U_i=E_{i,i},\,\,\,i=2,3,...,k(n),
$$
where $E_{i,i}=\sum_{j=(i-1)m(n)+1}^{im(n)}e_{i,i}$  and
$\{e_{i,j}\}$ is a matrix unit for $M_{m(n+1)}.$ Then
$$
\imath_n(u)=u'U_2^*u'U_2\cdots U_{k(n)}^*u'U_{k(n)},
$$
where $u'={\rm diag}(u,\overbrace{1,1,...,1}),$ for all $u\in
M_{m(n)}(A).$ Thus
$$
\imath_n^{\ddag}({\bar u})=k(n)j_n(\bar u).
$$

It follows that
$\imath_n^{\ddag}|_{U_0(M_{m(n)}(A))/CU(M_{m(n)}(A))}$ is
injective, since $U_0(M_{m(n+1)}(A))/CU(M_{m(n+1)}(A))$ is torsion
free (see Theorem 6.11 of \cite{Lnctr1}). For each $z\in
U_0(M_{m(n+1)}(A)/CU(M_{m(n+1)}),$ there is a unitary $v\in
M_{m(n+1)}(A)$ such that
$$
j_n({\bar v})=z,
$$
since $j_n$ is an isomorphism. By the divisibility of
$U_0(M_{m(n)}(A)/CU(M_{m(n)}),$ there is $u\in M_{m(n)}(A)$ such
that
$$
\overline{u^{k(n)}}=\overline{u}^{k(n)}=\overline{v}.
$$
As above,
$$
\imath_n^{\ddag}({\bar u})=k(n)j_n(\bar v)=z.
$$
So $\imath_n^{\ddag}|_{U_0(M_{m(n)}(A))/CU(M_{m(n)}(A))}$ is a
surjective.  It follows that $\imath_{1,
\infty}^{\ddag}|_{U_0(M_{m(n)}(A))/CU(M_{m(n)}(A))}$ is also an
isomorphism. One then concludes that
$\imath^{\ddag}|_{U_0(A)/CU(A)}$ is an isomorphism.

\end{proof}

\begin{lem}\label{cu2}
Let $A$ and $B$ be two unital separable simple \CA s  with
$TR(A)\le 1.$ Let $\phi: A\to B$ be an isomorphism and let $\bt:
B\otimes M_{\mathfrak{p}}\to B\otimes M_{\mathfrak{p}}$ be an
automorphism such that $\bt_{*1}={\rm id}_{K_1(B\otimes
M_{\mathfrak{p}})}$ for some supernatural number $\mathfrak{p}$ of infinite type.
Then
$$\psi^{\ddag}(U(A)/CU(A))=
(\phi_0)^{\ddag}(U(A)/CU(A))=U(B)/CU(B),
$$
where $\phi_0=\imath\circ \phi,$ $\psi=\bt\circ \imath\circ \phi$
and where $\imath: B\to B\otimes M_{\mathfrak{p}}$ is defined by
$\imath(b)=b\otimes 1$ for all $b\in B.$ Moreover there is an
isomorphism $\mu: U(B)/CU(B)\to U(B)/CU(B)$ with
$\mu(U_0(B)/CU(B))\subset U_0(B)/CU(B)$ such that
$$
\imath^{\ddag}\circ \mu\circ \phi^{\ddag}=\psi^{\ddag}\andeqn
q_1\circ \mu=q_1,
$$
where $q_1: U(B)/CU(B)\to K_1(B)$ is the quotient map.
\end{lem}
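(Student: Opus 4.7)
My plan is to first observe that since $\phi$ is an isomorphism, $\phi^{\ddag}$ is an isomorphism, so $(\phi_0)^{\ddag}(U(A)/CU(A)) = \imath^{\ddag}(U(B)/CU(B))$ trivially; the right-most equality in the displayed identity should be read up to the injection $\imath^{\ddag}$ (Lemma \ref{ucu} gives an isomorphism only on the $U_0/CU$ part, so I will identify $U(B)/CU(B)$ with its image inside $U(B\otimes M_{\mathfrak p})/CU(B\otimes M_{\mathfrak p})$). Note also that $TR(B)\le 1$ since $\phi:A\to B$ is an isomorphism, so Lemma \ref{ucu} applies to $B$, giving that $\imath^{\ddag}$ restricts to an isomorphism $\imath_0^{\ddag}:U_0(B)/CU(B)\to U_0(B\otimes M_{\mathfrak p})/CU(B\otimes M_{\mathfrak p})$.

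The main construction is the map $\mu.$ I would define a map $\delta:U(B)/CU(B)\to U_0(B)/CU(B)$ by
$$
\delta(\bar v) \;=\; (\imath_0^{\ddag})^{-1}\bigl(\bt^{\ddag}(\imath^{\ddag}(\bar v))\cdot \imath^{\ddag}(\bar v)^{-1}\bigr).
$$
This is well-defined because for any $\bar v\in U(B)/CU(B)$ one has $q_1(\bt^{\ddag}(\imath^{\ddag}(\bar v))) = \bt_{*1}(\imath_{*1}(q_1(\bar v))) = \imath_{*1}(q_1(\bar v)) = q_1(\imath^{\ddag}(\bar v))$ using $\bt_{*1}=\id$, so the expression inside $\imath_0^{\ddag}{}^{-1}$ actually lies in $U_0(B\otimes M_{\mathfrak p})/CU(B\otimes M_{\mathfrak p})$; also, since $\bt$ preserves $U_0$, the restriction $\bt^{\ddag}|_{U_0}$ is an automorphism. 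Since $U/CU$ is abelian, the multiplicativity of $\imath^{\ddag}$ and $\bt^{\ddag}$ immediately yields that $\delta$ is a group homomorphism. Then I would set $\mu(\bar v):=\delta(\bar v)\cdot \bar v$.

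The remaining verifications are routine checks: $\mu$ is multiplicative because $U(B)/CU(B)$ is abelian and $\delta$ is a homomorphism; $q_1\circ\mu=q_1$ is immediate since $\delta$ takes values in $U_0/CU$; $\mu(U_0(B)/CU(B))\subset U_0(B)/CU(B)$ because $\bar v\in U_0$ forces both factors of $\mu(\bar v)$ into $U_0/CU$; the identity $\imath^{\ddag}\circ\mu=\bt^{\ddag}\circ\imath^{\ddag}$ follows by directly computing $\imath^{\ddag}(\delta(\bar v))\cdot\imath^{\ddag}(\bar v) = \bt^{\ddag}(\imath^{\ddag}(\bar v))$; composing with $\phi^{\ddag}$ on the right gives $\imath^{\ddag}\circ\mu\circ\phi^{\ddag}=\bt^{\ddag}\circ\imath^{\ddag}\circ\phi^{\ddag}=\psi^{\ddag}$. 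For bijectivity of $\mu$, I would argue separately on $U_0$ (where $\mu$ restricts to $(\imath_0^{\ddag})^{-1}\circ\bt^{\ddag}\circ\imath_0^{\ddag}$, a composition of isomorphisms) and then use that every element of $U(B)/CU(B)$ differs from an element of the image of $\mu|_{U_0}$ by an element with the same $K_1$-class as a chosen preimage.

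Finally, for the image equality: having $\mu$ an automorphism with $\imath^{\ddag}\circ\mu\circ\phi^{\ddag}=\psi^{\ddag}$ gives
$$
\psi^{\ddag}(U(A)/CU(A))=\imath^{\ddag}(\mu(\phi^{\ddag}(U(A)/CU(A))))=\imath^{\ddag}(U(B)/CU(B))=(\phi_0)^{\ddag}(U(A)/CU(A)).
$$
The only delicate point is really Step~1 of the construction — namely the well-definedness of $\delta$, which rests essentially on the hypotheses $\bt_{*1}=\id$ and on Lemma \ref{ucu} supplying an inverse for $\imath^{\ddag}$ on $U_0/CU$ for $\mathfrak p$ of infinite type; everything else is then formal manipulation in an abelian group.
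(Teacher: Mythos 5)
Your proof is correct and follows essentially the same route as the paper's: both rest on Lemma \ref{ucu} (invertibility of $\imath^{\ddag}$ on $U_0/CU$) together with the observation that $\bt_{*1}={\rm id}$ forces the discrepancy between $\bt^{\ddag}\circ\imath^{\ddag}$ and $\imath^{\ddag}$ to land in $U_0(B\otimes M_{\mathfrak{p}})/CU(B\otimes M_{\mathfrak{p}})$, and both define $\mu$ as the identity perturbed by a $U_0$-valued correction homomorphism. The only cosmetic difference is that the paper writes $\mu=\dt_2\circ q_1+\bt^{\ddag,0}$ using explicit sections $\dt_2,\dt_3$ of the quotient maps (supplied by divisibility of $U_0/CU$), whereas your $\mu=\delta+{\rm id}$ avoids choosing these splittings; your reading of the equality with $U(B)/CU(B)$ as holding after the identification via $\imath^{\ddag}$ likewise matches what the paper actually proves.
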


\begin{proof}
Since  $\phi_{*1}$ is an isomorphism and $\bt_{*1}={\rm
id}_{K_1(B\otimes M_{\mathfrak{p}})},$ we have that
\beq\label{cu2-0}
(\psi)_{*1}(K_1(A))= (\phi_0)_{*1}(K_1(A)).
\eneq
Moreover, 
${\rm ker}(\imath\circ \phi)_{*1}={\rm
ker}(\imath\circ \psi)_{*1}.$

Let $q_1': U(B\otimes M_{\mathfrak{p}})/CU(B\otimes
M_{\mathfrak{p}})\to K_1(B\otimes M_{\mathfrak{p}})$  and $q_1'':
U(A)/CU(A)\to K_1(A)$ be the quotient maps. Define
$G=q_1'^{-1}((\phi_0)_{*1}(K_1(A))).$

It follows from \ref{ucu} that
\beq\label{cu1-1}
(\psi)^{\ddag}(U_0(A)/CU(A))=U_0(B\otimes
M_{\mathfrak{p}})/CU(B\otimes
M_{\mathfrak{p}})=(\phi_0)^{\ddag}(U_0(A)/CU(A)).
\eneq
Considering the following diagram:
$$
\begin{array}{cccccccc}
0 &\to &  U_0(A)/CU(A) &\to & U(A)/CU(A) & \to & K_1(A) & \to 0\\
& &\downarrow_{(\psi)^{\ddag}}& & \downarrow_{(\psi)^{\ddag}} &&
\downarrow_{\psi_{*1}}\\
0&\to& U_0(B\otimes M_{\mathfrak{p}})/CU(B\otimes
M_{\mathfrak{p}})&\to & G &\to & (\phi_0)_{*1}(K_1(A)) &\to 0
\end{array}
$$
Combining this with (\ref{cu1-1}), we have that
$$
\psi^{\ddag}(U(A)/CU(A))\subset (\phi_0)^{\ddag}(U(A)/CU(A)).
$$
We then conclude that
$$
\psi^{\ddag}(U(A)/CU(A))= (\phi_0)^{\ddag}(U(A)/CU(A)).
$$
Furthermore, we also have
\beq\label{cu1-2}
q_1'\circ \phi_0^{\ddag}=(\phi_0)_{*1}\circ q_1''\andeqn q_1'\circ
(\psi)^{\ddag}=\psi_{*1}\circ q_1''.
\eneq
It follows from Lemma 6.6 that $U_0(A)/CU(A)$ is divisible. There
is an injective \hm\, $\dt_1: K_1(A)\to U(A)/CU(A)$ such that
$q_1''\circ \dt_1={\rm id}_{K_1(A)}$  is the quotient map and we
write
$$
U(A)/CU(A)=U_0(A)/CU(A)\oplus \dt_1(K_1(A)).
$$
We also have an injective \hm\, $\dt_2: K_1(B)\to U(B)/CU(B)$ such
that $q_1\circ \dt_1={\rm id}_{K_1(B)}$ and we write
$$
U(B)/CU(B)=U_0(B)/CU(B)\oplus \dt_2(K_1(B)).
$$
We also have an injective \hm\, $\dt_3: K_1(B\otimes M_{\mathfrak{p}})\to U(B\otimes M_{\mathfrak{p}})/CU(B\otimes M_{\mathfrak{p}})$ such that $q_1'\circ \dt_3={\rm id}_{K_1(B\otimes M_{\mathfrak{p}})}.$
 Note that
$$
q_1'\circ \imath^{\ddag}\circ ({\rm id}_{U(B)/CU(B)}-\dt_2\circ
q_1)=0\andeqn q_1'\circ \imath^{\ddag}=\imath_{*1}\circ q_1.
$$
We also have
\beq\label{cu-3}
\dt_3\circ \imath_{*1}=\imath^{\ddag}\circ \dt_2.
\eneq

It follows that
$$
G=U_0((B\otimes M_{\mathfrak{p}})/CU(B\otimes
M_{\mathfrak{p}})\oplus \imath^{\ddag}\circ \dt_2\circ (\phi)_{*1}\circ
q_1''(U(A)/CU(A)).
$$

Let
$$j=(\imath^{\ddag}|_{U_0(B\otimes M_{\mathfrak{p}})/CU(B\otimes
M_{\mathfrak{p}})})^{-1}:U_0(B\otimes
M_{\mathfrak{p}})/CU(B\otimes M_{\mathfrak{p}})\to U_0(B)/CU(B).
$$
Define $\bt^{\ddag,0}: U(B)/CU(B)\to U_0(B)/CU(B)$ by
\beq\label{cu-9}
\bt^{\ddag,0}=j\circ (\bt^{\ddag}\circ \imath^{\ddag}-\dt_3\circ
q_1'\circ \bt^{\ddag}\circ \imath^{\ddag}).
\eneq
Note that $\bt^{\ddag,0}|_{U_0(B)/CU(B)}=j\circ\bt^{\ddag}\circ
\imath^{\ddag}|_{U_0(B)/CU(B)},$ since $\bt^{\ddag}$ maps
$U_0(B)/CU(B)$ to $U_0(B)/CU(B).$ Thus
\beq\label{cu-9+2}
\bt^{\ddag,0}\circ \phi^{\ddag}=j\circ
(\bt^{\ddag}\circ\imath^{\ddag}\circ \phi^{\ddag}-\dt_3\circ
q_1'\circ \bt^{\ddag}\circ \imath^{\ddag}\circ\phi^{\ddag}).
\eneq
Define $\mu: U(B)/CU(B)\to U(B)/CU(B)$ by
\beq\label{cu2-9+3}
\mu=\dt_2\circ q_1+\bt^{\ddag,0}.
\eneq
We compute that
\beq\label{cu2-9+4}
\imath^{\ddag}\circ \mu\circ \phi^{\ddag}&=& \imath^{\ddag}\circ
\dt_2\circ q_1\circ \phi^{\ddag}+\imath^{\ddag}\circ\bt^{\ddag,0}\circ \phi^{\ddag}\\
&=& \imath^{\ddag}\circ \dt_2\circ \phi_{*1}\circ q_1''\\
&&+\bt^{\ddag}\circ \imath^{\ddag}\circ \phi^{\ddag}-\dt_3\circ
q_1'\circ
\bt^{\ddag}\circ \imath^{\ddag}\circ\phi^{\ddag}\\
&=& \imath^{\ddag}\circ \dt_2\circ \phi_{*1}\circ q_1''\\
&&+\bt^{\ddag}\circ\imath^{\ddag}\circ \phi^{\ddag}-\dt_3\circ
\bt_{*1}\circ \imath_{*1}\circ
\phi_{*1}\circ q_1''\\
&=& \imath^{\ddag}\circ \dt_2\circ \phi_{*1}\circ q_1''\\
&&+\bt^{\ddag}\circ\imath^{\ddag}\circ \phi^{\ddag}-\dt_3\circ
\imath_{*1}\circ
\phi_{*1}\circ q_1''\\
&=&\bt^{\ddag}\circ\imath^{\ddag}\circ \phi^{\ddag}=\psi^{\ddag}.
\eneq
Moreover,
$$
q_1\circ \mu=q_1\circ \dt_2\circ q_1+q_1\circ \bt^{\ddag,0}=q_1.
$$
Since $\mu|_{U_0(B)/CU(B)}$ and $q_1\circ \mu
|_{\dt_2(U(B)/CU(B))}$ are both isomorphisms, one checks that
$\mu$ is an isomorphism.


\end{proof}

\begin{lem}\label{L10}
Let $A$ and let $B$ be two unital   simple amenable \CA s  in
${\cal N}$ with $TR(A)\le 1$ and $TR(B)\le 1.$  Suppose that
$\phi_1, \phi_2: A\to B$ are two isomorphisms such that
$[\phi_1]=[\phi_2]$ in $KK(A,B).$ Then there exists an
automorphism $\bt: B\to B$ such that $[\bt]=[{\rm id}_B]$ in
$KK(B,B)$ and $\bt\circ \phi_2$ is asymptotically unitarily
equivalent to $\phi_1.$ Moreover, if $H_1(K_0(A), K_1(B))=K_1(B),$
they are strongly asymptotically unitarily equivalent.
\end{lem}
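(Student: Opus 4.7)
My plan is to take $\beta := \phi_1 \circ \phi_2^{-1}$, which is a well-defined automorphism of $B$ because both $\phi_1$ and $\phi_2$ are isomorphisms. The $KK$-theoretic condition $[\beta] = [{\rm id}_B]$ then follows from the hypothesis $[\phi_1] = [\phi_2]$ by functoriality of the Kasparov product:
\begin{equation*}
[\beta] \;=\; [\phi_2^{-1}] \times [\phi_1] \;=\; [\phi_2^{-1}] \times [\phi_2] \;=\; [{\rm id}_B]\quad\text{in}\quad KK(B,B).
\end{equation*}

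For the asymptotic equivalence conclusion, I would observe that $\beta \circ \phi_2 = \phi_1$ literally as homomorphisms (not merely up to unitary conjugation), so the constant path $u(t) \equiv 1_B$ on $[0,\infty)$ witnesses strong asymptotic unitary equivalence: $u(0) = 1_B$ and $u(t)^* \phi_1(a) u(t) = \phi_1(a) = (\beta \circ \phi_2)(a)$ for all $t$ and all $a \in A$. This settles both the main assertion and the moreover assertion simultaneously, without reference to the hypothesis $H_1(K_0(A), K_1(B)) = K_1(B)$.

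The principal concern I anticipate is not mathematical but stylistic: this short argument uses none of the deep machinery built up in the paper (the uniqueness theorem \ref{TM}, the existence theorem \ref{TEX}, the rotation maps, or the $H_1$ hypothesis). If the intended proof is meant to construct a more canonical $\beta$ — for instance, one with prescribed invariants that are close to those of ${\rm id}_B$ — then the alternative route would be: apply \ref{TEX} (modulo the caveat that its hypothesis requires $C \in \mathcal{C}$, which would need to be circumvented since $B$ is merely a simple $TR \le 1$ algebra) to produce a unital monomorphism realising the compatible triple $([{\rm id}_B],\, \phi_1^{\ddag} \circ (\phi_2^{\ddag})^{-1},\, (\phi_2)_T^{-1} \circ (\phi_1)_T)$; upgrade to an automorphism by Elliott-style approximate intertwining; adjust by an asymptotically inner automorphism using \ref{Inn} and \ref{NL} to kill the rotation obstruction $\overline{R}_{\phi_1,\, \beta \circ \phi_2}$; and conclude by \ref{TM}, with \ref{ST1} supplying the strong version under the $H_1$ hypothesis. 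I would present the direct construction $\beta = \phi_1 \circ \phi_2^{-1}$ as the primary proof and flag the existence-theorem route only as a remark, since the stated conclusion is already delivered by the direct argument.
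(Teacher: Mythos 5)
Your construction is correct, and it does prove the lemma as literally stated: $\beta=\phi_1\circ\phi_2^{-1}$ is an automorphism, the Kasparov-product computation $[\beta]=[\phi_2^{-1}]\times[\phi_1]=[\phi_2^{-1}]\times[\phi_2]=[\mathrm{id}_B]$ is valid, and since $\beta\circ\phi_2=\phi_1$ on the nose, the constant path $u(t)\equiv 1_B$ gives strong asymptotic unitary equivalence with no use of the $H_1$ hypothesis. This is, however, a genuinely different route from the paper's. The paper builds $\beta=\beta_2\circ\beta_1$ out of the invariants rather than out of $\phi_1$ and $\phi_2$ themselves: $\beta_1$ is obtained from the existence theorem \ref{MT2} so that $[\beta_1]=[\mathrm{id}_B]$, $\beta_1^{\ddag}=\phi_1^{\ddag}\circ(\phi_2^{\ddag})^{-1}$ and $(\beta_1)_T=(\phi_1)_T\circ(\phi_2)_T^{-1}$; then \ref{Inn} supplies a $KK$-trivial, trace-trivial $\beta_2$ whose rotation class cancels $\overline{R}_{\phi_1,\beta_1\circ\phi_2}$ via the cocycle identities of \ref{Mul} and \ref{Group1}; asymptotic unitary equivalence then comes from the uniqueness theorem \ref{TM}, and the strong version from \ref{ST1}. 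What the longer argument buys is not extra strength in this particular statement --- your observation that the conclusion is already delivered by the tautological choice is accurate --- but rather the template that is genuinely needed in Lemma \ref{l1}, where the correcting automorphism must be realized on the smaller algebra $B\otimes M_{\mathfrak{q}}$ and no tautological choice is available. For what it is worth, your $\beta$ would still feed correctly into the proof of Theorem \ref{CMT1}, since there $\phi_T=\psi_T$ forces $\beta_T=(\mathrm{id})_T$, which is all that \ref{l1} requires; and the ``alternative route'' you sketch at the end is essentially the paper's actual proof.
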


\begin{proof}
It follows from \ref{MT2} that there is an automorphism $\bt_1:
B\to B$ satisfying the following:
\beq
[\bt_1]=[{\rm id}_B]\,\,\,{\rm in}\,\,\,KK(B,B),\\
\bt_1^{\ddag}=\phi_1^{\ddag}\circ (\phi_2^{-1}))^{\ddag}\andeqn
(\bt_1)_T=(\phi_1)_T\circ (\phi_2)_T^{-1}.
\eneq
By \ref{Inn}, there is automorphism $\bt_2\in Aut(B)$ such that
\beq
[\bt_2]=[{\rm id}_B]\,\,\, {\rm in}\,\,\, KK(B,B),\\
\bt_2^{\ddag}={\rm id}_B^{\ddag},\,\,\,(\bt_2)_T=({\rm
id}_B)_T\andeqn\\
 \overline{R}_{{\rm id}_B,\bt_2}=-\overline{R}_{\phi_1,\bt_1\circ \phi_2}\circ
(\phi_2)_{*1}^{-1}.
\eneq
Put $\bt=\bt_2\circ \bt_1.$
 It follows that
\beq
[\bt\circ \phi_2]=[\phi_1]\,\,\,{\rm in}\,\,\, KK(A,B),\\
(\bt\circ \phi_2)^{\ddag}=\phi_1^{\ddag}\andeqn (\bt\circ
\phi_2)_T=(\phi_1)_T.
\eneq
Moreover, by \ref{Group1},
\beq
\overline{R}_{\phi_1,\bt\circ \phi_2}&=&\overline{R}_{{\rm
id}_B,\bt_2}\circ
(\phi_2)_{*1}+\overline{R}_{\phi_1,\bt_1\circ\phi_2}\\
&=&(-\overline{R}_{\phi_1,\bt_1\circ \phi_2}\circ
(\phi_2)_{*1}^{-1})\circ (\phi_2)_{*1}
+\overline{R}_{\phi_1,\bt_1\circ\phi_2}=0.
\eneq
It follows from \ref{MT2} that $\bt\circ \phi_2$ and $\phi_1$ are
asymptotically unitarily equivalent.

In the case that $H_1(K_0(A), K_1(B))=K_1(B),$ it follows from
\ref{ST1} that $\bt\circ \phi_2$ and $\phi_1$ are strongly
asymptotically unitarily equivalent.

\end{proof}

\begin{lem}\label{l1}
Let $A$ and let $B$ be two  unital simple amenable \CA\, in ${\cal
N}$ with $TR(A),\,TR(B)\le 1$ and let $\phi: A\to B$ be an
isomorphism. Suppose that $\bt\in Aut(B\otimes M_{\mathfrak{p}})$
for which
$$[\bt]=[{\rm id}_{B\otimes
M_{\mathfrak{p}}}]\,\,\,{\rm in}\,\,\,KK(B\otimes
M_{\mathfrak{p}},B\otimes M_{\mathfrak{p}}) \andeqn \bt_T=({\rm
id}_{B\otimes M_{\mathfrak{p}}})_T
$$
for some supernatural number $\mathfrak{p}$ of infinite type.

Then there exists an automorphism $\af\in Aut(B)$ with
$[\af]=[{\rm id}_{B}]$ in $KK(B,B)$ such that $ \imath\circ
\af\circ \phi $ and $\bt\circ \imath\circ \phi$ are asymptotically
unitarily equivalent, where $\imath: B\to B\otimes
M_{\mathfrak{p}}$ is defined by $\imath(b)=b\otimes 1$ for all
$b\in B.$
\end{lem}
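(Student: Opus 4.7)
The plan is to combine the surjectivity in Theorem \ref{MT2} with the exact sequence of Corollary \ref{Inn} to build $\af$ in two stages: first arrange that the $KK$-class, the induced map on $U/CU,$ and the induced tracial map of $\imath\circ\af\circ\phi$ and $\bt\circ\imath\circ\phi$ coincide, and then cancel the remaining rotation obstruction by a further automorphism. Once all four invariants agree, Theorem \ref{TM} yields the asymptotic unitary equivalence.

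Concretely, I would first apply Lemma \ref{cu2} with $\psi=\bt\circ\imath\circ\phi$ to produce an isomorphism $\mu:U(B)/CU(B)\to U(B)/CU(B)$ with $q_1\circ\mu=q_1$ and $\imath^{\ddag}\circ\mu\circ\phi^{\ddag}=\psi^{\ddag}.$ The key preliminary point, needed to check that $({\rm id}_B,\mu,{\rm id}_{T(B)})$ is a compatible triple in $KKUT_e(B,B)^{++},$ is the following consequence of Lemma \ref{ucu}: since $\imath^{\ddag}:U_0(B)/CU(B)\to U_0(B\otimes M_{\mathfrak{p}})/CU(B\otimes M_{\mathfrak{p}})$ is an isomorphism and both $\Delta_B$ and $\Delta_{B\otimes M_{\mathfrak{p}}}$ are isometric isomorphisms onto their respective quotients of $\text{Aff}(T(B))=\text{Aff}(T(B\otimes M_{\mathfrak{p}})),$ the natural quotient map
$$
\text{Aff}(T(B))/\overline{\rho_B(K_0(B))}\to\text{Aff}(T(B))/\overline{\rho_{B\otimes M_{\mathfrak{p}}}(K_0(B\otimes M_{\mathfrak{p}}))}
$$
must be an isomorphism, forcing the equality $\overline{\rho_B(K_0(B))}=\overline{\rho_{B\otimes M_{\mathfrak{p}}}(K_0(B\otimes M_{\mathfrak{p}}))}.$ Combined with $\bt_T={\rm id}$ and the explicit formula for $\mu|_{U_0(B)/CU(B)}=j\circ\bt^{\ddag}\circ\imath^{\ddag}$ from the proof of Lemma \ref{cu2}, this identification gives $\Delta_B\circ\mu|_{U_0(B)/CU(B)}=\Delta_B$ and so yields the required tracial compatibility.

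Next, by the surjectivity of $\boldsymbol{\mathfrak{K}}$ in Theorem \ref{MT2} together with the approximate intertwining argument used in the proof of Corollary \ref{Inn}, I would pick $\af_1\in\Aut(B)$ with $[\af_1]=[{\rm id}_B],$ $\af_1^{\ddag}=\mu$ and $(\af_1)_T={\rm id}.$ Setting $\phi_0=\imath\circ\af_1\circ\phi$ and $\phi_2=\bt\circ\imath\circ\phi,$ one checks immediately that $[\phi_0]=[\imath\circ\phi]=[\phi_2],$ $\phi_0^{\ddag}=\imath^{\ddag}\circ\mu\circ\phi^{\ddag}=\phi_2^{\ddag}$ and $(\phi_0)_T=(\phi_2)_T,$ the last equality using $\bt_T={\rm id}$ and $(\af_1)_T={\rm id}.$ By Lemma \ref{BARAFF} the rotation $\lambda:=\overline{R}_{\phi_0,\phi_2}$ is a well-defined element of $\Hom(K_1(A),\overline{\rho_B(K_0(B))})/{\cal R}_0$ (after the closure identification above).

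To finish, set $\lambda_0:=\lambda\circ\phi_{*1}^{-1}\in\Hom(K_1(B),\overline{\rho_B(K_0(B))})/{\cal R}_0$ (legitimate because $\phi_{*1}$ is an isomorphism), and invoke the exact sequence of Corollary \ref{Inn} to pick $\af_2\in\Aut(B)$ with trivial $KK,$ $\ddag$ and $T$ invariants and with $\overline{R}_{{\rm id}_B,\af_2}=\lambda_0.$ Take $\af=\af_2\circ\af_1.$ Then $[\af]=[{\rm id}_B],$ $\af^{\ddag}=\mu$ and $\af_T={\rm id},$ so the first three invariants of $\imath\circ\af\circ\phi$ and $\phi_2$ continue to match. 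Using Theorem \ref{Mul} and Lemma \ref{Group1} applied to the automorphism $\af_2\otimes{\rm id}_{M_{\mathfrak{p}}}$ of $B\otimes M_{\mathfrak{p}},$ together with the elementary fact that $\overline{R}_{{\rm id},\,\af_2\otimes{\rm id}_{M_{\mathfrak{p}}}}\circ\imath_{*1}=\overline{R}_{{\rm id}_B,\af_2}$ (since a path from $\af_2(u)$ to $u$ in $B$ tensors to a path from $(\af_2\otimes{\rm id})(\imath(u))$ to $\imath(u)$), one computes
$$
\overline{R}_{\imath\circ\af\circ\phi,\phi_2}=\overline{R}_{\phi_0,\phi_2}-\overline{R}_{\phi_0,\imath\circ\af\circ\phi}=\lambda-\lambda_0\circ\phi_{*1}=\lambda-\lambda=0.
$$
An application of Theorem \ref{TM} then produces the required asymptotically unitarily implementing path.

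The main obstacles I anticipate are (i) the tracial compatibility check in the first step, which truly depends on the coincidence of the two closures of $\rho K_0$ in $\text{Aff}(T(B))$ extracted from Lemma \ref{ucu}, and (ii) the careful bookkeeping of how the rotation map transforms under $\otimes M_{\mathfrak{p}}$ and under restriction to $\imath_{*1}(K_1(B));$ once these are in hand the remainder is a formal use of the uniqueness/existence machinery of the preceding sections.
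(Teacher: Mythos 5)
Your proof is correct and follows essentially the same route as the paper: Lemma \ref{cu2} to produce $\mu$, the existence machinery of Theorem \ref{MT2}/Corollary \ref{Inn} to realize the triple and kill the rotation obstruction, and Theorem \ref{TM} to conclude. The only differences are cosmetic — you factor $\af$ as $\af_2\circ\af_1$ (exactly as the paper does in Lemma \ref{L10}) where the paper prescribes the rotation class in a single application of \ref{MT2}, and you spell out the compatibility check $\overline{\rho_B(K_0(B))}=\overline{\rho_{B\otimes M_{\mathfrak{p}}}(K_0(B\otimes M_{\mathfrak{p}}))}$ that the paper leaves implicit.
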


\begin{proof}
It follows from \ref{cu2} that there is an isomorphism $\mu:
U(B)/CU(B)\to U(B)/CU(B)$ such that
$$
\imath^{\ddag}\circ \mu\circ \phi^{\ddag}=(\bt\circ \imath\circ
\phi)^{\ddag}.
$$
Note that $\imath_T: T(B\otimes M_{\mathfrak{p}})\to T(B)$ is an
affine homeomorphism.

It follows from \ref{MT2} that there is an automorphism $\af: B\to
B$ such that
\beq\label{l1-1}
&&[\af]=[{\rm id}_B]\,\,\,{\rm in}\,\,\,KK(B,
B),\\
&&\af^{\ddag}=\mu,\,\,\, \af_T=(\bt\circ \imath\circ \phi)_T\circ
((\imath\circ
\phi)_T)^{-1}=({\rm id}_{B\otimes M_{\mathfrak{p}}})_T\andeqn\\
&&\overline{R}_{{\rm
id}_B,\af}(x)(\tau)=-\overline{R}_{\bt\circ\imath\circ \phi,\,
\imath\circ \phi}(\phi_{*1}^{-1}(x))(\imath_T(\tau))\tforal x\in
K_1(A)
\eneq
and for all $\tau\in T(B).$

Denote by $\psi=\imath\circ \af\circ \phi.$ Then we have, by
\ref{Group1},
\beq
&&[\psi]= [\imath\circ \phi]=[\bt\circ\imath\circ \phi]\,\,\,{\rm
in}\,\,\, KK(A, B\otimes M_{\mathfrak{p}})\\
&&\psi^{\ddag}=\imath^{\ddag}\circ\mu\circ \phi^{\ddag}=(\bt\circ
\imath\circ \phi)^{\ddag},\\
&&\psi_T=(\imath\circ \af\circ \phi)_T=(\imath\circ
\phi)_T=(\bt\circ \imath\circ \phi)_T. \eneq Moreover, for any $x\in
K_1(A)$ and $\tau\in T(B\otimes M_{\mathfrak{p}})$ (by \ref{Group1})

 \beq
\overline{R}_{\bt\circ \imath\circ \phi, \psi}(x)(\tau)&=&
\overline{R}_{\bt\circ \imath\circ\phi,
\imath\circ\phi}(x)(\tau)+\overline{R}_{\imath, \imath\circ
\af}\circ
\phi_{*1}(x)(\tau)\\
&=&\overline{R}_{\bt\circ \imath\circ\phi,
\imath\circ\phi}(x)(\tau)+\overline{R}_{{\rm id}_B, \imath\circ \af}\circ
\phi_{*1}(x)(\imath_T^{-1}(\tau))\\
&=&\overline{R}_{\bt\circ \imath\circ\phi,
\imath\circ\phi}(x)(\tau)-\overline{R}_{\bt\circ \imath\circ\phi,
\imath\circ\phi}(\phi_{*1}^{-1})(\phi_{*1}(x))(\tau)=0
\eneq
It follows from \ref{MT2}  that $\imath\circ \af\circ \phi$ and
$\bt\circ \imath\circ \phi$ are asymptotically unitarily
equivalent.

\end{proof}

\begin{NN}
{\rm Let $A$ be a unital separable simple \CA. By the Elliott
invariant we mean the 6-tuple:
$$
Ell(A)=(K_0(A), K_0(A)_+, [1_A], K_1(A), T(A),\rho_A),
$$
where $\rho_A: T(A)\to S_{[1]}(K_0(A))$ (where $S_{[1]}(K_0(A))$ is
the state space of $K_0(A)$) is a surjective affine continuous map
such that $\rho_A(\tau)([p])=\tau(p)$ for any projection $p\in
A\otimes {\cal K}.$

 Let $B$ be another unital separable simple \CA. We say that
there is an isomorphism
$$
\Gamma: Ell(A)\to Ell(B)
$$
if there is an  order isomorphism $\kappa_0: (K_0(A),
K_0(A)_+,[1_A])\to (K_0(B), K_0(B)_+, [1_B]),$  there is an
isomorphism $\kappa_1: K_1(A)\to K_1(B)$ and there is an affine
homeomorphism $\mu: T(A)\to T(B)$ for which
$$
\mu^{-1}(\tau)(p)=\rho_B(\tau)(\kappa_0([p]))
$$
for all projection $p\in M_k(A)$ (for all $k\ge 1$) and all tracial
states $\tau\in T(B).$

}
\end{NN}

\begin{thm}\label{CMT1}
Let $A$ and $B$ be two unital separable  simple
\CA s in ${\cal N}.$  Suppose that there is an isomorphism
$$
\Gamma: Ell(A)\to Ell(B).
$$
Suppose that, for some pair of relatively prime supernatural
numbers $\mathfrak{p}$ and $\mathfrak{q}$ of infinite type such
that $M_{\mathfrak{p}}\otimes M_{\mathfrak{q}}\cong Q,$
$TR(A\otimes M_{\mathfrak{p}})\le 1,$ $ TR(B\otimes
M_{\mathfrak{p}})\le 1,$ $TR(A\otimes M_{\mathfrak{q}})\le 1$ and
$TR(B\otimes M_{\mathfrak{q}}))\le 1.$ Then,
$$
A\otimes {\cal Z}\cong B\otimes {\cal Z}.
$$
\end{thm}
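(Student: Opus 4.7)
The plan is to follow W.~Winter's two--step strategy from \cite{W}, using the asymptotic unitary equivalence theorem \ref{TM} and its corollaries as the ``uniqueness'' input.

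\medskip

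First, I would invoke the classification theorem for unital separable simple amenable $C^*$-algebras with $TR\le 1$ satisfying the UCT (classification by Elliott invariant). Since $A\otimes M_{\mathfrak{p}}$ and $B\otimes M_{\mathfrak{p}}$ belong to this class, and the given isomorphism $\Gamma: Ell(A)\to Ell(B)$ extends naturally to an isomorphism $Ell(A\otimes M_{\mathfrak{p}})\to Ell(B\otimes M_{\mathfrak{p}})$ (tensoring $K$-theory by $K_0(M_{\mathfrak{p}})$ and identifying trace spaces via the affine homeomorphism induced by the unique tracial state of $M_{\mathfrak{p}}$), we obtain an isomorphism
$$
\phi_{\mathfrak{p}}: A\otimes M_{\mathfrak{p}}\longrightarrow B\otimes M_{\mathfrak{p}}
$$
realizing this Elliott invariant isomorphism. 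Similarly, we obtain $\phi_{\mathfrak{q}}: A\otimes M_{\mathfrak{q}}\to B\otimes M_{\mathfrak{q}}$. Both $A\otimes Q$ and $B\otimes Q$ belong to the class (they equal $A\otimes M_{\mathfrak{p}}\otimes M_{\mathfrak{q}}$, and $TR\le 1$ is preserved under tensoring with UHF algebras).

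\medskip

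Next, consider the two induced isomorphisms
$$
\Phi_{\mathfrak{p}}=\phi_{\mathfrak{p}}\otimes\mathrm{id}_{M_{\mathfrak{q}}},\quad
\Phi_{\mathfrak{q}}=\phi_{\mathfrak{q}}\otimes\mathrm{id}_{M_{\mathfrak{p}}}:
A\otimes Q\longrightarrow B\otimes Q.
$$
By construction these maps induce the same Elliott invariant isomorphism, hence (by the UCT and the agreement of induced tracial maps) they agree in $KK(A\otimes Q, B\otimes Q)$, on $U(\cdot)/CU(\cdot)$ up to adjusting by an asymptotically inner automorphism, and on the trace simplex. By Lemma~\ref{L10} applied to $\Phi_{\mathfrak{p}}$ and $\Phi_{\mathfrak{q}}$, there is an automorphism $\beta\in\mathrm{Aut}(B\otimes Q)$ with $[\beta]=[\mathrm{id}]$ in $KK(B\otimes Q, B\otimes Q)$, $\beta^{\ddag}$ and $\beta_T$ suitably chosen, and with vanishing rotation obstruction, so that $\beta\circ\Phi_{\mathfrak{q}}$ is asymptotically unitarily equivalent to $\Phi_{\mathfrak{p}}$.

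\medskip

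The crucial step is then to show that $\beta$ can be ``absorbed'' into the $\mathfrak{p}$-side so as to set up Winter's 2-colored intertwining. For this I would apply Lemma~\ref{l1} (with $B\otimes M_{\mathfrak{q}}$ playing the role of $B$, and $M_{\mathfrak{p}}$ the UHF factor to be tensored in) to produce an automorphism $\alpha\in\mathrm{Aut}(B\otimes M_{\mathfrak{q}})$ with $[\alpha]=[\mathrm{id}]$ in $KK$ such that $(\alpha\otimes\mathrm{id}_{M_{\mathfrak{p}}})\circ\Phi_{\mathfrak{q}}$ is asymptotically unitarily equivalent to $\beta\circ\Phi_{\mathfrak{q}}$, hence to $\Phi_{\mathfrak{p}}$. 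Replacing $\phi_{\mathfrak{q}}$ by $\alpha\circ\phi_{\mathfrak{q}}$ (an equally valid choice of isomorphism on the $\mathfrak{q}$-side), we are reduced to the situation where
$$
\phi_{\mathfrak{p}}\otimes\mathrm{id}_{M_{\mathfrak{q}}}\quad\text{and}\quad
\phi_{\mathfrak{q}}\otimes\mathrm{id}_{M_{\mathfrak{p}}}
$$
are asymptotically unitarily equivalent as unital monomorphisms $A\otimes Q\to B\otimes Q$.

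\medskip

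Finally, this is precisely the hypothesis needed for Winter's intertwining argument in \cite{W}: from the two isomorphisms $\phi_{\mathfrak{p}}, \phi_{\mathfrak{q}}$ on the tensor factors $A\otimes M_{\mathfrak{p}}, A\otimes M_{\mathfrak{q}}$, together with a continuous path of unitaries in $B\otimes Q$ implementing the asymptotic unitary equivalence of their amplifications, one builds an isomorphism $A\otimes\mathcal{Z}\cong B\otimes\mathcal{Z}$ by using a presentation of $\mathcal{Z}$ as a stationary inductive limit involving the dimension drop algebra with fibers $M_{\mathfrak{p}}$ and $M_{\mathfrak{q}}$. The main obstacle in the argument is verifying all the compatibility data needed to apply Lemmas~\ref{L10} and \ref{l1}, especially controlling the rotation map $\overline{R}$ between $\Phi_{\mathfrak{p}}$ and $\Phi_{\mathfrak{q}}$; but this is handled precisely because $\overline{R}$ is the only obstruction beyond $KK$-class, induced map on $U/CU$, and trace map, and all of those are matched by construction.
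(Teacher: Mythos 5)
Your proposal follows the same route as the paper's proof: classification in tracial rank at most one gives $\phi_{\mathfrak{p}}$ and $\phi_{\mathfrak{q}}$; divisibility of $K_*(B\otimes Q)$ upgrades agreement of Elliott data to equality in $KK(A\otimes Q,B\otimes Q)$; Lemma \ref{L10} produces the correcting automorphism $\beta$; Lemma \ref{l1} absorbs it into the $\mathfrak{q}$-side; and Theorem 7.1 of \cite{W} finishes. Two steps are elided, though, and both require genuine work. First, Lemma \ref{l1} only yields that $\imath\circ\alpha\circ\phi_{\mathfrak{q}}$ and $\beta\circ\imath\circ\phi_{\mathfrak{q}}$ are asymptotically unitarily equivalent as maps defined on $A\otimes M_{\mathfrak{q}}$; that is, it controls the restrictions of $(\alpha\otimes\mathrm{id}_{M_{\mathfrak{p}}})\circ\Phi_{\mathfrak{q}}$ and $\beta\circ\Phi_{\mathfrak{q}}$ to the first tensor factor, not the full maps on $A\otimes Q$. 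The paper needs a further argument to pass to the full amplifications: it uses an isomorphism $s\colon M_{\mathfrak{p}}\to M_{\mathfrak{p}}\otimes M_{\mathfrak{p}}$ with $[\mathrm{id}\otimes s]=[j]$, Theorem \ref{TM} applied with $K_1(M_{\mathfrak{p}})=0$ to see that $\mathrm{id}\otimes s$ and $j$ are strongly asymptotically unitarily equivalent, and a path of unitaries conjugating $\beta\circ\phi|_{1\otimes M_{\mathfrak{p}}}$ to the standard embedding of $M_{\mathfrak{p}}$. You should supply this step rather than cite Lemma \ref{l1} for the amplified statement. Second, Winter's theorem requires a \emph{unitarily suspended} isomorphism, i.e., a path with $u(0)=1$, so you need strong (not just plain) asymptotic unitary equivalence throughout; the paper secures this from $H_1(K_0(A\otimes Q),K_1(B\otimes Q))=K_1(B\otimes Q)$, which holds because $K_1(B\otimes Q)$ is divisible, via \ref{ST1} and the last clause of \ref{L10}. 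Make both points explicit and the argument is complete.
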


\begin{proof}
Note that $\Gamma$ induces an isomorphism
$$
\Gamma_{\mathfrak{p}}: Ell(A\otimes M_{\mathfrak{p}})\to
Ell(B\otimes M_{\mathfrak{p}}).
$$
 Since $TR(A\otimes M_{\mathfrak{p}})\le 1$ and
$TR(B\otimes M_{\mathfrak{p}})\le 1,$ by Theorem 10.10 of
\cite{Lnctr1}, there is an isomorphism $\phi_{\mathfrak{p}}:
A\otimes M_{\mathfrak{p}}\to B\otimes M_{\mathfrak{p}}.$ Moreover,
(by the proof of Theorem 10.4 of \cite{Lnctr1}),
$\phi_{\mathfrak{p}}$ carries $\Gamma_{\mathfrak{p}}.$  For
exactly the same reason, $\Gamma$ induces an isomorphism
$$
\Gamma_{\mathfrak{q}}:Ell(A\otimes M_{\mathfrak{q}})\to
Ell(B\otimes M_{\mathfrak{q}})
$$
and there is an isomorphism $\psi_{\mathfrak{q}}: A\otimes
M_{\mathfrak{q}}\to B\otimes M_{\mathfrak{q}}$ which induces
$\Gamma_{\mathfrak{q}}.$

 Put $\phi=\phi_{\mathfrak{p}}\otimes {\rm
id}_{M_{\mathfrak{q}}}: A\otimes Q\to B\otimes Q$ and
$\psi=\psi_{\mathfrak{q}}\otimes {\rm id}_{M_{\mathfrak{p}}}:
A\otimes Q\to B\otimes Q.$

Note that
$$
(\phi)_{*i}=(\psi)_{*i}\,\,{\rm (} i=0,1 {\rm )} \andeqn
\phi_T=\psi_T.
$$
(they are induced by $\Gamma$). Note that $\phi_T$ and $\psi_T$
are affine homeomorphisms. Since $K_{*i}(B\otimes Q)$ is
divisible, we in fact have $[\phi]=[\psi]$ (in $KK(A\otimes Q,
B\otimes Q)$). It follows from \ref{L10} that there is an
automorphism $\bt: B\otimes Q\to B\otimes Q$ such that
$$
[\bt]=[{\rm id}_{B\otimes Q}]\,\,\,KK(B\otimes Q, B\otimes Q)
$$
as well as  $\phi$ and $\bt\circ \psi$ are asymptotically unitarily
equivalent.  Since $K_1(B\otimes Q)$ is divisible, $H_1(K_0(A\otimes
Q), K_1(B\otimes Q))=K_1(B\otimes Q).$ It follows that $\phi$ and
$\bt\circ \psi$ are strongly asymptotically unitarily equivalent.
Note also in this case
$$
\bt_T=({\rm id}_{B\otimes Q})_T.
$$
Let $\imath: B\otimes M_{\mathfrak{q}}\to B\otimes Q$ defined by
$\imath(b)=b\otimes 1$ for $b\in B.$ We consider the pair $\bt\circ
\imath\circ \psi_{\mathfrak{q}}$ and $\imath \circ
\psi_{\mathfrak{q}}.$ By applying \ref{l1}, there exists an
automorphism $\af: B\otimes M_{\mathfrak{q}}\to B\otimes
M_{\mathfrak{q}}$ such that $\imath\circ \af\circ
\psi_{\mathfrak{q}}$ and $\bt\circ \imath\circ \psi_{\mathfrak{q}}$
are asymptotically unitarily equivalent (in $M(B\otimes Q)$). So
they are strongly asymptotically unitarily equivalent. Moreover,
$$
[\af]=[{\rm id}_{B\otimes M{\mathfrak{q}}}]\,\,\,{\rm in}\,\,\,
KK(B\otimes M_{\mathfrak{q}},B\otimes M_{\mathfrak{q}}).
$$

We will show that $\bt\circ \psi$ and $\af\circ
\psi_{\mathfrak{q}}\otimes {\rm id}_{M_{\mathfrak{p}}}$ are strongly
asymptotically unitarily equivalent. Define $\bt_1=\bt\circ
\imath\circ\psi_{\mathfrak{q}}\otimes {\rm id}_{M_{\mathfrak{p}}}:
B\otimes Q\otimes M_{\mathfrak{p}}\to B\otimes Q\otimes
M_{\mathfrak{p}}.$ Let $j: Q\to Q\otimes M_{\mathfrak{p}}$ defined
by $j(b)=b\otimes 1.$ There is an isomorphism $s:
M_{\mathfrak{p}}\to M_{\mathfrak{p}}\otimes M_{\mathfrak{p}}$ with
$({\rm id}_{M_{\mathfrak{q}}}\otimes s)_{*0}=j_{*0}.$ In this case
$[{\rm id}_{M_{\mathfrak{q}}}\otimes s]=[j].$ Since
$K_1(M_{\mathfrak{p}})=0.$ By \ref{TM}, ${\rm
id}_{M_{\mathfrak{q}}}\otimes s$ is strongly asymptotically
uniatrily equivalent to $j.$ It follows that $\af\circ
\psi_{\mathfrak{q}}\otimes {\rm id}_{M_{\mathfrak{p}}}$ and
$\bt\circ \imath\circ \psi_{\mathfrak{q}}\otimes {\rm
id}_{M_{\mathfrak{p}}}$ are strongly asymptotically unitarily
equivalent. Consider the \SCA\, $C=\bt\circ \psi(1\otimes
M_{\mathfrak{p}})\otimes M_{\mathfrak{p}}\subset B\otimes Q\otimes
M_{\mathfrak{p}}.$ In $C,$  $\bt\circ \phi|_{1\otimes
M_{\mathfrak{p}}}$ and $j_0$ are strongly asymptotically unitarily
equivalent, where $j_0: M_{\mathfrak{p}}\to C$ is defined by
$j_0(a)=1\otimes a$ for all $a\in M_{\mathfrak{p}}.$ There exists a
continuous path of unitaries $\{v(t): t\in [0,\infty)\}\subset C$
such that \beq\label{CM1-1} \lim_{t\to\infty}{\rm ad}\, v(t)
\circ\bt\circ \phi(1\otimes a)=1\otimes a\tforal a\in
M_{\mathfrak{p}}. \eneq It follows that $\bt\circ \psi$ and $\bt_1$
are strongly asymptotically unitarily equivalent. Therefore
$\bt\circ \psi$ and $\af\circ \psi_{\mathfrak{q}}\otimes {\rm
id}_{M_{\mathfrak{p}}}$ are strongly asymptotically unitarily
equivalent. Finally, we conclude that $\af\circ
\psi_{\mathfrak{q}}\otimes {\rm id}_{\mathfrak{p}}$ and $\phi$ are
strongly asymptotically unitarily equivalent. Note that $\af\circ
\psi_{\mathfrak{q}}$ is an isomorphism which induces
$\Gamma_{\mathfrak{q}}.$

Let $\{u(t): t\in [0,1)\}$ be a continuous path of unitaries in
$B\otimes Q$ with $u(0)=1_{B\otimes Q}$ such that
$$
\lim_{t\to\infty}{\rm ad}\, u(t)\circ \phi(a)=\af\circ
\psi_{\mathfrak{q}}\otimes {\rm id}_{M_{\mathfrak{q}}}(a)\tforal
a\in A\otimes Q.
$$
One then obtains a unitary suspended isomorphism which lifts
$\Gamma$ along $Z_{p,q}$ (see \cite{W}). It follows from Theorem
7.1 of \cite{W} that $A\otimes {\cal Z}$ and $B\otimes {\cal Z}$
are isomorphic.

\end{proof}

\begin{df}\label{Class}
{\rm Denote by ${\cal A}$ the class of those unital  simple
\CA s $A$  in ${\cal N}$ for which $TR(A\otimes
M_{\mathfrak{p}})\le 1$ for any supernatural number
${\mathfrak{p}}$ of infinite type.

Of course ${\cal A}$  contains all unital simple amenable \CA s
with tracial rank no more than one  which satisfy the UCT. It
contains the Jiang-Su algebra ${\cal Z}.$ It is also known to
contain all unital simple \CA s which are locally type I with
unique tracial state.
Moreover, it contains all unital simple ${\cal Z}$-stable
ASH-algebras $A$ such that $T(A) =S_{[1]}(K_0(A)),$ where $S_{[1]}(K_0(A))$ is the state space of $K_0(A).$  In these
three cases, the tensor products of these unital simple \CA s with
any infinite dimensional UHF-algebras actually have tracial rank
zero (see \cite{W} and \cite{Lnappn}).

There are of course  unital simple \CA s $A$ for which
$A\otimes M_{\mathfrak{p}}$ has tracial rank one (not zero). For example,
all  unital simple AH-algebras have this property.
The
following corollary states that \CA s in ${\cal A}$ can be
classified up to ${\cal Z}$-stably isomorphism by their Elliott
invariant.

}

\end{df}

\begin{cor}\label{CM1}
Let $A$ and $B$ be two \CA s in ${\cal A}.$ Then $A\otimes {\cal
Z}\cong B\otimes {\cal Z}$ if and only if $Ell(A\otimes {\cal
Z})\cong Ell(B\otimes {\cal Z}).$
\end{cor}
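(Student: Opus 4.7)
The plan is to deduce this corollary as an essentially immediate consequence of Theorem \ref{CMT1} (the main classification theorem just proved), using the fact that tensoring with $\mathcal{Z}$ is absorbed when further tensoring with any UHF algebra of infinite type.

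First I would dispose of the ``only if'' direction, which is trivial since any $\ast$-isomorphism induces an isomorphism of Elliott invariants. For the ``if'' direction, I would start by fixing a concrete pair of relatively prime supernatural numbers $\mathfrak{p}$ and $\mathfrak{q}$ of infinite type with $M_{\mathfrak{p}}\otimes M_{\mathfrak{q}}\cong Q$ (for instance, $\mathfrak{p}=2^{\infty}$ and $\mathfrak{q}$ the product of all other primes to the infinity). Set $A'=A\otimes \mathcal{Z}$ and $B'=B\otimes \mathcal{Z}$; both are unital separable simple \CA s in ${\cal N}$ since amenability, simplicity, the UCT, separability and unitality are all preserved under tensoring with the nuclear, simple, unital \CA\ $\mathcal{Z}$.

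The next step is to verify the tracial rank hypothesis required by Theorem \ref{CMT1} for $A'$ and $B'$. Using the $\mathcal{Z}$-absorption of UHF algebras, $\mathcal{Z}\otimes M_{\mathfrak{r}}\cong M_{\mathfrak{r}}$ for any supernatural number $\mathfrak{r}$ of infinite type, so that
\[
A'\otimes M_{\mathfrak{r}}\;=\;A\otimes \mathcal{Z}\otimes M_{\mathfrak{r}}\;\cong\; A\otimes M_{\mathfrak{r}},
\]
and similarly for $B'$. Since $A,B\in {\cal A}$, by definition $TR(A\otimes M_{\mathfrak{r}})\le 1$ and $TR(B\otimes M_{\mathfrak{r}})\le 1$ for each of $\mathfrak{r}\in\{\mathfrak{p},\mathfrak{q}\}$. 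Hence the four tracial-rank conditions of Theorem \ref{CMT1} hold for $A'$ and $B'$. The Elliott invariant isomorphism $Ell(A\otimes\mathcal{Z})\cong Ell(B\otimes\mathcal{Z})$ is exactly what is assumed.

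Applying Theorem \ref{CMT1} to $A'$ and $B'$ then yields
\[
A'\otimes \mathcal{Z}\;\cong\; B'\otimes \mathcal{Z},
\]
i.e.\ $A\otimes \mathcal{Z}\otimes \mathcal{Z}\cong B\otimes \mathcal{Z}\otimes \mathcal{Z}$. Finally, using $\mathcal{Z}\otimes\mathcal{Z}\cong \mathcal{Z}$ (Jiang--Su absorption of $\mathcal{Z}$ in itself), the left-hand side collapses to $A\otimes\mathcal{Z}$ and the right-hand side to $B\otimes\mathcal{Z}$, giving the desired isomorphism. There is no real obstacle here; the entire content of the argument is already concentrated in Theorem \ref{CMT1}, and the only verification needed is the routine identity $(A\otimes\mathcal{Z})\otimes M_{\mathfrak{r}}\cong A\otimes M_{\mathfrak{r}}$ coupled with the definition of the class ${\cal A}$.
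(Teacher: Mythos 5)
Your proof is correct and follows the same route the paper intends: the paper's entire proof is ``This follows from Theorem \ref{CMT1} immediately,'' and your argument simply spells out the routine verifications behind that word ``immediately'' --- namely applying Theorem \ref{CMT1} to $A\otimes{\cal Z}$ and $B\otimes{\cal Z}$, checking the tracial-rank hypotheses via $(A\otimes{\cal Z})\otimes M_{\mathfrak{r}}\cong A\otimes M_{\mathfrak{r}}$, and collapsing ${\cal Z}\otimes{\cal Z}\cong{\cal Z}$ at the end. No issues.
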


\begin{proof}
This follows from \ref{CMT1} immediately.

\end{proof}

\begin{thm}\label{Cperm}

 {\rm (i)}\, Every unital hereditary \SCA\, of a
\CA\, in ${\cal A}$ is in ${\cal A};$

{\rm (ii)}\, If $A\in {\cal A},$ then $M_n(A)\in {\cal A}$ for
integer $n\ge 1;$

 {\rm (iii)}\, If $A\in {\cal A},$ then $A\otimes {\cal Z}\in {\cal
 A};$

{\rm (iv)} If $A$ and $B$ are in ${\cal A},$ then $A\otimes B\in
{\cal A},$ and

{\rm (v)}\, If $\{A_n\}\subset {\cal A}$ and
$A=\lim_{n\to\infty}(A_n, \phi_n),$ then $A\in {\cal A},$

{\rm (vi)} Every unital simple AH-algebra is in ${\cal A}.$

{\rm (v)} ${\cal Z}\in {\cal A}.$
\end{thm}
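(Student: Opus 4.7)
The plan is to verify each closure property separately, exploiting the fact that the defining condition of $\mathcal{A}$ only sees $A \otimes M_{\mathfrak{p}}$ for supernatural $\mathfrak{p}$ of infinite type, and that such $M_{\mathfrak{p}}$ is strongly self-absorbing and $\mathcal{Z}$-absorbing; nuclearity and the UCT are preserved by every operation listed, so the only substantive issue in each case is tracial rank.

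For (i) and (ii), I would note that $(pAp) \otimes M_{\mathfrak{p}} = (p \otimes 1)(A \otimes M_{\mathfrak{p}})(p \otimes 1)$ and $M_n(A) \otimes M_{\mathfrak{p}} = M_n(A \otimes M_{\mathfrak{p}})$, so these reduce to the known stability of $TR \le 1$ under hereditary subalgebras and matrix amplifications (the latter is exactly Theorem 5.8 of \cite{Lnplms}, already used in Lemma \ref{ucu}). For (iii), (vi), and (vii), the key remark is $\mathcal{Z} \otimes M_{\mathfrak{p}} \cong M_{\mathfrak{p}}$, so $A \otimes \mathcal{Z} \otimes M_{\mathfrak{p}} \cong A \otimes M_{\mathfrak{p}}$ settles (iii) and the UHF case settles (vii). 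For (vi), if $A$ is a unital simple AH-algebra, then $A \otimes M_{\mathfrak{p}}$ remains a unital simple AH-algebra in which the matrix factor sizes grow without bound while the spectral dimensions are fixed, hence has very slow dimension growth, and Theorem 2.5 of \cite{Lnctr1} gives $TR(A \otimes M_{\mathfrak{p}}) \le 1$.

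For (v), given $A = \lim(A_n,\phi_n)$ with $A_n \in \mathcal{A}$, I would write $A \otimes M_{\mathfrak{p}} = \lim (A_n \otimes M_{\mathfrak{p}}, \phi_n \otimes \mathrm{id})$; simplicity is inherited from $A$ and $M_{\mathfrak{p}}$, and $TR \le 1$ passes to the limit by the standard approximation argument (any finite set in the limit is close to the image of a finite set from some stage, to which one applies the defining approximation at that stage, using the Cuntz comparison condition in (3) of Definition \ref{Dtr1} to transport the smallness of $1-p$).

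The main obstacle is (iv). Since $M_{\mathfrak{p}} \otimes M_{\mathfrak{p}} \cong M_{\mathfrak{p}}$, we have $(A \otimes B) \otimes M_{\mathfrak{p}} \cong (A \otimes M_{\mathfrak{p}}) \otimes (B \otimes M_{\mathfrak{p}})$, which reduces (iv) to: the tensor product of two unital separable simple amenable $C^*$-algebras in $\mathcal{N}$ with tracial rank at most one again has tracial rank at most one. My plan here is to invoke the classification result (Theorem 10.10 of \cite{Lnctr1}) to realize each factor as a unital simple AH-algebra with no dimension growth, observe that the tensor product of two such AH-algebras is itself a unital simple AH-algebra with very slow dimension growth (spectral dimensions add while matrix sizes multiply), and then apply Theorem 2.5 of \cite{Lnctr1} to conclude $TR \le 1$ for the tensor product. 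Simplicity of the tensor product follows from simplicity and nuclearity of the factors, and the UCT is preserved by minimal tensor products of nuclear $C^*$-algebras satisfying the UCT.
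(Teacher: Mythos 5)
Your proposal is correct and follows essentially the same route as the paper: the heart of the matter, part (iv), is handled identically in both — reduce via $M_{\mathfrak{p}}\otimes M_{\mathfrak{p}}\cong M_{\mathfrak{p}}$, realize each factor $A\otimes M_{\mathfrak{p}}$ and $B\otimes M_{\mathfrak{p}}$ as a unital simple AH-algebra with no dimension growth by the classification theorem (10.9/10.10 of \cite{Lnctr1}), and apply Theorem 2.5 of \cite{Lnctr1} to the tensor product — and parts (i), (ii), (v), (vi) are likewise the same standard permanence arguments. The only (harmless) divergences are cosmetic: you settle (iii) and the last item directly from $\mathcal{Z}\otimes M_{\mathfrak{p}}\cong M_{\mathfrak{p}}$, whereas the paper deduces (iii) from (iv) and verifies $\mathcal{Z}\in\mathcal{A}$ via approximate divisibility of $\mathcal{Z}\otimes M_{\mathfrak{p}}$.
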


\begin{proof}

{\rm (i)}\, Let $A\in {\cal A}.$ Then, for any supernatural number
$\mathfrak{p},$ $TR(A\otimes M_{\mathfrak{p}})\le 1.$  Let $p\in
A$ be a projection. Then $pAp\otimes M_{\mathfrak{p}}$ is a unital
hereditary \SCA\, of $A\otimes M_{\mathfrak{p}}.$ Therefore, by 5.3
of \cite{Lnplms}, $TR(pAp\otimes M_{\mathfrak{p}})\le 1.$ Thus
$pAp\in {\cal A}.$

{\rm (v)} Suppose that $A_n\in {\cal A}$ and
$A=\lim_{n\to\infty}(A_n, \phi_n)$ and $\mathfrak{p}$ is a
supernatural number. Then $TR(A_n\otimes M_{\mathfrak{p}})\le 1.$ It
follows that  $TR(A\otimes M_{\mathfrak{p}})\le 1.$

{\rm (iv)}. Suppose that $A,\, B\in {\cal A}$ and suppose that
$\mathfrak{p}$ is a supernatural number. Note that
$M_{\mathfrak{p}}\cong M_{\mathfrak{p}}\otimes M_{\mathfrak{p}}.$
It follows that
$$
(A\otimes B)\otimes M_{\mathfrak{p}}\cong (A\otimes
M_{\mathfrak{p}})\otimes (B\otimes M_{\mathfrak{p}}).
$$
Since $TR(A\otimes M_{\mathfrak{p}})\le 1$ and $TR(B\otimes
M_{\mathfrak{p}})\le 1,$ by 10.10 and 10.9  of \cite{Lnctr1}, $A\otimes
M_{\mathfrak{p}}$ and $B\otimes M_{\mathfrak{p}}$ are unital
simple AH-algebra with no dimension growth. It is easy to see that
$(A\otimes M_{\mathfrak{p}})\otimes (B\otimes M_{\mathfrak{p}})$
is also a unital simple AH-algebra with no dimension growth. It
follows that $(A\otimes B)\otimes M_{\mathfrak{p}}\in {\cal A}.$

{\rm (iii)}\, Note that $M_n, \, {\cal Z}\in {\cal A}.$ Thus (iii)
follows from (iv).

{\rm (vi)}\, Suppose that $A$ is a unital simple AH-algebra.  Then
it is easy to see that $A\otimes M_{\mathfrak{p}}$  has very slow
dimension growth (in the sense of Gong) for any supernatural numbers
$\mathfrak{p}.$ It follows from Theorem 2.5 of \cite{Lnctr1} that
$TR(A\otimes M_{\mathfrak{p}})\le 1.$

{\rm (v)} ${\cal Z}\in {\cal A}$ follows from the fact that ${\cal Z}\otimes M_{\mathfrak{p}}$ is
approximately divisible and whose projections separate the traces. In fact that ${\cal Z}\otimes M_{\mathfrak{p}}$
has tracial rank zero.

\end{proof}

\begin{cor}\label{MTF}
Let $A$ and $B$ be two unital simple AH-algebras. Then $A\otimes
{\cal Z}\cong B\otimes {\cal Z}$ if $Ell(A\otimes {\cal Z})\cong
Ell(B\otimes {\cal Z}).$
\end{cor}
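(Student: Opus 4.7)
The plan is to deduce Corollary \ref{MTF} directly from the combination of Theorem \ref{Cperm} and Corollary \ref{CM1}. The work has essentially been done already: the content of the corollary is just the instantiation of the general ${\cal Z}$-stable classification result for the class ${\cal A}$ in the case of unital simple AH-algebras, so the proof should be very short.

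First I would invoke Theorem \ref{Cperm}(vi), which asserts that every unital simple AH-algebra belongs to the class ${\cal A}.$ Thus both $A$ and $B$ lie in ${\cal A}.$ The point here is that for any unital simple AH-algebra $C$ and any supernatural number $\mathfrak p$ of infinite type, $C\otimes M_{\mathfrak p}$ is again a unital simple AH-algebra but now has very slow dimension growth in the sense of Gong (the UHF factor absorbs all dimension growth); Theorem 2.5 of \cite{Lnctr1} then gives $TR(C\otimes M_{\mathfrak p})\le 1,$ which is exactly the membership criterion for ${\cal A}.$

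Next I would apply Corollary \ref{CM1} (which is in turn a direct consequence of Theorem \ref{CMT1}) to the pair $(A,B)\in{\cal A}\times{\cal A}.$ This corollary asserts that, for any two members of ${\cal A},$ the relation $A\otimes{\cal Z}\cong B\otimes{\cal Z}$ holds if and only if there is an isomorphism
\[
Ell(A\otimes{\cal Z})\cong Ell(B\otimes{\cal Z}).
\]
The hypothesis of \ref{MTF} is precisely the right-hand side, so the conclusion follows.

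Since the proof reduces to quoting two prior results, there is no real obstacle; the substantive work has been absorbed into Theorem \ref{CMT1} (the ${\cal Z}$-stable classification via W.~Winter's strategy, the asymptotic unitary equivalence theorem \ref{TM}, and the existence theorems of Section~8) and into the closure and permanence properties of ${\cal A}$ established in \ref{Cperm}. Consequently, the proof of \ref{MTF} can be written in one or two sentences that simply cite \ref{Cperm}(vi) and \ref{CM1}.
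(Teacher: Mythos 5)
Your proposal is correct and matches the paper's own proof exactly: the paper also deduces the corollary immediately from Theorem \ref{Cperm}(vi) (every unital simple AH-algebra lies in ${\cal A}$, via very slow dimension growth of $A\otimes M_{\mathfrak p}$ and Theorem 2.5 of \cite{Lnctr1}) together with Corollary \ref{CM1}. Nothing is missing.
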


\begin{proof}
This follows from (vi) of \ref{Cperm} and \ref{CM1} immediately.

\end{proof}

One may compare the following  with Theorem 3.4 of \cite{TW2}.

\begin{cor}\label{CF}
Let $A$ be a unital simple infinite dimensional AH-algebra. Then the
following are equivalent:

{\rm (1)}\, $A$ is ${\cal Z}$-stable,

{\rm (2)} $A$ is approximately divisible,

{\rm (3)} $TR(A)\le 1,$

 {\rm (4)} $A$ is isomorphic to a unital
simple AH-algebra with very slow dimension growth,

{\rm (5)}  $A$ is isomorphic to a unital simple AH-algebra with no
dimension growth.

\end{cor}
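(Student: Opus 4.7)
The plan is to prove the cycle $(4)\Leftrightarrow(5)\Leftrightarrow(3)$, $(5)\Rightarrow(2)\Rightarrow(1)$, and finally $(1)\Rightarrow(5)$, since the last implication is the only one that actually uses the new classification machinery developed in this paper.

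For the equivalences among (3), (4), (5): $(5)\Rightarrow(4)$ is immediate (no dimension growth is a special case of very slow dimension growth), $(4)\Rightarrow(3)$ is Theorem~2.5 of \cite{Lnctr1}, and $(3)\Rightarrow(5)$ is Theorem~10.10 of \cite{Lnctr1}, which rewrites any unital simple AH-algebra with $TR\leq 1$ as an AH inductive limit with no dimension growth. For $(5)\Rightarrow(2)$, a unital simple AH-algebra with no dimension growth is approximately divisible by a standard construction using $M_k\oplus M_{k+1}$ subalgebras inside the low-dimensional building blocks. The implication $(2)\Rightarrow(1)$ is the theorem of Toms--Winter that every unital separable approximately divisible $C^*$-algebra is ${\cal Z}$-stable.

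The crux is $(1)\Rightarrow(5)$. By item (vi) of Theorem~\ref{Cperm}, $A\in {\cal A}$. Since $A$ is a unital simple AH-algebra that is ${\cal Z}$-stable, $K_0(A)$ is a weakly unperforated ordered group, and the Elliott invariant $Ell(A)$ satisfies the constraints required by the range theorem for unital simple AH-algebras with no dimension growth (Elliott--Gong--Li). Pick a unital simple AH-algebra $B$ with no dimension growth and $Ell(B)\cong Ell(A)$. By the implications already established, $B$ satisfies $TR(B)\leq 1$, so $B\in {\cal A}$, and $B$ is ${\cal Z}$-stable, giving $B\cong B\otimes{\cal Z}$. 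Because ${\cal Z}$ preserves the Elliott invariant in the presence of weak unperforation,
$$
Ell(A\otimes {\cal Z})\cong Ell(A)\cong Ell(B)\cong Ell(B\otimes {\cal Z}).
$$
Applying Corollary~\ref{CM1} to $A$ and $B$ in ${\cal A}$ yields $A\otimes{\cal Z}\cong B\otimes{\cal Z}$, and using ${\cal Z}$-stability of both sides, $A\cong B$; in particular $A$ has no dimension growth.

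The main obstacle is precisely this last step, where the leap from the ${\cal Z}$-stability hypothesis on a potentially wild AH-algebra $A$ to structural information (no dimension growth, $TR\leq 1$) is made possible only by combining two highly nontrivial inputs: the Elliott--Gong--Li realization theorem, which produces a ``model'' $B$ with no dimension growth matching the invariant of $A$, and Corollary~\ref{CM1} of the present paper, which says that membership in ${\cal A}$ together with agreement of Elliott invariants forces ${\cal Z}$-stable isomorphism. Neither ingredient alone suffices.
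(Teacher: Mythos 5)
Your proposal is correct and follows essentially the same route as the paper: the cycle $(5)\Rightarrow(4)\Rightarrow(3)\Rightarrow(5)$, $(5)\Rightarrow(2)\Rightarrow(1)$ via approximate divisibility and Toms--Winter, and then $(1)\Rightarrow(5)$ by producing a no-dimension-growth model $B$ with the same Elliott invariant (the paper cites Theorem 3.6 of Toms--Winter for this, you invoke the Elliott--Gong--Li range theorem directly, which amounts to the same thing), observing $B$ is ${\cal Z}$-stable, and applying Corollary~\ref{MTF}/\ref{CM1} together with ${\cal Z}$-stability of both algebras to conclude $A\cong B$. No gaps.
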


\begin{proof}
The implication (5) to (4) follows from the definition. That (2)
implies (1) follows from Theorem 2.3 of \cite{TW}, and that (4)
implies (3) was proved in Theorem 2.5 of \cite{Lnctr1}. By
\cite{EGLa}, (5) implies (2).  It follows from Theorem 10.10  of
\cite{Lnctr1} that (3) implies (5).  It remains to show that (1)
implies (5).

Let $A$ be a unital simple AH-algebra which is ${\cal Z}$-stable.
Then, by Theorem 3.6 of \cite{TW}, there is a unital simple
AH-algebra $B$ with no dimension growth such that $Ell(A)=Ell(B).$
It follows from the implication that (5) implies (2) and that (2)
implies (1) that $B$ is also ${\cal Z}$-stable. Therefore, by
\ref{MTF}, $A\cong B.$ Consequently, (1) implies (5).

\end{proof}

\begin{cor}
Let $A$ be a unital simple infinite dimensional AH-algebra. Then
$A\otimes {\cal Z}$ is isomorphic to a unital simple AH-algebra with
no dimension growth.

\end{cor}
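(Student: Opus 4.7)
The plan is to combine Theorem 3.6 of \cite{TW} (which was already invoked in the proof of \ref{CF}) with Corollary \ref{MTF} and the equivalences of \ref{CF}. Given a unital simple infinite dimensional AH-algebra $A$, I first form the Elliott invariant $Ell(A\otimes {\cal Z})$. Then I apply Theorem 3.6 of \cite{TW} to produce a unital simple AH-algebra $B$ with no dimension growth realizing this invariant, i.e., $Ell(B)=Ell(A\otimes {\cal Z})$.

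Next I would verify that $B$ is ${\cal Z}$-stable, i.e., $B\cong B\otimes {\cal Z}$. Since $B$ has no dimension growth, the implication (5) $\Rightarrow$ (1) of \ref{CF} (which passes through (2) via \cite{EGLa} and then uses Theorem 2.3 of \cite{TW}) gives that $B$ is ${\cal Z}$-stable. Therefore $Ell(A\otimes {\cal Z})=Ell(B)=Ell(B\otimes {\cal Z})$.

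At this stage both $A\otimes {\cal Z}$ and $B\otimes {\cal Z}$ arise from unital simple AH-algebras tensored with ${\cal Z}$, so I can apply \ref{MTF} to conclude that
\[
A\otimes {\cal Z}\,\cong\, B\otimes {\cal Z}\,\cong\, B,
\]
and $B$ is a unital simple AH-algebra with no dimension growth, which is exactly what we want.

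No step here looks like a genuine obstacle; all the heavy lifting has been done in the machinery built up through \ref{CMT1}, \ref{MTF}, and \ref{CF}. The only subtlety worth checking is that Theorem 3.6 of \cite{TW} applies to arbitrary Elliott invariants of the form $Ell(A\otimes {\cal Z})$ for $A$ a unital simple AH-algebra, i.e., that such invariants fall in the range realizable by no-dimension-growth AH-algebras; this however is exactly the content of that result and was already tacitly used in the proof of \ref{CF}.
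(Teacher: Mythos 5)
Your argument is correct and is essentially the paper's own proof, which simply refers back to the implication (1) $\Rightarrow$ (5) in \ref{CF}: produce $B$ with no dimension growth realizing $Ell(A\otimes{\cal Z})$ via Theorem 3.6 of \cite{TW}, note $B$ is ${\cal Z}$-stable, and apply \ref{MTF} to the pair of AH-algebras $A$ and $B$. Your write-up just makes explicit the steps the paper leaves implicit.
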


\begin{proof}
The proof of this is contained in the proof of the implication
from (1) to (5).
\end{proof}

\begin{NN}
{\rm It is known that all unital simple ASH-algebras $A$ whose
projections separate the traces are in ${\cal A}$ (see \cite{W}).
In fact, in this case, $A\otimes M_{\mathfrak{p}}$ has tracial
rank zero (see \cite{W}). Moreover, any unital simple \CA\, with unique tracial state which
is an inductive limit of type I \CA s is  in ${\cal A}$ (see also
\cite{Lnappn}). In a subsequent paper (\cite{LN2}), we will show that
 all  unital simple  inductive limits of dimension drop
algebras studied by Jiang and Su (\cite{JS}), as well as those
unital simple inductive limits of dimension drop circle algebras
studied in \cite{Myg}  are in ${\cal A}.$ These give examples of
unital simple \CA s $A$ for which $A\otimes M_{\mathfrak{p}}$ has
tracial rank one but not zero. Other unital simple ASH-algebras
whose $K_0$-groups are not Riesz groups are also shown to be in
${\cal A}.$ The range of invariants of ${\cal A}$ will be discussed.
}
\end{NN}


\begin{thebibliography}{BH}

\bibitem{BH} B. Blackadar  and D.  Handelman, {\em Dimension functions and traces on $C\sp{*} $-algebras} J. Funct. Anal. {\bf 45} (1982),  297--340.

\bibitem{BEEK} O. Bratteli, G. A.  Elliott, D. Evans and A. Kishimoto, {\em Homotopy of a pair of approximately commuting unitaries in a simple $C\sp *$-algebra},  J. Funct. Anal. {\bf 160} (1998),  466--523.


\bibitem{BR} B. Blackadar and M.  R¿rdam, {\em Extending states on preordered semigroups and the existence of quasitraces on $C\sp *$-algebras},  J. Algebra  {\bf 152}  (1992),   240--247.

\bibitem{DL} M. Dadarlat and T.  Loring, {\em  A universal multicoefficient theorem for the Kasparov groups},
 Duke Math. J.  {\bf 84}  (1996),   355--377.


\bibitem{HS}  P. de la Harpe and G. Skandalis, {\em D\'eterminant associ\'e  a une trace sur une alg\'ebre de Banach},  Ann. Inst. Fourier {\bf  34}, (1984), 169Ð202. 

\bibitem{Dix} J.  Dixmier, {\em On some $C\sp{*} $-algebras considered by Glimm},  J. Funct. Anal.  {\bf1 },1967 182--203

\bibitem{E1}G. A.  Elliott, {\em On the classification of $C\sp *$-algebras of real rank zero},
 J. Reine Angew. Math. {\bf 443} (1993), 179--219.

\bibitem{EG1} G. A. Elliott and G.  Gong, {\em On the classification of $C\sp *$-algebras of real rank zero. II},
 Ann. of Math. {\bf 144} (1996), 497--610.

\bibitem{EGLa} G. A. Elliott, G. Gong and L. Li, {\em Approximate divisibility of simple inductive limit $C\sp *$-algebras}  Operator algebras and operator theory (Shanghai, 1997),  87--97, Contemp. Math., {\bf 228}, Amer. Math. Soc., Providence, RI, 1998.

\bibitem{EGLb} G. A. Elliott, G.  Gong and L. Li, {\em Injectivity of the connecting maps in AH inductive limit systems},
Canad. Math. Bull. {\bf 48} (2005),  50--68.

\bibitem{EGL} G. A. Elliott, G. Gong and L.  Li, {\em On the classification of simple inductive limit $C\sp *$-algebras. II. The isomorphism theorem},  Invent. Math.  {\bf 168}  (2007),  249--320.


\bibitem{EL} G. A. Elliott and T.  Loring, {\em  AF embeddings of $C(\bold T\sp 2)$ with a prescribed $K$-theory},  J. Funct. Anal.  {\bf 103 }  (1992) 1--25.

\bibitem{ER} G. A. Elliott and M. R\o rdam, {\em Classification of certain infinite simple \CA s, II},
Comment. Math. Hel {\bf 70} (1995), 615-638.

\bibitem{Ex}R. Exel, {\em The soft torus and applications to
almots commuting matrics}, Pacific J. Math., {\bf 160} (1993),
207-217.

\bibitem{GL2} G.  Gong and H. Lin, {\em Almost multiplicative morphisms and $K$-theory},
 Internat. J. Math. {\bf 11} (2000),  983--1000.

\bibitem{JS} X. Jiang and H.  Su, {\em On a simple unital projectionless $C\sp *$-algebra},
  Amer. J. Math.  {\bf 121}  (1999),  359--413.

\bibitem{KK1}  A.  Kishimoto and A.  Kumjian, {\em The $\rm Ext$ class of an approximately inner automorphism}, Trans. Amer. Math. Soc. {\bf 350} (1998), 4127--4148.

\bibitem{KK2} A.  Kishimoto and A.  Kumjian, {\em The Ext class of an approximately inner automorphism. II}, J. Operator Theory {\bf 46} (2001),  99--122.

\bibitem{Li} L. Li, {\em $C\sp *$-algebra homomorphisms and $KK$-theory},  $K$-Theory  {\bf 18}  (1999),   161--172.

\bibitem{Lnplms} H. Lin, {\em Tracial topological ranks of \CA s},
Proc. London Math. Soc., {\bf 83} (2001), 199-234.

\bibitem{Lnctaf} H. Lin, {\em Classification of simple tracially AF $C\sp *$-algebras}, Canad. J. Math. {\bf 53} (2001), 161--194.

\bibitem{Lnbk} H. Lin, {\em An introduction to the classification of amenable $C\sp *$-algebras}, World Scientific Publishing Co., Inc., River Edge, NJ, 2001. xii+320 pp. ISBN: 981-02-4680-3.

\bibitem{LnKT} H. Lin, {\em Embedding an AH-algebra into a simple $C\sp *$-algebra with prescribed $KK$-data}.
$K$-Theory  {\bf 24}  (2001),   135--156.

\bibitem{Lnann} H. Lin, {\em Classification of simple $C\sp *$-algebras and higher dimensional
noncommutative tori},   Ann. of Math. (2) 157 (2003), no. 2,
521--544.

\bibitem{Lnah} H. Lin, {\em Simple $AH$-algebras of real rank zero}, Proc. Amer. Math. Soc. {\bf 131} (2003), 3813--3819 (electronic).


\bibitem{Lnduke} H. Lin,
          {\em Classification of simple $C^*$-algebras with
tracial topological rank zero},
  Duke Math. J.,{\bf 125} (2004), 91-119.

\bibitem{Lntr0} H. Lin, {\em Traces and simple $C\sp *$-algebras with tracial topological rank zero},
 J. Reine Angew. Math. 568 (2004), 99--137.

\bibitem{Lncd} H. Lin, {\em Classification of \hm s and dynamical systems}, Trans. Amer. Math. Soc. {\bf 359} (2007), 859-895.

\bibitem{Lnequ} H. Lin, {\em Unitary equivalences for essential extensions of $C^*$-algebras}, preprint, arXiv:math/0403236.

\bibitem{Lnhomp} H. Lin, {\em  Approximate homotopy of \hm s from $C(X)$ into a simple \CA\,},
preprint, arxiv.org/ OA/0612125.

\bibitem{Lnctr1} H. Lin, {\em Simple nuclear $C\sp *$-algebras of tracial topological rank one},   J. Funct. Anal.  {\bf 251}  (2007),   601--679.

\bibitem{Lnemb2} H. Lin, {\em AF-embedding of crossed products of AH-algebras by $\Z$ and
asymptotic AF-embedding},  Indiana J. Math., to appear, arXiv:math/0612529

\bibitem{Lnasym} H. Lin, {\em Asymptotically unitary equivalence and asymptotically inner automorphisms},
preprint, arXiv:math/0703610.

\bibitem{LnZ2af} H. Lin, {\em AF-embedding of the crossed products of AH-algebras by finitely generated
abelian groups}, Inter. Math. Res. Papers, to appear, arXiv:0706.2229.

\bibitem{Lnappn} H. Lin, {\em Localizing the Elliott Conjecture at Strongly Self-absorbing $C^*$-algebras --An Appendix}, preprint, arXiv:0709.1654 v2.

\bibitem{Lnn1} H. Lin, {\em Approximate Unitary Equivalence in Simple $C^*$-algebras of Tracial Rank One}, preprint, arXiv:0801.2929.

\bibitem{LnKK} H. Lin, {\em The Range of Approximate Unitary Equivalence Classes of Homomorphisms from AH-algebras}, preprint,  arXiv:0801.3858


\bibitem{Lnnhomp} H. Lin, {\em Homotopy of unitaries in simple \CA s with tracial rank one}, preprint, arXiv:0805.0583.


\bibitem{LN} H. Lin and Z. Niu {\em Lifting KK-elements, asymptotical unitary equivalence and
classification of simple C*-algebras},  Adv. Math., to appear,
arXiv:0802.1484.

\bibitem{LN2} H. Lin and Z. Niu {\em The Range of a classifiable simple amenable \CA s}, preprint, arXiv:0808.3424. 

\bibitem{Lo} T. Loring, {\em $K$-theory and asymptotically commuting matrices},  Canad. J. Math.  {\bf 40}  (1988),   197--216.

\bibitem{Myg}
J.~Mygind.
\newblock Classification of certain simple $\textrm{C*}$-algebras with torsion
  in $\mathrm{K}_1$.
\newblock {\em Canad. J. Math.}, 53(6):1223--1308, 2001.


\bibitem{NT} K. E. Nielsen and K. Thomsen, {\em Limits of circle algebras}, Expo. Math. {\bf 14}, (1996),17Ð56.

\bibitem{RS} J. Rosenberg and C.  Schochet, {\em The K\"{u}nneth theorem and the universal coefficient theorem for Kasparov's generalized $K$-functor},   Duke Math. J.  {\bf 55}  (1987),   431--474


\bibitem{Th1} K. Thomsen, {\em Traces, unitary characters and crossed products by $Z$},  Publ. Res. Inst. Math. Sci.  {\bf 31}  (1995),   1011--1029.

\bibitem{aT} A. Toms, {\em On the classification problem for nuclear C*-algebras}, preprint, math.OA/0509103.

\bibitem{TW} A. Toms and W. Winter, {\em Strongly self-absorbing $C\sp *$-algebras},
  Trans. Amer. Math. Soc.  {\bf 359}  (2007),   3999--4029


\bibitem{TW2} A Toms and W. Winter, {\em The Elliott conjecture for Villadsen algebras of the first type},  preprint, arXiv:math/0703393.

 \bibitem{jV} J. Villadsen, {\em On the stable rank of simple $C\sp *$-algebras}, J. Amer. Math. Soc. {\bf 12} (1999) 1091--1102.

\bibitem{W} W. Winter, {\em Localizing the Elliott conjecture at strongly self-absorbing C*-algebras}, preprint,
arXiv:0708.0283v2.





\end{thebibliography}
\end{document}